\documentclass[oneside]{amsart}
\usepackage{amsmath,amssymb,color,amsthm,graphicx,cite}
\usepackage{color,bm}
\graphicspath{{../../img_pdf/}{../img_pdf/}{./img_pdf/}}
\usepackage[T1]{fontenc}
\usepackage{enumerate} 
\usepackage[backref]{hyperref}
\RequirePackage{luatex85}
\usepackage[all]{xy}
\newtheorem{theorem}{Theorem}[section]
\newtheorem{proposition}[theorem]{Proposition}
\newtheorem{lemma}[theorem]{Lemma}
\newtheorem{corollary}[theorem]{Corollary}
\newtheorem{claim}{Claim}

\newtheorem{problem}{Problem}
\theoremstyle{definition}
\newtheorem{definition}{Definition}
\newtheorem{main}{Theorem}

\def\Z{\mathbb{Z} }

\def\R{\mathbb{R} }
\def\A{\mathbb{A} }

\def\bp{\partial^{+} }
\def\BD{\mathop{\mathrm{BD}}}
\def\nbd{neighborhood }

\def\Sv{\mathop{\mathrm{Sing}}(v)}

\def\Pv{\mathop{\mathrm{Per}}(v)}
\def\Cv{\mathop{\mathrm{Cl}}(v)}

\author{Tomoo Yokoyama}
\date{\today}
\address{Department of Mathematics, Faculty of Science, Saitama University, Shimo-Okubo 255, Sakura-ku, Saitama-shi, 338-8570 Japan\\}
\email{tyokoyama@rimath.saitama-u.ac.jp}
\thanks{The author was partially supported by JSPS Grant Number 20K03583 and 24K06733}
\subjclass[2010]{}

\makeatletter
\@namedef{subjclassname@2020}{%
  \textup{2020} Mathematics Subject Classification}
\makeatother

\title[Decompositions of surface flows]{Decompositions of surface flows}
\date{\today}
\keywords{Decomposition, flows on surfaces, complete invariant, labeled graph}

\subjclass[2020]{37E35,37N10,76A02,49M27,76M27}

\begin{document}

\maketitle

\begin{abstract}
Flows on surfaces are one of the most fundamental and classical objects in dynamical systems, and are studied from various areas (e.g. integrable systems, differential equations, fluid mechanics). Though hyperbolic flows and recurrent flows on surfaces are classified and characterized using various topological invariants, no complete finite invariants captured both hyperbolicity and recurrence. Moreover, no topological frameworks described even generic time evaluations of gradient flows or incompressible flows (e.g. flows around a circular cylinder placed in uniform flow, solutions of Euler equations and incompressible Navier-Stokes equations). In this paper, to construct a foundation for describing fluid phenomena and capturing hyperbolicity and recurrence, under regularity for the singular point set and tameness of genus and ends, we construct complete finite invariants of flows on (possibly non-compact) surfaces by reconstructing surfaces by gluing five kinds of invariant open subsets, which are trivial flow boxes, transverse/periodic annuli, periodic M{\"o}bius bands, and locally dense Q-sets. Such invariants imply a topological framework that can convert various time evaluations of fluids into walks in graphs without losing topological information, and provide a new tool for analyzing fluid phenomena and differential equations through combinatorics and topological data analysis. Furthermore, such invariants partially revive ``Markus-Neumann theorem''.
\end{abstract}
%198 語 (1396 文字です)

\section{Introduction}
%\subsection{On surface flows}
Flows on surfaces are one of the most fundamental and classical objects in dynamical systems and are wildly used as models of various fluid phenomena. 
However, from a dynamical system's perspective, there were no topological frameworks for describing hyperbolic and recurrent behaviors of unsteady flows on surfaces.
In addition, from a physical point of view, no topological frameworks exist that describe time-dependent flow around a circular cylinder placed in uniform flow, which are Hamiltonian flows on a punctured plane. This is due to the absence of complete finite invariants of flows on surfaces that can capture such fluid phenomena. 
Therefore, such complete finite invariants are necessary to construct a foundation for capturing hyperbolicity and recurrence and topologically describing fluid phenomena.

%For instance, the Poincar{\'e}-Bendixson theorem states the description of limit sets of points and is generalized in several ways \cite{andronov1966qualitative,aranson1996introduction,gardiner1985structure,gutierrez1986smoothing,Levitt1982foliation,lopez2010empty,marzougui2011some,nikolaev1999flows,schwartz1963generalization,vanderschoot2003local,yano1985asymptotic}.
%The smoothability of flows on surfaces is described by the following Gutierrez's smoothing theorem \cite{gutierrez1978structural}: Each flow on a compact surface is topologically equivalent to a $C^1$-flow. 
%Moreover, if any minimal sets are closed orbits or invariant tori, then such flows are topologically equivalent to $C^\infty$-flows. 
%In particular, the following statement holds (see for example \cite{nikolaev1999flows}):
%The $\omega$-limit set of an orbit of a flow with finitely many fixed points on a compact surface is either a closed orbit, a semi-attracting limit circuit, or a Q-set. 
%A part of this classification is based on the following Ma{\v \i}er's result \cite{mayer1943trajectories} (see  \cite[Theorem 2.4.4 p.32]{nikolaev1999flows},  \cite[Theorem 4.2 ]{aranson1996maier} for general cases, and  \cite[Theorem 19]{MV} for orientable hyperbolic cases for details): Any point contained in an $\omega$-limit set of some point for a flow on a compact surface whose $\omega$-limit set contains non-closed orbits is positively recurrent. 
%
%In addition, t

\subsection{Backgrounds}
Flows on surfaces are studied from various aspects (e.g. hyperbolic dynamical systems, recurrent dynamics, topological dynamics, integrable systems, differential equations, fluid mechanics).
%, and so on). 

\subsubsection{Background from a dynamical system's point of view}
The stability of flows, necessary to recognize time evaluations of fluids as sequences of their topologies, is widely studied. 
In fact, Andronov and Pontryagin introduced structural stability and showed the structural stability and open denseness of Morse-Smale flows on spheres \cite{andronov1937coarse}. 
Peixoto generalized the result as follows \cite{peixoto1962structural}: A vector field $v \in \chi^r(S)$ is structurally stable if and only if $v \in \Sigma^r(S)$, where $\chi^r(S)$ ($r \in \Z_{\geq 1}$) is the set of $C^r$-vector fields (with the $C^r$- topology) on an orientable closed surface $S$ and $\Sigma^r(S)$ is the subspace of $\chi^r(S)$ formed by the Morse-Smale $C^r$-vector fields on $S$.
Moreover, $\Sigma^r(S)$ is open and dense in $\chi^r(S)$.
%Roughly speaking, in the two-dimensional case, this shows that almost flows are hyperbolic and structurally stable. 
%Recall that a $C^r$ vector field ($r \in \Z_{\geq 1}$) on the closed surface $S$ with finitely many singular points is Morse-Smale if (1) each singular point is hyperbolic (i.e. a sink, a source, or a saddle); (2) each periodic orbit is a hyperbolic limit cycle; (3) there are no heteroclinic multi-saddle separatrices; and (4) the $\omega$-limit (resp. $\alpha$-limit) set of a point is a closed orbit.
%
%There are several topological invariants of hyperbolic flows and their generalizations. 
%For instance, a basic result of Morse theory says that gradient flows of Morse functions on closed surfaces are characterized by the set of saddles and their separatrices, which are finite directed graphs. 
%The Morse theory for gradient vector fields on compact surfaces is extended to an index theory for Smale flows on compact surfaces using Lyapunov graphs \cite{franks1985nonsingular}, which are generalizations of the quotient spaces of gradient functions. 
%Recall that a flow is Smale if (1) its chain recurrent set has a hyperbolic structure and its dimension is less than or equal to one; and (2) it satisfies the transverse condition. 
%In \cite{de1993lyapunov}, a characterization of Lyapunov graphs associated with smooth surface flows is presented.
It is also known that the Peixoto graph is a complete finite invariant for Morse-Smale flows without limit cycles (i.e. Morse flows), three-colour graph is a complete invariant for Morse-Smale flows \cite{oshemkov1998classification}, and an equipped graph \cite{kruglov2018topological} is a complete finite invariant for $\Omega$-stable flows on compact surfaces, which are ``Morse-Smale flow without non-existence of heteroclinic separatrices''.  
%When $M$ is a non-orientable closed surface, Pugh's $C^1$-Closing Lemma implies \cite{pugh1967improved,pugh1983c} that $\Sigma^1(M)$ is dense and that Peixoto's work holds for the case in which the non-orientable genus of $M$ is less than five  \cite{gutierrez1978structural,markley1969poincare}.

\subsubsection{Background from a recurrent dynamics' point of view}
Though ``almost'' flows on surfaces are hyperbolic from a dynamical system point of view, non-hyperbolic flows (e.g. area-preserving flows) on surfaces naturally appear from differential equations and physical points of view. 
In particular, the measurable properties of area-preserving flows are studied from various aspects  \cite{chaika2021singularity,conze2011cocycles,forni1997solutions,frkaczek2012ergodic,forni2002deviation,kanigowski2016ratner,kulaga2012self,ravotti2017quantitative,ulcigrai2011absence}. 
The study of area-preserving flows for their connection with solid-state physics and pseudo-periodic topology was initiated by Novikov \cite{novikov1982hamiltonian}. 
The orbits of such flows also arise in pseudo-periodic topology, as hyperplane sections of periodic manifolds (cf. \cite{arnol1991topological,zorich1999leaves}).
Moreover, non-hyperbolic flows with wandering domains appear on compact surfaces which are the restrictions of flows on $3$-manifolds to the boundary and are the resulting flows generated by the projections of vector fields on $3$-manifolds to two-dimensional subspaces  (e.g. \cite[Figure~18]{sakajo2020discrete}). 

\subsubsection{Background from an integrable system's point of view and physical point of view}
From a fluid mechanics point of view, the topological properties of area-preserving flows have been paid attention to \cite{bakker1991bifurcations,bro1999streamline,hartnack1999streamline}, and the topological classification of Hamiltonian flows with finitely many singular points has been investigated on a plane $\R^2$ \cite{aref1998stagnation}, on a sphere $\mathbb{S}^2$ \cite{kidambi2000streamline}, and a torus \cite{moffatt2001topology}. 
These classifications are generalized to such flows on closed surfaces from an integrable system point of view \cite{bolsinov2004integrable} and compact surfaces from a dynamical system point of view \cite{sakajo2014unique,sakajo2015transitions,sakajo2018tree,sakajo2020discrete,yokoyama2021complete,yokoyama2013word,yokoyama2021combinatorial}. 
In addition, for any positive integer $r \in \Z_{\geq 1}$, 
%on $C^r$-Hamiltonian vector fields ($r \in \Z_{\geq 1}$), 
a Hamiltonian vector field $v \in \mathcal{H}^r(S)$ is structurally stable in $\mathcal{H}^r(S)$ if and only if $v \in \mathcal{H}^r_*(S)$,  and that $\mathcal{H}^r_*(S)$ is open dense in $\mathcal{H}^r(S)$ \cite{ma2005geometric}, where $\mathcal{H}^r(S)$ is the set of Hamiltonian $C^r$ vector fields on an orientable closed connected surface $S$ and $\mathcal{H}^r_*(S)$ is the set of Hamiltonian $C^r$ vector fields whose singular points are non-degenerate and whose saddle connection are self-connected. 
The computable representations of complete finite invariants for structurally stable Hamiltonian flows and their generic intermediate flows are constructed \cite{bolsinov2004integrable,sakajo2014unique,sakajo2015transitions,sakajo2018tree,yokoyama2021complete,yokoyama2013word}. 
The representation is implemented and applied to analyze various phenomena (e.g. vortices in heart \cite{sakajo2023topological}, meteorology \cite{uda2021identification}, oceanography \cite{sakajo2022identification}, and fluids in powder classifying devices \cite{sakajo2020discrete}). 

\subsubsection{Background from a computational point of view}

The Conley Theory allows dynamical systems to be decomposed into finite blocks, each of which is a chain recurrent or gradient-like. 
The resulting directed graph is called a Morse graph. 
Though Morse graphs of various flows become finite graphs, the finite representability of Morse graphs is crucial to using graph algorithms because only finite data can be computed. 
In fact, the Morse graphs are implemented as a computer software CHomP (Computational Homology Project software) \cite{AKKMOP2009} and there are several relative works to analyzing dynamical systems using algorithm \cite{MSW2015,HMMN2014,MMW2016}. 
The software widely analyzes global dynamics and bifurcations in several areas. 
The finite realizability of the orbit class spaces is studied \cite{BHS2011,BHSV2011}. 
Therefore, the question of how to reduce dynamical systems into finite information is essential. 
On the other hand, since the Morse graphs of any recurrent dynamics on connected manifolds are singleton, they cannot capture any information on recurrent dynamics. 

\subsubsection{Background from a topological dynamics' point of view}

There are (non-finite) complete invariants for non-wandering flows (i.e. flows without wandering domains). 
Recall that Hamiltonian (or, more generally, area-preserving) flows are examples of non-wandering flows. 
Non-wandering flows with finitely many singular points on compact surfaces are classified up to a graph-equivalence by using a topological invariant, called a Conley-Lyapunov-Peixoto graph, which is equipped with rotation and weight functions \cite{nikolaev2001non}. 

%\subsubsection{Background from ``Markus-Neumann theorem''}

In addition, the following assertion was stated in the papers~\cite{markus1954global,neumann1975classification,neumann1976global}: 
%\begin{quote}
An orbit complex (also called separatrix configuration) of a flow is also a complete invariant for the set of flows with ``finitely many separatrices'' in the sense of Markus. 
%\end{quote}
Though these papers are referred to more than one hundred papers as mentioned in \cite{buendia2018markus}, Buend{\'\i}a and L\'opez have pointed out that the assertion does not hold. 

In fact, the orbit complex is not a complete invariant for flows having  ``finitely many separatrices''  in the sense of Markus, and it is not a complete invariant even for flows with finitely many singular points but without limit separatrices in the sense of Markus \cite{buendia2018markus}. 
One of their counterexamples is a toral flow which consists of one singular point and non-recurrent orbits. 
In \cite{buendia2018markus}, the authors have modified the invariant into a complete invariant, called a separatrix configuration, for flows whose essential singular points are at most finite. 
By definition of separatrix configuration in the sense of Buend{\'\i}a and L\'opez (see  \cite[Definition 2.2]{buendia2018markus} for details), note that the separatrix configuration contains the complement of the interior of the union of non-recurrent orbits, and so that it contains infinitely many data in general. 

Therefore, we pose the following problem to justify the results by relying as much as possible on the ``Markus-Neumann theorem''.
 
\begin{problem} \label{prob:MN}
Find a large class for which the ``Markus-Neumann theorem'' holds. Or justify the ``Markus-Neumann theorem'' with as few additional conditions as possible.
\end{problem}

\subsubsection{Background from a differential equation's point of view}

Though the non-degeneracy of singular points are ``generic'' conditions from a dynamical system point of view, the no-slip boundary condition as in Figure~\ref{blowup} appearing in fluid phenomena on punctured surfaces is a fundamental condition from a differential equation point of view. 
Therefore, studies of flows with finitely many singular points on compact surfaces must be extended to analyze several fluid phenomena (e.g. flows on non-compact surfaces with no-slip boundaries). 

\subsection{Statements of main results}
In this paper, to achieve a foundation of the analysis of flows, we construct decompositions of flows on surfaces (see Theorem~\ref{lem0c}, Theorem~\ref{homogeneity}, Theorem~\ref{th:bd1}), and construct a complete finite invariant of such flows under regularity of singular points and tameness of genus and ends. 
As an application, we show that the orbit complex of any flow of finite type is a complete finite invariant 
% under the regularity and tameness 
(see Theorem~\ref{main:03}). 
To construct complete finite invariants, we need some regularity of singular points because degenerate singular points can have infinitely many parabolic sectors (see Figure~\ref{sing} and Definition~\ref{def:elli_sector}
). 
Notice that \cite[Theorem~3]{cobo2010flows} implies that every singular point of any non-wandering flows with finitely many singular points on compact surfaces is either a center or a multi-saddle, and that \cite[Theorem~A]{kibkalo2021topological} says that every isolated singular point of a gradient flow on a surface is finitely sectored (i.e. there is a \nbd consisting of the singular points and finitely many non-elliptic sectors) (see Figure~\ref{sectors_all_01} and definition of (elliptic) sectors in \S~\ref{sec:sector} for details).

Therefore, we will construct decompositions and complete finite invariants for flows possibly with finitely ``sectored-like'' singular points and with/without no-slip boundaries on compact/non-compact surfaces. 
Because the descriptions of such flows are relatively complicated, we state the main results under regularity conditions in this section and generalize them in the following sections.  

\subsubsection{Detailed descriptions of main results}

To state more precisely, we recall some concepts. 
An orbit $O$ is {\bf periodic} if it is homeomorphic to a circle, {\bf locally dense} if its closure has a nonempty interior, and {\bf exceptional} if it is neither locally dense nor proper (i.e. embedded as a submanifold). 
Denote by $\bm{\mathop{\mathrm{Per}}(v)}$  (resp. $\bm{\mathrm{P}(v)}$, $\bm{\mathrm{LD}(v)}$) the union of periodic (resp. non-recurrent, locally dense) orbits for the flow $v$.
A flow on a surface is {\bf quasi-regular} if any singular points are either multi-saddles, sinks, $\partial$-sinks, sources, $\partial$-sources, or centers (see Figure~\ref{Fig:quasiregular} and definition in \S~\ref{sec:def_sing} and \S~\ref{sec_multi-saddle}).
By definition, every quasi-regular flow on a compact surface has finitely many singular points.
\begin{figure}
\begin{center}
\includegraphics[scale=0.325]{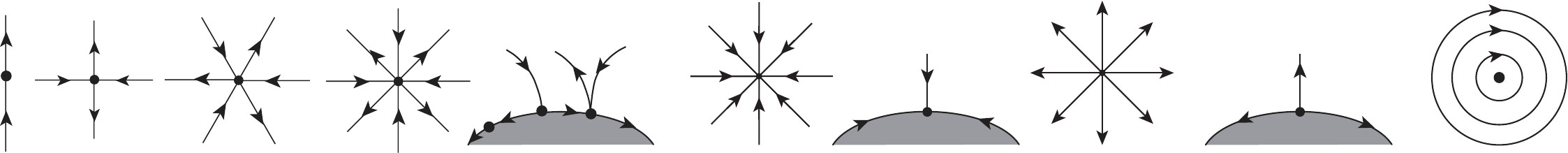}
\end{center}
\caption{Multi-saddles, a sink,  a $\partial$-sink, a source, a $\partial$-source, and a center.}
\label{Fig:quasiregular}
\end{figure} 
For a quasi-regular flow $v$ on a compact surface $S$, denote by $\bm{\mathop{\mathrm{Bd}}(v)}$ by the union of singular points, periodic orbits on $\partial S$, the closure of the union of limit cycles, orbits from or to multi-saddles, orbits between a $\partial$-source and a $\partial$-sink on the boundary $\partial S$, and exceptional orbits. 
We show that quasi-regular flows can be decomposed into finitely many invariant flow boxes, periodic annuli, transverse annuli (see Figure~\ref{fb} and Definitions~\ref{def:tr_flow_box}--\ref{def:per_annulus}), and essential subsets of locally dense orbits. 
\begin{figure}
\begin{center}
\includegraphics[scale=0.35]{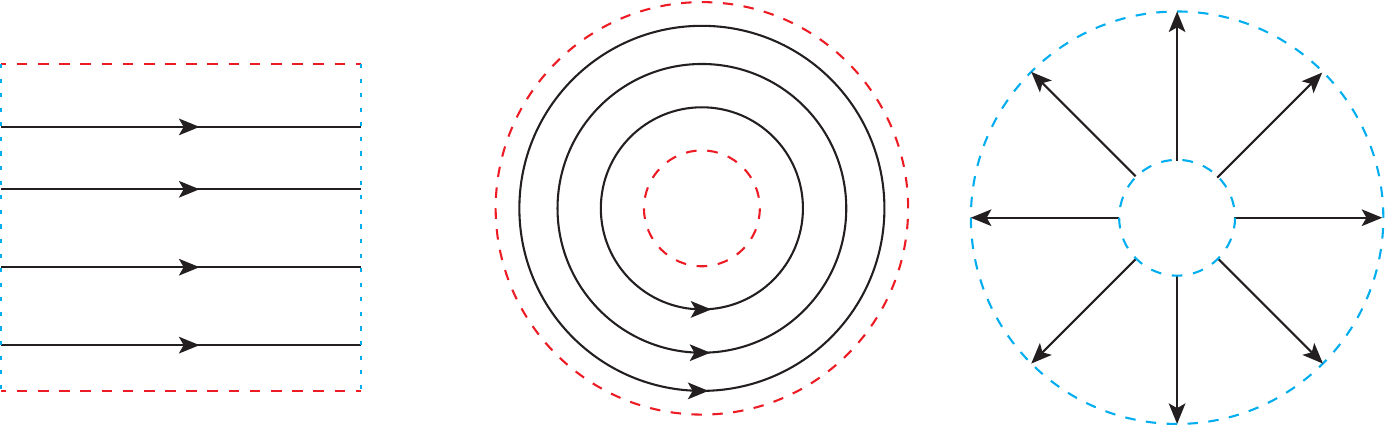}
\end{center}
\caption{Three fundamental domains: left invariant flow box, middle periodic annulus, right transverse annulus.}
\label{fb}
\end{figure}

\begin{main}\label{lem0c}
Each connected component of $S - \mathop{\mathrm{Bd}}(v)$ for a quasi-regular flow $v$ on a compact surface $S$ is one of the following invariant open subsets exclusively:
\\
$(1)$ A trivial flow box in $\mathrm{P}(v)$, whose orbit space is an open interval,
\\
$(2)$ An annulus in $\mathrm{P}(v)$, whose orbit space is a circle,
\\
$(3)$ A torus in $\mathop{\mathrm{Per}}(v)$, whose orbit space is a circle,
\\
$(4)$ A Klein bottle in $\mathop{\mathrm{Per}}(v)$, whose orbit space is an interval,
\\
$(5)$ An annulus in $\mathop{\mathrm{Per}}(v)$, whose orbit space is an open interval,
\\
$(6)$ A M{\"o}bius band in $\mathop{\mathrm{Per}}(v)$, whose orbit space is an interval,
or
\\
$(7)$ An essential subset in $\mathrm{LD}(v)$, whose orbit class space is a singleton.

Moreover, the following statements hold: 
\\
{\rm(a)}  For any connected component $U$ in $\mathrm{P}(v) \setminus \mathop{\mathrm{Bd}}(v)$ and for any points $x,y \in U$, we have $\alpha(x) = \alpha(y)$ and $\omega(x) = \omega(y)$. 
%The $\omega$-limit $(\mathrm{resp.}$ $\alpha$-limit$)$ set of a point in a connected component in $\mathrm{P}(v) \setminus \mathop{\mathrm{Bd}}(v)$ is the $\omega$-limit $(\mathrm{resp.}$ $\alpha$-limit$)$ set of any point in the connected component. 
\\
{\rm(b)} The orbit closure of a point in a connected component in $\mathrm{LD}(v)$ is the orbit closure of any point in the connected component. 
\\
{\rm(c)} The boundary $\partial O(x)$ of a point $x \in \mathrm{LD}(v)$ consists of finitely many multi-saddle separatrices and singular points. 
\\
{\rm(d)} Any boundary component of a connected component of $S - \mathop{\mathrm{Bd}}(v)$ which does not intersect $\mathrm{E}(v)$ is a finite union of closed orbits, separatrices from or to multi-saddles, and separatrices on $\partial S$ between $\partial$-sources and  $\partial$-sinks. 
\end{main}

%A more general result holds for flows of weakly finite-like type (see Theorem~\ref{th:bd1}). 
%
% (trivial flow boxes, transverse/periodic annuli, and locally dense Q-sets). 
%Here a Q-set is the closure of a non-closed recurrent orbit. 
%obtained by gluing five kinds of  invariant open subsets 
%we show that each quasi-regular-like flow on a compact surface can be obtained by gluing five kinds of  invariant open subsets (trivial flow boxes, transverse/periodic annuli, tori, and locally dense Q-sets). 
%seven kinds of  invariant open subsets (trivial flow boxes, transverse/periodic annuli, tori, Klein bottles, M{\"o}bius bands, and locally dense Q-sets). 
Using the decomposition in the previous theorem, a complete finite invariant is constructed for quasi-regular flows with finitely many limit cycles on punctured spheres (or, more generally, flows of finite type on compact surfaces) (see Theorem~\ref{injection} for details). 
As mentioned above, from physical and differential equation points of view, the no-slip boundary condition and non-compactness appearing in fluid phenomena on punctured surfaces are fundamental. 
Therefore, we extend Theorem~\ref{lem0c} for flows with no-slip boundaries on non-compact surfaces (or more general flows) (see Theorem~\ref{cor:bd1_1} and Theorem~\ref{th:bd1} for details). 
Theorem~\ref{cor:bd1_1} is applied to topologically characterize Hamiltonian flows on unbounded surfaces \cite{yokoyama2021topological}. 
%
%Moreover, a computable representation of the complete invariant of such quasi-regular flows, which is a generalization of representations used in \cite{sakajo2014unique,sakajo2015transitions,sakajo2018tree,yokoyama2021complete,yokoyama2013word}, is constructed explicitly and applied to analyze practical fluid phenomena \cite{sakajo2020discrete}.  
%Note that a partial representation for the mapping described in Theorem~\ref{injection} is constructed explicitly and applied to an analysis of practical fluid phenomena \cite{sakajo2020discrete}.  
%Furthermore, this computable representation is encoded and applied to various phenomena (e.g. vortices in heart \cite{sakajo2023topological}, meteorology \cite{uda2021identification}, oceanography \cite{sakajo2022identification}). 

As mentioned, the ``Markus-Neumann theorem'' is not true even for a toral flow that consists of one singular point and non-recurrent orbits.  
%However, 
To answer Problem~\ref{prob:MN} affirmatively, we show that the ``Markus-Neumann theorem'' is true in the space of flows of finite type, 
% under mild conditions, 
which contains Morse-Smale flows and generic Hamiltonian flows on compact surfaces. 
%To state the completeness of the orbit complex in the sense of Markus-Neumann for a large class of flows, we consider the following flows. 
%a class of flows. 
Here, a quasi-regular flow is {\bf of finite type} if every recurrent orbit is closed and there are at most finitely many limit cycles.
In other words, using the decompositions, we show the following completeness. 

\begin{main}\label{main:03}
The orbit complex (see \S~\ref{sec:ord_cpx} for the definition) is a complete invariant for a flow of finite type on an orientable compact surface. 
% if either the orbit complexes are equipped with two circuit orders $\leq_{\partial_{\perp}^{\alpha}}$ and $\leq_{\partial_{\perp}^{\omega}}$ (see Definition~\ref{def:leq_trans} below) or each multi-saddle connection is self-connected (see Definition~\ref{def:sc_mc} below). 
\end{main}

%This result is followed from Theorem~\ref{th:NM_re} and Corollary~\ref{cor:NM_re}. 

Applying Theorem~\ref{lem0c}, we can reduce flows of finite type on compact surfaces into labeled graphs without loss of topological information as follows.  

\begin{main}\label{main:04}
There is a complete finite invariant, which is a labeled graph, for the space of flows of finite type on compact surfaces.  
\end{main}

Indeed, such a complete finite invariant is constructed in Theorem~\ref{injection}. 
The simplified complete finite invariant for the space of flows generated by structurally stable Hamiltonian vector fields is implemented and applied to analyze various phenomena (e.g. vortices in heart \cite{sakajo2023topological}, meteorology \cite{uda2021identification}, oceanography \cite{sakajo2022identification}, and fluids in powder classifying devices \cite{sakajo2020discrete}). 
%Furthermore, this computable representation is encoded and applied to various phenomena (e.g. vortices in heart \cite{sakajo2023topological}, meteorology \cite{uda2021identification}, oceanography \cite{sakajo2022identification}). 
Furthermore, the combinatorial structure of the ``moduli space'' of Hamiltonian (resp. gradient) vector fields is analyzed using variations of such complete finite invariants. 
For instance, under the conditions of the non-existence of creations and annihilations of singular points, the space of Hamiltonian (resp. gradient) vector fields on compact surfaces has non-contractible connected components by using combinatorics and simple homotopy theory \cite{yokoyama2021combinatorial} (resp. \cite{yokoyama2022combinatorial}).

The present paper consists of thirteen sections.
In the next section, as preliminaries, we introduce some fundamental concepts.
In \S~3 and 4, the properties of surface flows are described to show Theorem~\ref{lem0c}. 
In \S~5, Theorem~\ref{lem0c} is demonstrated. 
The remaining sections deal with three independent applications.
In \S~6--8, a finite enumerable complete invariant for surface flows of finite type is constructed explicitly, which implies that the ``Markus-Neumann theorem'' is true in a large class under mild conditions, which contains Morse-Smale flows and generic Hamiltonian flows on compact surfaces. 
%, by using the explicit invariant.
In particular, Theorem~\ref{main:03} is proved.  
In \S~9, we apply our decompositions to Hamiltonian flows and Morse-Smale flows, which are the foundation for the theory of analyzing flows such as incompressible fluids and potential flows to construct computable representations of complete invariants.
In \S~10--13, the results in \S~5 are extended to flows with non-slip conditions and flows on non-compact surfaces, to construct the foundation to describe and analyze a wide class of differential equations and fluid phenomena.
In fact, in \S~10--12, we extend results in \S~5 to those for flows with no-slip boundary condition or more general flows.
In \S~13,
% we extend results in \S~5 to those for flows with blow-ups of finitely sectored singular points.
%In the final section, 
we extend results in \S~12 to those for flows on non-compact surfaces.
In Appendix, we demonstrate some fundamental statements. 
In particular, we show a characterization of a center, which complements studies of germs by Bendixson \cite{bendixson1901courbes}, Dumortier \cite{dumortier1977singularities}, and Roussarie \cite{roussarie2021top}.

\section{Preliminaries}

\subsection{Definitions from topology}
Let $A$ be a subset of a topological space $X$. 
Denoted by $\overline{A}$ the closure of $A$, by $\mathrm{int}A$ the interior of $A$, and by $\partial A := \overline{A} - \mathrm{int}A$ the boundary of $A$, where $B - C$ is used instead of $B \setminus C$ when $C \subseteq B$. 
We define the border $\bm{\partial^-  A}$ by $A - \mathrm{int}A$ of $A$.
Denote by $\bm{\bp A} := \overline{A} - A$ the coborder (or frontier) of $A$.
Then $\partial A = \partial^-  A \sqcup \bp A$, where $\sqcup$ denotes a disjoint union.
A {\bf boundary component} of a subset $A$ is a connected component of the boundary $\partial A$ of $A$. 

\subsubsection{$T_0$-separation axiom}
A point $x$ of a topological space $X$ is $T_0$ (or Kolmogorov) if for any point $y \neq x \in X$ there is an open subset $U$ of $X$ such that $|\{x, y \} \cap U| =1$, where $|A|$ is the cardinality of a subset $A$.
A topological space is $T_0$ if each point is $T_0$.

\subsubsection{$T_0$-tification}
By a decomposition, we mean a family $\mathcal{F}$ of pairwise disjoint nonempty subsets of a set $X$ such that $X = \bigsqcup \mathcal{F}$, where $\bigsqcup$ denotes a disjoint union. 
For a topological space $X$, define the {\bf class} $\hat{x} := \{ y \in X \mid \overline{\{ x \}} = \overline{\{ y \}} \}$ for any point $x \in X$ and a decomposition $\hat{X} := \{ \hat{z} \mid z \in X \}$ of classes. 
Then the decomposition $\hat{X}$ is a $T_0$ space as a quotient space, which is called the $T_0$-tification (or Kolmogorov quotient) of $X$.

\subsubsection{Dimensions}
By {\bf dimension}, we mean Lebesgue covering dimension on a topological space.  
According to Urysohn's theorem, the Lebesgue covering dimension, the large inductive dimension, and the small inductive dimension correspond to separable metrizable spaces. 
By definition, a topological space is zero-dimensional if and only if it has a base consisting of closed and open subsets.

\begin{definition}
A compact metrizable space $X$ whose small inductive dimension is $n > 0$ is an $n$-dimensional {\bf Cantor manifold} \cite{urysohn1925memoire} if the complement $X - L$ for any closed subset $L$ of $X$ whose small inductive dimension is less than $n - 1$ is connected.
\end{definition}
Hurewicz-Menger-Tumarkin Theorem in dimension theory says that a connected topological manifold is a Cantor manifold. 
In addition, it is known that any open connected subset of a homogeneous locally compact metrizable space is a Cantor manifold \cite[Theorem~2.1]{krupski1991recent}. 
Recall that a topological space $X$ is homogeneous if for each two points $x, y \in X$ there is a homeomorphism $h \colon  X \to X$ with $h(x)=y$, and that $X$ is locally compact if any point of $X$ has a compact neighborhood.
Then any open connected subset of a compact manifold is a Cantor manifold \cite{hurewicz1928dimension,tumarkin1928structure,urysohn1925memoire} (cf.  \cite[p.93. Example VI 11]{hurewicz2015dimension}).

\subsubsection{Curves and loops}
A curve is a continuous mapping $C: I \to X$ where $I$ is a non-degenerate connected subset of a circle $\mathbb{S}^1$. 
Here, a non-degenerate subset contains at least two points. 
A curve is simple if it is injective.
We also denote by $C$ the image of a curve $C$.
Denote by $\partial C := C(\partial I)$ the boundary of a curve $C$ if $C$ can be extended into a continuous map whose domain is $I \cup \partial I$, where $\partial I$ is the boundary of $I \subset \mathbb{S}^1$. 
Put $\mathop{\mathrm{int}} C := C \setminus \partial C$ if $\partial C$ is defined. 
A simple curve is a simple closed curve if its domain is $\mathbb{S}^1$ (i.e. $I = \mathbb{S}^1$).
A simple closed curve is also called a {\bf loop}. 
An {\bf arc} is a simple curve whose domain is an interval. 

%An orbit arc is an arc contained in an orbit. 
%A maximal orbit arc is an orbit arc which is maximal with respect to the inclusion order. 
%An orbit curve is either an orbit arc or a periodic orbit. 

\subsubsection{Essential subsets}

%Recall that a one-dimensional cell complex $\gamma$ is {\bf essential} in a compact surface $S$ if and only if $\gamma$ is not null homotopic in $S^*$, where $S^*$ is the resulting surface from $S$ by collapsing all boundary components into singletons. 
%Here a singleton is a set consisting of a point. 
%Moreover, a one-dimensional cell complex $\gamma$ is essential in a compact surface $S$ if and only if there is not an open disk in $S^*$ which is a \nbd of $\gamma$.
%In other words, a one-dimensional cell complex on a surface $S$ is essential if and only if it is not null homologous with respect to the relative homology group $H_1(S, \partial S; \Z)$.
Let $S$ be a compact surface and $S^*$ the resulting surface from $S$ by collapsing all boundary components into singletons. 
In other words, the resulting surface $S^*$ is a quotient space $S/\sim^*$, where $x \sim^* y$ if there is a boundary component of $S$ which contains $x$ and $y$. 
For a subset $U$ of $S$, put the resulting subset $U^* := U/\sim^*$. 
We call that a subset $A$ in $S$ is {\bf inessential} if there is a disjoint union of open disks in $S^*$ which is a \nbd of the resulting subset $A^*$ in $S^*$. 
%An open subset in $S$ is {\bf essential} if it is not inessential. 
%A subset $A$ of $S$ is {\bf inessential} if it has an inessential open subset which is a \nbd of $A$. 
A subset in $S$ is {\bf essential} if it is not inessential. 
Notice that every loop on the boundary of a compact surface is inessential, and that a loop $\gamma$ in a compact surface $S$ with $\gamma \cap \partial S = \emptyset$ is essential if and only if $\gamma$ is not null homotopic in $S^*$. 

%is not null homotopic in the resulting surface $S^*$ from $S$ by collapsing all boundary components into singletons.
%We call that a singular point is a compressed center if for any its neighborhood $U$, there is an open neighborhood $V \subset U$ whose boundary is a periodic orbit.
%Note that a center is a compressed center.

\subsection{Defitions from dynamical systems}
A {\bf flow} is a continuous $\R$-action on a topological space.
Let $v : \R \times X \to X$ be a flow on a topological space $X$.
For $t \in \R$, define $v_t : X \to X$ by $v_t := v(t, \cdot )$.
For a point $x$ of $X$, we denote by $O(x)$ the orbit of $x$, $O^+(x)$ the positive orbit (i.e. $O^+(x) := \{ v_t(x) \mid t > 0 \} = v(\R_{>0},x)$), and $O^-(x)$ the negative orbit (i.e. $O^-(x) := \{ v_t(x) \mid t < 0 \} = v(\R_{<0},x)$).
Recall that a point $x$ of $X$ is singular if $x = v_t(x)$ for any $t \in \R$, and is periodic if there is a positive number $T > 0$ such that $x = v_T(x)$ and $x \neq v_t(x)$ for any $t \in (0, T)$.
A singular point is isolated if there is its \nbd which contains no singular points except it.  

\begin{definition}
A point is {\bf closed} if it is either singular or periodic. 
\end{definition}
An orbit is singular (resp. periodic, closed) if it contains a singular (resp. periodic, closed) point. 
%An orbit is closed if it is singular or periodic.
Denote by $\mathop{\mathrm{Sing}}(v)$ the set of singular points and by $\mathop{\mathrm{Per}}(v)$ (resp. $\mathop{\mathrm{Cl}}(v)$) the union of periodic (resp. closed) orbits.

\subsubsection{Wandering domains and non-wandering points}
A point is wandering if there are its neighborhood $U$ and a positive number $N$ such that $v_t(U) \cap U = \emptyset$ for any $t > N$. 
Such $U$ is called a {\bf wandering domain}. 
A point is {\bf non-wandering} if it is not wandering (i.e. for any its neighborhood $U$ and for any positive number $N$, there is a number $t \in \mathbb{R}$ with $|t| > N$ such that $v_t(U) \cap U \neq \emptyset$).
Denote by $\Omega (v)$ the set of non-wandering points, called the {\bf non-wandering set}. 
A flow is {\bf non-wandering} if the non-wandering set is the whole space. 

\subsubsection{Orbit arcs}
% and orbit curves}
An {\bf orbit arc} is an arc contained in an orbit. 
A maximal orbit arc is an orbit arc that is maximal with respect to the inclusion order. 
%An orbit curve is either an orbit arc or a periodic orbit. 

\subsubsection{Limit sets of points and orbits}
The $\omega$-limit set of a point $x$ is $\omega(x) := \bigcap_{n\in \mathbb{R}}\overline{\{v_t(x) \mid t > n\}} = \bigcap_{n\in \mathbb{R}} \overline{v(\R_{>n},x)}$. 
The $\alpha$-limit set of a point $x$ is $\alpha(x) := \bigcap_{n\in \mathbb{R}}\overline{\{v_t(x) \mid t < n\}} = \bigcap_{n\in \mathbb{R}} \overline{v(\R_{<n},x)}$. 
For an orbit $O$, define $\omega(O) := \omega(x)$ and $\alpha(O) := \alpha(x)$ for some point $x \in O$.
Note that an $\omega$-limit (resp. $\alpha$-limit) set of an orbit is independent of the choice of point in the orbit. 

\subsubsection{Recurrence and Poisson stability}
A point $x$ of $X$ is {\bf Poisson stable} (or strongly recurrent) if $x \in \omega(x) \cap \alpha(x)$,  a point $x$ is positively recurrent (or positively Poisson stable) if $x \in \omega(x)$, and a point $x$ is negatively recurrent (or negatively Poisson stable) if $x \in \alpha(x)$. 
\begin{definition}
A point $x$ is {\bf recurrent} (cf. \cite{marzougui2002area}) (or positively or negatively Poisson stable) if $x \in \omega(x) \cup \alpha(x)$.
\end{definition}
Denote by $\mathrm{R}(v)$ (resp. $\mathrm{P}(v)$) the set of non-closed recurrent points (resp. non-recurrent) points of the flow $v$.
Notice that some authors refer to the property $x \in \omega(x) \cap \alpha(x)$ as recurrence (cf. \cite{nikolaev2013foliations}).

\subsubsection{Invariance, saturation, orbit class, and orbit class space}
A subset $A$ is positive invariant if $\bigcup_{x \in A} O^+(x) \subseteq A$. 
\begin{definition}
A subset is {\bf invariant} (or {\bf saturated}) if it is a union of orbits. 
\end{definition}
The saturation of a subset is the union of orbits intersecting it.
Denote by $\mathrm{Sat}_v(A)$ or $v(A)$ the saturation of a subset $A$.
Then $\mathrm{Sat}_v(A) = v(A) = \bigcup_{a \in A} O(a)$.

\subsubsection{Orbit classes and quotient spaces of flows}

\begin{definition}
The {\bf {\rm(}orbit{\rm)} class} $\hat{O}$ of an orbit $O$ is the union of orbits each of whose orbit closure corresponds to $\overline{O}$ {\rm(i.e.} $\hat{O} = \{ y \in X \mid \overline{O(y)} = \overline{O} \}${\rm)}.
\end{definition}
%\subsubsection{Orbit (class) spaces}
For a flow $v$ on a topological space $X$, the {\bf orbit space} $T/v$ (resp. {\bf orbit class space} $T/\hat{v}$) of an invariant subset $T$ of $X$ is a quotient space $T/\sim$ defined by $x \sim y$ if $O(x) = O(y)$ (resp. $\overline{O(x)} = \overline{O(y)}$). 
Notice that an orbit space $T/v$ is the set $\{ O(x) \mid x \in T \}$ with the quotient topology. 
Moreover, the orbit class space $T/\hat{v}$ is the set $\{ \hat{O}(x) \mid x \in T \}$ with the quotient topology. 
%Denote by $\tau_v$ (resp. $\tau_{\hat{v}}$) the topology of the orbit space $X/v$ (resp. orbit class space $X/\hat{v}$). 
Note that the orbit class space $X/{\hat{v}}$ is  the $T_0$-tification of the orbit space $X/v$, and that 
$T/v$ (resp. $T/\hat{v}$) is a subset of the orbit space $X/v$ (resp. orbit class space $X/\hat{v}$).

\subsubsection{Separatrices}
%\begin{definition}
A {\bf separatrix} is a non-singular orbit whose $\alpha$-limit or $\omega$-limit set is a singular point.
%\end{definition}
A separatrix is {\bf connecting} if each of its $\omega$-limit set and $\alpha$-limit sets is a singular point.

\subsection{Concepts from flows on surfaces}
By a {\bf surface},  we mean a paracompact two-dimensional manifold, that does not need to be orientable.
From now on, we suppose that flows are on surfaces unless otherwise stated.
Let $v$ be a flow on surface $S$. 

\subsubsection{Topological properties of orbits}
From codimension one foliation theory, we define the following concepts. 
\begin{definition}
An orbit is {\bf proper} if it is an embedded submanifold, {\bf locally dense} if its closure has a nonempty interior, and {\bf exceptional} if it is neither proper nor locally dense.  
\end{definition}

Denote by $\mathrm{LD}(v)$ (resp. $\mathrm{E}(v)$) the union of locally dense (resp. exceptional) orbits for the flow $v$.
We have the following observation. 

\begin{lemma}[Lemma~2.1\cite{yokoyama2023topological}]\label{lem:decomp}
The following statements hold for a flow $v$ on a paracompact manifold $S$: 
\\
{\rm(1)} The set $\mathrm{P}(v)$ of non-recurrent points is the union of non-closed proper orbits. 
\\
{\rm(2)} The subset $\mathop{\mathrm{Cl}}(v) \sqcup \mathrm{LD}(v) \sqcup \mathrm{E}(v)$ is the set of recurrent points. 
\\
{\rm(3)} The subset $\mathrm{R}(v) = \mathrm{LD}(v) \sqcup \mathrm{E}(v)$ is the union of non-proper orbits. 
\\
{\rm(4)} $S = \mathop{\mathrm{Cl}}(v) \sqcup \mathrm{P}(v) \sqcup \mathrm{R}(v)$. 
\end{lemma}

Note that an orbit on a paracompact manifold (e.g. a surface) is proper if and only if it has a neighborhood in which the orbit is closed \cite{yokoyama2019properness}.  
Note that the closure $\mathcal{M}$ of every exceptional orbit is transversely a Cantor set (i.e.
there is a small neighborhood $U$ of a recurrent point of $\mathcal{M}$ such that $\mathcal{M} \cap U$ is a product of an open interval and a Cantor set) because a Cantor set is characterized as a compact metrizable perfect totally disconnected space \cite{yokoyama2021poincare}. 
%Therefore the definitions of exceptional orbit before Theorem~\ref{lem0c} and the above definition coincide. 

\subsubsection{Properties for points}

A point is proper (resp. locally dense, exceptional) if so is its orbit. 
%Since surfaces are paracompact manifolds, the union $\mathrm{P}(v)$ of non-recurrent orbits is the union of non-closed proper orbits and so the union $\mathrm{R}(v)$ is the union $\mathrm{R}(v)$ of non-closed recurrent orbits.
%Therefore we have a decomposition $S = \mathop{\mathrm{Cl}}(v) \sqcup \mathrm{P}(v) \sqcup \mathrm{R}(v)$. 
%%Note that $\mathrm{R}(v)$ is the union of non-closed recurrent orbits.
%%By definition,  we have a decomposition $S = \mathop{\mathrm{Cl}}(v) \sqcup \mathrm{P}(v) \sqcup \mathrm{R}(v)$, where $\sqcup$ denotes a disjoint union. 
By \cite[Proposition~2.2]{yokoyama2016topological} and \cite[Proposition~2.3]{yokoyama2016topological}, if $x \in \mathrm{LD}(v)$ (resp. $x \in \mathrm{E}(v)$) then $\hat{O}(x) = \overline{O(x)} \cap \mathrm{LD}(v )= \overline{O(x)} \setminus (\mathop{\mathrm{Sing}}(v) \sqcup \mathrm{P}(v))$ (resp. $\hat{O}(x) = \overline{O(x)} \cap \mathrm{E}(v) = \overline{O(x)} \setminus (\mathop{\mathrm{Sing}}(v) \sqcup \mathrm{P}(v))$). 

\subsubsection{Q-sets}
A {\bf quasi-minimal set} (or a {\bf Q-set} for brevity) is defined to be the orbit closure of a non-closed recurrent point.
%A quasi-minimal set is also called a Q-set.
It is known that a total number of Q-sets for $v$ cannot exceed $g$ if $S$ is an orientable surface of genus $g$ \cite{mayer1943trajectories}, and $\frac{p-1}{2}$ if $S$ is a non-orientable surface of genus $p$ \cite{markley1970number} (cf. \cite[Remark 2]{aranson1996maier}).
Therefore, the closure $\overline{\mathrm{R}(v)}$ is a finite union of Q-sets.
%
%\subsubsection{Topologies of invariant sets}
%
A (quasi-)minimal set is exceptional (resp. locally dense) if it is a closure of an orbit intersecting $\mathrm{E}(v)$ (resp. $\mathrm{LD}(v)$).

\subsubsection{Topological equivalence}
A flow $v$ on a surface $M$ is {\bf topologically equivalent} to a flow $w$ on a surface $N$ if there is a homeomorphism $h \colon M \to N$ such that the images of any orbits of $v$ are orbits of $w$ with preservation of the direction in time. 
Then, the homeomorphism $h \colon M \to N$ is called the {\bf topologically equivalent homeomorphism}. 

\subsubsection{Locally topological equivalence}

The restriction of a flow $v$ to a subset $U$ of $S$ is {\rm(}{\bf locally}{\rm)} {\bf topologically equivalent} to one of a flow $w$ to a subset $V$ on a topological space $T$ if there is a homeomorphism $h \colon U \to V$ such that the images of any orbit arcs of $v$ in $U$ are orbit arcs of $w$, the inverse images of any orbit arcs of $w$ in $V$ are orbit arcs of $v$, and $h$ and $h^{-1}$ preserve the directions of orbit arc of $v$ and $w$. 

\subsubsection{Canonical local structures}

%\subsubsection{Flow box and transverse annulus}
We define a trivial flow box as follows, as the left in Figure~\ref{fb}.

\begin{definition}\label{def:tr_flow_box}
The restriction of a flow to a disk is a {\bf trivial flow box} if it is locally topologically equivalent to the restriction of the flow generated by a vector field $(0,1)$ to a square $I \times J$ where $I$ and $J$ are non-degenerate intervals. 
\end{definition}
%The disk is also called a {\bf (trivial) flow box}.
%In this paper, we will refer to trivial flow boxes as flow boxes, because it does not deal with any other types of flow boxes.
%
We also define a transverse annulus as follows, as the middle in Figure~\ref{fb}. 

\begin{definition}\label{def:trans_annulus}
The restriction of a flow to an annulus is a {\bf transverse annulus} if it is locally topologically equivalent to the restriction of the flow generated by a vector field $(-x,-y)$ to an annulus $\{ (x,y) \in \R^2 \mid x^2 + y^2 \in I \}$ where $I \subset (0,1)$ is a non-degenerate interval. 
\end{definition}
The annulus is also called a {\bf transverse annulus}.
Similarly, we define a periodic annulus as follows, as the right in Figure~\ref{fb}. 

\begin{definition}\label{def:per_annulus}
The restriction of a flow to an annulus is a {\bf periodic annulus} if it is locally topologically equivalent to the restriction of the flow generated by a vector field $(-y,x)$ to an annulus $\{ (x,y) \in \R^2 \mid x^2 + y^2 \in I \}$ where $I \subset (0,1)$ is a non-degenerate interval. 
\end{definition}

The annulus is also called a {\bf periodic annulus}.

\subsubsection{Types of topologically non-degenerate singular points}\label{sec:def_sing}

We recall the following types of singular points as in Figure~\ref{Fig:quasiregular}.
\begin{definition}
A singular point outside of the boundary is a {\bf sink} (resp. {\bf source}) if there is its open \nbd to which the restriction of the flow is locally topologically equivalent to a flow on the Euclidean space $\R^2$ generated by a vector field $X(p) = -p$ (resp. $X(p) = p$). 
\end{definition}

\begin{definition}
A singular point on the boundary is a {\bf $\partial$-sink} (resp. {\bf $\partial$-source}) if there is its open \nbd to which the restriction of the flow is locally topologically equivalent to a flow on the half plane $\R_{\geq 0} \times \R$ generated by a vector field $X(p) = -p$ (resp. $X(p) = p$). 
\end{definition}

%A singular point is {\bf attracting} if it is either a sink or a $\partial$-sink, and is {\bf repelling} if it is either a source and a $\partial$-source as in Figure~\ref{non-deg-sing}. 
\begin{definition}
A singular point is a {\bf center} if there is its open \nbd to which the restriction of the flow is locally topologically equivalent to a flow on the Euclidean space $\R^2$ generated by a vector field $X(x,y) = (-y,x)$. 
\end{definition}

We have the following observation. 

\begin{proposition}\label{prop:char_center}
Every singular point $x$ of a flow on a surface is a center if and only if there is an open disk $V$ with $x \in V$ such that the set difference $V - \{ x \}$ is contained in the set of periodic points. 
\end{proposition}

The proof of the previous proposition is shown in Appendix~\ref{appendix}. 
An invariant disk is a {\bf center disk} if it is a union of one center and periodic orbits and is a neighborhood of the center.
%
%%A singular point $x$ on (resp. outside of) the boundary $\partial S$ of a surface $S$ is a {\bf $\bm{\partial}$-sink} (resp. {\bf sink}) if there is a neighborhood $U$ of $x$ such that $\omega(y) = \{ x \}$ for any $y \in U$, and is a {\bf $\bm{\partial}$-source} (resp. {\bf source}) if there is a neighborhood $U$ of $x$ such that $\alpha(y) = \{ x \}$ for any $y \in U$. 
%A point $x$ is a {\bf center} if for any its neighborhood $U$, there is an invariant open neighborhood $V \subset U$ of $x$ such that $U - \{ x \}$ consists of periodic orbits, where $A - B$ is used instead of $A \setminus B$ when $A \subseteq B$. 
%

\begin{definition}
A singular point outside of the boundary is a {\bf saddle} if there is its open \nbd to which the restriction of the flow is locally topologically equivalent to a flow on the Euclidean space $\R^2$ generated by a vector field $X(x,y) = (-x,y)$. 
\end{definition}

\begin{definition}
A singular point on the boundary is a {\bf $\partial$-saddle} if there is its open \nbd to which the restriction of the flow is locally topologically equivalent to a flow on the half plane $\R_{\geq 0} \times \R$ generated by a vector field $X(x,y) = (-x,y)$ or $X(x,y) = (x,-y)$. 
\end{definition}

\subsubsection{Multi-saddles}\label{sec_multi-saddle}

A $\partial$-$k$-saddle (resp. $k$-saddle) is an isolated singular point on (resp. outside of) $\partial S$ with exactly $(2k + 2)$-separatrices, counted with multiplicity as in  Figure~\ref{multi-saddles}.
\begin{figure}
\begin{center}
\includegraphics[scale=0.6]{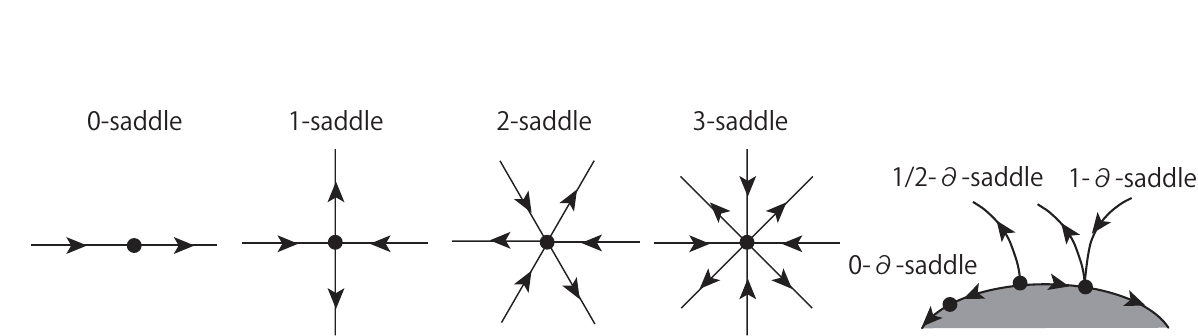}
\end{center}
\caption{Examples of multi-saddles}
\label{multi-saddles}
\end{figure} 
In other words, A $\partial$-$k/2$-saddle (resp. $k$-saddle) is an isolated singular point on (resp. outside of) $\partial S$ with exactly $k+1$ (resp. $(2k + 2)$) hyperbolic sectors (see Definition~\ref{def:hyp_sector} for the definition of hyperbolic sector). 
Here, a hyperbolic sector is a local structure as in Figure~\ref{hyp-sectors}. 
\begin{figure}
\begin{center}
\includegraphics[scale=0.3]{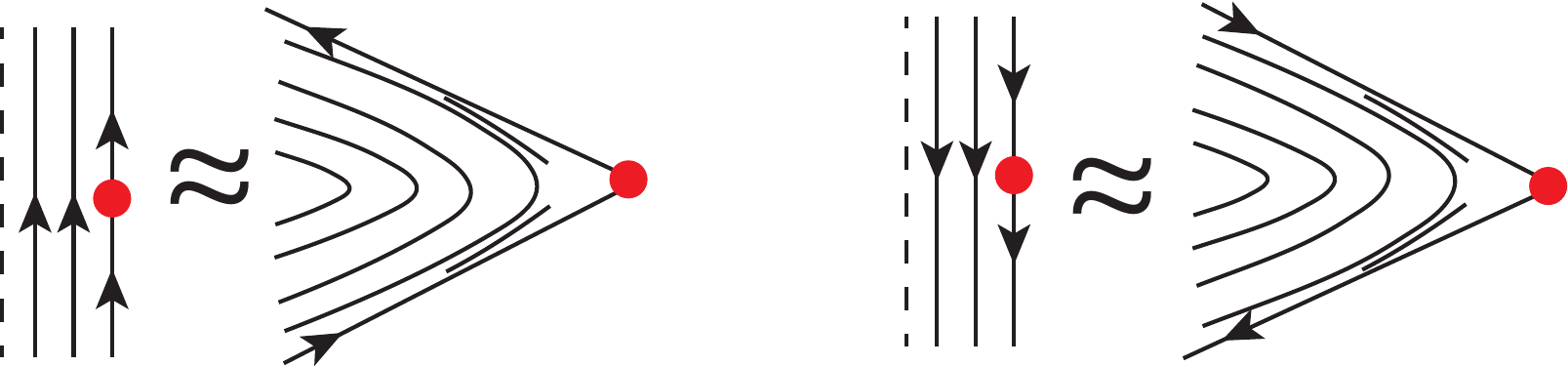}
\end{center}
\caption{A hyperbolic sector with anti-clockwise flow direction and a hyperbolic sector with clockwise flow direction}
\label{hyp-sectors}
\end{figure}

\begin{definition}
A {\bf multi-saddle} is a $k$-saddle or a $\partial$-$(k/2)$-saddle for some $k \in \mathbb{Z}_{\geq 0}$.
\end{definition}

%The degree of a multi-saddle $x$ is the number of its separatrices counted with multiplicity and is denoted by $\deg(x)$.
By definition, notice that a $1$-saddle is a saddle and that a $\partial$-$(1/2)$-saddle is a $\partial$-saddle.
%
%\subsubsection{Types of separatrices}
%A separatrix is a non-singular orbit whose $\alpha$-limit or $\omega$-limit set is a singular point.
%A separatrix is connecting if each of its $\omega$-limit set and $\alpha$-limit sets is a singular point.
%A non-singular orbit is connecting quasi-separatrix if its $\alpha$- and $\omega$-limit sets consists of singular points. 

\subsubsection{{\rm(}Semi-{\rm)}multi-saddle separatrices}

A separatrix is {\bf semi-multi-saddle} if it is from or to a multi-saddle.
A connecting separatrix is a saddle (resp. {\bf multi-saddle}) separatrix if its $\alpha$-limit and $\omega$-limit set are saddles and $\partial$-saddles (resp. multi-saddles).
%A separatrix is multi-saddle if it is from and to a multi-saddle.

\subsubsection{{\rm(}Multi-{\rm)}saddle connection diagrams}
%A connecting separatrix is a saddle (resp. multi-saddle) separatrix if each of its $\alpha$-limit and $\omega$-limit set is a saddle (resp. multi-saddle).

To construct decompositions of surfaces, we recall the following concepts. 

\begin{definition}
The {\bf multi-saddle connection diagram} $D(v)$ is the union of multi-saddles and multi-saddle separatrices.
\end{definition}

\begin{definition}
A {\bf multi-saddle connection} is a connected component of the multi-saddle connection diagram.
\end{definition}

Note that a multi-saddle connection is also called a poly-cycle.
%A {\bf multi-saddle connection} is a connected component of the multi-saddle connection diagram.
If each multi-saddle in the multi-saddle connection diagram $D(v)$ is either a saddle or a $\partial$-saddle, then $D(v)$ is also called the {\bf saddle connection diagram}.
A {\bf saddle connection} is a connected component of the saddle connection diagram.

%\subsubsection{{\rm(}Quasi-{\rm)}Regularity}
%A flow is {\bf quasi-regular} if each singular point is either a center, a multi-saddle, a sink, a $\partial$-sink, a source, or a $\partial$-source. 
%%A quasi-regular flow is regular if each multi-saddle is either a saddle or a $\partial$-saddle. 
%Note that a non-wandering flow with finitely many singular points on a compact surface is quasi-regular (see  \cite[Theorem 3]{cobo2010flows}).
%Conversely, a quasi-regular flow on a compact surface has finitely many singular points.
%%Moreover, a flow is regular if and only if each singular point is locally topologically equivalent to a non-degenerate singular point. 

\subsubsection{Collars}
Recall that an {\bf annular} subset is homeomorphic to an annulus. 
\begin{definition}
An open annular subset $\mathbb{A}$ of a surface is a {\bf collar} of an invariant subset $\gamma$ if there is an open connected neighborhood $U$ of $\gamma$ such that $\mathbb{A}$ is a connected component of the complement $U - \gamma$. 
\end{definition}
Note that an open annular subset $\mathbb{A}$ of a surface is a collar of a singular point $x$ if and only if the union $\mathbb{A} \sqcup \{ x \}$ is an open disk and a neighborhood of $x$. 

\subsubsection{Circuits}
By a {\bf cycle} or a periodic circuit, we mean a periodic orbit.

\begin{definition}
A {\bf circuit} is either a singular point, a cycle, or an image of an oriented circle by a continuous orientation-preserving mapping which is a directed graph but not a singleton and which is the union of separatrices and finitely many singular points. 
\end{definition}

A circuit if {\bf trivial} if it is a singular point. 
Note that a non-trivial circuit is not a local injection in general (see Figure~\ref{NAC02+}). 
Here, a mapping is a local injection if, for any point in a domain, there is its neighborhood to which the restriction is injection. 
\begin{figure}
\begin{center}
\includegraphics[scale=0.25]{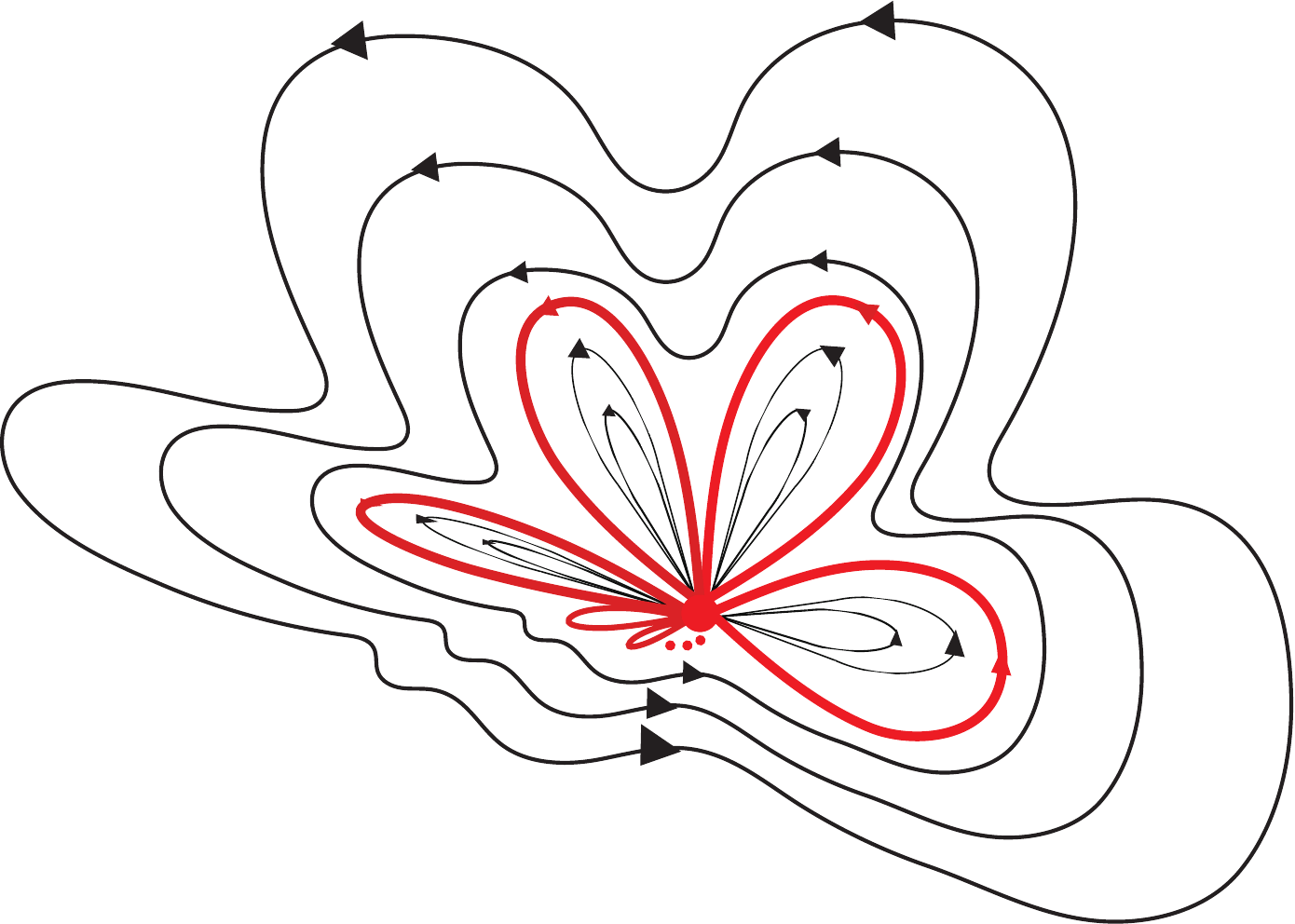}
\end{center}
\caption{A degenerate singular point with infinitely many connecting separatrices}
\label{NAC02+}
\end{figure}
%A {\bf circuit} is either a trivial or non-trivial circuit. 
%Note that 
Moreover, there are circuits with infinitely many edges, and any non-trivial non-periodic circuit contains non-recurrent orbits as in Figure~\ref{NAC02+}. 
\begin{definition}
A non-trivial circuit is a {\bf multi-saddle circuit} if it is contained in a multi-saddle connection. 
\end{definition}
%An open annular subset $\mathbb{A}$ of a surface is a collar of a non-trivial circuit $\gamma$ if $\gamma$ is a boundary component of $\mathbb{A}$ and there is a neighborhood $U$ of $\gamma$ such that $\mathbb{A}$ is a connected component of the complement $U - \gamma$. 
%Recall that a {\bf boundary component} of a subset $A$ is a connected component of the boundary of $A$. 
%
\begin{definition}
A non-trivial circuit $\gamma$ is a {\bf limit circuit} if there is a point $x \notin \gamma$ with $\omega(x) = \gamma$ or $\alpha(x) = \gamma$. 
\end{definition}
A {\bf limit cycle} is a limit circuit in $\mathop{\mathrm{Per}}(v)$.

\subsubsection{Semi-attracting circuits and semi-repelling circuits}

A trivial circuit $x$ is {\bf semi-attracting} (resp. {\bf semi-repelling}) if there is its collar which is contained in the stable (resp. unstable) manifold of $x$.  
In other words, a semi-attracting trivial circuit is either a $\partial$-source or a source, and a semi-repelling trivial circuit is either a $\partial$-sink or a sink.

\begin{definition}
A circuit $\gamma$ is a {\bf semi-attracting} (resp. {\bf semi-repelling}) circuit with respect to a small collar $\A$ if $\omega(x)  = \gamma$ (resp. $\alpha(x) = \gamma$) and $O^+(x) \subset \A$ (resp. $O^-(x) \subset \A$) for any point $x \in \A$. 
\end{definition}

Then $\A$ is called a semi-attracting (resp. a semi-repelling) {\bf collar basin} of $\gamma$. 
By the generalization of the Poincar{\'e}-Bendixson theorem (cf. \cite[Corollary~5.8]{yokoyama2021poincare}), any limit circuit is either semi-attracting or semi-repelling.

\subsubsection{One-sided circuits and two-sided circuits}

One-sidedness is defined as follows. 

\begin{definition}
A non-trivial circuit $\gamma$ is {\bf one-sided} if, for any small neighborhood $U$ of $\gamma$,  there are a point $x \in \mathrm{P}(v) \cap \gamma$ and a collar $V \subset U$ of $\gamma$ such that the union $V \sqcup \gamma$ is a neighborhood of $x$. 
\end{definition}
%there is a point $x \in \mathrm{P}(v) \cap \gamma$ such that the set difference $C \setminus \gamma$ for any small transverse arc $C$ intersecting $x$ is contained in a connected component of $U \setminus \gamma$. 
A non-trivial circuit $\gamma$ is {\bf two-sided} if it is not one-sided (i.e. there is a small neighborhood $U$ of $\gamma$ such that the union $V \sqcup \gamma$ for any collar $V \subset U$ of $\gamma$ is not a neighborhood of any point in $\mathrm{P}(v) \cap \gamma$). 
% the set difference $C \setminus \gamma$ for some small transverse arc $C$ intersecting $x$ intersects at least two connected components of $U \setminus \gamma$).

For a circuit $\mu$ which is a simple closed curve, notice that the circuit $\mu$ is one-sided if and only if it is either a boundary component of a surface or has a small neighborhood which is a M{\"o}bius band, and that the circuit $\mu$ is two-sided if and only if it has an open small annular neighborhood $\mathbb{A}$ such that the complement $\mathbb{A} - \mu$ consists of two open annuli.

\subsubsection{Ss-multi-saddle connection diagrams}

To construct decompositions, we introduce the following invariant subsets. 

\begin{definition}
An invariant subset is an {\bf ss-component} if it is either a sink, a $\partial$-sink, a source, a $\partial$-source, a limit circuit, or an exceptional Q-set.
\end{definition}

In other words, an invariant subset is an ss-component if and only if it is either a semi-attracting/semi-repelling circuit or an exceptional Q-set.

\begin{definition}
A semi-multi-saddle separatrix is an {\bf ss-separatrix} if it connects a multi-saddle and an ss-component.
\end{definition}

In other words, a semi-multi-saddle separatrix is an ss-separatrix if its $\alpha$-limit or $\omega$-limit set is an ss-component. 
%whose $\omega$-limit set is a multi-saddle is an ss-separatrix if its $\alpha$-limit set is an ss-component.
%A separatrix whose $\alpha$-limit set is a multi-saddle is an ss-separatrix if its $\omega$-limit set is an ss-component.

\begin{definition}
The {\bf ss-multi-saddle connection diagram} $\bm{D_{\mathrm{ss}}(v)}$ is the union of multi-saddles, multi-saddle separatrices, ss-separatrices, and ss-components.
\end{definition}

An {\bf ss-multi-saddle connection} is a connected component of the ss-multi-saddle connection diagram.
Note that $D_{\mathrm{ss}}(v)$ is the union of the multi-saddle connection diagram, ss-separatrices, and ss-components.

\subsubsection{Concepts of ``border'' invariant subsets in interiors}

We define notations of invariant subsets as follows: 
\\
%\begin{quote}
 $\bm{\mathop{\mathrm{P}_{\mathrm{ms}}}(v)} \subset D_{\mathrm{ss}}(v)$ : The union of multi-saddle separatrices in $\mathrm{int}\mathrm{P}(v)$. 
\\
$\bm{\mathop{\mathrm{P}_{\mathrm{ss}}}(v)} \subset D_{\mathrm{ss}}(v)$ : The union of ss-separatrices in $\mathrm{int}\mathrm{P}(v)$. 
\\
$\bm{\mathop{\partial_{\mathrm{P}(v)}}} \subset \partial S$ : The union of orbits on $\partial S$ which connect $\partial$-sinks and $\partial$-sources. 
\\
$\bm{\mathop{\mathrm{P}_{\mathrm{sep}}}(v)} := \mathop{\mathrm{P}_{\mathrm{ms}}}(v) \sqcup \mathop{\mathrm{P}_{\mathrm{ss}}}(v) \sqcup \mathop{\partial_{\mathrm{P}(v)}} \subset \mathop{\mathrm{int}} \mathrm{P}(v)$
\\
$\bm{\partial_{\mathop{\mathrm{Per}}(v)}} := \partial S \cap \mathop{\mathrm{int}} \mathop{\mathrm{Per}}(v)$
% : $ The union of periodic orbits in the interior of $\mathop{\mathrm{Per}}(v)$ 
\\
$\bm{\mathop{\mathrm{Per}_{1}}(v)}$ : The union of one-sided periodic orbits in $\mathrm{int}(\mathop{\mathrm{Per}}(v) - \partial_{\mathop{\mathrm{Per}}(v)})$ 
\\
$\bm{\mathop{\mathrm{Bd}}_{\mathrm{int}}(v)} := \mathop{\mathrm{P}_{\mathrm{sep}}}(v) \sqcup \partial_{\mathop{\mathrm{Per}}(v)}$
\\
$\bm{\mathop{\mathrm{BD}}_{\mathrm{int}}(v)} := \mathop{\mathrm{P}_{\mathrm{sep}}}(v) \sqcup \partial_{\mathop{\mathrm{Per}}(v)} \sqcup \mathop{\mathrm{Per}_{1}}(v) = \mathop{\mathrm{Bd}}_{\mathrm{int}}(v) \sqcup \mathop{\mathrm{Per}_{1}}(v)$
\\
$\bm{\mathop{\mathrm{Bd}}_{\partial}(v)} := \partial \mathop{\mathrm{Sing}}(v) \cup \partial \mathop{\mathrm{Per}}(v) \cup \partial \mathrm{P}(v) \cup \partial \mathrm{LD}(v) \cup \partial \mathrm{E}(v)$
\\
$
\bm{\mathop{\mathrm{Bd}}(v)} := \mathop{\mathrm{Bd}_{\mathrm{int}}}(v) \sqcup \mathop{\mathrm{Bd}_{\partial}}(v)
$
\\
$\bm{\mathop{\mathrm{BD}}(v)} := \mathop{\mathrm{Bd}}(v) \sqcup \mathop{\mathrm{Per}_{1}}(v)$
%\\

We call $\mathop{\mathrm{BD}}(v)$ (resp. $\mathop{\mathrm{Bd}}(v)$) the {\bf border} (resp. {\bf weak border}) {\bf point set} for the flow $v$. 
By definitions, we have the following equalities: 
\[
\mathop{\mathrm{Bd}}(v) = \partial \mathop{\mathrm{Sing}}(v) \cup \partial \mathop{\mathrm{Per}}(v) \cup \partial_{\mathop{\mathrm{Per}}(v)} \cup \partial \mathrm{P}(v) \cup  \mathop{\mathrm{P}_{\mathrm{sep}}}(v) \cup \partial \mathrm{LD}(v) \cup \partial \mathrm{E}(v)
\]
\[
\mathop{\mathrm{BD}}(v) = \partial \mathop{\mathrm{Sing}}(v) \cup \partial \mathop{\mathrm{Per}}(v) \cup \partial_{\mathop{\mathrm{Per}}(v)} \cup \mathop{\mathrm{Per}_{1}}(v) \cup \partial \mathrm{P}(v) \cup  \mathop{\mathrm{P}_{\mathrm{sep}}}(v) \cup \partial \mathrm{LD}(v) \cup \partial \mathrm{E}(v)
\]

Note that the definition of $\mathop{\mathrm{Bd}}(v)$ before Theorem~\ref{lem0c} coincides with this definition when the flow $v$ is quasi-regular (see Proposition~\ref{cor:bd01}). 
We will prove that $\mathop{\mathrm{Bd}}(v) = \mathop{\mathrm{Sing}}(v) \sqcup \partial^{-} \mathop{\mathrm{Per}}(v) \sqcup \partial^{-} \mathrm{P}(v) \sqcup \mathrm{E}(v) \sqcup \mathop{\mathrm{Bd}_{\mathrm{int}}}(v)$ if $v$ is a flow with finitely many singular points on a compact surface (see Lemma~\ref{lem010}).
Note that the union $\partial_{\mathop{\mathrm{Per}}(v)} \sqcup \mathop{\mathrm{Per}_{1}}(v)$ is the union of one-sided periodic orbits in $\mathrm{int}\mathop{\mathrm{Per}}(v)$. 

%\end{quote}

%Denote by $\bm{\partial_{\mathop{\mathrm{Per}}(v)}}$ the union of periodic orbits in an intersection $\partial S \cap \mathop{\mathrm{int}} \mathop{\mathrm{Per}}(v)$.
%Moreover, denote by $\bm{\mathop{\mathrm{P}_{\mathrm{ms}}}(v)}$ (resp. $\bm{\mathop{\mathrm{P}_{\mathrm{ss}}}(v)}$) the union of multi-saddle separatrices (resp. ss-separatrices) in $\mathrm{int}\mathrm{P}(v)$. 
%Define $\mathrm{P}_{\mathrm{lc}}(v) := \bigcup \{ \gamma \subset \mathop{\mathrm{int}} \mathrm{P}(v) \sqcup \mathop{\mathrm{Sing}}(v) : \text{limit circuit} \} \setminus \mathop{\mathrm{Sing}}(v)$. 
%In other words, the set $\mathrm{P}_{\mathrm{lc}}(v)$ is the intersection of $\mathop{\mathrm{int}} \mathrm{P}(v)$ and the union of limit circuits.
%In addition, define $\bm{\mathop{\partial_{\mathrm{P}(v)}}}$ the union of orbits on $\partial S$ which connect $\partial$-sinks and $\partial$-sources. 

\subsubsection{Border points}

Note that 
%compactness of $S$ and the finiteness of singulsr points imply that 
if $S$ is compact and $\Sv$ is finite then $\mathop{\partial_{\mathrm{P}(v)}}$ is a finite union of non-recurrent orbits on $\partial S \cap \mathop{\mathrm{int}} \mathrm{P}(v)$. 
%Put $\bm{\mathop{\mathrm{P}_{\mathrm{sep}}}(v)} := \mathop{\mathrm{P}_{\mathrm{ms}}}(v) \sqcup \mathop{\mathrm{P}_{\mathrm{ss}}}(v) \sqcup \mathop{\partial_{\mathrm{P}(v)}}$. 
%
We have the following observation. 
\begin{lemma}\label{lem:semi-multi-saddles}
The following statements hold for a quasi-regular flow $v$ on a surface $S$: 
\\
{\rm(1)} The union of non-singular orbits in limit circuits is contained in the union $\mathop{\mathrm{P}_{\mathrm{ms}}}(v) \sqcup \partial^{-} \mathrm{P}(v)$. 
\\
{\rm(2)} The union $\mathop{\mathrm{P}_{\mathrm{ms}}}(v) \sqcup \mathop{\mathrm{P}_{\mathrm{ss}}}(v)$ is the union of proper semi-multi-saddle separatrices in $\mathop{\mathrm{int}} \mathrm{P}(v)$. 
\\
{\rm(3)} The union $\mathop{\mathrm{P}_{\mathrm{sep}}}(v)$ 
% = \mathop{\mathrm{P}_{\mathrm{ms}}}(v) \sqcup \mathop{\mathrm{P}_{\mathrm{ss}}}(v) \sqcup \mathop{\partial_{\mathrm{P}(v)}}$ 
 is the union of proper semi-multi-saddle separatrices in $\mathop{\mathrm{int}} \mathrm{P}(v)$ and non-recurrent orbits on the boundary $\partial S$ in $\mathop{\mathrm{int}} \mathrm{P}(v)$. 
\end{lemma}

\begin{proof}
The quasi-regularity implies that any singular point is either a multi-saddle, a sink, a $\partial$-sink, a source, a $\partial$-source, or a center. 
Fix an orbit $O \subseteq \mathrm{P}(v)$. 
By \cite[Corollary~2.9]{yokoyama2016topological}, the union $\mathrm{LD}(v)$ is open and so $\overline{O} \cap \mathrm{LD}(v) = \emptyset$. 

Suppose that  $O$ is contained in a limit circuit. 
Then the orbit $O$ connects multi-saddles and so is a multi-saddle separatrix. 
If $O \subset \mathop{\mathrm{int}} \mathrm{P}(v)$, then assertion (1) holds. 
If $O \not\subset \mathop{\mathrm{int}} \mathrm{P}(v)$, then $O \subset \partial^{-} \mathrm{P}(v)$, which implies assertion (1). 

Suppose that $O$ is contained in $\mathop{\mathrm{P}_{\mathrm{ms}}}(v) \sqcup \mathop{\mathrm{P}_{\mathrm{ss}}}(v)$. 
Since any multi-saddle separatrix is a semi-multi-saddle separatrix, we may assume that $O \subset \mathop{\mathrm{P}_{\mathrm{ss}}}(v)$ (i.e. $O$ is an ss-separatrix in $\mathrm{int}\mathrm{P}(v)$). 
Conversely, suppose that $O$ is a proper semi-multi-saddle separatrix in $\mathop{\mathrm{int}} \mathrm{P}(v)$. 
By time reversion if necessary, we may assume that $\alpha(O)$ is a multi-saddle. 
If $O$ is a multi-saddle separatrix, then $O \subset \mathop{\mathrm{P}_{\mathrm{ms}}}(v)$. 
Thus we may assume that $O$ is not a multi-saddle separatrix. 
Then $\omega(O)$ is not a multi-saddle. 
By the generalization of the Poincar{\'e}-Bendixson theorem (cf. \cite[Corollary~5.8]{yokoyama2021poincare}) and from the quasi-regularity of $v$, the $\omega$-limit set $\omega(O)$ is either a $\partial$-sink, a sink, a limit circuit, or an exceptional Q-set.
This means that the $\omega$-limit set $\omega(O)$ is an ss-component and so the orbit $O$ is an ss-separatrix in $\mathrm{int}\mathrm{P}(v)$ (i.e. $O \subset \mathop{\mathrm{P}_{\mathrm{ss}}}(v)$), which implies assertion (2).

By definitions, the union $\mathop{\partial_{\mathrm{P}(v)}} \subset \partial S$ consists of non-recurrent orbits on the boundary $\partial S$. 
By assertion (2), the union $\mathop{\mathrm{P}_{\mathrm{sep}}}(v)$ is contained in the union of proper semi-multi-saddle separatrices in $\mathop{\mathrm{int}} \mathrm{P}(v)$. 
Conversely, suppose that $O$ is a non-recurrent orbit on the boundary $\partial S$ in $\mathop{\mathrm{int}} \mathrm{P}(v)$.
Then $\alpha(O)$ and $\omega(O)$ are singletons, which are singular points. 
If $\alpha(O)$ and $\omega(O)$ are multi-saddles, then $O$ is a proper multi-saddle separatrices in $\mathop{\mathrm{int}} \mathrm{P}(v)$ and so $O \subseteq \mathop{\mathrm{P}_{\mathrm{ms}}}(v)$. 
Thus we may assume that $\omega(O)$ is not a multi-saddle. 
The quasi-regularity of $v$ imiplies that $\omega(O)$ is a $\partial$-sink.  
If $\alpha(O)$ is a multi-saddle, then $O$ is an ss-separatrix in $\mathrm{int}\mathrm{P}(v)$ and so $O \subset \mathop{\mathrm{P}_{\mathrm{ss}}}(v)$. 
Thus we may assume that $\alpha(O)$ is not a multi-saddle. 
The quasi-regularity of $v$ imiplies that $\alpha(O)$ is a $\partial$-source. 
This means that $O \subseteq \mathop{\partial_{\mathrm{P}(v)}}$. 
Therefore, assertion (3) holds because of assertion (2). 
\end{proof}

We will show that the union $\partial^{-} \mathrm{P}(v) \sqcup \mathop{\mathrm{P}_{\mathrm{ms}}}(v) \sqcup \mathop{\mathrm{P}_{\mathrm{ss}}}(v)$ is the union of proper semi-multi-saddle separatrices if $v$ is quasi-regular (see Corollary~\ref{cor:semi-multi-separatrices}). 

\subsubsection{Transversality}
Notice that we can define transversality using tangential spaces of surfaces because each flow on a compact surface is topologically equivalent to a $C^1$-flow by Gutierrez's smoothing theorem~\cite{gutierrez1986smoothing}.
However, to explicitly modify transverse arcs, we define transversality immediately as follows.  
%\begin{definition}
%A curve $C$ is {\bf transverse} to $v$ at a point $p \in \mathop{\mathrm{int}} C$ if there are a small neighborhood $U$ of $p$ and a homeomorphism $h:U \to [-1,1]^2$ with $h(p) = 0$ such that $h^{-1}([-1,1] \times \{t \})$ for any $t \in [-1, 1]$ is an orbit arc and $h^{-1}(\{0\} \times [-1,1]) = C \cap U$. 
%\end{definition}

\begin{definition}
A curve $C$ is {\bf transverse} to $v$ at a point $p \in C \cap \partial S$ if there are a small neighborhood $U$ of $p$ and a homeomorphism $h:U \to [-1,1] \times [0,1]$ with $h(p) = 0$ such that $h^{-1}([-1,1] \times \{t \})$ for any $t \in [0, 1]$ is an orbit arc and $h^{-1}(\{0\} \times [0,1]) = C \cap U$. 
\end{definition}

\begin{definition}
A curve $C$ is {\bf transverse} to $v$ at a point $p \in C \setminus \partial S$ if there are a small neighborhood $U$ of $p$ and a homeomorphism $h:U \to [-1,1]^2$ with $h(p) = 0$ such that $h^{-1}([-1,1] \times \{t \})$ for any $t \in [-1, 1]$ is an orbit arc and $h^{-1}(\{0\} \times [-1,1]) = C \cap U$. 
\end{definition}

A simple curve $C$ is transverse to $v$ if so is it at any point in $C$.  
A simple curve $C$ is transverse to $v$ is called a {\bf transverse arc }.
A simple closed curve is a {\bf closed transversal} if it transverses to $v$.

\section{Properties of surface flows}

This section states the following lemmas in preparation for demonstrating Theorem~\ref{lem0c}. 
Let $v$ be a flow on a compact connected surface $S$. 

\subsection{Properties of borders and frontiers}

Recall notations of the border $\partial^- A$ and the frontier $\bp A$ for a subset $A \subseteq S$ as follows:
\[ 
\begin{split}
\partial^- A &= A - \mathop{\mathrm{int}} A
\\
\bp A &= \overline{A} - A
\end{split}
\]
Moreover, recall notations of sets of points of various types: 
\\
$\Sv$ : the singular point set
\\
$\Pv$ : the periodic point set
\\
$\Cv = \Sv \sqcup \Pv$ : the closed point set
\\
$\mathrm{P}(v)$ : the union of non-closed proper orbits, which is also the set of non-recurrent points
\\
$\mathrm{LD}(v)$ : the union of locally orbits
\\
$\mathrm{E}(v)$ : the union of exceptional orbits
\\
$\mathrm{R}(v) = \mathrm{LD}(v) \sqcup \mathrm{E}(v)$ : the set of non-closed recurrent points, which is also the union of non-proper orbits

 \subsubsection{Inclusion relations for borders and frontiers}
%that $\partial^- A = A - \mathop{\mathrm{int}} A$ and $\bp A = \overline{A} - A$ for a subset $A \subseteq S$. 

We have the following inclusion relations. 
%
%We describe frontiers $\bp A = \overline{A} - A$.

\begin{lemma}\label{lem3-04}
The following statements hold for a flow $v$ on a compact surface $S$:
\\
$(1)$ 
$\bp \mathrm{LD}(v) \subseteq  \partial^{-} \mathop{\mathrm{Sing}}(v) \sqcup \partial^{-} \mathrm{P}(v)$.
\\
$(2)$ 
$\bp \mathrm{E}(v) \subseteq  \partial^{-} \mathop{\mathrm{Sing}}(v) \sqcup \partial^{-} \mathrm{P}(v)$.
\\
$(3)$ 
$\bp \mathop{\mathrm{Per}}(v) \subseteq  \partial^{-} \mathop{\mathrm{Sing}}(v) \sqcup \partial^{-} \mathrm{P}(v)$.
\\
$(4)$ 
$\bp \mathop{\mathrm{Sing}}(v) = \emptyset$.  
\\
$(5)$ 
$\bp \mathrm{P}(v) \subseteq  \partial^{-} \mathop{\mathrm{Sing}}(v) \sqcup  \partial^{-} \mathop{\mathrm{Per}}(v) \sqcup \partial^{-} \mathrm{LD}(v) \sqcup \partial^{-} \mathrm{E}(v)$.

In particular, we have
$\bp \mathop{\mathrm{Per}}(v) \cup \bp \mathrm{LD}(v) \cup \bp \mathrm{E}(v) \subseteq \partial^{-} \mathop{\mathrm{Sing}}(v) \sqcup \partial^{-} \mathrm{P}(v)$. 
\end{lemma}

\begin{proof}
As mentioned, it is known that a total number of Q-sets for $v$ is finite  \cite{markley1970number,mayer1943trajectories}.
By  \cite[Proposition~2.2]{yokoyama2016topological}, assertions $(1)$ and $(2)$ hold.
 \cite[Lemma 2.3]{yokoyama2016topological} implies that $\overline{\mathop{\mathrm{Per}}(v)} \cap \mathrm{R}(v) = \emptyset$ and so $\bp \mathop{\mathrm{Per}}(v) \subseteq  \partial^{-} \mathop{\mathrm{Sing}}(v) \sqcup \partial^{-} \mathrm{P}(v)$ (i.e assertion $(3)$ holds).
The closedness of the singular point set $\mathop{\mathrm{Sing}}(v)$ implies $\bp \mathop{\mathrm{Sing}}(v) = \emptyset$.  
By definition of $\bp$, assertion $(5)$ holds.
\end{proof}

%We have the following inclusion relations. 

%\begin{lemma}\label{lem:properness}
%The following equality holds for a flow $v$ on a surface:
%\[
%\partial^{-} \mathrm{P}(v) = \mathrm{P}(v) \cap \overline{\mathop{\mathrm{Per}}(v) \sqcup \mathrm{R}(v)} = \mathrm{P}(v) \cap (\bp \mathop{\mathrm{Per}}(v) \cup \bp \mathrm{LD}(v) \cup \bp \mathrm{E}(v))
%\]
%\end{lemma}
%
%\begin{proof}
%By $S = \mathrm{P}(v) \sqcup (\mathop{\mathrm{Cl}}(v) \sqcup \mathrm{R}(v))$, the closedness of the singular point set implies $\partial^{-} \mathrm{P}(v) = \mathrm{P}(v) - \mathop{\mathrm{int}} \mathrm{P}(v) = \mathrm{P}(v) \cap \overline{\mathop{\mathrm{Cl}}(v) \sqcup \mathrm{R}(v)} = \mathrm{P}(v) \cap \overline{\mathop{\mathrm{Per}}(v) \sqcup \mathrm{R}(v)} 
%= \mathrm{P}(v) \cap (\bp \mathop{\mathrm{Per}}(v) \cup \bp \mathrm{LD}(v) \cup \bp \mathrm{E}(v))$.
%\end{proof}

\subsection{Non-trivial recurrence}

We recall the following Ma{\v \i}er's result \cite{mayer1943trajectories} (cf. \cite[Theorem 2.4.4 p.32]{nikolaev1999flows}, \cite[Theorem 4.2]{aranson1996maier}). 

\begin{lemma}[Ma{\v \i}er]\label{pos_rec}
Let $v$ be a flow on a compact surface $S$. 
A point $x \in \omega(z)$ for some point $z \in S$ with $\omega(x) \setminus  \mathop{\mathrm{Cl}}(v) \neq \emptyset$ is non-closed positively recurrent. 
In particular, we have $x \in \omega(x) \cap \mathrm{R}(v)$. 
\end{lemma}

Although the results are widely referred, the original Ma{\v \i}er's Russian paper is not translated. 
Because no proof is written in English, even in textbooks, as far as the author knows, we state a proof of this result in Appendix~\ref{M_result}.

\subsection{Inherited properties for double covers}
We state the inherited properties for double covers. 

\begin{lemma}\label{lem:ori_cover}
Let $v$ be a flow on a compact surface $S$ and $p \colon  \widetilde{S} \to S$ a double cover. 
%For any point $x \in S$, denote by $x_-, x_+$ be the lifts of $x$ $(\mathrm{i.e.} \, \{ x_+, x_- \} := p^{-1}(x) )$. 
Then the lift $\widetilde{v}$ of $v$ to $\widetilde{S}$ is well-defined and the following statement holds for any point $x \in S$: 
\\
$(1)$ Either $\hat{O}_{\widetilde{v}}(x_+) = \hat{O}_{\widetilde{v}}(x_-) = p^{-1}(\hat{O}(x) )$ or $\hat{O}_{\widetilde{v}}(x_+) \sqcup \hat{O}_{\widetilde{v}}(x_-) = p^{-1}(\hat{O}(x) )$
% for any point $x \in S$ 
\\
$(2)$ $p^{-1}(\mathop{\mathrm{Sing}}(v)) = \mathop{\mathrm{Sing}}(\widetilde{v})$
%$x \in \mathop{\mathrm{Sing}}(v)$ if and only if $p^{-1}(x) \subseteq \mathop{\mathrm{Sing}}(\widetilde{v})$
\\ 
$(3)$ $p^{-1}(\mathop{\mathrm{Per}}(v)) = \mathop{\mathrm{Per}}(\widetilde{v})$
%$x \in \mathop{\mathrm{Per}}(v)$ if and only if $p^{-1}(x) \subseteq \mathop{\mathrm{Per}}(\widetilde{v})$
\\ 
$(4)$ $p^{-1}(\mathrm{P}(v)) = \mathrm{P}(\widetilde{v})$
%$x \in \mathrm{P}(v)$ if and only if $p^{-1}(x) \subseteq \mathrm{P}(\widetilde{v})$
\\ 
$(5)$  $p^{-1}(\mathrm{LD}(v)) = \mathrm{LD}(\widetilde{v})$
%$x \in \mathrm{LD}(v)$ if and only if $p^{-1}(x) \subseteq \mathrm{LD}(\widetilde{v})$
\\ 
$(6)$  $p^{-1}(\mathrm{E}(v)) = \mathrm{E}(\widetilde{v})$
%$x \in \mathrm{E}(v)$ if and only if $p^{-1}(x) \subseteq \mathrm{E}(\widetilde{v})$
\end{lemma}

\begin{proof}  
By Gutierrez's smoothing theorem~\cite{gutierrez1986smoothing}, the flow is generated by a vector field up to topological equivalence. 
Notice that any flow generated by a vector field $X$ on a connected compact manifold induces the induced flow on the finite cover because $X$ can be lifted to the cover.  

Let $\widetilde{v}$ be the lift of $v$ on a double cover $\widetilde{S}$ of $S$. 
For a point $x \in S$, put $\{ x_+, x_- \} := p^{-1}(x)$. 
For a subset $A \subseteq S$, since covering maps are locally homeomorphisms, we have $p^{-1}(A) = \{ x_+, x_- \mid x \in A \}$ and so $\overline{p^{-1}(A)} = p^{-1}(\overline{A}) = \{ x_+, x_- \mid x \in \overline{A} \}$.  

\begin{claim}\label{claim:007}
Assertions $(2)$ and $(3)$ hold, and assertion $(1)$ for closed points holds.
%We may assume that $x \notin \mathop{\mathrm{Cl}}(v)$. 
\end{claim}

\begin{proof}[Proof of Claim~\ref{claim:007}]
Suppose that $x \in \mathop{\mathrm{Cl}}(v)$. 
Then the lift $p^{-1}(O(x))$ consists of one or two closed orbits of $\widetilde{v}$ and so of one or two orbit classes. 
Since lifts of singular (resp. periodic, non-closed) points are singular (resp. periodic, non-closed), assertions $(2)$ and $(3)$ hold. 
Moreover, assertion $(1)$ for closed points holds. 
\end{proof}

%This means assertions $(2)$ and $(3)$ hold. 
%
%We may assume that  $x \in S - \Cv = \mathrm{P}(v) \sqcup \mathrm{R}(v)$. 
%

\begin{claim}\label{claim:008}
Assertion $(4)$ holds, and assertion $(1)$ for non-recurrent points holds.
%We may assume that $x \notin \mathop{\mathrm{Cl}}(v)$. 
\end{claim}

\begin{proof}[Proof of Claim~\ref{claim:008}]
Let $x \in S$ be a non-recurrent point.  
Since the non-recurrence is invariant under taking finite coverings, we have that $x \in \mathrm{P}(v)$ if and only if $p^{-1}(x) \subseteq \mathrm{P}(\widetilde{v})$. 
This means that assertion $(4)$ holds. 
Moreover, if $x \in \mathrm{P}(v)$, then either $O_{\widetilde{v}}(x_+) = O_{\widetilde{v}}(x_-) = \hat{O}_{\widetilde{v}}(x_+) = \hat{O}_{\widetilde{v}}(x_-) = p^{-1}(\hat{O}(x) )= p^{-1}(O(x))$ or $O_{\widetilde{v}}(x_+) \sqcup O_{\widetilde{v}}(x_-) = \hat{O}_{\widetilde{v}}(x_+) \sqcup \hat{O}_{\widetilde{v}}(x_-) = p^{-1}(\hat{O}(x)) = p^{-1}(O(x))$, because of the properness of $x$. 
This implies that assertion $(1)$ for non-recurrent points holds. 
\end{proof}

Assertions $(2)$--$(4)$ imply that $p^{-1}(\mathrm{R}(v)) = \mathrm{R}(\widetilde{v})$. 
%$x \in \mathrm{R}(v)$ if and only if $p^{-1}(x) \subseteq \mathrm{R}(\widetilde{v})$. 
%
%Thus we may assume that $x \in \mathrm{R}(v)$. 

\begin{claim}\label{claim:009}
Assertion $(1)$ holds.
%We may assume that $x \notin \mathop{\mathrm{Cl}}(v)$. 
\end{claim}

\begin{proof}[Proof of Claim~\ref{claim:009}]
Fix a point $x \in S$. 
By Claim~\ref{claim:007} and Claim~\ref{claim:008}, we may assume that $x \in S - (\mathop{\mathrm{Cl}}(v) \sqcup \mathrm{P}(v)) = \mathrm{R}(v)$. 
%Since $x \in \mathrm{R}(v)$, we have
Then $p^{-1}(x) \subseteq \mathrm{R}(\widetilde{v})$. 
\cite[Proposition~2.2]{yokoyama2016topological} implies that $\hat{O}(x) = \overline{O(x)} \setminus (\mathop{\mathrm{Sing}}(v) \sqcup \mathrm{P}(v)) = \overline{O(x)} \cap \mathrm{R}(\widetilde{v})$, $\hat{O}_{\widetilde{v}}(x_+) = \overline{O_{\widetilde{v}}(x_+)} \setminus (\mathop{\mathrm{Sing}}(\widetilde{v}) \sqcup \mathrm{P}(\widetilde{v}))  = \overline{O_{\widetilde{v}}(x_+)} \cap \mathrm{R}(\widetilde{v})$, and $\hat{O}_{\widetilde{v}}(x_-) = \overline{O_{\widetilde{v}}(x_-)} \setminus (\mathop{\mathrm{Sing}}(\widetilde{v}) \sqcup \mathrm{P}(\widetilde{v})) = \overline{O_{\widetilde{v}}(x_-)} \cap \mathrm{R}(\widetilde{v})$. 
Since the cover $p$ is a local homeomorphism, we have that $\hat{O}_{\widetilde{v}}(x_+) \cup \hat{O}_{\widetilde{v}}(x_-) \subseteq p^{-1}(\hat{O}(x)) = p^{-1}(\overline{O(x)} \cap \mathrm{R}(v))  \subseteq p^{-1}(\overline{O(x)}) \cap p^{-1}((\mathrm{R}(v))) = \overline{p^{-1}(O(x))} \cap \mathrm{R}(\widetilde{v}) = \overline{O_{\widetilde{v}}(x_-) \cup O_{\widetilde{v}}(x_+)} \cap \mathrm{R}(\widetilde{v}) = (\overline{O_{\widetilde{v}}(x_-)} \cup \overline{O_{\widetilde{v}}(x_+)}) \cap \mathrm{R}(\widetilde{v}) = \hat{O}_{\widetilde{v}}(x_+) \cup \hat{O}_{\widetilde{v}}(x_-)$. 
Therefore, we obtain  $\hat{O}_{\widetilde{v}}(x_+) \cup \hat{O}_{\widetilde{v}}(x_-) = p^{-1}(\hat{O}(x))$. 
This implies assertion $(1)$. 
\end{proof}

\begin{claim}\label{claim:010}
%If $\hat{O}_{\widetilde{v}}(x_+) = \hat{O}_{\widetilde{v}}(x_-)$, then a
Assertions $(5)$ and $(6)$ hold for any point $x \in \mathrm{R}(v)$ with $\hat{O}_{\widetilde{v}}(x_+) = \hat{O}_{\widetilde{v}}(x_-)$. 
\end{claim}

\begin{proof}[Proof of Claim~\ref{claim:010}]
Let $x \in S$ be a point $x \in \mathrm{R}(v)$ with $\hat{O}_{\widetilde{v}}(x_+) = \hat{O}_{\widetilde{v}}(x_-)$.
%Suppose that $\hat{O}_{\widetilde{v}}(x_+) = \hat{O}_{\widetilde{v}}(x_-)$. 
Then $p^{-1}(\hat{O}(x)) = \hat{O}_{\widetilde{v}}(x_+) = \hat{O}_{\widetilde{v}}(x_-) \subseteq \mathrm{R}(\widetilde{v})$ and $p^{-1}(\overline{O(x)}) = \overline{p^{-1}(O(x))} = \overline{O_{\widetilde{v}}(x_+)} =\overline{O_{\widetilde{v}}(x_-)}$. 
The local homeomorphic property of the covering map $p$ implies that $\mathop{\mathrm{int}} \, \overline{O(x)} \neq \emptyset$ if and only if $\mathop{\mathrm{int}} \, \overline{O_{\widetilde{v}}(x_+)} = \mathop{\mathrm{int}} \, \overline{O_{\widetilde{v}}(x_-)} \neq \emptyset$. 
Hence assertion $(5)$ holds. 
Since $\mathrm{E}(v) = S - (\mathop{\mathrm{Cl}}(v) \sqcup \mathrm{P}(v) \sqcup \mathrm{LD}(v))$, by $p^{-1}(\mathrm{R}(v)) = \mathrm{R}(\widetilde{v})$, assertions $(2)$--$(5)$ implies assertion $(6)$. 
\end{proof}

\begin{claim}\label{claim:010+}
Assertions $(5)$ and $(6)$ hold for any point $x \in \mathrm{R}(v)$. 
\end{claim}

\begin{proof}[Proof of Claim~\ref{claim:010+}]
Let $x \in S$ be a point $x \in \mathrm{R}(v)$.
By Claim~\ref{claim:010}, we may assume that $\hat{O}_{\widetilde{v}}(x_+) \neq \hat{O}_{\widetilde{v}}(x_-)$. 
%Thus we may assume that $\hat{O}_{\widetilde{v}}(x_+) \neq \hat{O}_{\widetilde{v}}(x_-)$. 
Then $p^{-1}(\hat{O}(x)) = \hat{O}_{\widetilde{v}}(x_+) \sqcup \hat{O}_{\widetilde{v}}(x_-) \subseteq \mathrm{R}(\widetilde{v})$, $x_+ \notin \overline{O_{\widetilde{v}}(x_-)}$, and $x_- \notin \overline{O_{\widetilde{v}}(x_+)}$. 
Since the complement $S - \overline{O_{\widetilde{v}}(x_-)}$ is an invariant open \nbd of $x_+$, the local homeomorphic property of the covering map $p$ and the homogeneity of orbits imply that $\mathop{\mathrm{int}} \, \overline{O(x)} \neq \emptyset$ if and only if $\mathop{\mathrm{int}} \, \overline{O_{\widetilde{v}}(x_+)} \neq \emptyset$. 
Similarly, we have that $\mathop{\mathrm{int}} \, \overline{O(x)} \neq \emptyset$ if and only if $\mathop{\mathrm{int}} \, \overline{O_{\widetilde{v}}(x_-)} \neq \emptyset$. 
This implies assertion $(5)$ holds. 
Since $\mathrm{E}(v) = S - (\mathop{\mathrm{Cl}}(v) \sqcup \mathrm{P}(v) \sqcup \mathrm{LD}(v))$, assertions $(2)$--$(5)$ implies assertion $(6)$. 
\end{proof}
This completes the proof. 
\end{proof}

\section{On the border point sets}

This section also states the following four statements (Proposition~\ref{prop:cs}, Lemma~\ref{lem010}, Proposition~\ref{lem011}, and Proposition~\ref{cor:bd01}) in preparation for demonstrating Theorem~\ref{lem0c}.

\subsection{Border of non-recurrent closed orbits}

To describe the border of non-recurrent closed orbits, we have the following statement. 

\begin{lemma}\label{lem:annuli}
Let $S$ be a compact surface, $I \subset S$ a closed transverse arc, and $(O_n)_{n \in \Z_{\geq 0}}$ a sequence of pairwise disjoint loops intersecting $I$ such that, for any $n \neq m \in \Z_{\geq 0}$, the pair $O_n$ and $O_m$ are homotopic to each other. 
By renumbering $(O_n)_{n \in \Z_{\geq 0}}$, we may assume that the connected component of $S - \bigsqcup_{n \in \Z_{\geq 0}} O_{n}$ whose boundary is $O_{n} \sqcup O_{n+1}$ for any $n \in \Z_{\geq 0}$ is an open annulus. 
\end{lemma}

\begin{proof}
By the compactness of $I$, there is a subsequence of the sequence of points $x_n \in O_n \cap I$ which convergences to a point in $I$ monotonically.  
By taking a subsequence of $(O_n)_{n \in \Z_{\geq 0}}$, we may assume that the sequence $(x_n)_{n \in \Z_{\geq 0}}$ of $x_n \in O_n$ which convergences to a point in $I$ monotonically. 
For any $n \in \Z_{\geq 0}$, fix a connected component $A_n$ of $S - \bigsqcup_{k \in \Z_{\geq 0}}O_k$  whose boundary contains $O_n \sqcup O_{n+1}$.  

Suppose that there are infinitely many loops $O_n$ which are not null homotopic. 
By taking a subsequence of $(O_n)_{n \in \Z_{\geq 0}}$, we may assume that any loops $O_n$ are not null homotopic. 
From the finiteness of genus and boundary components of $S$, the complement $S - \bigsqcup_{n \in \Z_{\geq 0}}O_n$ consists of open annuli except for finitely many domains. 
By taking a subsequence of $(O_n)_{n \in \Z_{\geq 0}}$, we may assume that the connected component $A_n$ is an open annulus whose boundary is $O_n \sqcup O_{n+1}$ for any $n \in \Z_{\geq 0}$. 

Suppose that there are at most finitely many loops $O_n$ which are not null homotopic. 
By taking a subsequence of $(O_n)_{n \in \Z_{\geq 0}}$, we may assume that the loops $O_n$ are null homotopic. 
From the finiteness of genus and boundary components of $S$, the complement $S - \bigsqcup_{n \in \Z_{\geq 0}}O_n$ consists of open punctured annuli $A_{i_n}$ except finitely many domains. 
On the other hand, each loop $O_n$ is null homotopic and so bounds an open disk. 
Since any loops $O_n$ intersect the closed transverse arc $I$, the open punctured annuli $A_{i_n}$ except at most one domain are annuli. 
By taking a subsequence of $(O_n)_{n \in \Z_{\geq 0}}$, we may assume that the connected component $A_n$ is an open annulus whose boundary is $O_n \sqcup O_{n+1}$ for any $n \in \Z_{\geq 0}$. 
\end{proof}

A non-singular orbit of a point $x$ is a connecting quasi-separatrix \cite{yokoyama2021poincare} if $\omega(x) \cup \alpha(x) \subseteq \Sv$. 
We show the following statement. 
\begin{lemma}\label{lem:cs-}
Let $v$ be a flow on a compact surface and $x \in \partial^{-} \mathrm{P}(v)$ a point. 
Then the orbit $O(x)$ is a connecting quasi-separatrix. 
%we have $\omega(x) \cup \alpha(x) \subseteq \Sv$. 
\end{lemma}

\begin{proof}
Since $\Sv$ is closed, we have $\mathrm{P}(v) \cap \partial \Sv = \emptyset$. 
From $\partial^{-} \mathrm{P}(v) = \mathrm{P}(v) \cap \partial \mathrm{P}(v) = \mathrm{P}(v) \cap \partial (S  - \mathrm{P}(v)) = \mathrm{P}(v) \cap \partial (\Sv \sqcup \Pv \sqcup \mathrm{R}(v)) \subseteq  \partial \Pv \sqcup \partial \mathrm{R}(v)$, we obtain $\partial^{-} \mathrm{P}(v) = (\mathrm{P}(v) \cap \partial \Pv) \cup (\mathrm{P}(v) \cap \partial \mathrm{R}(v))$. 

The Ma{\v \i}er and Markley works \cite{markley1969poincare,markley1970number} imply the closure $\overline{\mathrm{R}(v)}$ is a finite union of Q-sets. 
For any point $x \in \mathrm{P}(v) \cap \partial \mathrm{R}(v)$, by \cite[Theorem~A(b)]{yokoyama2021poincare} and the dual statement, the point $x$ is contained in $\omega$-limit sets or $\alpha$-limit set of points and so the orbit $O(x)$ is a connecting quasi-separatrix. 

Fix any point $x \in \mathrm{P}(v) \cap \partial \Pv$. 
By Gutierrez's smoothing theorem, we may assume that the flow $v$ is generated by an integrable vector field. 
The flow box theorem (cf. \cite[Theorem~4.2.6, p.95]{markley2023flows}, \cite[Theorem 1.1, p.45]{aranson1996introduction}) implies there is an open trivial flow box $B$ which contains the non-singular point $x$. 
From $x \in \mathrm{P}(v)$, taking $B$ small, we may assume that $O(x) \cap B$ consists of an open orbit arc. 
%Considering replacing a flow box in $B \setminus O(x)$, if the resulting flow $w$ satisfies that $\Sv = \mathop{\mathrm{Sing}(w)}$, $(w\vert_{S - (\overline{B} \setminus O(x))})' = (v\vert_{S - (\overline{B} \setminus O(x))})'$, and that the point $x$ is contained in the $\omega$-limit set of a point with respect to $w$, then $O_w(x) = O(x)$ is a connecting quasi-separatrix because of \cite[Theorem~A(b)]{yokoyama2021poincare}, where 
Let $(w\vert_{S - (\overline{B} \setminus O(x))})'$ (resp. $(v\vert_{S - (\overline{B} \setminus O(x))})'$) be the vector field of the flow $w\vert_{S - (\overline{B} \setminus O(x))}$ (resp. $v\vert_{S - (\overline{B} \setminus O(x))}$) by differentiating. 
%Therefore we show the following claim. 

\begin{claim}\label{claim:pret_B}
Replacing a flow box in $B \setminus O(x)$, we can obtain the resulting flow $w$ with $\Sv = \mathop{\mathrm{Sing}(w)}$ and $(w\vert_{S - (\overline{B} \setminus O(x))})' = (v\vert_{S - (\overline{B} \setminus O(x))})'$ such that the point $x$ is contained in the $\omega$-limit set of a point with respect to $w$. 
\end{claim}

\begin{proof}[Proof of Claim~\ref{claim:pret_B}]
By $x \in \partial \Pv$, there are a closed transverse arc $I$ one of whose boundary components is $x$ and a sequence $(a_n)_{n \in \Z_{\geq 0}}$ of periodic points $a_n \in B \cap I$ such that $(a_n)_{n \in \Z_{\geq 0}}$ converges to $x$ monotonically in the closed interval $I$. 
By taking a subsequence of $(a_n)_{n \in \Z_{\geq 0}}$, from the finiteness of genus and boundary components of the compact surface $S$, we may assume that for any $n \neq m \in \Z_{\geq 0}$, the pair $O(a_n)$ and $O(a_m)$ are homotopic to each other such that $O(a_n) \neq O(a_m)$. 
From Lemma~\ref{lem:annuli}, we may assume that the connected component $A_n$ whose boundary is $O(a_n) \sqcup O(a_{n+1})$ for any $n \in \Z_{\geq 0}$ is an open annulus. 
By construction of $A_n$, we have $A_n \cap (O(x) \sqcup \bigsqcup_{n \in \Z_{\geq 0}}O(a_n)) = \emptyset$.
Then the disjoint union $\sqcup_{n \in \Z_{\geq 0}} A_n \sqcup O(a_{n+1})$ is an invariant open annulus whose boundary contains $O(x)$. 
By replacing the flow box $B$ as in the right of Figure~\ref{Fig:pert_B}, the resulting flow $w$ satisfies that $(w\vert_{S - (\overline{B} \setminus O(x))})' = (v\vert_{S - (\overline{B} \setminus O(x))})'$
% the vetor field $(w\vert_{S - \overline{B}})'$ equals to $(v\vert_{S - \overline{B}})'$ 
 and that the orbit $O_w(a_1)$ contains the subset $\{a_n \mid n \in \Z_{\geq 0} \}$. 
\begin{figure}
\begin{center}
\includegraphics[scale=0.95]{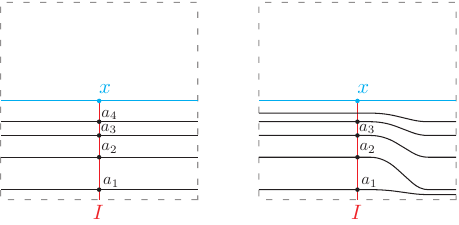}
\end{center}
\caption{Left: the flow box $B$; right: perturabated flow box from the flow box $B$}
\label{Fig:pert_B}
\end{figure} 
From $\lim_{n \to \infty}a_n = x$, the $\omega$-limit set $\omega_{w}(a_1)$ contains $x$. 
\end{proof}

%As mentioned above, 
\cite[Theorem~A(b)]{yokoyama2021poincare} implies that $O_w(x)$ is a connecting quasi-separatrix with respect to $w$. 
Since $\overline{O_w(x)} \subset S - (\overline{B} \setminus O(x))$, we have $O(x) = O_w(x)$, $\omega(x) = \omega_w(x) \subseteq \mathop{\mathrm{Sing}(w)} = \Sv$, and $\alpha(x) = \alpha_w(x) \subseteq \mathop{\mathrm{Sing}(w)} = \Sv$. 
This means that $O(x)$ is a connecting quasi-separatrix with respect to $v$. 
\end{proof}

\subsection{Border of non-recurrent closed orbits for flows with finitely many singular points}

In this section, from now on, let $v$ be a flow with finitely many singular points on a compact connected surface $S$ unless otherwise stated.

\begin{proposition}\label{prop:cs}
The following statements hold for any flow $v$ with finitely many singular points on a compact surface: 
\\
{\rm(1)} Each orbit in $\partial^{-} \mathrm{P}(v)$ is a connecting separatrix.
\\
{\rm(2)} Each limit circuit of $v$ is a boundary component of its semi-attracting or semi-repelling collar basin. 
\\
{\rm(3)} For any point $x$ whose $\omega$-limit set or $\alpha$-limit set is a limit circuit, we have $x \in \mathop{\mathrm{int}} \mathrm{P}(v)$. 
\\
{\rm(4)} If $v$ is quasi-regular, then $\partial^{-} \mathrm{P}(v)$ is a finite union of multi-saddle  separatrices.
\end{proposition}

\begin{proof}
Let $v$ be a flow with finitely many singular points on a compact surface. 
Lemma~\ref{lem:cs-} implies assertion (1).  
From \cite[Lemma~8 and Proposition~51]{yokoyama2021poincare}, assertions (2) and (3) hold. 

\begin{claim}\label{claim:051}
Assertion (4) holds. 
\end{claim}

\begin{proof}[Proof of Claim~\ref{claim:051}]
Suppose that $v$ is quasi-regular. 
%Then each point whose $\omega$-limit set is either a $\partial$-sink, a sink, or a limit circuit is contained in $\mathop{\mathrm{int}} \mathrm{P}(v)$. 
Fix a point $x \in \partial^{-} \mathrm{P}(v)$. 
By assertion (1), the finiteness of singular points implies that the $\omega$-limit set $\omega(x)$ is a singular point. 
Since each point whose $\omega$-limit set is either a $\partial$-sink or a sink is contained in $\mathop{\mathrm{int}} \mathrm{P}(v)$, the $\omega$-limit set $\omega(x)$ is neither $\partial$-sink nor a sink. 
This means that the $\omega$-limit set $\omega(x)$ is a multi-saddle. 
By symmetry, so is $\alpha$-limit set $\alpha(x)$. 
\end{proof}

This completes the proof. 
\end{proof}

The finiteness of $\mathop{\mathrm{Sing}}(v)$ in Proposition~\ref{prop:cs}(1) is necessary. 
In fact, there is a flow with a point $y \in \partial^{-} \mathrm{P}(v)$ whose $\omega$-limit set is not a point (see Figure~\ref{NAC}).
\begin{figure}
\begin{center}
\includegraphics[scale=0.25]{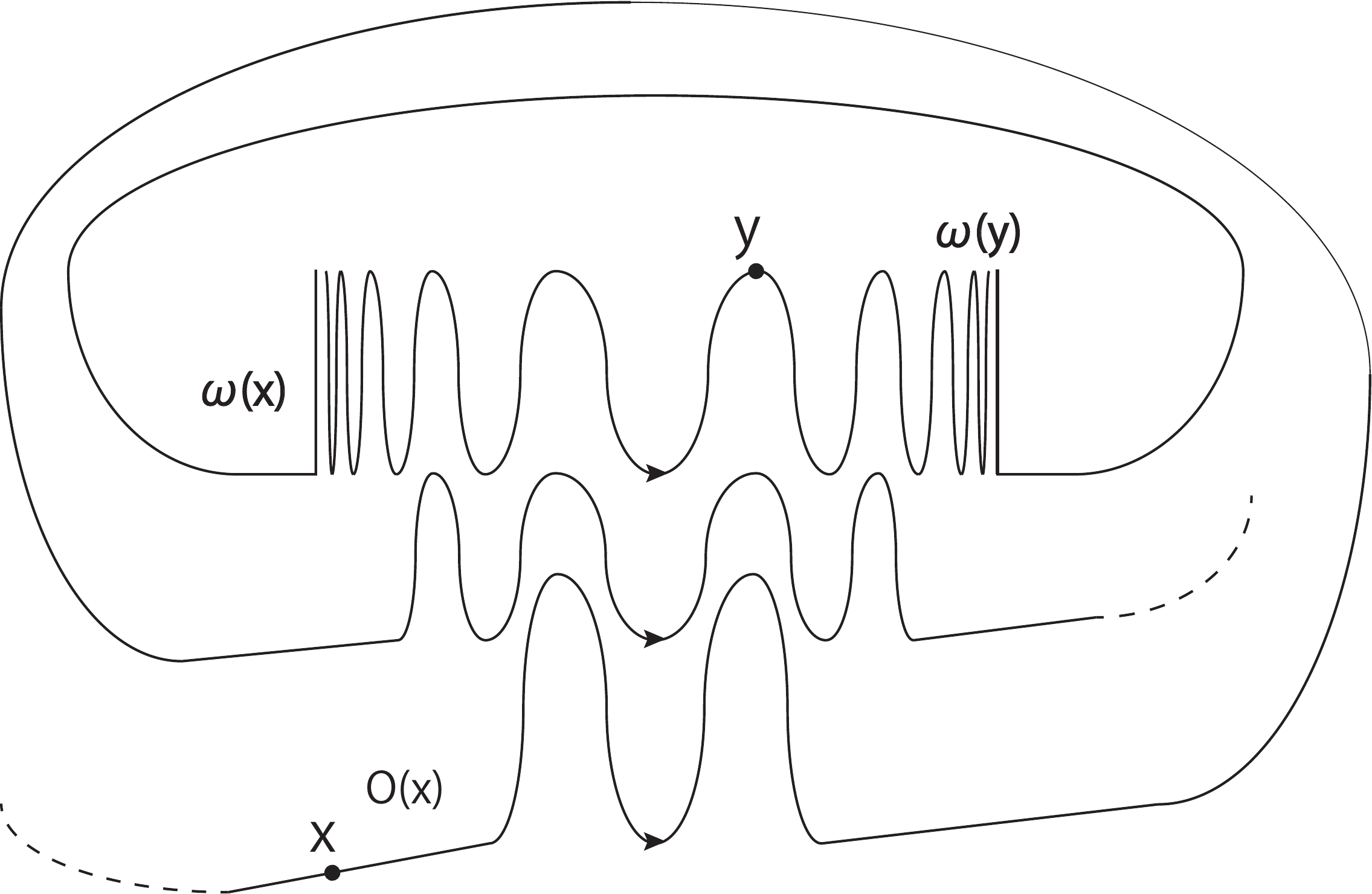}
\end{center}
\caption{An $\omega$-limit set which is not arcwise-connected.}
\label{NAC}
\end{figure}
The quasi-regularity of $\mathop{\mathrm{Sing}}(v)$ in Proposition~\ref{prop:cs}(1) is necessary.
In fact, there is a flow with infinitely many connecting separatrices (see Figure~\ref{NAC02+}).

\subsubsection{On proper semi-multi-saddle separatrices}

We define $\mathop{\mathrm{P}_{\mathrm{semi}}}(v)$ for a quasi-regular flow on a surface as follows. 

\begin{definition}\label{semi_P_qr}
We define $\mathop{\mathrm{P}_{\mathrm{semi}}}(v)$ as the union of proper semi-multi-saddle separatrices. 
\end{definition}

Note that we will define $\mathop{\mathrm{P}_{\mathrm{semi}}}(v)$ in more general setting (see Definition~\ref{semi_P} for details). 
The previous proposition and Lemma~\ref{lem:semi-multi-saddles} implies the following statement. 

\begin{corollary}\label{cor:semi-multi-separatrices}
If $v$ is quasi-regular, then the union $\partial^{-} \mathrm{P}(v) \sqcup \mathop{\mathrm{P}_{\mathrm{ms}}}(v) \sqcup \mathop{\mathrm{P}_{\mathrm{ss}}}(v)$ is the union $\mathop{\mathrm{P}_{\mathrm{semi}}}(v)$ of proper semi-multi-saddle separatrices. 
\end{corollary}

\subsection{Descriptions of border point sets and relative concepts}

Recall that $\mathop{\mathrm{BD}}(v) = \mathop{\mathrm{Bd}}(v) \sqcup \mathop{\mathrm{Per}_{1}}(v)$ and $\mathop{\mathrm{Bd}}(v) = \partial \mathop{\mathrm{Sing}}(v) \cup \partial \mathop{\mathrm{Per}}(v) \cup \partial_{\mathop{\mathrm{Per}}(v)} \cup \partial \mathrm{P}(v) \cup \partial \mathrm{LD}(v) \cup  \mathop{\mathrm{P}_{\mathrm{sep}}}(v) \cup \partial \mathrm{E}(v)$. 
In this subsection, we demonstrate the following description of the (weak) border point set below (see the proof of the following lemma in \ref{prf:lem44}).

\begin{lemma}\label{lem010}
The following statements hold for a flow $v$ with finitely many singular points on a compact surface $S$: 
\\
{\rm(1)}  $\mathop{\mathrm{Bd}}(v)$ and $\mathop{\mathrm{BD}}(v)$ are closed. 
\\
{\rm(2)} $\mathop{\mathrm{Bd}}(v) = \mathop{\mathrm{Sing}}(v) \sqcup \partial^{-} \mathop{\mathrm{Per}}(v) \sqcup \partial_{\mathop{\mathrm{Per}}(v)} \sqcup \partial^{-} \mathrm{P}(v) \sqcup \mathop{\mathrm{P}_{\mathrm{sep}}}(v) \sqcup \mathrm{E}(v) = \mathop{\mathrm{Sing}}(v) \sqcup \partial^{-} \mathop{\mathrm{Per}}(v) \sqcup \partial^{-} \mathrm{P}(v) \sqcup \mathrm{E}(v) \sqcup \mathop{\mathrm{Bd}}_{\mathrm{int}}(v)$. 
%
%$\mathop{\mathrm{BD}}(v) = \mathop{\mathrm{Sing}}(v) \sqcup \partial^{-} \mathop{\mathrm{Per}}(v) \sqcup \partial_{\mathop{\mathrm{Per}}(v)} \sqcup \mathop{\mathrm{Per}_{1}}(v) \sqcup \partial^{-} \mathrm{P}(v) \sqcup \mathop{\mathrm{P}_{\mathrm{sep}}}(v) \sqcup \mathrm{E}(v) = \mathop{\mathrm{Sing}}(v) \sqcup \partial^{-} \mathop{\mathrm{Per}}(v) \sqcup \partial^{-} \mathrm{P}(v) \sqcup \mathrm{E}(v) \sqcup \mathop{\mathrm{BD}_{\mathrm{int}}}(v)$. 
\\
{\rm(3)} The union $\mathop{\mathrm{P}_{\mathrm{sep}}}(v)$
%$\mathop{\mathrm{P}_{\mathrm{ms}}}(v) \sqcup \mathop{\mathrm{P}_{\mathrm{ss}}}(v) \sqcup \mathop{\partial_{\mathrm{P}(v)}}$ 
consists of finitely many connecting separatrices.
\\
{\rm(4)} The union $\partial_{\mathop{\mathrm{Per}}(v)} \sqcup \mathop{\mathrm{Per}_{1}}(v)$ is the finite union of one-sided periodic orbits in $\mathop{\mathrm{int}} \mathop{\mathrm{Per}}(v)$. 
\\
{\rm(5)} If there are at most finitely many limit cycles, then the union $\partial^{-} \mathop{\mathrm{Per}}(v)$ is the finite union of limit cycles. 
\\
{\rm(6)} If $v$ is quasi-regular, then the union $\partial^{-} \mathrm{P}(v)\sqcup \mathop{\mathrm{P}_{\mathrm{sep}}}(v) = \mathop{\mathrm{P}_{\mathrm{semi}}}(v) \sqcup \mathop{\partial_{\mathrm{P}(v)}}$ is the finite union of proper semi-multi-saddle separatrices and separatrices on $\partial S$ between $\partial$-sources and $\partial$-sinks. 
In particular, the quasi-regularity implies $\partial^{-} \mathrm{P}(v) \subseteq \mathop{\mathrm{P}_{\mathrm{semi}}}(v)$. 
\end{lemma}

To demonstrate the previous lemma, we show the following properties.

\begin{lemma}\label{lem001}
For a flow $v$ with finitely many singular points on a compact surface, we have $\mathrm{LD}(v) \cap \overline{\mathrm{P}(v)} = \emptyset$.
\end{lemma}

\begin{proof}
By Lemma~\ref{lem:ori_cover}, we may assume that $S$ is connected and orientable. 
Assume that there is a point $x \in \mathrm{LD}(v) \cap \overline{\mathrm{P}(v)}$.
\cite[Theorem VI]{cherry1937topological} implies that the orbit class $\hat{O}(x)$ contains infinitely many Poisson stable orbits and so there is a non-closed recurrent point $z$ with $x \in \omega(z) \cap \alpha(z)$. 
Since the orbit closure $\overline{O(x)}$ is a neighborhood of $x$, there are an open trivial flow box $U \subset \overline{O(x)}$ centered at $x$ and an open transverse arc $I \subset U$ intersecting $x$ with $v((-\varepsilon,\varepsilon),I) = U$ for some $\varepsilon>0$. 
Moreover, there is a sequence $(x_n)_{n \in \Z_{>0}}$ in $I \cap \overline{O(x)} \cap \partial^- \mathrm{P}(v) \subseteq \overline{\mathrm{LD}(v)}$ converging to $x$. 
% such that $O(x_n) \cap O(x_m) = \emptyset$ for any $n \neq m \in \Z_{>0}$. 
Then $x_n \in \omega(z) \cap \alpha(z)$ for any $n \in \Z_{>0}$. 
Since neither $\omega(x_n)$ nor $\alpha(x_n)$ are limit cycles, Lemma~\ref{pos_rec} and the dual statement for $\alpha$-limit sets imply that $\overline{O(x_n)} - O(x_n)$ consists of at most two singular points.
%Fix an open trivial flow box $U \subset \overline{O(x)}$ centered at $x$ and an open transverse arc $I \subset U$ intersecting $x$ with $v((-\varepsilon,\varepsilon),I) = U$ for some $\varepsilon>0$. 
By renumbering, we may assume that $(x_n)_{n \in \Z_{>0}}$ converges to $x$ and is decreasing with respect to the transverse direction $I$ from one side in the trivial flow box $U$ such that $O(x_n) \cap O(x_m) = \emptyset$ for any $n \neq m \in \Z_{>0}$.  

\begin{claim}\label{claim:014}
There is a subsequence $(x_{k_n})_{n \in \Z_{>0}}$ of $(x_n)_{n \in \Z_{>0}}$ such that $\overline{O(x_n)}$ is a simple closed curve. 
\end{claim}

\begin{proof}[Proof of Claim~\ref{claim:014}]
%Indeed, s
Suppose that there is no subsequence $(x_{k_n})_{n \in \Z_{>0}}$ of $(x_n)_{n \in \Z_{>0}}$ such that $\overline{O(x_n)}$ is a simple closed curve. 
By renumbering, each closure $\overline{O(x_n)}$ is not a simple closed curve. 
Then $\omega(x_n) \neq \alpha(x_n)$ for any $n \in \Z_{>0}$. 
Since $|\mathop{\mathrm{Sing}}(v)| < \infty$, there are singular points $\alpha \neq \omega \in \overline{\mathrm{LD}(v)}$ and a subsequence $(x_{k_n})_{n \in \Z_{>0}}$ of $(x_n)_{n \in \Z_{>0}}$ such that $\omega(x_{k_n}) = \omega$ and $\alpha(x_{k_n}) = \alpha$. 
Since $S$ is compact, there are an open disk $D \subseteq \overline{O(x)}$ and three points $x_{k_{n}}, x_{k_{n}+1}, x_{k_{n}+2}$ in the subsequence such that $\partial D = \overline{O(x_{k_{n}})} \cup \overline{O(x_{k_{n}+2})}$
and $x_{k_{n}+1} \in \mathrm{int}D$. 
Then $\overline{\mathrm{LD}(v)} \cap \mathrm{int}D = \emptyset$ and so $x_{k_{n}+1} \notin \overline{\mathrm{LD}(v)}$, which contradicts the choice of $x_{k_{n}+1}$.
\end{proof}

From Claim~\ref{claim:014}, by taking a subsequence $(x_{k_n})_{n \in \Z_{>0}}$ of $(x_{k_n})_{n \in \Z_{>0}}$, we may assume that there is a singular point $\gamma$ with $\omega(x_n) = \alpha(x_n) = \gamma$ for any $n \in \Z_{>0}$. 
Since the isotopic classes of pairwise disjoint loops on a compact surface are finite, by taking a subsequence $(x_{k_n})_{n \in \Z_{>0}}$ of $(x_{k_n})_{n \in \Z_{>0}}$, we may assume that any pair of $\overline{O(x_n)}$ and $\overline{O(x_m)}$ are homotopic relative to $\gamma$. 
% and that there are open disks $D_n$ with $\partial D_n = \{ \gamma \} \sqcup O_n$ for any $n,m \in \Z_{>0}$. 

\begin{claim}\label{claim:015}
For any $n \in \Z_{>0}$, the loop $\overline{O(x_n)}$ is null homotopic and so intersects no closed transversals. 
\end{claim}

\begin{proof}[Proof of Claim~\ref{claim:015}]
Assume $\overline{O(x_n)}$ is not null homotopic.
Then $\overline{O(x_n)}$ does not bound any open disks.  
By renumbering $(x_{k_n})_{n \in \Z_{>0}}$, we may assume that there is a connected component of the complement of $\overline{O(x_n)} \cup \overline{O(x_{n+2})}$ containing $O(x_{n+1})$ is an open disk, which contradicts $x_{n+1} \in \overline{O(x)} \subseteq \overline{\mathrm{LD}(v)}$. 
%Thus $\overline{O(x_n)}$ is null homotopic and so intersects no closed transversals. 
\end{proof}

From Claim~\ref{claim:015}, by taking a subsequence $(x_{k_n})_{n \in \Z_{>0}}$ of $(x_n)_{n \in \Z_{>0}}$, we may assume that there are open disks $D_n$ whose boundary is a loop $\partial D_n = \{ \gamma \} \sqcup O_n$ for any $n \in \Z_{>0}$.

\begin{claim}\label{claim:016}
We may assume that the intersection $O(x_n) \cap U$ is one orbit arc for any $n \in \Z_{>0}$.
\end{claim}

\begin{proof}[Proof of Claim~\ref{claim:016}]
Assume that there is a subsequence  $(x_{k_n})_{n \in \Z_{>0}}$ of $(x_n)_{n \in \Z_{>0}}$ such that $O(x_{k_n}) \cap U$ consists of at least two maximal orbit arcs. 
If the restriction of the return map on $I$ near $I \cap O(x_{k_n})$ is non-orientable, then there are an orbit arc in $O(x_{k_n})$ and a sub-arc of $I$ whose union is a one-sided loop. 
By the finiteness of the genus of $S$, there are at most finitely many pairwise disjoint one-sided loops.
Therefore, since the sequence $(x_{k_n})_{n \in \Z_{>0}}$ on $I$ converges to $x$, by taking a subsequence of $(x_{k_n})_{n \in \Z_{>0}}$, we may assume that the restriction of the return map on $I$ near $I \cap O(x_{k_n})$ is orientable. 
Then the waterfall construction (cf. \cite[Lemma~3.3.7 p.86]{Candel2000foliation})  to the loop consisting an orbit arc in $O(x_{k_n})$ and a transverse arc in $U$ implies the existence of a closed transversal intersecting $O(x_{k_n})$, which contradicts the non-existence of closed transversals from Claim~\ref{claim:015}.  
\end{proof}

Since the complement $S - \overline{O(x_n)}$ consists of two connected components and one of them is an open disk and since the sequence $(x_n)_{n \in \Z_{>0}}$ in $I$ converges to $x$, Claim~\ref{claim:016} implies that the loop $O(x_{n+1}) \sqcup \{ \gamma \} = \overline{O(x_{n+1})}$ for any $n \in \Z_{>0}$ contains a disk which contains either $O(x_{n})$ or $O(x_{n+2})$, which contradicts $x_{n}, x_{n+2} \in \overline{O(x)} \subseteq \overline{\mathrm{LD}(v)}$. 
Thus $\mathrm{LD}(v) \cap \overline{\mathrm{P}(v)} = \emptyset$.
\end{proof}

%We show the openness of $\mathrm{LD}(v)$.

\begin{lemma}\label{lem002}
For a flow $v$ with finitely many singular points on a compact surface, the union $\mathrm{LD}(v)$ is open.
\end{lemma}

\begin{proof}
Since the singular point set $\mathop{\mathrm{Sing}}(v)$ is closed, Lemma~\ref{lem3-04} and Lemma~\ref{lem001} imply that $\overline{\mathop{\mathrm{Cl}}(v) \sqcup \mathrm{P}(v) \sqcup \mathrm{E}(v)} \cap \mathrm{LD}(v) = \emptyset$ and so that the union $\mathrm{LD}(v) = S - \overline{\mathop{\mathrm{Cl}}(v) \sqcup \mathrm{P}(v) \sqcup \mathrm{E}(v)}$ is open.
\end{proof}

The finiteness of singular points in  Lemma~\ref{lem001} and Lemma~\ref{lem002} is necessary, because there is a flow $v$ on closed surfaces with countably many singular points such that $\mathrm{LD}(v)$ is not open (cf. \cite[Example~2.10]{yokoyama2016topological}). 
We obtain the following description of a neighborhood of $\mathrm{E}(v)$.

\begin{lemma}\label{lem3-06}
For a flow $v$ with finitely many singular points on a compact surface, we obtain $\partial^{-} \mathrm{E}(v) = \mathrm{E}(v) \subseteq  \mathrm{int}(\mathrm{E}(v) \sqcup \mathrm{int}\mathrm{P}(v))  = \mathrm{E}(v) \sqcup \mathrm{int}\mathrm{P}(v)$.
%In particular, we have $\overline{\partial^- \mathrm{P}(v)} \cap (\mathrm{E}(v) \sqcup \mathrm{int}\mathrm{P}(v)) = \emptyset$. 
\end{lemma}

\begin{proof}
By the Ma{\v \i}er theorem \cite{mayer1943trajectories} (cf. \cite[Remark~2]{aranson1996maier}), the closure $\overline{\mathrm{E}(v)}$ is the finite union of closures of exceptional orbits and so $\partial^{-} \mathrm{E}(v) = \mathrm{E}(v)$.
The closedness of $\mathop{\mathrm{Sing}}(v)$ and Lemma~\ref{lem3-04} imply $\mathrm{E}(v) \cap \overline{\mathop{\mathrm{Cl}}(v) \sqcup \mathrm{LD}(v)} = \emptyset$. 
Then we have that $\mathrm{E}(v) \subseteq S -  \overline{\mathop{\mathrm{Cl}}(v) \sqcup \mathrm{LD}(v)}
= \mathrm{int}(\mathrm{P}(v) \sqcup \mathrm{E}(v))$.
%\end{proof}
%
%\begin{corollary}\label{cor}
%$\mathrm{E}(v) \subset  \mathrm{int}(\mathrm{E}(v) \sqcup \mathrm{int}\mathrm{P}(v))  = \mathrm{E}(v) \sqcup \mathrm{int}\mathrm{P}(v)$.
%\end{corollary}
%
%\begin{proof}

\begin{claim}\label{claim:017}
%We claim that 
$\overline{\partial^{-} \mathrm{P}(v)} \cap \mathrm{E}(v) = \emptyset$. 
\end{claim}

\begin{proof}[Proof of Claim~\ref{claim:017}]
%Indeed, a
Assume that there is a point $x \in \overline{\partial^{-} \mathrm{P}(v)} \cap \mathrm{E}(v)$. 
Then there are a transverse closed arc $\gamma$, a sequence $\{ x_n \}_{n \in \Z_{>0}}$ on $\gamma \cap \partial^{-} \mathrm{P}(v)$ convergening to $x$ monoronically from a one side, and a sequence $\{ O_n \}_{n \in \Z_{>0}}$ of orbits of $x_n$ in $\partial^{-} \mathrm{P}(v)$ and  with $x \in \partial \gamma \setminus \bigsqcup_n O_n$ such that $x \in \overline{\bigcup_n O_n \cap \gamma}$ and 
$O_n \neq O_m$ for any $n \neq m \in \Z_{>0}$. 

Assume that there are infinitely many orbits $O_{k_n}$ which are homoclinic. 
Taking a subsequence, we may assume that there is a singular point $y$ with $y = \omega(O_n) = \alpha(O_n)$ for any $n \in \Z_{>0}$. 
Since $S$ is compact, taking a subsequence, we may assume that $\{ y \} \sqcup O_{n}$ and $\{ y \} \sqcup O_{n+1}$ are homotopic relative to $\{ y \}$. 
As above, taking a subsequence, for any $n \in \Z_{>0}$, we may assume that a union $\{ y \} \sqcup O_{2n} \sqcup O_{2n+2}$ bounds an open disk $B_n$, and that the open disk $B_n$  contains $O_{2n+1}$ such that $B_n - O_{2n+1}$ consists of two open disks.  
For any $n \in \Z_{>0}$, by $O_{2n+1} \subset \partial^{-} \mathrm{P}(v)$, let $C_n$ be the one of two open disks $B_n - O_{2n+1}$ intersecting $S - \mathrm{P}(v)$. 
Since a closed disk whose boundary is saturated does not intersect $\mathrm{R}(v)$, the disk $C_n$ intersects $S - (\mathrm{P}(v) \sqcup \mathrm{R}(v)) = \mathop{\mathrm{Cl}}(v)$ for any $n \in \Z_{>0}$. 
Since a null homotopic periodic orbit bounds a singular point, the disk $C_n$ contains singular points for any $n \in \Z_{>0}$. 
Hence there are infinitely many singular points, which contradicts the finite hypothesis of singular points. 

Thus, there are at most finitely many orbits $O_{k_n}$ which are homoclinic. 
Taking a subsequence, we may assume that there are singular points $\alpha \neq \omega$ with $\omega = \omega(O_n)$ and $\alpha = \alpha(O_n)$ for any $n$. 
Since $S$ is compact, taking a subsequence, we may assume that the closed arcs $\{ \alpha, \omega \} \sqcup O_{n}$ and $\{ \alpha, \omega \} \sqcup O_{n+1}$ are homotopic relative to $\{ \alpha, \omega \}$. 
As above, taking a subsequence, for any $n \in \Z_{>0}$, we may assume that a union $\{ \alpha, \omega \} \sqcup O_{2n} \sqcup O_{2n+2}$ bounds an open disk $B_n$, and that the open disk $B_n$ contains $O_{2n+1}$ such that $B_n - O_{2n+1}$ consists of two open disks.  
By $O_{2n+1} \subset \partial^{-} \mathrm{P}(v)$, let $C_n$ be the one of two open disks $B_n - O_{2n+1}$ intersecting $S - \mathrm{P}(v)$. 
Since $\partial C_n \subset \{ \alpha, \omega \} \sqcup O_{2n} \sqcup O_{2n+1} \sqcup O_{2n+2}$, 
%an open disk whose boundary is saturated does not intersect $\mathrm{R}(v)$, 
the disk $C_n$ intersects $S - (\mathrm{P}(v) \sqcup \mathrm{R}(v)) = \mathop{\mathrm{Cl}}(v)$. 
Since a null homotopic periodic orbit bounds a singular point, the disk $C_n$ contains singular points. 
Hence, there are infinitely many singular points, which contradicts the finite hypothesis of singular points. 
%This completes the claim. 
\end{proof}

By Claim~\ref{claim:017}, the set difference $\mathrm{int}(\mathrm{E}(v) \sqcup \mathrm{P}(v))  \setminus \overline{\partial^{-} \mathrm{P}(v)} \subseteq (\mathrm{E}(v) \sqcup \mathrm{P}(v)) \setminus \overline{\partial^{-} \mathrm{P}(v)} = \mathrm{E}(v) \sqcup \mathrm{int}\mathrm{P}(v)$ is an open neighborhood of $\mathrm{E}(v)$. 
Since $\mathrm{int}\mathrm{P}(v) \subset \mathrm{int}(\mathrm{E}(v) \sqcup \mathrm{P}(v))$, we have $E \sqcup \mathrm{int}\mathrm{P}(v) \subseteq \mathrm{int}(\mathrm{E}(v) \sqcup \mathrm{int}\mathrm{P}(v))$ and so $\mathrm{int}(\mathrm{E}(v) \sqcup \mathrm{int}\mathrm{P}(v)) = \mathrm{E}(v) \sqcup \mathrm{int}\mathrm{P}(v)$. 
\end{proof}

Recall the frontier $\bp A = \overline{A} - A$ of a subset $A \subseteq S$.
The following frontier's property holds.

\begin{lemma}\label{lem009}
For a flow $v$ with finitely many singular points on a compact surface, we have $\mathrm{E}(v) \subseteq \bp \mathrm{P}(v) \subseteq \mathop{\mathrm{Sing}}(v) \sqcup \partial^{-} \mathop{\mathrm{Per}}(v)  \sqcup \mathrm{E}(v)$. 
\end{lemma}

\begin{proof}
By \cite[Lemma 2.3]{yokoyama2016topological}, we obtain $\mathrm{E}(v) \subseteq \overline{\mathrm{int}\mathrm{P}(v)} \setminus \mathrm{P}(v) \subseteq \bp \mathrm{P}(v)$.
Lemma~\ref{lem002} implies that the union $\mathrm{LD}(v) \sqcup \mathrm{int}\mathop{\mathrm{Per}}(v)$ are open and so that $\overline{\mathrm{P}(v)} - (\mathrm{P}(v) \sqcup \mathrm{E}(v)) = \overline{\mathrm{P}(v)} \setminus (\mathrm{int}\mathop{\mathrm{Per}}(v) \sqcup \mathrm{P}(v) \sqcup \mathrm{R}(v)) \subseteq S - (\mathrm{int}\mathop{\mathrm{Per}}(v) \sqcup \mathrm{P}(v) \sqcup \mathrm{R}(v)) = \mathop{\mathrm{Sing}}(v) \sqcup \partial^{-} \mathop{\mathrm{Per}}(v)$.
Therefore $\bp \mathrm{P}(v) \subseteq \mathop{\mathrm{Sing}}(v) \sqcup \partial^{-} \mathop{\mathrm{Per}}(v)  \sqcup \mathrm{E}(v)$.
\end{proof}

\subsubsection{Properties of the set $\mathrm{P}(v)$ of non-recurrent points}

We obtain the following description of a neighborhood of $\mathop{\mathrm{Per}}(v)$.

\begin{lemma}\label{lem007a}
Let $v$ be a flow with finitely many singular points on a compact surface. 
Then the disjoint union $\mathop{\mathrm{Per}}(v) \sqcup \mathrm{int}\mathrm{P}(v)$ is an open neighborhood of $\mathop{\mathrm{Per}}(v)$.
In particular, we have $\mathop{\mathrm{Per}}(v) \cap \overline{\partial^{-} \mathrm{P}(v)} = \emptyset$.
\end{lemma}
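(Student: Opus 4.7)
The plan is to show, for each $x \in \mathop{\mathrm{Per}}(v)$, that some open tubular neighborhood $U$ of the periodic orbit $\gamma := O(x)$ is entirely contained in $\mathop{\mathrm{Per}}(v) \sqcup \mathrm{int}\mathrm{P}$. Since $\mathrm{int}\mathrm{P}$ is already open in $S$, this will simultaneously show that $\mathop{\mathrm{Per}}(v) \sqcup \mathrm{int}\mathrm{P}$ is open and that it is a neighborhood of $\mathop{\mathrm{Per}}(v)$. The ``in particular'' clause then follows at once: the open set $\mathop{\mathrm{Per}}(v) \sqcup \mathrm{int}\mathrm{P}$ is disjoint from $\delta\mathrm{P} = \mathrm{P} - \mathrm{int}\mathrm{P}$ (because $\mathop{\mathrm{Per}}(v)$ and $\mathrm{P}$ are disjoint), and being open it is automatically disjoint from $\overline{\delta\mathrm{P}}$ as well.

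First I would restrict $U$ so that $U \subseteq \mathop{\mathrm{Per}}(v) \sqcup \mathrm{P}$. Because $\mathop{\mathrm{Sing}}(v)$ is closed and $\gamma$ is compact and regular, $U$ can be taken disjoint from $\mathop{\mathrm{Sing}}(v)$. The identity $\overline{\mathop{\mathrm{Per}}(v)} \cap (\mathrm{LD} \sqcup \mathrm{E}) = \emptyset$ (Lemma 2.3 of \cite{Y}, already used in the proof of Lemma~\ref{lem3-04}) lets me shrink $U$ further to be disjoint from $\mathrm{LD} \sqcup \mathrm{E}$. According as $\gamma$ is two-sided or one-sided, $U$ may be chosen to be an open annulus or an open M\"obius band with core $\gamma$.

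The main step is to prove $U \cap \mathrm{P} \subseteq \mathrm{int}\mathrm{P}$, which I would do by showing that the periodic orbits in $U$ form a closed subset of $U$. Fix a short transverse arc $T$ crossing $\gamma$ once, and after shrinking $U$ arrange that every orbit in $U$ meets $T$ and that the Poincar\'e first return map $f : T \to T$ is defined on all of $T$. A null-homotopic periodic orbit inside $U$ would bound a disk, which by a Poincar\'e--Bendixson (or Poincar\'e index) argument must contain a singular point, contradicting our choice of $U$. Hence every periodic orbit in $U$ wraps essentially around $\gamma$, and after sufficient shrinking it is a fixed point of $f$ in the annular case, or of $f^2$ in the M\"obius case (where $f$ is orientation-reversing and swaps the two sides of $\gamma$ on $T$). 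Since $\mathrm{Fix}(f)$, respectively $\mathrm{Fix}(f^2)$, is closed in $T$, its flow-saturation $U \cap \mathop{\mathrm{Per}}(v)$ is closed in $U$; therefore $U \cap \mathrm{P}$ is open in $U$ and hence in $S$, giving $U \cap \mathrm{P} \subseteq \mathrm{int}\mathrm{P}$.

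The main obstacle I expect is the calibration of the return map in the last step: one has to choose $U$ uniformly thin enough that every orbit in $U$ crosses $T$ and so that return times stay close to the period of $\gamma$, ensuring that every periodic orbit in $U$ is detected by $\mathrm{Fix}(f)$ (resp.\ $\mathrm{Fix}(f^2)$) rather than by some higher iterate. The M\"obius band case adds mild bookkeeping because $f$ is orientation-reversing and only the core $\gamma$ itself is a one-sided periodic orbit in $U$.
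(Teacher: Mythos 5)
Your proposal is correct and follows essentially the same route as the paper: both proofs work with a transverse arc through the periodic orbit, reduce the question to the first return map $f$, and exploit the fact that $\mathrm{Fix}(f)$ is closed in the transversal to conclude that the saturation sits inside $\mathop{\mathrm{Per}}(v)\sqcup\mathrm{int}\mathrm{P}$. The only cosmetic differences are that the paper passes to the orientation double cover and handles the return map by an explicit dichotomy (contracting/expanding vs.\ accumulation of fixed points), whereas you handle the M\"obius band case directly and argue uniformly that $\mathrm{Sat}_v(\mathrm{Fix}(f))$ is closed; you also spell out the short deduction of the ``in particular'' clause, which the paper leaves implicit.
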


\begin{proof}
By Lemma~\ref{lem:ori_cover}, taking the orientation double covering and the double of the surface $S$ if necessary, we may assume that $S$ is closed and orientable.
Fix a point $x \in \mathop{\mathrm{Per}}(v)$.
Take a small collar $U \subseteq S - \mathop{\mathrm{Sing}}(v)$ of $O(x)$.
The flow box theorem 
%(cf. \cite[Theorem~4.2.6, p.95]{markley2023flows}, \cite[Theorem 1.1, p.45]{aranson1996introduction})
 implies that there are an open annulus $\A \subseteq U$ which is also a collar of $O(x)$ and a transverse closed arc $T_{+} \subseteq \overline{\A}$ such that $x$ is contained in $\partial T_{+}$.
Then there are a small subinterval $I_{+} \subseteq T_{+}$ containing $x$ and the first return map $f_{v+}$ on $T_{+}$ induced by $v$ whose domain contains $I_{+}$. 
%: I_{+} \to T_{+}$.
Then $f_{v+}(x) = x$.
Identify $T_{+}$ with $[0,1]$ (resp. $x$ with $0$).

\begin{claim}\label{claim:011}
We may assume that $\mathrm{Sat}_v(T_+) \subseteq \mathop{\mathrm{Per}}(v) \sqcup \mathrm{int}\mathrm{P}(v)$. 
\end{claim}

\begin{proof}[Proof of Claim~\ref{claim:011}]
%Indeed, s
Suppose that the restriction $f_{v+}\vert_{S} \colon I_{+} \to T_{+}$ is either attracting or repelling near zero.
Then $O$ is a limit cycle. 
By taking $T_+$ small, we may assume that $\mathrm{Sat}_v(T_+) \subseteq \mathop{\mathrm{Per}}(v) \sqcup \mathrm{int}\mathrm{P}(v)$.

Suppose that the restriction $f_{v+}\vert_{S} \colon I_{+} \to T_{+}$ is neither attracting nor repelling near zero.
Then zero is an accumulation point of the fixed point set $\mathop{\mathrm{Fix}}(f_{v+})$.
Shortening $T_+$, we may assume that $f_{v+}(1) = 1$ and so $S_+ = T_+$.
Then the saturation of $T_+$ is a closed annulus, and each orbit intersecting $\mathrm{Fix}(f_{v+})$ is contained in
$\mathop{\mathrm{Per}}(v)$ and each orbit intersecting $T_{+} - \mathrm{Fix}(f_v)$ is contained in $\mathop{\mathrm{int}} \mathrm{P}(v)$.
Therefore $\mathrm{Sat}_v(T_+) \subseteq \mathop{\mathrm{Per}}(v) \sqcup \mathrm{int}\mathrm{P}(v)$.
\end{proof}

By symmetry, there is a transverse closed arc $T_{-} \subseteq \overline{\A}$ with $x \in \partial T_{-}$ such that $T_{-} \cap U = \emptyset$ and $\mathrm{Sat}_v(T_-) \subseteq \mathop{\mathrm{Per}}(v) \sqcup \mathrm{int}\mathrm{P}(v)$.
Then the union $T := T_- \cup T_+$ is a transverse closed arc with $x \in \mathop{\mathrm{int}} T$ such that $\mathrm{Sat}_v(T) \subseteq \mathop{\mathrm{Per}}(v) \sqcup \mathrm{int}\mathrm{P}(v)$.
Since $\mathrm{Sat}_v(T)$ is a \nbd of $x$, the disjoint union $\mathop{\mathrm{Per}}(v) \sqcup \mathop{\mathrm{int}} \mathrm{P}(v)$ is a neighborhood of $x$ and so of $\mathop{\mathrm{Per}}(v)$.

In addition, since $(\mathop{\mathrm{Per}}(v) \sqcup \mathop{\mathrm{int}} \mathrm{P}(v)) \cap \partial^{-} \mathrm{P}(v) = \emptyset$, we obtain $\mathop{\mathrm{int}}(\mathop{\mathrm{Per}}(v) \sqcup \mathop{\mathrm{int}} \mathrm{P}(v)) \cap \partial^{-} \mathrm{P}(v) = \emptyset$ and so  $\emptyset = \mathop{\mathrm{int}}(\mathop{\mathrm{Per}}(v) \sqcup \mathop{\mathrm{int}} \mathrm{P}(v)) \cap \overline{\partial^{-} \mathrm{P}(v)} \supseteq \mathop{\mathrm{Per}}(v) \cap \overline{\partial^{-} \mathrm{P}(v)}$ because of $\mathop{\mathrm{Per}}(v) \subseteq  \mathop{\mathrm{int}} (\mathop{\mathrm{Per}}(v) \sqcup \mathop{\mathrm{int}} \mathrm{P}(v))$. 
Therefore $\mathop{\mathrm{Per}}(v) \cap \overline{\partial^{-} \mathrm{P}(v)} = \emptyset$.
\end{proof}

We describe the border of $\mathop{\mathrm{Per}}(v)$. 

\begin{lemma}\label{lem007}
Let $v$ be a flow with finitely many singular points on a compact surface $S$.
Then the border $\partial^{-} \mathop{\mathrm{Per}}(v) \subseteq \Pv \cap \overline{\mathrm{int}\mathrm{P}(v)}$ is the union of periodic orbits contained in the closure of the union of limit cycles.
\end{lemma}

\begin{proof}
By Lemma~\ref{lem:ori_cover}, taking the orientation double covering if necessary, we may assume that $S$ is orientable.
From Lemma~\ref{lem007a}, the union $\mathop{\mathrm{Per}}(v) \sqcup \mathrm{int}\mathrm{P}(v)$ is an open neighborhood of $\mathop{\mathrm{Per}}(v)$ and so $\partial^{-} \mathop{\mathrm{Per}}(v) \subset \Pv \cap \overline{\mathrm{int}\mathrm{P}(v)}$.

Fix a periodic orbit $O \subseteq \partial^{-} \mathop{\mathrm{Per}}(v)$.
If $O$ is a limit cycle, then $O$ is contained in the union of limit cycles.
% \subseteq \overline{\mathrm{int}\mathrm{P}(v)}$.
Thus, we may assume that $O$ is not a limit cycle.
By the compactness of $O$, there is a transverse closed arc $T \subseteq \overline{\A}$ whose boundary contains a point of $O$ and whose interior does not intersect $O$.
Identify $T$ with $[0,1]$ (resp. $O \cap T$ with $0$).
Let $f_v$ be the first return map on $T$ induced by $v$.
Then $f_v(0) = 0$. 
By the compactness of $O$, the flow box theorem implies that the domain of $f_v$ contains a non-degenerate interval $I$ in $[0,1]$ with $0 \in I$.
The fact  $O \subset \overline{\mathrm{int}\mathrm{P}(v)}$ implies that the first return map $f_v$ is not identical. 
Since $O$ is not a limit cycle, the return map $f_v\vert_{I}: I \to T$ is neither attracting nor repelling near zero.
This implies that $0 \in I$ is an accumulation point of the fixed point set $\mathop{\mathrm{Fix}}(f_v)$.
Since $O \subseteq \overline{\mathrm{int}\mathrm{P}(v)}$, the transversal subinterval $I$ has a convergence sequence to zero of fixed points which are either attracting or repelling from at least one side with respect to $f_v$.
This means that $O$ is contained in the closure of the union of limit cycles intersecting $I$.
Therefore, the border $\partial^{-} \mathop{\mathrm{Per}}(v)$ is contained in the closure of the union of limit cycles, and so is contained in the union of periodic orbits of the closure of the union of limit cycles.

Conversely, any periodic orbits contained in the closure of the union of limit cycles are contained in $\partial^{-} \mathop{\mathrm{Per}}(v)$. 
This means that the border $\partial^{-} \mathop{\mathrm{Per}}(v)$ is the union of periodic orbits contained in the closure of the union of limit cycles.
\end{proof}

\subsubsection{Properties of the boundary points, borders, and frontiers}

We summarize the properties of the borders and frontiers as follows.

\begin{proposition}\label{prop4-12}
Let $v$ be a flow with finitely many singular points on a compact surface $S$.
The following statements hold:
\\
$(1)$ 
$\bp \mathrm{LD}(v) = \partial \mathrm{LD}(v)$ $(\mathrm{i.e.} \,\, \mathrm{LD}(v)$ is open $)$.
\\
$(2)$ 
$\bp \mathop{\mathrm{Sing}}(v) = \emptyset$.
\\
$(3)$ 
$\bp \mathop{\mathrm{Per}}(v) \sqcup \bp \mathrm{LD}(v) \sqcup \bp \mathrm{E}(v)
\subseteq \mathop{\mathrm{Sing}}(v) \sqcup \partial^{-} \mathrm{P}(v)$.
\\
$(4)$ 
$\bp \mathrm{P}(v) \subseteq \mathop{\mathrm{Sing}}(v) \sqcup \partial^{-} \mathop{\mathrm{Per}}(v)  \sqcup \mathrm{E}(v)$.
\end{proposition}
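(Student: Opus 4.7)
The plan is to observe that this proposition is a compilation of results already established in the section, so the proof amounts to assembling the relevant lemmas. Because $v$ has finitely many singular points by the standing hypothesis of this section, every ingredient needed is available.

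First I would dispose of assertion 2, which is immediate: the singular point set of any continuous flow is closed, so $\overline{\mathop{\mathrm{Sing}}(v)} = \mathop{\mathrm{Sing}}(v)$ and thus $\sigma \mathop{\mathrm{Sing}}(v) = \overline{\mathop{\mathrm{Sing}}(v)} - \mathop{\mathrm{Sing}}(v) = \emptyset$. This makes $\mathop{\mathrm{Sing}}(v)$ a closed saturated set, a fact used implicitly throughout.

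Next I would invoke Lemma \ref{lem002} for assertion 1: since $\mathrm{LD}$ is open under finiteness of singular points, we have $\mathrm{LD} \cap \overline{S - \mathrm{LD}} = \emptyset$, hence $\partial \mathrm{LD} = \overline{\mathrm{LD}} - \mathrm{LD} = \sigma \mathrm{LD}$. For assertion 3, I would cite Lemma \ref{lem3-04}, which precisely gives $\sigma \mathop{\mathrm{Per}}(v) \cup \sigma \mathrm{LD} \cup \sigma \mathrm{E} \subseteq \delta \mathop{\mathrm{Sing}}(v) \sqcup \delta \mathrm{P} \subseteq \mathop{\mathrm{Sing}}(v) \sqcup \delta \mathrm{P}$; note that these unions are disjoint (as $\mathrm{P}$, $\mathrm{LD}$, $\mathrm{E}$, and $\mathop{\mathrm{Per}}(v)$ partition the non-singular part of $S$), so the $\cup$ in the conclusion is a $\sqcup$ as stated.

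For assertion 4 I would invoke Lemma \ref{lem009}, which yields $\sigma \mathrm{P} \subseteq \mathrm{E} \sqcup \mathop{\mathrm{Sing}}(v) \sqcup \delta \mathop{\mathrm{Per}}(v)$ directly. I expect no serious obstacle here since the hard content was pushed into the earlier lemmas; the only bookkeeping point is to verify that each of the four unions in the conclusions is indeed a disjoint union, which follows from the decomposition $S = \mathop{\mathrm{Sing}}(v) \sqcup \mathop{\mathrm{Per}}(v) \sqcup \mathrm{P} \sqcup \mathrm{LD} \sqcup \mathrm{E}$ together with the observation that $\delta \mathrm{P} \subseteq \mathrm{P}$, $\delta \mathop{\mathrm{Per}}(v) \subseteq \mathop{\mathrm{Per}}(v)$, and $\mathrm{E}$ is disjoint from both. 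The whole argument is a short paragraph-by-paragraph citation.
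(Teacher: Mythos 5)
Your compilation matches what the paper intends: the proposition is stated as a summary with no separate proof, and your citations — closedness of $\mathop{\mathrm{Sing}}(v)$ for assertion~2, Lemma~\ref{lem002} for assertion~1, Lemma~\ref{lem3-04} for assertion~3, and Lemma~\ref{lem009} for assertion~4 — are exactly the right ingredients.

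One small correction to the bookkeeping remark. Your claim that $\sigma\mathop{\mathrm{Per}}(v)$, $\sigma\mathrm{LD}$, $\sigma\mathrm{E}$ are pairwise disjoint \emph{because} $\mathop{\mathrm{Per}}(v)$, $\mathrm{LD}$, $\mathrm{E}$ partition the non-singular part of $S$ does not follow: the shell $\sigma A = \overline{A} - A$ lies \emph{outside} $A$, so disjointness of the original pieces says nothing about disjointness of their shells — a priori a single singular point, or a separatrix in $\delta\mathrm{P}$, could simultaneously be an accumulation point of periodic orbits and of locally dense orbits. Indeed the paper's own Lemma~\ref{lem3-04} writes the left-hand side with $\cup$, not $\sqcup$, so it makes no such disjointness claim; the $\sqcup$ in the proposition statement should be read as loose notation. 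What the proposition actually requires is only the containment, which your citation of Lemma~\ref{lem3-04} (combined with $\delta\mathop{\mathrm{Sing}}(v)\subseteq\mathop{\mathrm{Sing}}(v)$) does establish correctly.
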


\begin{proof}
Assertion $(1)$ is immediate from Lemma~\ref{lem002}.
The finiteness of $\mathop{\mathrm{Sing}}(v)$ implies assertion $(2)$.
Assertion $(3)$ is obtained by Lemma~\ref{lem3-04}.
Assertion $(4)$ is assured by Lemma~\ref{lem009}. 
\end{proof}

%Summarize the properties of the borders as follows.

\begin{proposition}\label{prop4-12.5}
Let $v$ be a flow with finitely many singular points on a compact surface $S$.
The following statements hold:
\\
$(1)$  
$\partial^{-}  \mathop{\mathrm{Sing}}(v) =  \mathop{\mathrm{Sing}}(v)$.
\\
$(2)$ 
$\partial^{-} \mathop{\mathrm{Per}}(v) \subseteq \overline{\mathrm{int}\mathrm{P}(v)}$ is the union of periodic orbits contained in the closure of the union of limit cycles.
\\
$(3)$ 
$\partial^{-} \mathrm{P}(v) = (\bp \mathop{\mathrm{Per}}(v) \cup \bp \mathrm{LD}(v) \cup \bp \mathrm{E}(v)) \setminus \mathop{\mathrm{Sing}}(v)$ consists of connecting separatrices.
\\
$(4)$ 
$\partial^{-} \mathrm{LD}(v) = \emptyset$.
\\
$(5)$ 
$\partial^{-} \mathrm{E}(v) = \mathrm{E}(v) = \bp \mathrm{P}(v) \cap \mathrm{int}(\mathrm{E}(v) \sqcup \mathrm{int}\mathrm{P}(v))$.

Moverover, a disjoint union $\mathop{\mathrm{Sing}}(v) \sqcup \partial^{-} \mathop{\mathrm{Per}}(v) \sqcup \partial^{-} \mathrm{P}(v) \sqcup \mathrm{E}(v)$ is closed. 
\end{proposition}

\begin{proof}
The finiteness of $\mathop{\mathrm{Sing}}(v)$ implies assertion $(1)$.
Assertion $(2)$ is followed from Lemma~\ref{lem007}. 
Proposition~\ref{prop:cs} and Proposition~\ref{prop4-12} imply assertion $(3)$. 
Assertion $(4)$ is obtained by Lemma~\ref{lem002}. 
From Lemma~\ref{lem3-06}, assertion $(5)$ holds.
\end{proof}

\begin{proposition}\label{thm018}
Let $v$ be a flow with finitely many singular points on a compact surface $S$.
The following statements hold:
\\
$(1)$
$\partial \mathop{\mathrm{Per}}(v) \subseteq \mathop{\mathrm{Sing}}(v) \sqcup \partial^{-} \mathop{\mathrm{Per}}(v) \sqcup \partial^{-} \mathrm{P}(v)$.
\\
$(2)$
$\partial \mathrm{LD}(v) = \partial^+ \mathrm{LD}(v) \subseteq \mathop{\mathrm{Sing}}(v) \sqcup \partial^{-}  \mathrm{P}(v)$.
\\
$(3)$ 
$\partial \mathrm{E}(v) = \overline{\mathrm{E}(v)} \subseteq \mathop{\mathrm{Sing}}(v) \sqcup \partial^{-}  \mathrm{P}(v) \sqcup \mathrm{E}(v)$.
\\
$(4)$ 
$\partial \mathrm{P}(v) \subseteq \mathop{\mathrm{Sing}}(v) \sqcup \partial^{-} \mathop{\mathrm{Per}}(v) \sqcup \partial^{-} \mathrm{P}(v) \sqcup \mathrm{E}(v)$.
%\\
%$(5)$ 
%$\partial^{-} \mathrm{P}(v)$ is a union of connecting separatrices.
%\\
%$(6)$ 
%$\partial^{-} \mathop{\mathrm{Per}}(v)$ is the union of periodic orbits contained in the closure of the union of limit cycles.
%\\
%$(7)$ 
%$\mathop{\mathrm{Bd}}(v) = \mathop{\mathrm{Sing}}(v) \sqcup \partial^{-} \mathop{\mathrm{Per}}(v) \sqcup \partial_{\mathop{\mathrm{Per}}(v)} \sqcup \partial^{-} \mathrm{P}(v) \sqcup  \mathop{\mathrm{P}_{\mathrm{sep}}}(v) \sqcup \mathrm{E}(v)$.
\end{proposition}

\begin{proof}
Proposition~\ref{prop4-12} implies assertions (1) - (4).
\end{proof}

The finiteness of singular points is necessary in the previous proposition {\rm(3)}.
In fact, a construction in Figure~\ref{NAC} implies that there is a flow with an orbit in $\partial^{-} \mathrm{P}(v) \cap (\bp \mathrm{LD}(v) \cup \bp \mathrm{E}(v))$ which is not a separatrix.

\subsubsection{Proof of description of the {\rm(}weak{\rm)} border point set}\label{prf:lem44}

We demonstrate the following description of the (weak) border point set.

\begin{proof}[Proof of Lemma~\ref{lem010}]
By the compactness of $S$, assertion {\rm(4)} holds. 
From Proposition~\ref{prop4-12.5}, assertion {\rm(5)} holds. 
%The compactness of $S$ implies that there are at most finitely connected components of $\partial S$ and so that $\mathop{\mathrm{Per}_{1}}(v)$ consists of finitely many orbits. 
%Therefore $\mathop{\mathrm{Bd}}(v)$ is closed if and only if so is $\mathop{\mathrm{BD}}(v)$. 
%The finite union $\partial \mathop{\mathrm{Sing}}(v) \cup \partial \mathop{\mathrm{Per}}(v) \cup \partial \mathrm{P}(v) \cup \partial \mathrm{LD}(v) \cup \partial \mathrm{E}(v) \cup \partial_{\mathop{\mathrm{Per}}(v)}$ of boundaries is closed. 

\begin{claim}\label{claim:018}
Assertion {\rm(3)} holds. 
\end{claim}
\begin{proof}[Proof of Claim~\ref{claim:018}]
By the finiteness of multi-saddles, the union of proper semi-multi-saddle separatrices consists of finitely many orbits and so does the union $\mathop{\mathrm{P}_{\mathrm{ms}}}(v) \sqcup \mathop{\mathrm{P}_{\mathrm{ss}}}(v)$ of semi-multi-saddle separatrices in $\mathop{\mathrm{int}} \mathrm{P}(v)$. 
The compactness of $S$ implies that there are at most finitely connected components of $\partial S$ and so that $\mathop{\partial_{\mathrm{P}(v)}}$ consists of finitely many orbits. 
Therefore, the union $\mathop{\mathrm{P}_{\mathrm{sep}}}(v) = \mathop{\mathrm{P}_{\mathrm{ms}}}(v) \sqcup \mathop{\mathrm{P}_{\mathrm{ss}}}(v) \sqcup \mathop{\partial_{\mathrm{P}(v)}}$ consists of finitely many connecting separatrices.
\end{proof}

\begin{claim}\label{claim:019}
Assertion {\rm(1)} holds. 
\end{claim}
\begin{proof}[Proof of Claim~\ref{claim:019}]
By definition, the finite union $\mathop{\mathrm{Bd}}_{\partial}(v) = \partial \mathop{\mathrm{Sing}}(v) \cup \partial \mathop{\mathrm{Per}}(v) \cup \partial \mathrm{P}(v) \cup \partial \mathrm{LD}(v) \cup \partial \mathrm{E}(v)$ is closed. 
By assertion {\rm(4)}, the union $\partial_{\mathop{\mathrm{Per}}(v)}$ is closed. 
By assertion {\rm(3)}, the closure of the finite union $\mathop{\mathrm{P}_{\mathrm{sep}}}(v)$ of orbits is the finite union of the closures of orbits in $\mathop{\mathrm{P}_{\mathrm{sep}}}(v)$. 
By the fininteness of $\Sv$, we have $\overline{\mathop{\mathrm{P}_{\mathrm{sep}}}(v)} \subseteq \partial^- \Sv \sqcup \mathop{\mathrm{P}_{\mathrm{sep}}}(v)$. 
Threrefore, we obtain $\overline{\mathop{\mathrm{Bd}}_{\mathrm{int}}(v)} = \overline{\mathop{\mathrm{P}_{\mathrm{sep}}}(v) \sqcup \partial_{\mathop{\mathrm{Per}}(v)}} = \overline{\mathop{\mathrm{P}_{\mathrm{sep}}}(v)} \sqcup \partial_{\mathop{\mathrm{Per}}(v)} \subseteq \mathop{\mathrm{Bd}}_{\mathrm{int}}(v) \sqcup \partial^- \Sv \subseteq \mathop{\mathrm{Bd}}_{\mathrm{int}}(v) \sqcup \mathop{\mathrm{Bd}}_{\partial}(v) = \mathop{\mathrm{Bd}}(v)$. 
This means that $\mathop{\mathrm{Bd}}(v)$ is closed. 
The compactness of $S$ implies that there are at most finitely connected components of $\partial S$ and so that $\mathop{\mathrm{Per}_{1}}(v)$ consists of finitely many orbits. 
Therefore, the union $\mathop{\mathrm{BD}}(v) = \mathop{\mathrm{Bd}}(v) \sqcup \mathop{\mathrm{Per}_{1}}(v)$ is closed. 
\end{proof}

\begin{claim}\label{claim:020}
Assertion {\rm(2)} holds. 
\end{claim}
\begin{proof}[Proof of Claim~\ref{claim:020}]
The finiteness of $\mathop{\mathrm{Sing}}(v)$ implies $\partial  \mathop{\mathrm{Sing}}(v) =  \mathop{\mathrm{Sing}}(v)$.
Proposition~\ref{thm018} implies $\partial \mathop{\mathrm{Per}}(v) \subseteq \mathop{\mathrm{Sing}}(v) \sqcup \partial^{-} \mathop{\mathrm{Per}}(v) \sqcup \partial^{-} \mathrm{P}(v)$, $\partial \mathrm{P}(v) \subseteq \mathop{\mathrm{Sing}}(v) \sqcup \partial^{-} \mathop{\mathrm{Per}}(v) \sqcup \partial^{-} \mathrm{P}(v) \sqcup \mathrm{E}(v)$, $\partial \mathrm{LD}(v) \subseteq \mathop{\mathrm{Sing}}(v) \sqcup \partial^{-} \mathrm{P}(v)$, and $\partial \mathrm{E}(v) = \overline{\mathrm{E}(v)} \subseteq  \mathop{\mathrm{Sing}}(v) \cup \partial^{-} \mathrm{P}(v) \sqcup \mathrm{E}(v)$.
Therefore, we have $\mathop{\mathrm{Bd}}_{\partial}(v) = \mathop{\mathrm{Sing}}(v) \sqcup \partial^{-} \mathop{\mathrm{Per}}(v) \sqcup \partial^{-} \mathrm{P}(v) \sqcup \mathrm{E}(v)$ and so $\mathop{\mathrm{Bd}}(v) = \mathop{\mathrm{Sing}}(v) \sqcup \partial^{-} \mathop{\mathrm{Per}}(v) \sqcup \partial^{-} \mathrm{P}(v) \sqcup \mathrm{E}(v) \sqcup \mathop{\mathrm{Bd}}_{\mathrm{int}}(v)$.
%
%Therefore $\mathop{\mathrm{Bd}}(v) = \partial \mathop{\mathrm{Sing}}(v) \cup \partial \mathop{\mathrm{Per}}(v) \cup \partial_{\mathop{\mathrm{Per}}(v)} \cup \partial \mathrm{P}(v) \cup  \mathop{\mathrm{P}_{\mathrm{sep}}}(v) \cup \partial \mathrm{LD}(v) \cup \partial \mathrm{E}(v) = \mathop{\mathrm{Sing}}(v) \sqcup \partial^{-} \mathop{\mathrm{Per}}(v) \sqcup \partial_{\mathop{\mathrm{Per}}(v)} \sqcup \partial^{-} \mathrm{P}(v) \sqcup  \mathop{\mathrm{P}_{\mathrm{sep}}}(v) \sqcup \mathrm{E}(v)$.
%%This completes assertion {\rm(2)}. 
\end{proof}

Suppose that $v$ is quasi-regular. 
From Corollary~\ref{cor:semi-multi-separatrices}, the union $\partial^{-} \mathrm{P}(v)\sqcup \mathop{\mathrm{P}_{\mathrm{ms}}}(v) \sqcup \mathop{\mathrm{P}_{\mathrm{ss}}}(v)$ is the finite union $\mathop{\mathrm{P}_{\mathrm{semi}}}(v)$ of proper semi-multi-saddle separatrices. 
By $\mathop{\mathrm{P}_{\mathrm{sep}}}(v) = \mathop{\mathrm{P}_{\mathrm{ms}}}(v) \sqcup \mathop{\mathrm{P}_{\mathrm{ss}}}(v) \sqcup \mathop{\partial_{\mathrm{P}(v)}}$, we have $\partial^{-} \mathrm{P}(v) \sqcup \mathop{\mathrm{P}_{\mathrm{sep}}}(v) = \partial^{-} \mathrm{P}(v) \sqcup \mathop{\mathrm{P}_{\mathrm{ms}}}(v) \sqcup \mathop{\mathrm{P}_{\mathrm{ss}}}(v) \sqcup \mathop{\partial_{\mathrm{P}(v)}} = \mathop{\mathrm{P}_{\mathrm{semi}}}(v) \sqcup \mathop{\partial_{\mathrm{P}(v)}}$. 
Therefore the union $\partial^{-} \mathrm{P}(v)\sqcup \mathop{\mathrm{P}_{\mathrm{sep}}}(v)$ is the finite union of proper semi-multi-saddle separatrices and separatrices on $\partial S$ between $\partial$-sources and $\partial$-sinks. 
\end{proof}

\subsection{Complement of the weak border point set}
We have the following properties. 
%We show the openness of the complement $S - \mathop{\mathrm{Bd}}(v)$. 

\begin{proposition}\label{lem011}
The following statements hold for a flow $v$ with finitely many on a compact surface $S$: 
\\
$(1)$ The complement $S - \mathop{\mathrm{Bd}}(v)$ is open. 
\\
$(2)$ $S - \mathop{\mathrm{Bd}}(v) = (\mathrm{int}\mathop{\mathrm{Per}}(v) - \partial_{\mathop{\mathrm{Per}}(v)}) \sqcup (\mathrm{int}\mathrm{P}(v) - \mathop{\mathrm{P}_{\mathrm{sep}}}(v)) \sqcup \mathrm{LD}(v)$. 
\\
$(3)$ The invariant subsets $\mathrm{int}\mathop{\mathrm{Per}}(v) - \partial_{\mathop{\mathrm{Per}}(v)}$, $\mathrm{int}\mathrm{P}(v) - \mathop{\mathrm{P}_{\mathrm{sep}}}(v)$, and $\mathrm{LD}(v)$ are open.
\\
$(4)$ The union $\mathrm{LD}(v)/\hat{v}$ is a finite space such that each connected component of $\mathrm{LD}(v)$ is an orbit class.
\\
$(5)$ If $v$ is quasi-regular, then the set differences $\mathrm{P}(v) - \mathop{\mathrm{P}_{\mathrm{semi}}}(v)$ and $\mathrm{P}(v) - (\mathop{\mathrm{P}_{\mathrm{semi}}}(v) \sqcup \mathop{\partial_{\mathrm{P}(v)}}) = \mathrm{int}\mathrm{P}(v) - \mathop{\mathrm{P}_{\mathrm{sep}}}(v)$ are open. 
\end{proposition}

\begin{proof}
Lemma~\ref{lem010} implies assertion {\rm(1)}. 
By $\mathop{\mathrm{Bd}}(v) = \mathop{\mathrm{Sing}}(v) \sqcup \partial^{-} \mathop{\mathrm{Per}}(v) \sqcup \partial_{\mathop{\mathrm{Per}}(v)} \sqcup \partial^{-} \mathrm{P}(v) \sqcup  \mathop{\mathrm{P}_{\mathrm{sep}}}(v) \sqcup \mathrm{E}(v)$, we have $S - \mathop{\mathrm{Bd}}(v) = (\mathrm{int}\mathop{\mathrm{Per}}(v) - \partial_{\mathop{\mathrm{Per}}(v)}) \sqcup (\mathrm{int}\mathrm{P}(v) - \mathop{\mathrm{P}_{\mathrm{sep}}}(v)) \sqcup \mathrm{LD}(v)$.
This means that assertion {\rm(2)} holds.
%
%Since $S$ is compact, the set difference $\mathrm{int}\mathop{\mathrm{Per}}(v) - \partial_{\mathop{\mathrm{Per}}(v)}$ is open. 
By the finiteness of the boundary components, the set difference $\mathrm{int}\mathop{\mathrm{Per}}(v) - \partial_{\mathop{\mathrm{Per}}(v)}$ is open. 
Since $\mathop{\mathrm{Bd}}(v)$ is closed, 
the set difference $\mathrm{int}\mathrm{P}(v) \setminus \mathop{\mathrm{Bd}}(v) = \mathrm{int}\mathrm{P}(v) \setminus (\mathop{\mathrm{Sing}}(v) \sqcup \partial^{-} \mathop{\mathrm{Per}}(v) \sqcup \partial_{\mathop{\mathrm{Per}}(v)} \sqcup \partial^{-} \mathrm{P}(v) \sqcup  \mathop{\mathrm{P}_{\mathrm{sep}}}(v) \sqcup \mathrm{E}(v)) = \mathrm{int}\mathrm{P}(v) - \mathop{\mathrm{P}_{\mathrm{sep}}}(v)$ is open.
Lemma~\ref{lem002} implies the openness of the union $\mathrm{LD}(v)$. 
Thus assertion $(3)$ is true.  
By \cite[Proposition~2.2]{yokoyama2016topological}, the union $\mathrm{LD}(v)$ consists of finitely many orbit classes such that each connected component of $\mathrm{LD}(v)$ is an orbit class.
Assertion $(4)$ is immediate from this finiteness. 

Suppose that $v$ is quasi-regular. 
The set difference $\mathrm{P}(v) - (\mathop{\mathrm{P}_{\mathrm{semi}}}(v) \sqcup \mathop{\partial_{\mathrm{P}(v)}}) = \mathrm{int}\mathrm{P}(v) - \mathop{\mathrm{P}_{\mathrm{sep}}}(v)$ is open. 
Since $\mathop{\partial_{\mathrm{P}(v)}}$ consists of finitely many orbits, the closure $\overline
{\mathop{\mathrm{P}_{\mathrm{semi}}}(v)} \subseteq \mathop{\mathrm{Sing}}(v) \sqcup \mathop{\mathrm{P}_{\mathrm{semi}}}(v)$ is a union of $\mathop{\mathrm{P}_{\mathrm{semi}}}(v)$, $\partial$-sinks, and $\partial$-sources. 
Therefore the set difference $\mathrm{P}(v) - \mathop{\mathrm{P}_{\mathrm{semi}}}(v) = \mathrm{P}(v) \setminus \overline{\mathop{\mathrm{P}_{\mathrm{semi}}}(v)}$ is open. 
\end{proof}

%\subsection{Border point sets for quasi-regular flows}

%The previous lemma and the following statement. 

\begin{proposition}\label{cor:bd01}
The following statements hold for a quasi-regular flow $v$ on a compact surface $S$: 
\\
{\rm(1)} The border point set $\mathop{\mathrm{BD}}(v)$ is the union of singular points, one-sided periodic orbits, the closure of the union of limit cycles, proper semi-multi-saddle separatrices, separatrices on the boundary $\partial S$ between a $\partial$-source and a $\partial$-sink, and exceptional orbits. 
\\
{\rm(2)} The weak border point set $\mathop{\mathrm{Bd}}(v)$ is the union of singular points, periodic orbits on $\partial S$, the closure of the union of limit cycles, proper semi-multi-saddle separatrices, separatrices on the boundary $\partial S$ between a $\partial$-source and a $\partial$-sink, and exceptional orbits. 
%consists of singular points, one-sided periodic orbits, proper semi-multi-saddle separatrices, separatrices on $\partial S$ between $\partial$-sources and $\partial$-sinks, the closure of the union of limit cycles, and exceptional orbits. 
\\
{\rm(3)} 
%If $\mathrm{E}(v) = \emptyset$, then 
The set differences $\mathop{\mathrm{BD}}(v) - \mathrm{E}(v)$ and $\mathop{\mathrm{Bd}}(v) - \mathrm{E}(v)$ consist of finitely many proper orbits and {\rm(}possibly infinitely many{\rm)} periodic orbits in the closure of the union of limit cycles. 
%the set differences $\mathop{\mathrm{BD}}(v) - \partial^{-} \mathop{\mathrm{Per}}(v)$ and $\mathop{\mathrm{Bd}}(v) - \partial^{-} \mathop{\mathrm{Per}}(v)$ consist of finitely many proper orbits. 
\\
{\rm(4)} The following statements are equivalent: 
\begin{quote}
{\rm(4.1)} $\mathop{\mathrm{BD}}(v) \sqcup \mathrm{LD}(v)$ consists of finitely many orbit classes. 
\\
{\rm(4.2)} $\mathop{\mathrm{Bd}}(v) \sqcup \mathrm{LD}(v)$ consists of finitely many orbit classes. 
\\
{\rm(4.3)} There are at most finitely many limit cycles. 
\end{quote}
{\rm(5)} If $v$ is of finite type, then $\mathop{\mathrm{BD}}(v)$ {\rm(resp.} $\mathop{\mathrm{Bd}}(v)${\rm)} is the finite union of singular points, one-sided periodic orbits {\rm(resp.} periodic orbits on $\partial S${\rm)}, limit cycles, proper semi-multi-saddle separatrices, and separatrices on the boundary $\partial S$ between a $\partial$-source and a $\partial$-sink. 
%consists of finitely many proper orbits. 
\end{proposition}

\begin{proof}
From Lemma~\ref{lem010}, we have $\mathop{\mathrm{BD}}(v) = \mathop{\mathrm{Bd}}(v) \sqcup \mathop{\mathrm{Per}_{1}}(v) = \mathop{\mathrm{Sing}}(v) \sqcup \partial^{-} \mathop{\mathrm{Per}}(v) \sqcup \partial_{\mathop{\mathrm{Per}}(v)} \sqcup \mathop{\mathrm{Per}_{1}}(v) \sqcup \mathop{\mathrm{P}_{\mathrm{semi}}}(v) \sqcup \mathop{\partial_{\mathrm{P}(v)}} \sqcup \mathrm{E}(v)$. 

\begin{claim}\label{claim:021}
Assertions $(1)$ and $(2)$ hold.
\end{claim}

\begin{proof}[Proof of Claim~\ref{claim:021}]
By Lemma~\ref{lem010}, then the union $\mathop{\mathrm{P}_{\mathrm{semi}}}(v) \sqcup \mathop{\partial_{\mathrm{P}(v)}}$ is the finite union of proper semi-multi-saddle separatrices and separatrices on $\partial S$ between $\partial$-sources and $\partial$-sinks. 
Lemma~\ref{lem007} implies that $\partial^{-} \mathop{\mathrm{Per}}(v)$ is the union of periodic orbits contained in the closure of the union of limit cycles. 
From Proposition~\ref{prop4-12.5}, since $\partial^{-} \mathop{\mathrm{Per}}(v)$ contains all limit cycles of $v$, the closure of the union of limit cycles is contained in $\overline{\partial^{-} \mathop{\mathrm{Per}}(v)} \subseteq \partial \mathop{\mathrm{Per}}(v) \subseteq \mathop{\mathrm{Sing}}(v) \sqcup \partial^{-} \mathop{\mathrm{Per}}(v) \sqcup \partial^{-} \mathrm{P}(v) \subseteq \mathop{\mathrm{BD}}(v)$. 
Lemma~\ref{lem010} implies that $\partial_{\mathop{\mathrm{Per}}(v)} \sqcup \mathop{\mathrm{Per}_{1}}(v)$ is the finite union of one-sided periodic orbits in $\mathop{\mathrm{int}} \mathop{\mathrm{Per}}(v)$. 
The union of of one-sided periodic orbits is contained in $\partial^{-} \mathop{\mathrm{Per}}(v) \sqcup \partial_{\mathop{\mathrm{Per}}(v)} \sqcup \mathop{\mathrm{Per}_{1}}(v) \subseteq \mathop{\mathrm{BD}}(v)$. 
Therefore assertion {\rm(1)} holds. 
%By definitions of (weak) border point set, 
Since the set difference $\mathop{\mathrm{BD}}(v) - \mathop{\mathrm{Bd}}(v) = \mathop{\mathrm{Per}_{1}}(v)$ is the finite union of one-sided periodic orbits in $\mathop{\mathrm{int}} \mathop{\mathrm{Per}}(v)$, assertion {\rm(2)} is followed by assertion {\rm(1)}. 
\end{proof}

\begin{claim}\label{claim:022-}
The union $\mathop{\mathrm{Sing}}(v) \sqcup \partial_{\mathop{\mathrm{Per}}(v)} \sqcup \mathop{\mathrm{Per}_{1}}(v) \sqcup \mathop{\mathrm{P}_{\mathrm{semi}}}(v) \sqcup \mathop{\partial_{\mathrm{P}(v)}} = \mathop{\mathrm{BD}}(v) - (\partial^{-} \mathop{\mathrm{Per}}(v) \sqcup \mathrm{E}(v))$ is a finite union of singular points, one-sided periodic orbits in $\mathop{\mathrm{int}} \mathop{\mathrm{Per}}(v)$, proper semi-multi-saddle separatrices, and separatrices on the boundary $\partial S$ between a $\partial$-source and a $\partial$-sink.
\end{claim}

\begin{proof}[Proof of Claim~\ref{claim:022-}]
The quasi-regularity of $v$ implies the finiteness of singular points. 
The compactness of $S$ implies that $\partial_{\mathop{\mathrm{Per}}(v)}$ consists of finitely many orbits contained in the boundary $\partial S$ and so that the union $\partial_{\mathop{\mathrm{Per}}(v)} \sqcup \mathop{\mathrm{Per}_{1}}(v)$ of one-sided periodic orbits in $\mathop{\mathrm{int}} \mathop{\mathrm{Per}}(v)$ consists of finitely many orbits. 
The quasi-regularity of $v$ implies that the union $\mathop{\mathrm{P}_{\mathrm{semi}}}(v)$ of proper semi-multi-saddle separatrices consists of finitely many orbits. 
By the finiteness of $\mathop{\mathrm{Sing}}(v)$ and boundary components, the union $\mathop{\partial_{\mathrm{P}(v)}}$ of separatrices between a $\partial$-source and a $\partial$-sink on the boundary $\partial S$ consists of finitely many orbits. 
\end{proof}

\begin{claim}\label{claim:023+}
Assertion $(3)$ holds.
\end{claim}

\begin{proof}[Proof of Claim~\ref{claim:023+}]
%Suppose that $\mathrm{E}(v) = \emptyset$. 
%all recurrent orbits are closed (i.e. $\mathrm{R}(v) = \emptyset$). 
By Claim~\ref{claim:022-}, the set diffrence $\mathop{\mathrm{BD}}(v) - (\partial^{-} \mathop{\mathrm{Per}}(v) \sqcup \mathrm{E}(v)) = \mathop{\mathrm{Sing}}(v) \sqcup \partial_{\mathop{\mathrm{Per}}(v)} \sqcup \mathop{\mathrm{Per}_{1}}(v) \sqcup \mathop{\mathrm{P}_{\mathrm{semi}}}(v) \sqcup \mathop{\partial_{\mathrm{P}(v)}}$ consists of finitely many proper orbits. 
Since $\partial^{-} \mathop{\mathrm{Per}}(v)$ is the union of periodic orbits contained in the closure of the union of limit cycles, assertion {\rm(3)} holds. 
\end{proof}

\begin{claim}\label{claim:022}
Assertion $(4)$ holds.
\end{claim}

\begin{proof}[Proof of Claim~\ref{claim:022}]
If either {\rm(4.1)} or {\rm(4.2)} holds, then {\rm(4.3)} holds. 
Conversely, suppose that there are at most finitely many limit cycles. 
By the finiteness of limit cycles, Lemma~\ref{lem007} implies that $\partial^{-} \mathop{\mathrm{Per}}(v)$ is the finite union of limit cycles. 
The Ma{\v \i}er and Markley works \cite{markley1969poincare,markley1970number} imply the closure $\overline{\mathrm{R}(v)}$ is a finite union of Q-sets. 
By \cite[Proposition~2.2]{yokoyama2016topological}, the union $\mathrm{R}(v)$ of non-closed recurrent orbits consists of finitely many orbit classes.
From Claim~\ref{claim:022-}, the disjoint union $\mathop{\mathrm{BD}}(v) \sqcup \mathrm{LD}(v)$ consists of finitely many orbit classes and so does $\mathop{\mathrm{Bd}}(v) \sqcup \mathrm{LD}(v)$, because of $\mathop{\mathrm{BD}}(v) \subseteq \mathop{\mathrm{BD}}(v)$. 
%The definitions of (weak) border point set imply assertion {\rm(3)}. 
\end{proof}

\begin{claim}\label{claim:023}
Assertion $(5)$ holds.
\end{claim}

\begin{proof}[Proof of Claim~\ref{claim:023}]
Suppose that $v$ is of finite type. 
By Claim~\ref{claim:022-}, the set difference $\mathop{\mathrm{BD}}(v) - \partial^{-} \mathop{\mathrm{Per}}(v) = \mathop{\mathrm{Sing}}(v) \sqcup \partial_{\mathop{\mathrm{Per}}(v)} \sqcup \mathop{\mathrm{Per}_{1}}(v) \sqcup \mathop{\mathrm{P}_{\mathrm{semi}}}(v) \sqcup \mathop{\partial_{\mathrm{P}(v)}}$ is a finite union of singular points, one-sided periodic orbits in $\mathop{\mathrm{int}} \mathop{\mathrm{Per}}(v)$, proper semi-multi-saddle separatrices, and separatrices on the boundary $\partial S$ between a $\partial$-source and a $\partial$-sink.
The finiteness of limit cycles implies that the union $\partial^{-} \mathop{\mathrm{Per}}(v) \sqcup \partial_{\mathop{\mathrm{Per}}(v)} \sqcup \mathop{\mathrm{Per}_{1}}(v)$ is the finite union of limit cycles and one-sided periodic orbits. 
Therefore, the border point set $\mathop{\mathrm{BD}}(v)$ is the finite union of singular points, one-sided periodic orbits, limit cycles, proper semi-multi-saddle separatrices, and separatrices on the boundary $\partial S$ between a $\partial$-source and a $\partial$-sink. 
Since the set difference $\mathop{\mathrm{BD}}(v) - \mathop{\mathrm{Bd}}(v) = \mathop{\mathrm{Per}_{1}}(v)$ is the union of one-sided periodic orbits outside of the boundary $\partial S$, the weak border point set $\mathop{\mathrm{Bd}}(v)$ is the finite union of singular points, periodic orbits on $\partial S$, limit cycles, proper semi-multi-saddle separatrices, and separatrices on the boundary $\partial S$ between a $\partial$-source and a $\partial$-sink. 
\end{proof}

This completes the proof of the proposition. 
\end{proof}

By the previous proof, notice that $\mathop{\mathrm{BD}}(v)$ (resp. $\mathop{\mathrm{BD}}(v)$) for a quasi-regular flow $v$ on a compact surface $S$ is also the union of singular points, one-sided periodic orbits (resp. periodic orbits on $\partial S$), periodic orbits contained in the closure of the union of limit cycles, proper semi-multi-saddle separatrices, separatrices on the boundary $\partial S$ between a $\partial$-source and a $\partial$-sink, and exceptional orbits. 

\section{Decompositions of quasi-regular flows}

In this section, we demonstrate Theorem~\ref{lem0c}. 
We introduce some concepts to describe the proof and show statements to achieve this.

Let  $v$ be a flow on a connected compact surface $S$. 
Recall that a non-trivial circuit is a multi-saddle circuit if it is contained in a multi-saddle connection. 
We demonstrate the existence of a closed transversal with infinitely many intersections for non-closed recurrent orbits. 

\begin{lemma}\label{lem3-02a}
For any point $x \in \mathrm{R}(v)$, there is a closed transversal $\gamma$ through $O(x)$ such that the intersection $\gamma \cap O(x)$ is infinite.
\end{lemma}

\begin{proof}
Fix a point $x \in \mathrm{R}(v)$ and a transverse arc $I \subset U$ such that $x$ is the interior point of $I$. 
Then $\vert I \cap O(x) \vert = \infty$. 
%By Lemma~\ref{loops}, 
By \cite[Lemma~6]{yokoyama2021poincare}, there are an orbit arc $C$ in $O(x)$ and a transverse closed arc $J \subseteq I$ such that the union $\mu := J \cup C$ is a loop with $C \cap J = \partial C = \partial J$ and that the return map along $C$ is orientation-preserving between neighborhoods of $\partial C$ in $I$. 
By the waterfall construction to the loop $\mu$, there is a closed transversal $\gamma$ containing $x$ arbitrarily near $\mu$.
Since $x$ is recurrent, we have $\vert \gamma \cap O(x) \vert = \infty$. 
\end{proof}

\begin{lemma}\label{lem:3-02a}
Each closed transversal through a point $y \in \mathrm{R}(v)$ is essential and intersects $O(y)$ infinitely many times.
\end{lemma}

\begin{proof}
Let $\nu$ be a closed transversal through a point $y$ in $\mathrm{R}(v)$. 
Since $y$ is non-closed recurrent, the intersection $\nu \cap O(y)$ is infinite.
Suppose that $\nu$ is not essential.
Let $S^*$ be the resulting closed surface from $S$ by collapsing all boundary components into singletons.
Then $\nu$ is null homotopic in $S^*$ and so the point $y \in \mathrm{P}(v)$, which contradicts the recurrence of $y$.
\end{proof}

\subsection{Surgeries for flows on surfaces}
Recall that an invariant closed disk is a center disk if and only if it is a union of one center and periodic orbits and is a neighborhood of the center.
\begin{definition}
A closed disk is a {\bf sink (resp. source) disk} if its boundary is a closed transversal and its interior consists of one sink (resp. source) and of orbit arcs of non-recurrent orbits (see Figure~\ref{disks}).
\end{definition}
\begin{figure}
\begin{center}
\includegraphics[scale=0.4]{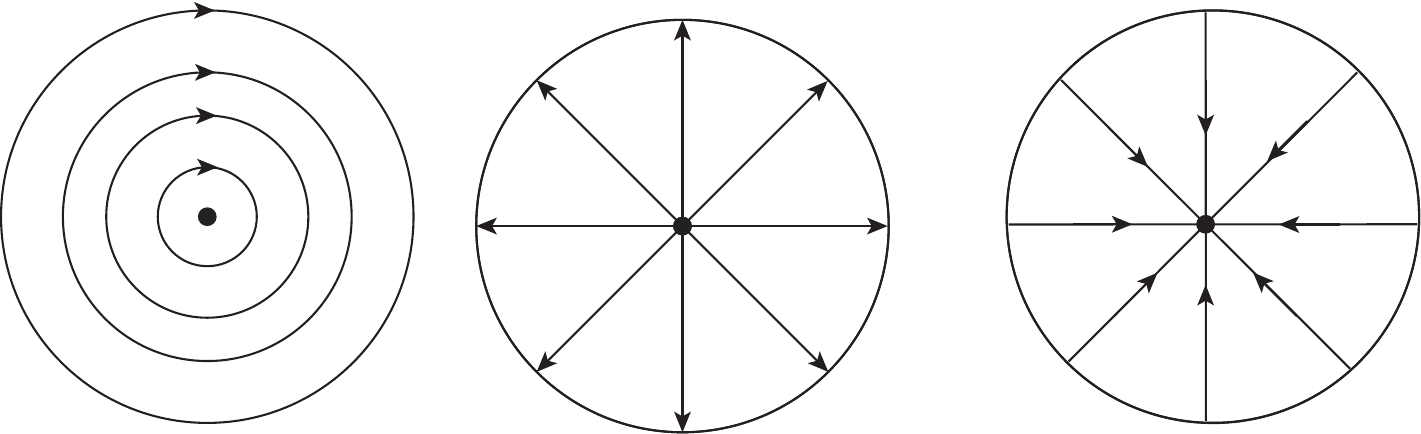}
\end{center}
\caption{A center disk, a sink disk, and a source disk}
\label{disks}
\end{figure}

We define a Cherry flow box as follows (cf. \cite{cherry1937topological} and Figure~1 in \cite{gardiner1985structure}). 

\begin{definition}
A {\bf Cherry flow box} is a disk with orbit arcs on one of the figures to the middle and right in Figure~\ref{cherrybox}. 
\end{definition}

\begin{figure}
\begin{center}
\includegraphics[scale=0.3]{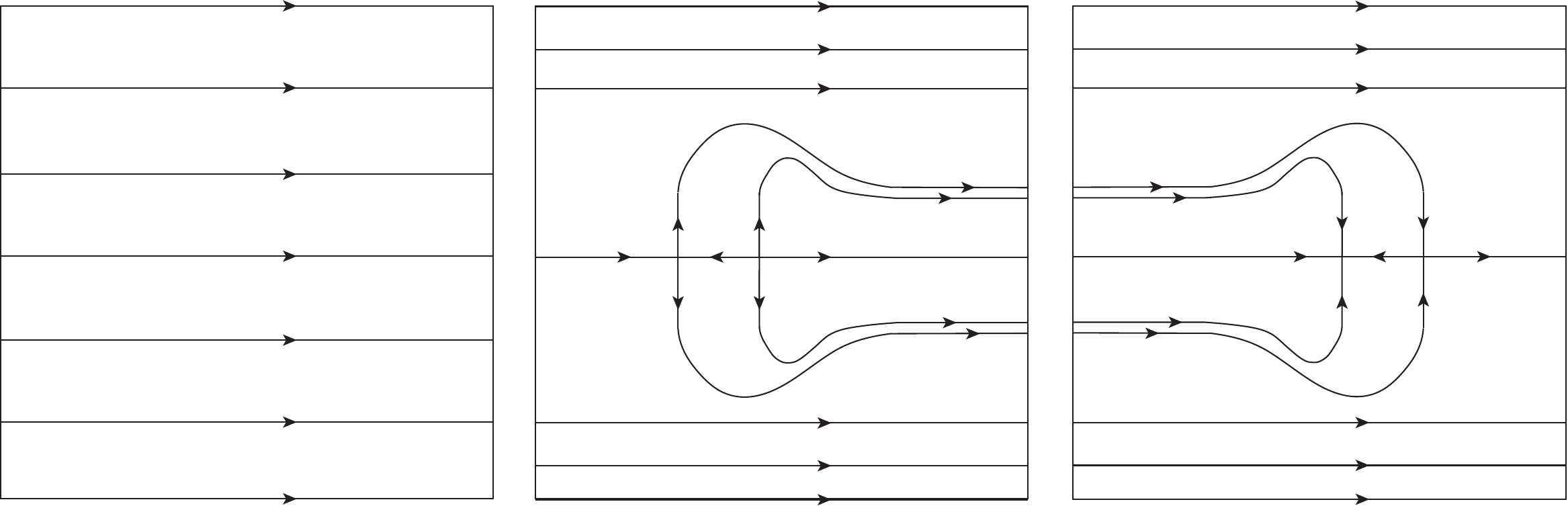}
\end{center}
\caption{A trivial flow box and Cherry flow boxes}
\label{cherrybox}
\end{figure}

\subsubsection{Operations}

A {\bf Cherry blow-up operation} is replacing a trivial flow box with a Cherry flow box.
Define operations as in  Figure~\ref{cutting} as follows: 
\begin{quote}
$\bm{\mathop{\mathrm{C}}_{o}}$ : an operation as cutting a non-null-homotopic periodic orbit and pasting one or two center disks to the new boundary. 
%cutting a periodic orbit which is either essential or and pasting one or two center disks to the new boundary. 
\\
$\bm{\mathop{\mathrm{C}}_{d}}$ : an operation as removing an essential one-sided (resp. two-sided) multi-saddle circuit which is a loop, 
% loop in the multi-saddle connection diagram $D(v)$ 
 and pasting a double covering (resp. two copies) of the loop to the new boundary (resp. new two boundary components). 
\\
 $\bm{\mathop{\mathrm{C}}_{t}}$ : an operation as cutting an essential closed transversal and pasting one sink disk and one source disk.
\begin{figure}
\begin{center}
\includegraphics[scale=0.18]{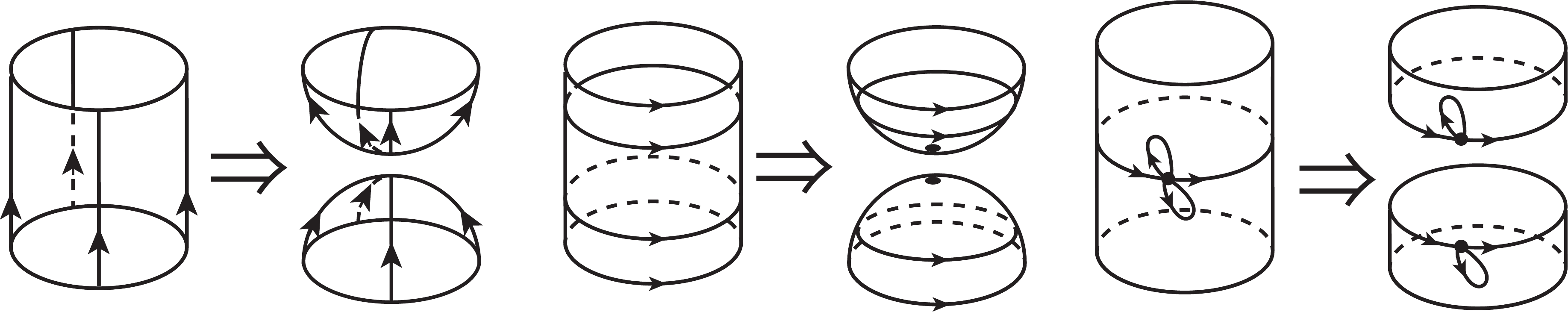}
\end{center}
\caption{Left: An example of the operation $\mathop{\mathrm{C}}_{t}$. Middle: An example of the operation $\mathop{\mathrm{C}}_{o}$. Right: An example of the operation $\mathop{\mathrm{C}}_{d}$}
\label{cutting}
\end{figure}
\\
$\bm{\mathop{\mathrm{Ch}}_{k}}$ : an inverse operation of a Cherry blow-up operation to a multi-saddle separatrix as shown in Figure~\ref{cherrybox04}. 
\begin{figure}
\begin{center}
\includegraphics[scale=0.45]{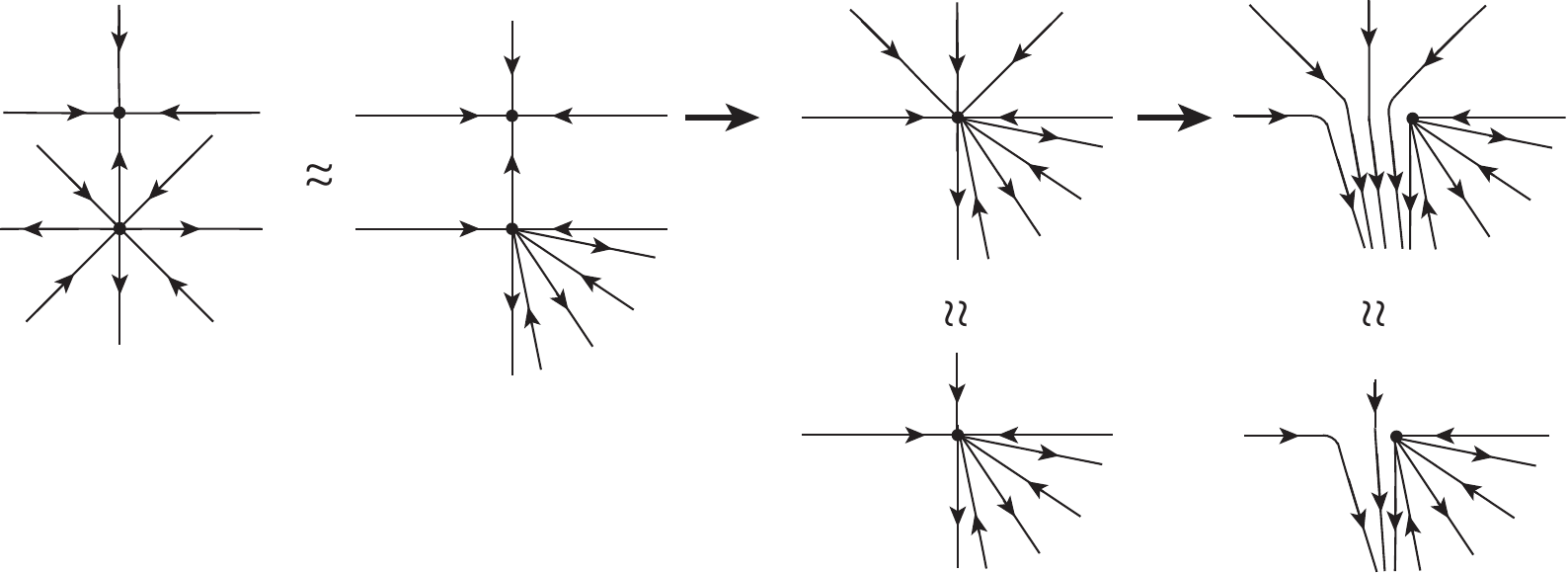}
\end{center}
\caption{Inverse operation of a Cherry blow-up operation $\mathop{\mathrm{Ch}}_{k}$ to a separatrix from a $k$-saddle to a sink}
\label{cherrybox04}
\end{figure}
\end{quote}
By cutting essential parts, we can reduce surface flows into spherical flows.

\begin{lemma}\label{cut-ess-orbit}
Let  $v$ be a quasi-regular flow on a connected compact surface $S$,
$S_{o}$ the resulting surface from $S$ by applying an operation $\mathop{\mathrm{C}}_{o}$ as possible,
$S_{d}$ the resulting surface from $S_{o}$ by applying an operation $ \mathop{\mathrm{C}}_{d}$ as possible, and
$S_{t}$ the resulting surface from $S_{d}$ by applying an operation $\mathop{\mathrm{C}}_{t}$ as possible.
Then each connected component of the surface $S_{t}$ is a subset of a sphere up to homeomorphism.
\end{lemma}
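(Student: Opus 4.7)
The plan is to argue by contradiction and show that if some connected component $\Sigma$ of $S_{t}$ is not a subset of a sphere then at least one of $\mathop{\mathrm{C}}_{o}$, $\mathop{\mathrm{C}}_{d}$, $\mathop{\mathrm{C}}_{t}$ would still apply to it, contradicting the stated maximality. First I would verify that the three reductions are compatible in the prescribed order: $\mathop{\mathrm{C}}_{o}$ pastes in center disks and so introduces no new multi-saddle separatrices and no new essential closed transversals, and $\mathop{\mathrm{C}}_{d}$ pastes along null-homotopic circles and so introduces no new essential periodic orbits and no new essential closed transversals. Consequently on $S_{t}$, and on each connected component $\Sigma$, the flow has no essential periodic orbit, no essential loop in the multi-saddle connection diagram $D$, and no essential closed transversal; quasi-regularity is preserved throughout.

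Next I would use Lemma \ref{lem3-02a} to conclude that $\mathrm{LD}\sqcup\mathrm{E}=\emptyset$ on $\Sigma$: every point of $\mathrm{LD}\sqcup\mathrm{E}$ produces an essential closed transversal. Once Q-sets are eliminated, the generalized Poincar\'e--Bendixson theorem for flows with finitely many singular points gives $\Sigma=\mathop{\mathrm{Sing}}(v)\sqcup\mathop{\mathrm{Per}}(v)\sqcup\mathrm{P}$, and every non-closed proper orbit has $\alpha$- and $\omega$-limit either a singular point or a limit circuit, hence a subset of $D$.

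Now assume for contradiction that $\Sigma$ does not embed in $S^{2}$. Then $\Sigma^{*}$ is non-orientable or has positive genus, so it carries an essential simple closed curve $c$. I would isotope $c$ in $\Sigma^{*}\setminus\mathop{\mathrm{Sing}}(v)$ into general position with $v$ and then remove tangencies by the waterfall construction of Figure \ref{wf}, using the absence of Q-sets to guarantee that adjacent tangencies of opposite type can always be cancelled and that the process terminates. The resulting curve, still essential and isotopic to $c$, must be of one of three types: a closed transversal (contradicting maximality of $\mathop{\mathrm{C}}_{t}$), a periodic orbit or an essential annular family of periodic orbits (contradicting $\mathop{\mathrm{C}}_{o}$), or a concatenation of multi-saddles and separatrices forming a loop in $D$ (contradicting $\mathop{\mathrm{C}}_{d}$).

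The main obstacle will be the tangency-cancellation step and the classification of the terminal position of $c$. The cleanest execution is to pass to the finite decomposition of $\Sigma\setminus D$ afforded by quasi-regularity and the absence of Q-sets: each component is either a periodic annulus bounded by centers and pieces of $D$, or a region of parallel non-closed proper orbits joining two arcs of $D$. The homotopy class of $c$ in $\Sigma^{*}$ must then be supported inside one such cell together with its adjacent pieces of $D$, and the required essential transversal, periodic orbit, or $D$-loop homotopic to $c$ is read off from the structure of that cell.
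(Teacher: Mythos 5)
The central step of your argument --- taking an arbitrary essential simple closed curve $c$ in $\Sigma^{*}$, isotoping it into ``general position'', cancelling tangencies by the waterfall construction, and concluding that the terminal curve is one of an essential closed transversal, an essential periodic orbit, or an essential loop in $D$ --- is exactly the assertion the lemma needs, and you do not actually establish it. You flag it yourself as ``the main obstacle'', but the proposed fix (passing to the cell decomposition of $\Sigma\setminus D$ and ``reading off'' the required object) just restates the goal: there is no argument for why the homotopy class of $c$ must be supported in a single cell of that decomposition, nor for why the terminal position of $c$ under tangency cancellation must be one of the three listed types rather than, for instance, a curve crossing several parabolic or hyperbolic cells with unavoidable tangencies. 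In a non-orientable component (a projective plane, say) this is genuinely delicate; the tangency-cancellation argument for closed curves transverse to a foliation on a surface requires more care and usually fails for curves that necessarily cross separatrices of sinks and sources.

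The paper avoids this by going the other way: after the cuttings it collapses boundaries, uses inverse Cherry operations to eliminate ss-separatrices connecting multi-saddles to sinks or sources, applies Lemma~\ref{lem004} to show every multi-saddle connection in the reduced flow is null homotopic, collapses the disks they bound, and then computes Euler characteristic via the Poincar\'e--Hopf theorem to force the closed surface to be a sphere or projective plane, finally ruling out $\mathbb{RP}^{2}$ by an explicit first-return analysis on a M\"obius band. Your reduction of $\mathrm{LD}\sqcup\mathrm{E}=\emptyset$ via Lemma~\ref{lem3-02a} is fine and agrees with the paper, and your observation that the three operations are order-compatible is correct; but without a proof of the classification of essential curves you invoke, the proposal has a genuine gap precisely where the lemma's content lies.
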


\begin{proof}
By the quasi-regularity, there are at most finitely many non-periodic limit circuits. 
Removing ($\partial$-)$0$-saddles, we may assume that there are no ($\partial$-)$0$-saddles.
Let $v$ be the resulting flow from $v$ on $S$ without ($\partial$-)$0$-saddles, $v_o$ the resulting flow on the resulting compact surface $S_o$ from $v$ by applying an operation $\mathop{\mathrm{C}}_{o}$ as possible, $v_d$ the resulting flow on the resulting compact surface $S_d$ from $v_o$ by applying an operation $ \mathop{\mathrm{C}}_{d}$ as possible, and $v_t$ the resulting flow on the resulting compact surface $S_t$ from $v_d$ by applying an operation $\mathop{\mathrm{C}}_{t}$ as possible.
Since $S$ is compact, the surface $S_{o}$ can be obtained by taking operations $C_o$ to $S$ finitely many times.
By construction, the flow $v_{o}$ has no essential periodic orbits.
Since $S$ is compact, the surface $S_{d}$ can be obtained by taking operations $C_d$ to $S_o$ finitely many times.
By construction, the flow $v_{d}$ has neither essential periodic orbits nor essential loops in $D(v)$.
Since $S_{d}$ is compact, the surface $S_{t}$ can be obtained by taking operations $C_t$ to $S_d$ finitely many times.
Then the flow $v_{t}$ has neither essential closed transversals, essential loops in $D(v)$, nor essential periodic orbits.
Since each Q-set intersects some essential closed transversal (because of Lemma~\ref{lem3-02a} and Lemma~\ref{lem:3-02a}),  the flow $v_{t}$ has no Q-sets but consists of proper orbits. 
%
%We show that each connected component of $S_{t}$ is a subset of a sphere.
%%
%Indeed, r
Replacing $\partial$-sinks (resp. $\partial$-sources) with pairs of a $\partial$-saddle and a sink (resp. a source) (see Figure~\ref{Bsinks}), we may assume that there are neither $\partial$-sinks nor $\partial$-sources on $S_{t}$. 
Then each singular point on the boundary $\partial S_t$ is a multi-saddle.

Let $S'$ be the resulting closed surface from a connected component of $S_{t}$ by collapsing all boundary components into singletons and $v'$ the resulting flow on $S'$. 
Since any orbits of $v_t$ are proper, so are those of $v'$.
Moreover, the flow $v'$ is quasi-regular because of the construction and Proposition~\ref{prop:char_center}. 
By construction, it suffices to show that each connected component of $S'$ is a subset of a sphere up to homeomorphism. 

\begin{claim}\label{claim:024}
Each connected component of the surface $S'$ is a sphere or a projective plane. 
\end{claim}

\begin{proof}[Proof of Claim~\ref{claim:024}]
By the operation $C_o$, each periodic orbit of $v'$ is null homotopic. 
From the operation $C_t$ and the construction of $S'$ form $S_t$, each multi-saddle connection of $v'$ contains no null homotopic loops in it and so is null homotopic. 
By construction, each new singular point of $v'$ is a multi-saddle.
The quasi-regularity of $v$ implies that each singular point of the resulting flow $v'$ is either a multi-saddle, a sink, a source, or a center.
Moreover, each limit circuit is null homotopic in the closed surface $S'$. 
%Collapsing closed disks each of whose boundary components is a limit circuit into singletons, we may assume that there are no essential limit circuits.

For an ss-multi-saddle connection of $v'$ with a $k$-saddle ($k > 0$) such that there is a separatrix from a source to it (resp. from it to a sink), applying the inverse operation of $\mathop{\mathrm{Ch}}_{k}$ to the separatrix with a source (resp. sink), we obtain an ss-multi-saddle connection with a $(k-1)$-saddle (see Figure ~\ref{cherrybox04}).
Applying the operations finitely many times and removing $0$-saddles, we may assume that there are no ss-separatrices connecting a multi-saddle and either a sink or a source.
Therefore each semi-multi-saddle separatrix is a multi-saddle separatrix.
Then, each separatrix which is not contained in the multi-saddle connection diagram $D(v')$ of $v'$ connects two of a sink, a source, and limit circuits.

Since each non-periodic limit circuit of $v'$ is null homotopic, by the finiteness of non-periodic limit circuits, collapsing closed disks each of whose boundary is a non-periodic limit circuit into singletons, the new singletons are either sinks or sources.
Thus, we may assume that $D(v') = \emptyset$ and that each periodic orbit is null homotopic.

Therefore, $v'$ consists of sinks, sources, centers, non-recurrent orbits, and null homotopic periodic orbits. 
In particular, the indices of singular points are positive if singular points exist. 
If there are no singular points, then there are non-contractible periodic orbits or non-closed recurrent orbits, which contradicts the non-existence of non-contractible periodic orbits and non-closed recurrent orbits.  
Thus there are singular points, whose indices are positive.
Then the Euler characteristic is positive. 
Hence, the Poincar\'e-Hopf theorem implies that each connected component of the surface $S'$ is a sphere or a projective plane. 
\end{proof}

\begin{claim}\label{claim:025}
%We show that e
Each connected component of $S'$ is a sphere. 
\end{claim}
\begin{proof}[Proof of Claim~\ref{claim:025}]
Assume that there is a connected component $\mathbb{P}$ of $S'$ is a projective plane. 
By the non-existence of multi-saddles, the Poincar\'e-Hopf theorem implies that there is just one singular point on $\mathbb{P}$. 

Suppose that there is a center. 
Removing a small open center disk $B$, the complement $\mathbb{P} - B$ is a M{\"o}bius band without singular points and so their first return map to a transverse closed arc is an orientation-reversing homeomorphism on a closed interval.   
Then, it has a unique fixed point of the first return map that corresponds to a one-sided periodic orbit, which contradicts the non-existence of essential periodic orbits. 

Suppose that there is a sink (resp. source). 
The Poincar\'e-Hopf theorem implies the $\alpha$-limit (resp. $\omega$-limit) set of each non-singular point is a periodic orbit. 
Since there is only one singular point, which is a sink (resp. source), 
there is a closed disk $B$ whose boundary is a periodic orbit and whose interior is the unstable manifold of the sink (resp. the stable manifold of the source) and so the complement $\mathbb{P} - \mathop{\mathrm{int}} B$ is a M{\"o}bius band without singular points, because the periodic orbit $\partial B$ is not essential. 
The same argument as above implies the contradiction. 
\end{proof}

Thus, each connected component of $S_{t}$ is a subset of a sphere.
\end{proof}

\begin{figure}
\begin{center}
\includegraphics[scale=0.5]{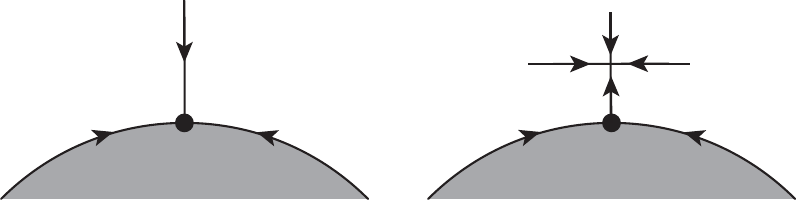}
\end{center}
\caption{Replacing a $\partial$-sink with a pair of a sink and a $\partial$-saddle}
\label{Bsinks}
\end{figure}

\begin{figure}
\begin{center}
\includegraphics[scale=0.3]{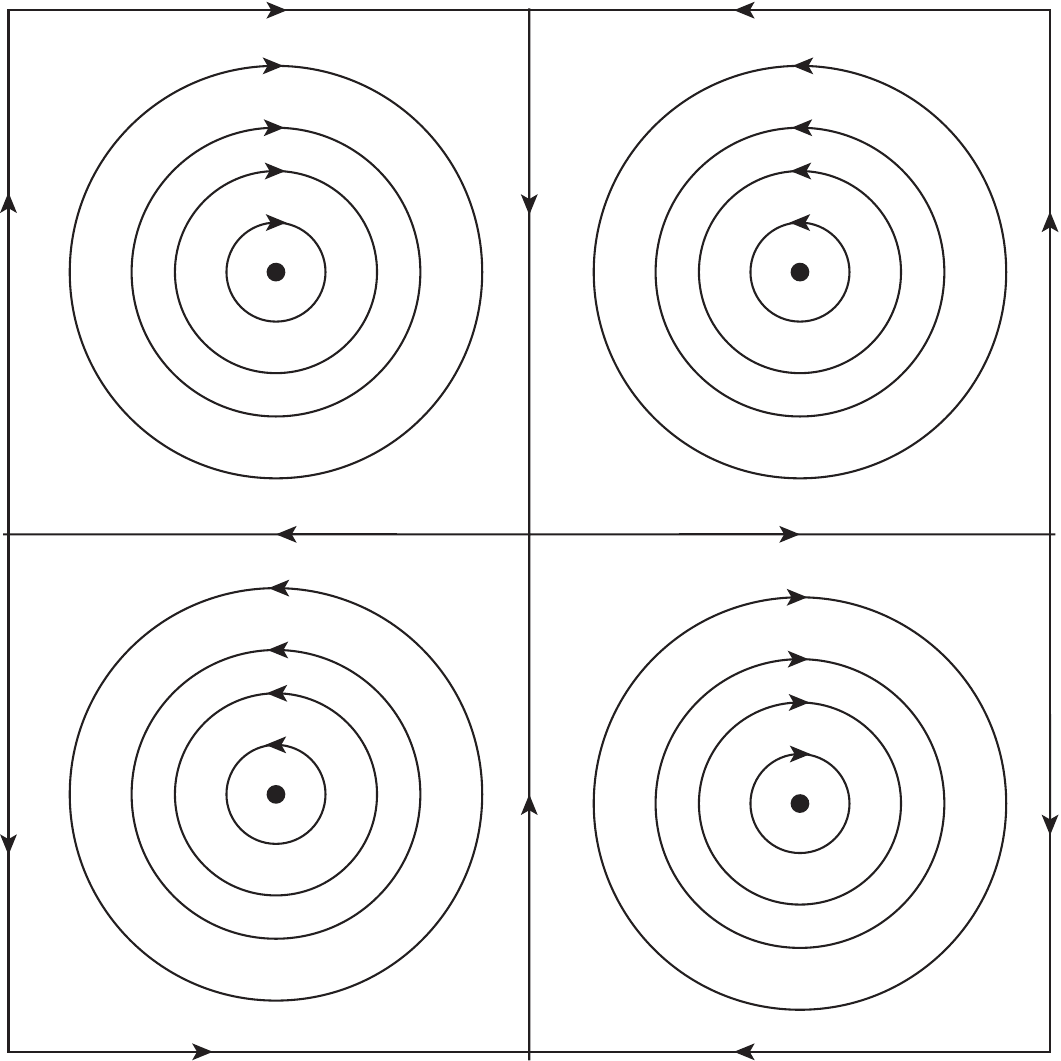}
\end{center}
\caption{A flow on a torus with heteroclinic multi-saddle separatrices but without essential closed transversals or essential periodic orbits}
\label{fig:Tayorflow}
\end{figure}

Note that the operation $\mathop{\mathrm{C}}_{d}$ is necessary in the previous lemma because of the existence of flows generated by Taylor fields as in Figure~\ref{fig:Tayorflow} and \cite[Figure~3.3.2 p.99]{ma2005geometric}.

\subsection{Decomposition of quasi-regular flow using the strict border set}

To demonstrate Theorem~\ref{lem0c}, we define the transverse and vertical boundary of an invariant open subset.

\subsubsection{$\alpha'(x)$-limit sets and $\omega'$-limit sets of points and orbits}
Define $\bm{\alpha'(x)}$ and $\bm{\omega'(x)}$ for a point $x \in X$ as follows \cite{buendia2018markus,markus1954global}: 
$$\alpha'(x) := \alpha(x) \setminus O(x)$$
$$\omega'(x) := \omega(x) \setminus O(x)$$
We call $\alpha'(x)$ the {\bf $\bm{\alpha'}$-limit set} of $x$ and $\omega'(x)$ the {\bf $\bm{\omega'}$-limit set} of $x$. 
For a subset $A \subseteq S$, define $\bm{\alpha'(A)}$ and $\bm{\omega'(A)}$ as follows:  
\[
\alpha'(A) := \bigcup_{x \in A} \alpha'(x)
\]
\[
\omega'(A) := \bigcup_{x \in A} \omega'(x)
\]
Then $\alpha'(O) = \alpha(O) \setminus O$ and $\omega'(O) = \omega(O) \setminus O$ for an orbit $O$. 
%We call $\alpha'(x)$ (resp. $\alpha'(O)$) the $\alpha'$-limit set and $\omega'(x)$ (resp. $\omega'(O)$) the $\omega'$-limit set. 
As mentioned, the definition of $\alpha'(p)$ (resp. $\omega'(p)$) in the sense of Buend{\'\i}a and L\'opes is slightly different from the definition in the sense of Markus and Neumann (cf.~\cite{markus1954global,neumann1975classification,neumann1976global}) but these definitions correspond for proper orbits. 
Note that an orbit $O$ is closed if and only if $\alpha'(O) = \emptyset$ (resp. $\omega'(O) = \emptyset$). 
Since flows of finite type have no non-proper orbits, two definitions of $\alpha'(p)$ (resp. $\omega'(p)$) in the sense of Buend{\'\i}a and L\'opes and in the sense of Markus and Neumann 
%(cf.~\cite{markus1954global,neumann1975classification,neumann1976global})
 correspond to each other for flows of finite type.

\subsubsection{Transverse boundary of an invariant open subset by flows}

Recall that a subset is locally connected if each point in it has a small connected neighborhood. 

For an invariant open subset $A$ with locally connected boundary, define the transverse boundary $\partial_{\pitchfork} A$ of $A$ as follows: 
Put $\Gamma_A := \bigcup_{x \in A} \alpha'(x) \cup \omega'(x)$. 
Let $S_1$ be the metric completion of $S - \Gamma_A$, $\partial_1 := S_1 - (S - \Gamma_A)$ the new boundary of $S_1$, and $\pi \colon S_1 \to S$ the natural continuous surjection associated to the metric completion such that a restriction $\pi\vert_{\partial_1}: \partial_1 \to \Gamma_A$ is an injection except singular points and a restriction $\pi_{S_1 - \partial_1}: S_1 - \partial_1 \to S - \Gamma_A$ is an identity mapping. 
Here, we call that a mapping $f$ is an {\bf injection except for singular points} if the restriction to the complement of the inverse image of the singular point set is injective. 
Then we define the {\bf transverse boundary}
$$\bm{\partial_{\pitchfork} A} := \pi \left(\partial_{S_1} A - (\alpha'_{v_1} (A) \cup \omega'_{v_1} (A)) \right) = \pi \left(\partial_{S_1} A - \bigcup_{x \in A} (\alpha'_{v_1} (x) \cup \omega'_{v_1} (x)) \right)$$
where $v_1$ is the resulting flow on $S_1$ induced by $v$,  $\partial_{S_1} A$ is the boundary of $A$ in $S_1$, $\alpha'_{v_1} (x)$ is the $\alpha'$-limit set of $x$ with respect to $v_1$, and $\omega'_{v_1} (x)$ is the $\omega'$-limit set of $x$ with respect to $v_1$.

\subsubsection{Left and right transverse boundary components of canonical regions}

Let $v$ be a quasi-regular flow with $\mathrm{E}(v) = \emptyset$ on a compact surface. 

For an oriented oriented invariant trivial flow box $B \cong \mathbb{D} = I \times J \subset \R^2$, denote by $\partial_{\pitchfork}^L B$ (resp. $\partial_{\pitchfork}^R B)$ the image by the mapping $\pi \colon S_1 \to S$ in the previous subsection of the boundary component of $\partial_{S_1} B - (\alpha'_{v_1} (B) \cup \omega'_{v_1} (B))$ corresponding to $I \times \{0 \}$ (resp. $I \times \{ 1 \}$) (see the figure to the left in Figure~\ref{flow-boxes}), where $\{0,1\} = \partial J$. 
By definition, we have $\partial_{\pitchfork} B = \partial_{\pitchfork}^L B \cup \partial_{\pitchfork}^R B$. 

For an oriented open periodic annulus $B \cong \mathbb{A} = \{ (x,y) \mid 1< \sqrt{x^2+y^2} < 2 \}$, denote by $\bm{\partial_{\pitchfork}^L B}$ (resp. $\bm{\partial_{\pitchfork}^R B})$ the boundary component of $\partial_{\pitchfork} B$ corresponding to $\{ ( \cos \theta, \sin \theta) \mid \theta \in [0, 2\pi ] \}$ (resp. $\{ ( 2 \cos \theta, 2 \sin \theta) \mid \theta \in [0, 2\pi ] \}$) (see the figure to the middle in Figure~\ref{flow-boxes}). 

For an open transverse annulus $B \cong \A_{\widetilde{+}}$, define $\partial_{\pitchfork}^L B = \partial_{\pitchfork}^R B := \emptyset$. 
We call that $\partial_{\pitchfork}^R B$ (resp. $\partial_{\pitchfork}^L B$) is the {\bf right (resp. left) transverse boundary}.

\subsubsection{Vertical boundary of an invariant open subset by flows}

For an invariant open subset $A$, define $\bm{\partial_{\perp}^{\alpha} A}$, $\bm{\partial_{\perp}^{\omega}}$, and $\bm{\partial_{\perp} A}$ of $A$ as follows: 
$$\partial_{\perp}^{\alpha} A := (\bigcup_{x \in A} \alpha(x)) \setminus A$$
$$\partial_{\perp}^{\omega} A := (\bigcup_{x \in A} \omega(x)) \setminus A$$
$$\partial_{\perp} A := \partial_{\perp}^{\alpha} A \cup \partial_{\perp}^{\omega} A$$
We call that $\partial_{\perp}^{\alpha} A$ (resp. $\partial_{\perp}^{\omega} A$, $\partial_{\perp} A$) is the {\bf $\bm{\alpha}$-vertical} (resp. {\bf $\bm{\omega}$-vertical}, {\bf vertical}) {\bf boundary}.

\subsubsection{Examples of vertical boundaries}
For an invariant trivial flow box $B \cong \mathbb{D} = I \times J \subset \R^2$, the $\alpha$-vertical (resp. $\omega$-vertical) boundary $\partial_{\perp}^{\alpha} B$ (resp. $\partial_{\perp}^{\omega} B)$ is the boundary component of $\partial_{\perp} B$ corresponding to $\{0 \} \times J$ (resp. $\{ 1 \} \times J$) (see the figure to the left in Figure~\ref{flow-boxes}), where $\{0,1\} = \partial I$. 
\begin{figure}
\begin{center}
\includegraphics[scale=0.4]{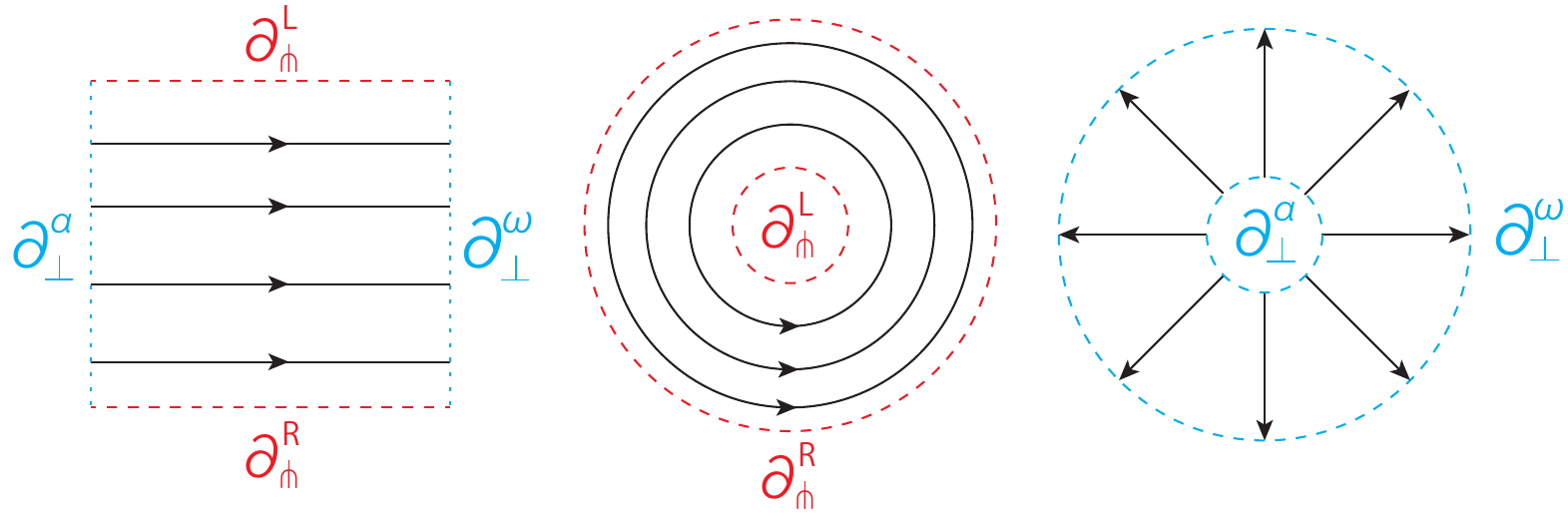}
\end{center}
\caption{A trivial flow box, a periodic annulus, and an open transverse annulus}
\label{flow-boxes}
\end{figure}

For an open periodic annulus $B \cong \A_{\widetilde{+}}$, the vertical boundary components are empty (i.e. $\partial_{\perp}^{\alpha} B = \partial_{\perp}^{\omega} B := \emptyset$). 

For an open transverse annulus $B \cong \mathbb{A} = \{ (x,y) \mid 1< \sqrt{x^2+y^2} < 2 \}$, the $\alpha$-vertical (resp. $\omega$-vertical) boundary $\partial_{\perp}^{\alpha} B$ (resp. $\partial_{\perp}^{\omega}  B)$ is the boundary component of $\partial_{\perp} B$ corresponding to $\{ ( \cos \theta, \sin \theta) \mid \theta \in [0, 2\pi ] \}$ (resp. $\{ ( 2 \cos \theta, 2 \sin \theta) \mid \theta \in [0, 2\pi ] \}$) (see the figure to the right in Figure~\ref{flow-boxes}). 

\subsubsection{Remarks of transverse boundaries and vertical boundarires}

Note that $\partial_{\pitchfork} A \cap \partial_{\perp} A \neq \emptyset$ in general and that metric completions are necessary to define the transverse boundary. 
In fact, there is a flow on an orientable closed surface $\Sigma_2$ with genus two which consists of two singular points $x_1$ and $x_2$ and non-recurrent orbits with the weak border point set $\BD = \bigsqcup_{i=1}^2 x_i \sqcup \bigsqcup_{i=1}^5 O_i$ such that the complement $\Sigma_2 - \BD$ is an invariant trivial flow box $U$ as in Figure~\ref{trans-bdry} with $\partial_{\pitchfork} U \cap \partial_{\perp} U = \bigsqcup_{i=1}^2 x_i \sqcup \bigsqcup_{i=1}^2 O_i \neq \emptyset$ and $\partial_{\pitchfork} U = \bigsqcup_{i=1}^2 x_i \sqcup \bigsqcup_{i=1}^5 O_i \supsetneq O_3 \sqcup O_4 \sqcup O_5 =  \partial U - \bigcup_{x \in U} (\alpha' (x) \cup \omega' (x)) = \partial U - \partial_{\perp} U$.  
\begin{figure}
\begin{center}
\includegraphics[scale=0.25]{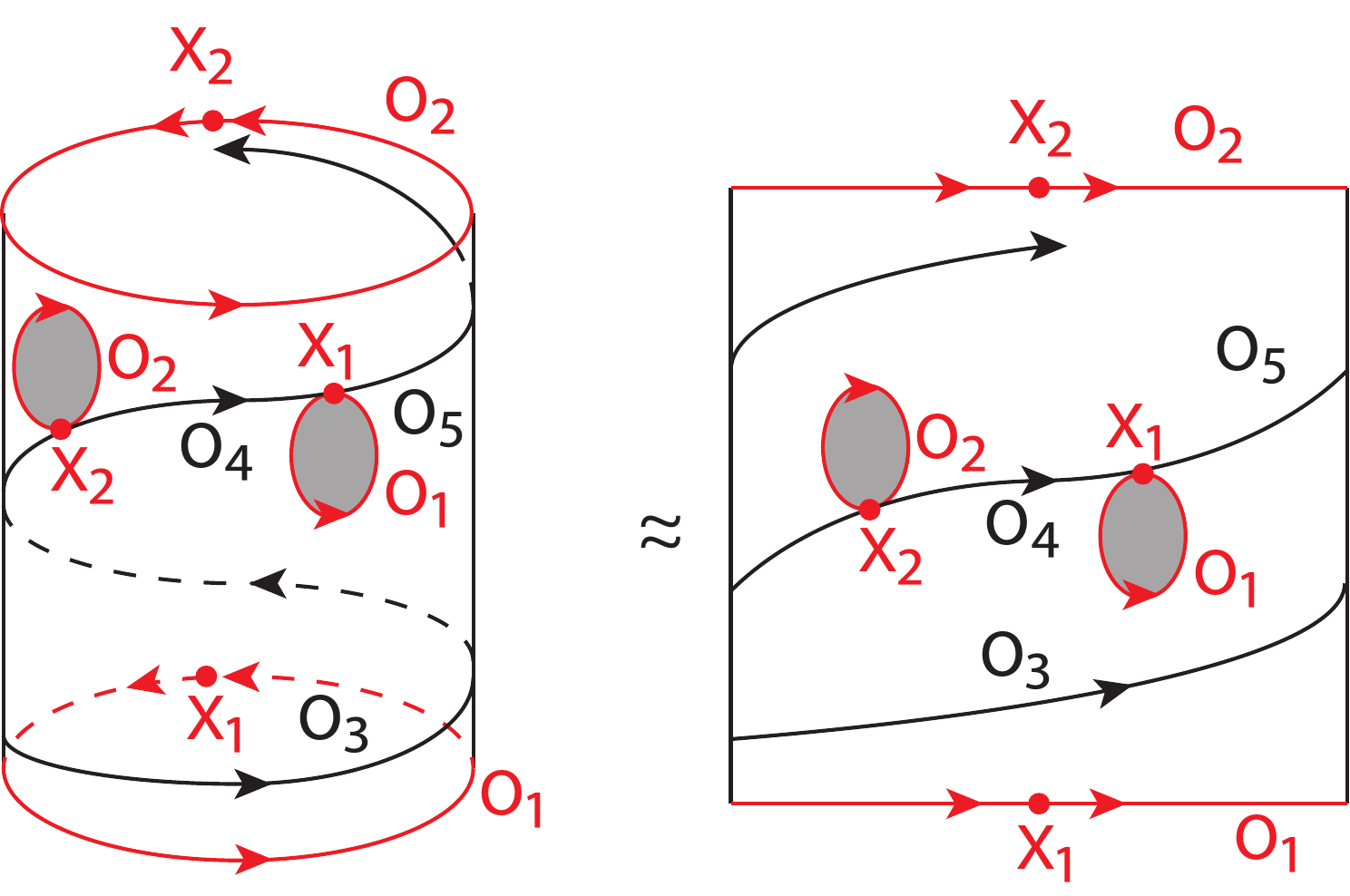}
\end{center}
\caption{A flow on an orientable closed surface of genus two is obtained by gluing a pair of two  circuits labeled by $x_1 \sqcup O_1$ and a pair of two circuits labeled by $x_2 \sqcup O_2$.}
\label{trans-bdry}
\end{figure}
%When neither $\alpha (x)$ nor $\omega (x)$ is a limit circuit, we obtain $\partial_{\pitchfork} A = \partial A - \bigcup_{x \in A} (\alpha' (x) \cup \omega' (x))$. 

%\bigcup \{ B \subseteq S : \text{ invariant } \mid B \leq_{\pitchfork} A \}$$

\subsubsection{Self-connectedness}
Self-connectedness is defined as follows. 

\begin{definition}
A multi-saddle separatrix is {\bf self-connected} (cf. \cite{ma2005geometric}) if either its $\omega$-limit set and $\alpha$-limit set correspond or it connects multi-saddles on the same boundary component.
\end{definition}

\begin{definition}\label{def:sc_mc}
A (multi-)saddle connection is {\bf self-connected} if all the separatrices are self-connected.
\end{definition}

\begin{definition}
The (multi-)saddle connection diagram is {\bf self-connected} if so is each connected component.
\end{definition}

Notice that a homoclinic multi-saddle separatrix is self-connected and a heteroclinic multi-saddle separatrix on the boundary is self-connected.

\subsubsection{Proof of the existence of decompositions for quasi-regular flows}

For a quasi-regular flow $v$ on a compact surface, recall that $\mathop{\mathrm{Bd}}(v) = \mathop{\mathrm{Sing}}(v) \sqcup \partial^{-} \mathop{\mathrm{Per}}(v) \sqcup \partial_{\mathop{\mathrm{Per}}(v)} \sqcup \mathop{\mathrm{P}_{\mathrm{semi}}}(v) \sqcup \mathop{\partial_{\mathrm{P}(v)}} \sqcup \mathrm{E}(v)$ is the union of singular points, the closure of the union of limit cycles, periodic orbits on $\partial S$, proper semi-multi-saddle separatrices, separatrices between a $\partial$-source and a $\partial$-sink on the boundary $\partial S$, and exceptional orbits, because of Proposition~\ref{cor:bd01}. 
Since $\partial S \subseteq \mathop{\mathrm{Bd}}(v)$ for any quasi-regular flow $v$ on a compact surface $S$, we describe the complement of the weak border point set $\mathop{\mathrm{Bd}}(v)$ as follows.

\begin{proof}[Proof of Theorem~\ref{lem0c}]
Lemma~\ref{lem010} implies that the complement $S - \mathop{\mathrm{Bd}}(v)$ is open. 
%Since the boundary $\partial S$ is contained in $\mathop{\mathrm{Bd}}(v)$, taking the double of a manifold if necessary, we may assume that $S$ is closed. 

\begin{claim}\label{claim:027}
We may assume that $\mathop{\partial_{\mathrm{P}(v)}} \sqcup \partial_{\mathop{\mathrm{Per}}(v)}  = \emptyset$ and that there are neither $\partial$-sinks nor $\partial$-sources. 
\end{claim}
\begin{proof}[Proof of Claim~\ref{claim:027}]
We can deform the flow $v$ on $S$ by following operations which preserve the complement $S - \mathop{\mathrm{Bd}}(v)$: 
%Replacing each orbit in $\mathop{\partial_{\mathrm{P}(v)}}$ by a $\partial$-$0$-saddle with two ss-separatrices, we may assume that $\mathop{\partial_{\mathrm{P}(v)}} = \emptyset$. 
Replacing $\partial_{\mathop{\mathrm{Per}}(v)}$ with centers, we may assume that $\partial_{\mathop{\mathrm{Per}}(v)} = \emptyset$.
Replacing $\partial$-sinks (resp. $\partial$-sources) with pairs of a $\partial$-saddle and a sink (resp. a source) as in Figure~\ref{Bsinks}, we may assume that there are neither $\partial$-sinks nor $\partial$-sources, and so that $\mathop{\partial_{\mathrm{P}(v)}} = \emptyset$. 
\end{proof}

Fix a connected component $U$ of $S - \mathop{\mathrm{Bd}}(v)$.
Proposition~\ref{lem011} implies that the open subset $U$ is contained in either $\mathop{\mathrm{int}} \mathop{\mathrm{Per}}(v)$,  $\mathop{\mathrm{int}} (\mathrm{P}(v) - \mathop{\mathrm{P}_{\mathrm{sep}}}(v))$, or $\mathrm{LD}(v)$.

\begin{claim}\label{claim:028}
If $U \subseteq \mathrm{LD}(v)$, then $U$ is in the case $(7)$.
\end{claim}
\begin{proof}[Proof of Claim~\ref{claim:028}]
Suppose that $U \subseteq \mathrm{LD}(v)$. 
Lemma~\ref{lem002} implies that $\mathrm{LD}(v)$ is open. 
By Proposition~\ref{lem011}, the connected component $U$ of $\mathrm{LD}(v)$ is an orbit class. 
Fix a point $x \in U$ and a transverse arc $I \subset U$ where $x$ is the interior point in $I$.
Since $x$ is non-closed recurrent, 
%by Lemma~\ref{loops}, 
by \cite[Lemma~6]{yokoyama2021poincare}, taking a small transverse arc if necessary, by the waterfall construction, there is a closed transversal in $U$. 
%By Lemma~\ref{lem3-02a} and 
By Lemma~\ref{lem:3-02a}, the closed transversal is essential and so is $U$. 
Then $U$ is desired in the case $(7)$.
\end{proof}

\begin{claim}\label{claim:029}
If $U \subseteq \mathop{\mathrm{int}} \mathop{\mathrm{Per}}(v)$, then $U$ is in the cases $(3)$, $(4)$, $(5)$ or $(6)$.
\end{claim}
\begin{proof}[Proof of Claim~\ref{claim:029}]
Suppose that $U \subseteq \mathop{\mathrm{int}} \mathop{\mathrm{Per}}(v)$.
If $\partial U = \emptyset$, then $U$ is either a torus or a Klein bottle and so is desired in the case $(3)$ or $(4)$.
Thus, we may assume that $\partial U \neq \emptyset$.
Then $U$ is an open annulus or an open M{\"o}bius band and so is desired in the case $(5)$ or $(6)$.
\end{proof}

Thus we may assume that $U \subseteq \mathop{\mathrm{int}} (\mathrm{P}(v) - \mathop{\mathrm{P}_{\mathrm{sep}}}(v))$.
The $\omega$-limit (resp. $\alpha$-limit) set of each point in $U$
is either an exceptional Q-set, a limit circuit, or a sink (resp. a source).
We can deform the flow $v$ on $S$ by the following operation which preserve $U$: 
Cutting a non-null-homotopic periodic orbit and pasting one or two center disks
(i.e. taking operation $\mathop{\mathrm{C}}_{o}$),
by induction, we may assume that each periodic orbit is null homotopic.

%\begin{claim}\label{claim:030}
%We may assume that each periodic orbit is null homotopic. 
%%, and that there are neither limit circuits on $\partial U$. 
%\end{claim}
%\begin{proof}[Proof of Claim~\ref{claim:030}]
%Cutting a non-null-homotopic periodic orbit and pasting one or two center disks
%(i.e. taking operation $\mathop{\mathrm{C}}_{o}$),
%by induction, we may assume that each periodic orbit is null homotopic.
%\end{proof}

\begin{claim}\label{claim:030}
We may assume that there are no multi-saddle circuits in $\mathop{\mathrm{Bd}}(v)$.
%there are neither limit circuits on $\partial U$. 
\end{claim}
\begin{proof}[Proof of Claim~\ref{claim:030}]
%We can deform the flow $v$ on $S$ by following operations which preserve $U$: 
%%
%Cutting a non-null-homotopic periodic orbit and pasting one or two center disks
%(i.e. taking operation $\mathop{\mathrm{C}}_{o}$),
%by induction, we may assume that each periodic orbit is null homotopic.
%%Cutting limit cycles and collapsing the new boundary components into singletons if necessary, we may assume that there are no limit cycles on $\partial U$. 
We can deform the flow $v$ on $S$ by the following operation which preserve $U$: 
Applying the operation $\mathop{\mathrm{C}}_{d}$ (i.e. removing the union $\Gamma$ of multi-saddle circuits in $\mathop{\mathrm{Bd}}(v)$ and taking the metric completion $S_1$ of $S - \Gamma$) and collapsing the new boundary components into singletons, we may assume that there are no multi-saddle circuits in $\mathop{\mathrm{Bd}}(v)$, by replacing $S$ with the resulting surface $S_{\mathrm{col}}$. 
Here a new boundary component is a connected component of $S_1 - (S - \Gamma)$. 
% where $\Gamma$ is the union of multi-saddle circuits in $\mathop{\mathrm{Bd}}(v)$ and $S_1$ is the metric completion of $S - \Gamma$. 
%
%Therefore, we may assume that there are no limit circuit on $\partial U$. 
%Moreover, there are neither essential limit circuits nor essential self-connected multi-saddle connections on $S$.
%By construction, new singletons are either sinks, sources, centers, or multi-saddles.
%Thus the resulting flow is quasi-regular and so this completes the proof of the claim. 
\end{proof}

%Cutting the closures of multi-saddle separatrices and collapsing new boundary components into singletons, we may assume that there are no multi-saddle connection separatrices on $\partial U$.
%
%Note $U$ contains no multi-saddles.
Then the $\omega$-limit (resp. $\alpha$-limit) set of each point in $U$ is either an exceptional Q-set or a sink (resp. a source).
Moreover, the multi-saddle connection diagram $D(v)$ is a finite forest (i.e. a finite disjiont union of trees). 
We can deform the flow $v$ on $S$ by the following operation which preserve $U$: 
Collapsing multi-saddle separatrices into singletons, we may assume that there are no multi-saddle connections of $v$. 

\begin{claim}\label{claim:031}
If $\overline{U} \cap \mathrm{E}(v) = \emptyset$, then $U$ is in the cases $(1)$ or $(2)$.
\end{claim}

\begin{proof}[Proof of Claim~\ref{claim:031}]
Suppose that $\overline{U} \cap \mathrm{E}(v) = \emptyset$.
%Note that the case which $U$ is contained in the sphere satisfies the condition $\overline{U} \cap \mathrm{E}(v) = \emptyset$.
The $\omega$-limit (resp. $\alpha$-limit) set of each point in $U$ is a sink (resp. a source).
Fix a sink $s$ which is the $\omega$-limit set of a point in $U$
and a small closed transversal $\gamma$ which is a boundary of a sink disk of the sink $s$.
If $\gamma$ is contained in $U$, then $U$ is an open annulus in $\mathrm{P}(v)$ and so is desired in the case $(2)$.
Thus, we may assume that $\gamma$ is not contained in $U$.
Then there are separatrices $O_i$ between the sink $s$ and multi-saddles such that $O_i \subset \partial U$.
Since each multi-saddle in $\partial U$ connects with a sink or a source, the boundary $\partial U$ consists of sinks, sources, multi-saddles, and semi-multi-saddle separatrices from sinks or to sources.
Taking a double $U_1$ of $\overline{U}$ if necessary, the resulting flow on the resulting closed surface $U_1$ consists of sinks, sources, $0$-saddles, and non-recurrent orbits.
Since the index of each singular point is non-negative, the existence of both a sink and a source and the Poincar\'e-Hopf theorem imply that the double is a sphere and there are exactly one sink and one source except $0$-saddles on the double $U_1$.
This means $U$ is an invariant open trivial flow box in $\mathrm{P}(v)$ and so is desired in the case $(1)$.
\end{proof}

Thus we may assume that $\overline{U} \cap \mathrm{E}(v) \neq \emptyset$.
This implies that $S$ is not spherical.

\begin{claim}\label{claim:032}
We may assume that $\mathrm{LD}(v) = \emptyset$. 
\end{claim}

\begin{proof}[Proof of Claim~\ref{claim:032}]
We can deform the flow $v$ on $S$ by the following operation which preserve $U$: 
%Applying an operation $\mathop{\mathrm{C}}_{d}$ as possible, we may assume that the flow $v$ has neither essential periodic orbits nor essential loops in $D(v)$.
%
Applying an operation $\mathop{\mathrm{C}}_{t}$ to essential loops in $\mathrm{LD}(v) \subset S - \overline{U}$ as possible, 
%Cutting an essential closed transversal in $\mathrm{LD}(v)$ and pasting one sink disk and one source disk, 
we may assume that $\mathrm{LD}(v) = \emptyset$. 
\end{proof}

\begin{claim}\label{claim:033}
The invariant subset $U$ is in the case $(1)$.
\end{claim}

\begin{proof}[Proof of Claim~\ref{claim:033}]
%We show that $U$ is an invariant trivial flow box in $\mathrm{P}(v)$ desired in the case $(1)$, by cutting $U$ into small open trivial flow boxes.
%Indeed, c
Cutting an essential closed transversal $\gamma_i$ intersecting $\mathrm{E}(v)$ and pasting one sink disk and one source disk (i.e. applying an operation $\mathop{\mathrm{C}}_{t}$) as possible, let $T$ be the resulting surface from $S$, $w$ the resulting flow on $T$, and $U^- \subseteq T$ the resulting subset of $U \setminus \bigcup_{i} \gamma_i$.
By Lemma~\ref{cut-ess-orbit}, the resulting surface $T$ is spherical. 
Let $U_k$ be the connected components of $U^-$ with $U^- = \bigsqcup_{k} U_k$.
Denote by $\widetilde{U}$ (resp. $\widetilde{U}_k$)
the saturation of $U^-$ (resp. $U_k$) by $w$.
By construction, each connected component $\widetilde{U}_k$ of $\widetilde{U}$ is an open trivial flow box such that $\omega(x) = a$ and $\alpha(x) = b$ for the new sink $a$ and the new source $b$. 
Then each connected component $U_k$ of $U \setminus \bigcup_{i} \gamma_i$ is an invariant open trivial flow box in $T$ with respect to $w$ whose vertical boundary components correspond to closed intervals $I_{k,\alpha}, I_{k,\omega} \subset \bigcup_{i} \gamma_i \cap U \subset S$.
In other words, each connected component $U_k \subset T$ has one $\alpha$-vertical boundary corrsponding to $I_{k,\alpha} = I_{k-1,\omega}$ and one $\omega$-vertical boundary corrsponding to $I_{k,\omega} = I_{k+1,\alpha}$.
Since $U = \bigsqcup_{k} U_k \sqcup I_{k,\omega}$ is constructed by pasting $\alpha$-vertical boundaries and $\omega$-vertical boundaries, we have $U$ is an invariant open trivial flow box in $\mathrm{P}(v)$ and so is desired in the case $(1)$.
\end{proof}

This completes the classification of connected components of the complement $S - \mathop{\mathrm{Bd}}(v)$. 

By Proposition~\ref{cor:bd01}, the weak border point set $\mathop{\mathrm{BD}}(v)$ consists of singular points, one-sided periodic orbits, proper semi-multi-saddle separatrices, separatrices on $\partial S$ between $\partial$-sources and $\partial$-sinks, the closure of the union of limit cycles, and finitely many exceptional Q-sets.
Moreover, the union $\mathop{\mathrm{Bd}}(v) \sqcup \mathrm{LD}(v)$ consists of finitely many orbit classes. 
Then the set difference $\mathop{\mathrm{Bd}}(v)  - \mathrm{E}(v)$ consists of finitely many proper orbits. 

\begin{claim}\label{claim:034}
Assertions {\rm(b)} and {\rm(c)} hold.
\end{claim}

\begin{proof}[Proof of Claim~\ref{claim:034}]
Fix a connected component $U$ of $\mathrm{LD}(v) \setminus \mathop{\mathrm{BD}}(v)$ and a point $x \in U$.  
%Suppose that $x \in \mathrm{LD}(v)$. 
\cite[Proposition~2.2]{yokoyama2016topological} implies that  $U \subseteq \mathrm{LD}(v)$ and $\overline{O(x)} = \overline{O(y)}$ for any $y \in U$. 
This implies assertion {\rm(b)}. 

Lemma~\ref{lem3-04} and Proposition~\ref{prop:cs} imply that the intersection $\mathop{\mathrm{BD}}(v) \cap \partial^+ \overline{O(x)} \subseteq  \partial^+ \mathrm{LD}(v) \subseteq  \partial^{-} \mathop{\mathrm{Sing}}(v) \sqcup \partial^{-} \mathrm{P}(v)$ consists of finitely many multi-saddle separatrices and singular points. 
This implies assertion {\rm(c)}. 
\end{proof}

\begin{claim}\label{claim:035}
%If $\omega(x) \cap \mathrm{E}(v) \neq \emptyset$, then a
Assertion {\rm(a)} holds.
\end{claim}

\begin{proof}[Proof of Claim~\ref{claim:035}]
Fix a connected component $U$ of $\mathrm{P}(v) \setminus \mathop{\mathrm{BD}}(v)$ and a point $x \in U$.  
Then $U$ is an invariant trivial flow box. 

%Thus we may assume that $x \in \mathrm{P}(v) \setminus \mathop{\mathrm{BD}}(v)$. 
Suppose that $\omega(x) \cap \mathrm{E}(v) \neq \emptyset$. 
By the construction in Claim~\ref{claim:033}, the $\omega$-limit sets of any point in $U$ is the exceptional Q-set $\omega(x)$.  
By symmetry, the $\alpha$-limit set $\alpha(y)$ of any point $y \in U$  corresponds to $\alpha(x)$. 
This implies assertion {\rm(a)}. 

Thus we may assume that $\bigcup_{y \in U} \omega(y) \cap \mathrm{E}(v) = \emptyset$. 
By symmetry,  we may assume that $\bigcup_{y \in U} \alpha(y) \cap \mathrm{E}(v) = \emptyset$. 
From Poincar{\'e}-Bendixson theorem for a quasi-regular flow, the $\omega$-limit set $\omega(y)$ of any point $y \in U$ is either a sink, a $\partial$-sink, or a semi-attracting limit circuit. 
By the flow fox theorem, the $\omega$-limit set $\omega(z)$ of each point $z$ in some small \nbd of any point $y \in U$ corresponds to $\omega(y)$, which is either a sink, a $\partial$-sink, or a semi-attracting limit circuit. 
The open condition implies that the $\omega$-limit set $\omega(y)$ of any point $y \in U$  corresponds to $\omega(x)$. 
By symmetry, the $\alpha$-limit set $\alpha(y)$ of any point $y \in U$  corresponds to $\alpha(x)$. 
This implies assertion {\rm(a)}. 
\end{proof}

\begin{claim}\label{claim:036}
Assertion {\rm(d)} holds.
\end{claim}

\begin{proof}[Proof of Claim~\ref{claim:036}]
By Lemma~\ref{lem010}, we obtain $\mathop{\mathrm{BD}}(v) - \mathrm{E}(v) = \mathop{\mathrm{Sing}}(v) \sqcup \partial^{-} \mathop{\mathrm{Per}}(v) \sqcup \partial_{\mathop{\mathrm{Per}}(v)} \sqcup \mathop{\mathrm{Per}_{1}}(v) \sqcup \partial^{-} \mathrm{P}(v)\sqcup \mathop{\mathrm{P}_{\mathrm{sep}}}(v)$. 
Let $U$ be a connected component of $S - \mathop{\mathrm{Bd}}(v)$ and $\partial$ a connected component of $\partial U$ with $\mathrm{E}(v) \cap \partial = \emptyset$. 
Then $\partial \subseteq \mathop{\mathrm{BD}}(v) - \mathrm{E}(v) = \mathop{\mathrm{Sing}}(v) \sqcup \partial^{-} \mathop{\mathrm{Per}}(v) \sqcup \partial_{\mathop{\mathrm{Per}}(v)} \sqcup \mathop{\mathrm{Per}_{1}}(v) \sqcup \partial^{-} \mathrm{P}(v)\sqcup \mathop{\mathrm{P}_{\mathrm{sep}}}(v)$. 
If $\mathop{\mathrm{Per}}(v) \cap \partial \neq \emptyset$, then the connectivity of $\partial$ implies that $\partial$ is a periodic orbit and so that assertion {\rm(d)} holds. 
Thus we may assume that $\partial \subseteq \mathop{\mathrm{Sing}}(v) \sqcup \partial^{-} \mathrm{P}(v)\sqcup \mathop{\mathrm{P}_{\mathrm{sep}}}(v)$. 
From Lemma~\ref{lem010}, the union $\mathop{\mathrm{Sing}}(v) \sqcup \partial^{-} \mathrm{P}(v)\sqcup \mathop{\mathrm{P}_{\mathrm{sep}}}(v)$ is a finite union of singular points, proper semi-multi-saddle separatrices, and separatrices on $\partial S$ between $\partial$-sources and $\partial$-sinks. 
%and multi-saddle separatrices. 
Therefore, assertion {\rm(d)} holds. 
\end{proof}

This completes the proof. 
\end{proof}

%The quasi-regularity in Claim~\ref{claim:035} is necessary for 
The previous result can be simplified by modifying $\mathop{\mathrm{Bd}}(v)$. 
To achieve this, we show the following statement. 

\begin{lemma}\label{lem:reduction}
Let $v$ be a quasi-regular flow on a compact surface $S$. 
For any connected component $U$ of $S - \mathop{\mathrm{Bd}}(v)$ and $S - \mathop{\mathrm{BD}}(v)$ which is a trivial flow box or a periodic annulus, each of the left and right transverse boundaries of $U$ is an immersed line which is a finite union of closed orbits, proper semi-multi-saddle separatrices, and separatrices on $\partial S$ between $\partial$-sources and $\partial$-sinks. 
%Moreover, if the above $U$ satisfies $\mathrm{E}(v) \cap \partial U \neq \emptyset$, then $\mathrm{E}(v) \cap \partial_{\pitchfork} U = \emptyset$. 
\end{lemma}

\begin{proof}
By definition of $\mathop{\mathrm{BD}}(v)$, we have $\mathop{\mathrm{BD}}(v) = \mathop{\mathrm{Bd}}(v) \sqcup \mathop{\mathrm{Per}_{1}}(v)$. 
Therefore, it suffices to show the assertion for the case $\mathop{\mathrm{Bd}}(v)$. 
Let $U$ be a connected component of $S - \mathop{\mathrm{Bd}}(v)$ which is either a trivial flow box or periodic annulus. 

\begin{claim}\label{claim:037}
If $U$ is an invariant periodic annulus, then assertion holds.
\end{claim}

\begin{proof}[Proof of Claim~\ref{claim:037}]
Suppose that $U$ is an invariant periodic annulus. 
Then $\partial U = \partial_{\pitchfork} U$. 
The quasi-regularity implies that the intersection $\mathop{\mathrm{Sing}}(v) \cap \partial \mathop{\mathrm{Per}}(v)$ consists of centers and multi-saddles. 
By Propositions~\ref{prop4-12.5} and Propositions~\ref{thm018}, we have $\partial U \subseteq \mathop{\mathrm{Cl}}(v) \sqcup \partial^- \mathrm{P}(v)$ is a finite union of centers, multi-saddles, periodic orbits, and multi-saddle separatrices.
\end{proof}

Thus, we may assume that $U$ is an invariant trivial flow box. 

\begin{claim}\label{claim:038}
If $\mathrm{E}(v) \cap \partial U = \emptyset$, then assertion holds.
\end{claim}

\begin{proof}[Proof of Claim~\ref{claim:038}]
Suppose that $\mathrm{E}(v) \cap \partial U = \emptyset$. 
By Theorem~\ref{lem0c}(d), the boundary $\partial U$ is a finite union of closed orbits, proper semi-multi-saddle separatrices, and  separatrices on $\partial S$ between $\partial$-sources and $\partial$-sinks. 
%the finite union of proper semi-multi-saddle separatrices and separatrices on $\partial S$ between $\partial$-sources and $\partial$-sinks. 
%By Theorem~\ref{lem0c}(d), any boundary component of a connected component of $S - \mathop{\mathrm{BD}}(v)$ which does not intersect $\mathrm{E}(v)$ is a finite union of closed orbits, proper semi-multi-saddle separatrices, and separatrices on $\partial S$ between $\partial$-sources and $\partial$-sinks. 
\end{proof}

Thus, we may assume that $\mathrm{E}(v) \cap \partial U \neq \emptyset$. 

\begin{claim}\label{claim:039}
%We claim that 
$\mathrm{E}(v) \cap \partial_{\pitchfork} U = \emptyset$. 
\end{claim}

\begin{proof}[Proof of Claim~\ref{claim:039}]
%Indeed, a
Assume there is a point $x \in \mathrm{E}(v) \cap \partial_{\pitchfork} U$. 
Then $\omega(x)$ or $\alpha(x)$ is an exceptional Q-set. 
By time reversion if necessary, we may assume that $\omega(x)$ is an exceptional Q-set. 
Then $x$ is positively recurrent. 
By the orientation reversion if necessary, we may assume that $O^+(x) \subset \partial_{\pitchfork}^L U$. 
Since $x$ is positively recurrent, there is an open transverse arc $I \subset U$ whose saturation $v(I) = U$ such that $\partial (\bigcup_{y \in I} O^+(y)) \cap \partial_{\pitchfork}^L U = \{x \} \sqcup O^+(x)$. 
From Theorem~\ref{lem0c}(a), we have that $\omega(y) = \omega(z)$ for any points $y,z \in U$. 
By the generalization of the Poincar{\'e}-Bendixson theorem, the $\omega$-limit set of a point $y$ in $U$ is either a semi-attracting circuit (i.e. $\partial$-sink, a sink, a semi-attracting limit circuit) or an exceptional Q-set.
Fix a point $y \in U$. 

Assume that the $\omega$-limit set $\omega(y)$ is a semi-attracting circuit. 
Then there is a semi-attracting collar basin $\A$ of $\omega(y)$ whose boundary $\partial \A$ is a closed transversal such that $O^+(y) \cap \A \neq \emptyset$ and $x \notin \A$. 
For any point $z \in U$, since $\omega(y) = \omega(z)$, we have $O^+(z) \cap \A \neq \emptyset$. 
Since the intersection $U \cap \partial \A$ is an open connected subset of the circle $\partial \A$, the intersection $U \cap \partial \A$ is an open interval. 
Therefore, the intersection $ \partial_{\pitchfork} U \cap \partial \A$ consists of two points, and so the intersection $O^+(x) \cap \partial \A$ is a singleton. 
Since $\A$ is a semi-attracting collar basin, the $\omega$-limit set $\omega(x)$ is the semi-attracting circuit $\omega(y)$, which contradicts that $\omega(x)$ is an exceptional Q-set. 
Thus, the $\omega$-limit set of a point $y$ in $U$ is an exceptional Q-set. 

%If $x \in \omega(y)$, then $x \notin \partial_{\pitchfork} U$. 
%Thus $x \notin \omega(y)$. 
%Then $O(x) \cap \omega(y) = \emptyset$. 
If $\omega(x) = \omega(y)$, then $x \notin \partial_{\pitchfork} U$. 
Thus $\omega(x) \neq \omega(y)$. 
Then there is an open transverse arc $I$ intersecting $(\omega(y) \cap E(v))$ such that $I \cap O(x) = \emptyset$, and there are a subarc $I' \subset I$ and an orbit arc $J \subset \omega(y)$ whose union $I' \cup J$ is a loop. 
By the waterfall construction to the loop $I' \cup J$, there is a closed transversal $\gamma$ with $\gamma \cap \omega(y) \cap E(v) \neq \emptyset$ and $\gamma \cap O(x) = \emptyset$. 
%In particular, we have $\gamma \cap O(x) = \emptyset$. 
As in the proof of Theorem~\ref{lem0c}, cutting $\gamma$ and pasting one sink disk and one source disk (i.e. applying an operation $\mathop{\mathrm{C}}_{t}$), let $S'$ be the resulting surface from $S$, $v'$ the resulting flow, and $U^- \subseteq S'$ the resulting subset of $U \setminus \gamma$.
Then $v'$ is a quasi-regular flow on the compact surface $S'$ and the subset $U^-$ is a disjoint union of invariant trivial flow boxes $U^-_i$ in $\mathrm{P}(v')$ each of which is contained in the complement $S' - \mathop{\mathrm{Bd}}(v')$. 
Since the $\omega$-limit set of a point in $U^-$ with respect to $v'$ is a sink, the $\omega$-limit set of any point of $\partial_{\pitchfork} U^-_i$ is either the sink or a multi-saddle on $\partial U^-_i$. 
This means that $\omega_{v'}(x)$ is either the sink or a multi-saddle, which contradicts that $\omega_{v'}(x) = \omega_v(x)$ is an exceptional Q-set. 
This implies that $\mathrm{E}(v) \cap \partial_{\pitchfork} U = \emptyset$. 
\end{proof}

Therefore $\partial_{\pitchfork} U \subseteq \mathop{\mathrm{BD}}(v) \setminus \mathrm{R}(v) = \mathop{\mathrm{BD}}(v) \cap (\mathop{\mathrm{Cl}}(v) \sqcup \mathrm{P}(v))$. 

\begin{claim}\label{claim:040}
%We claim that 
$\partial_{\pitchfork} U \cap \mathop{\mathrm{Per}}(v) = \emptyset$. 
\end{claim}

\begin{proof}[Proof of Claim~\ref{claim:040}]
%Indeed, a
Assume there is a point $x \in \mathop{\mathrm{Per}}(v) \cap \partial_{\pitchfork} U$. 
Then there is a closed transverse arc $\mu$ such that $x$ is a boundary component of $\mu$ and $\mu - \{ x \} \subset U$. 
Applying the flow box theorem to $\mu$, the invariance of $U$ implies that $U$ is a collar of $\mu$. 
This implies that $U$ is not simply connected and so is not a trivial flow box, which contradicts that $U$ is an open disk. 
\end{proof}

Since $\mathop{\mathrm{Sing}}(v)$ is finite, by $\partial_{\pitchfork} U \subseteq \mathop{\mathrm{Bd}}(v) \cap (\mathop{\mathrm{Sing}}(v) \sqcup \mathrm{P}(v))$, Lemma~\ref{lem010} implies that $\partial_{\pitchfork} U \subseteq \mathop{\mathrm{Sing}}(v) \sqcup \mathop{\mathrm{P}_{\mathrm{semi}}}(v) \sqcup \mathop{\partial_{\mathrm{P}(v)}}$ is a finite union of singular points, proper semi-multi-saddle separatrices, and separatrices on $\partial S$ between $\partial$-sources and $\partial$-sinks. 
\end{proof}

Recall that a periodic orbit is one-sided if and only if it is either a boundary component of a surface or has a small neighborhood which is a M{\"o}bius band. 
%Denote by $\bm{\mathop{\mathrm{BD}}(v)}$ the union of $\mathop{\mathrm{Bd}}(v)$ and one-sided periodic orbits.
%In other words, the union $\mathop{\mathrm{BD}}(v)$ is the union of singular points, one-sided periodic orbits, the closure of the union of limit cycles, non-singular orbits from or to multi-saddles, non-singular orbits between a $\partial$-source and a $\partial$-sink on the boundary $\partial S$, and exceptional orbits. 
Theorem~\ref{lem0c} and Lemma~\ref{lem:reduction} imply the following statement.

\begin{theorem}\label{homogeneity}
Each connected component of $S - \mathop{\mathrm{BD}}(v)$ for a quasi-regular flow $v$ on a compact surface $S$ is one of the following invariant open subsets exclusively:
\\
$(1)$
A trivial flow box in $\mathrm{P}(v)$, whose orbit space is an open interval,
\\
$(2)$
An annulus in $\mathrm{P}(v)$, whose orbit space is a circle,
\\
$(3)$ 
A torus in $\mathop{\mathrm{Per}}(v)$, whose orbit space is a circle,
\\
$(4)$ 
An annulus in $\mathop{\mathrm{Per}}(v)$, whose orbit space is an open interval, or
\\
$(5)$ 
An essential subset in $\mathrm{LD}(v)$, whose orbit class space is a singleton.

Moreover, the following statements hold: 
\\
{\rm(a)} The $\omega$-limit $(\mathrm{resp.}$ $\alpha$-limit$)$ set of a point in a connected component in $\mathrm{P}(v) \setminus \mathop{\mathrm{BD}}(v)$ is the $\omega$-limit $(\mathrm{resp.}$ $\alpha$-limit$)$ set of any point in the connected component. 
\\
{\rm(b)} The closure of a point in a connected component in $\mathrm{LD}(v)$ is the closure of any point in the connected component. 
\\
{\rm(c)} Any boundary component of a connected component of $S - \mathop{\mathrm{BD}}(v)$ which does not intersect $\mathrm{E}(v)$ is a finite union of closed orbits, proper semi-multi-saddle separatrices, and separatrices on $\partial S$ between $\partial$-sources. 
\\
{\rm(d)} Each of the left and right transverse boundaries of any connected components of $S - \mathop{\mathrm{BD}}(v)$ in the cases $(1)$ and $(4)$ is an immersed line which is a finite union of closed orbits, proper semi-multi-saddle separatrices, and separatrices on $\partial S$ between $\partial$-sources and $\partial$-sinks. 
\end{theorem}

%A more general result holds for flows of weakly finite-like type (see Theorem~\ref{cor:bd1}). 
%
For a non-quasi-regular flow $v$ on a compact surface, note that the transverse boundaries of connected components of the complement of the border point set $\mathop{\mathrm{BD}}(v)$ in the cases $(1)$ and $(4)$ in the previous corollary are not connected in general and that the $\omega$-limit sets of points in the domain are different from each other in general (see Figure~\ref{ex-disk}).
\begin{figure}
\begin{center}
\includegraphics[scale=0.3]{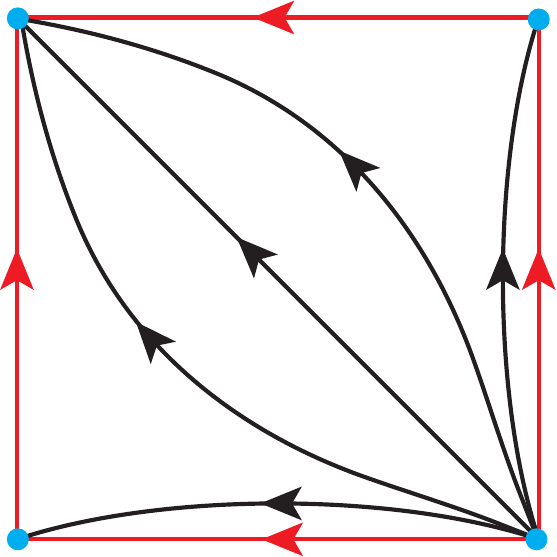}
\end{center}
\caption{The complement of the border point set $\mathop{\mathrm{BD}}(v)$ in a closed disk is an open disk whose left (resp. right) transverse boundary is a disjoint union of two open intervals and whose $\omega$-vertical boundary consists of three points such that each $\omega$-limit set is a singular point.}
\label{ex-disk}
\end{figure}

\section{Enumeration of flows of finite type}

In this section, we demonstrate the enumerability of flows of finite type. 
To show this, we construct the finiteness of $\mathop{\mathrm{Bd}}(v)$.

\subsection{Description of components of the decompositions and their boundaries}

Denote by  $\mathop{\mathrm{Sing}_c}(v)$ the set of centers, and recall that $D_{\mathrm{ss}}(v)$ is the ss-multi-saddle connection diagram. 
Propositions~\ref{prop4-12.5} and Proposition~\ref{thm018} imply the following description. 
% $\mathop{\mathrm{Bd}}(v)$ as follows.

\begin{lemma}\label{lem:dss}
The following statements hold for a quasi-regular flow $v$ on a compact surface $S$:
\\
$(1)$ The disjoint union $\partial^{-} \mathrm{P}(v) \sqcup  \mathop{\mathrm{P}_{\mathrm{sep}}}(v) = \mathop{\mathrm{P}_{\mathrm{semi}}}(v) \sqcup \mathop{\partial_{\mathrm{P}(v)}}$ is the finite union of multi-saddle separatrices, ss-separatrices, and non-closed orbits connecting a $\partial$-sink and a $\partial$-source on the boundary $\partial S$.
\\
$(2)$ The disjoint union $(\mathop{\mathrm{Sing}}(v) - \mathop{\mathrm{Sing}_c}(v)) \sqcup \partial^{-} \mathop{\mathrm{Per}}(v) \sqcup \mathop{\mathrm{P}_{\mathrm{semi}}}(v) \sqcup \mathrm{E}(v)$ is closed.
\\
$(3)$ 
$D_{\mathrm{ss}}(v) - ((\mathop{\mathrm{Sing}}(v) - \mathop{\mathrm{Sing}_c}(v)) \sqcup  \mathop{\mathrm{P}_{\mathrm{semi}}}(v) \sqcup \mathrm{E}(v)) \subseteq \partial^{-} \mathop{\mathrm{Per}}(v)$. 
\end{lemma}

\begin{proof}
Lemma~\ref{lem010} implies assertion {\rm(1)}. 

\begin{claim}\label{claim:041}
Assertion {\rm(2)} holds. 
\end{claim}

\begin{proof}[Proof of Claim~\ref{claim:041}]
Quasi-regularity implies that $(\mathop{\mathrm{Sing}}(v) - \mathop{\mathrm{Sing}_c}(v))$ is  finite and so closed. 
By Proposition~\ref{prop4-12}, we have $\overline{\partial^{-} \mathop{\mathrm{Per}}(v)} \subseteq \partial^{-} \mathrm{P}(v) \sqcup \partial^{+} \mathrm{P}(v) \subseteq (\mathop{\mathrm{Sing}}(v) - \mathop{\mathrm{Sing}_c}(v)) \sqcup \partial^{-} \mathop{\mathrm{Per}}(v) \sqcup \partial^{-} \mathrm{P}(v) \subseteq (\mathop{\mathrm{Sing}}(v) - \mathop{\mathrm{Sing}_c}(v)) \sqcup \partial^{-} \mathop{\mathrm{Per}}(v) \sqcup \mathop{\mathrm{P}_{\mathrm{semi}}}(v)$ and $\overline{\mathrm{E}(v)} = \partial^+ \mathrm{E}(v) \sqcup \mathrm{E}(v) \subseteq (\mathop{\mathrm{Sing}}(v) - \mathop{\mathrm{Sing}_c}(v)) \sqcup \mathop{\mathrm{P}_{\mathrm{semi}}}(v) \sqcup \mathrm{E}(v)$. 

From Lemma~\ref{lem002}, the union $\mathrm{LD}(v)$ is open.
Quasi-regularity implies that any limit circuits are contained in a union $(\mathop{\mathrm{Sing}}(v) - \mathop{\mathrm{Sing}_c}(v)) \sqcup \mathop{\mathrm{P}_{\mathrm{semi}}}(v)$. 
By the generalization of the Poincar{\'e}-Bendixson theorem, quasi-regularity implies that each of $\omega$-limit set and $\alpha$-limit set of a point in $\mathop{\mathrm{P}_{\mathrm{semi}}}(v)$ is either a singular point in $\mathop{\mathrm{Sing}}(v) - \mathop{\mathrm{Sing}_c}(v)$, a limit cycle in $\overline{\partial^{-} \mathop{\mathrm{Per}}(v)}$, a limit circuit in $(\mathop{\mathrm{Sing}}(v) - \mathop{\mathrm{Sing}_c}(v)) \sqcup \mathop{\mathrm{P}_{\mathrm{semi}}}(v)$, or an exceptional Q-set in $\overline{\mathrm{E}(v)}$. 
Therefore, we have $\overline{\mathop{\mathrm{P}_{\mathrm{semi}}}(v)} \subseteq (\mathop{\mathrm{Sing}}(v) - \mathop{\mathrm{Sing}_c}(v)) \cup \mathop{\mathrm{P}_{\mathrm{semi}}}(v) \cup \overline{\partial^{-} \mathop{\mathrm{Per}}(v)} \cup \overline{\mathrm{E}(v)} \subseteq (\mathop{\mathrm{Sing}}(v) - \mathop{\mathrm{Sing}_c}(v)) \sqcup \partial^{-} \mathop{\mathrm{Per}}(v) \sqcup \mathop{\mathrm{P}_{\mathrm{semi}}}(v) \sqcup \mathrm{E}(v)$. 
This completes the proof of the claim. 
\end{proof}

Recall that the ss-multi-saddle connection diagram $D_{\mathrm{ss}}(v)$ is the union of multi-saddles, multi-saddle separatrices, ss-separatrices, and ss-components.

\begin{claim}\label{claim:042}
$D_{\mathrm{ss}}(v) \subseteq (\mathop{\mathrm{Sing}}(v) - \mathop{\mathrm{Sing}_c}(v)) \sqcup \partial^{-} \mathop{\mathrm{Per}}(v) \sqcup \mathop{\mathrm{P}_{\mathrm{semi}}}(v) \sqcup \mathrm{E}(v)$. 
\end{claim}

\begin{proof}[Proof of Claim~\ref{claim:042}]
By Lemma~\ref{lem010} and Proposition~\ref{thm018}, an ss-component is either a singular point in $\mathop{\mathrm{Sing}}(v) - \mathop{\mathrm{Sing}_c}(v)$, a limit circuit in $(\mathop{\mathrm{Sing}}(v) - \mathop{\mathrm{Sing}_c}(v)) \sqcup \mathop{\mathrm{P}_{\mathrm{semi}}}(v)$, or an exceptional Q-set in $\overline{\mathrm{E}(v)} \subseteq (\mathop{\mathrm{Sing}}(v) - \mathop{\mathrm{Sing}_c}(v)) \sqcup \mathop{\mathrm{P}_{\mathrm{semi}}}(v) \sqcup \mathrm{E}(v)$. 
%Quasi-regularity implies that any limit circuits are contained in a union $(\mathop{\mathrm{Sing}}(v) - \mathop{\mathrm{Sing}_c}(v)) \sqcup \mathop{\mathrm{P}_{\mathrm{semi}}}(v)$. 
Since all multi-saddle separatrices and all ss-separatrices are contained in $\mathop{\mathrm{P}_{\mathrm{semi}}}(v)$,  
%a semi-multi-saddle separatrix in $\mathop{\mathrm{P}_{\mathrm{semi}}}(v)$ connecting a multi-sadde and an ss-component, 
the claim holds. 
\end{proof}

Conversely, by definition, we obtain $(\mathop{\mathrm{Sing}}(v) - \mathop{\mathrm{Sing}_c}(v))   \sqcup \mathrm{E}(v) \subseteq D_{\mathrm{ss}}(v)$.  

\begin{claim}\label{claim:043}
$\mathop{\mathrm{P}_{\mathrm{semi}}}(v) \subseteq D_{\mathrm{ss}}(v)$
\end{claim}

\begin{proof}[Proof of Claim~\ref{claim:043}]
By the openness of $\mathrm{LD}(v)$, since $v$ is quasi-regular, the generalization of the Poincar{\'e}-Bendixson theorem for a flow with finitely many singular points implies that the $\omega$-limit set and $\alpha$-limit set of any proper semi-multi-saddle separatrix are either singular points in $(\mathop{\mathrm{Sing}}(v) - \mathop{\mathrm{Sing}_c}(v))$, limit circuits, or exceptional Q-sets. 
Then implies that the $\omega$-limit set and $\alpha$-limit set of any proper semi-multi-saddle separatrix are ss-components. 
Therefore, any proper semi-multi-saddle separatrix are ss-separatrices and so are contained in $D_{\mathrm{ss}}(v)$. 
Thus $ \mathop{\mathrm{P}_{\mathrm{semi}}}(v) \subseteq D_{\mathrm{ss}}(v)$. 
\end{proof}

Therefore assertion {\rm(3)} holds. 
\end{proof}

We describe the relations among the border point set and relative concepts as follows. 

\begin{proposition}\label{cor019}
The following statements hold for a quasi-regular flow $v$ with at most finitely many limit cycles on a compact surface $S$:
\\
$(1)$
$D_{\mathrm{ss}}(v) = (\mathop{\mathrm{Sing}}(v) - \mathop{\mathrm{Sing}_c}(v)) \sqcup \partial^{-} \mathop{\mathrm{Per}}(v) \sqcup \mathop{\mathrm{P}_{\mathrm{semi}}}(v) \sqcup \mathrm{E}(v)$ is closed.
\\
$(2)$ 
$\mathop{\mathrm{Bd}}(v) = D_{\mathrm{ss}}(v) \sqcup \mathop{\mathrm{Sing}_c}(v) \sqcup \partial_{\mathop{\mathrm{Per}}(v)} \sqcup \mathop{\partial_{\mathrm{P}(v)}}$.
\end{proposition}

\begin{proof}
By Lemma~\ref{lem010}, the union $\partial^{-} \mathop{\mathrm{Per}}(v)$ is the finite union of limit cycles.
Lemma~\ref{lem:dss} implies that assertion {\rm(1)} holds. 
From Lemma~\ref{lem010}, by the quasi-regularity, we have the following equality: 
\[
\begin{split}
\mathop{\mathrm{Bd}}(v) &= \mathop{\mathrm{Sing}}(v) \sqcup \partial^{-} \mathop{\mathrm{Per}}(v) \sqcup \partial_{\mathop{\mathrm{Per}}(v)} \sqcup \partial^{-} \mathrm{P}(v) \sqcup \mathop{\mathrm{P}_{\mathrm{sep}}}(v) \sqcup \mathrm{E}(v) 
\\
&= \mathop{\mathrm{Sing}}(v) \sqcup \partial^{-} \mathop{\mathrm{Per}}(v) \sqcup \partial_{\mathop{\mathrm{Per}}(v)} \sqcup \mathop{\mathrm{P}_{\mathrm{semi}}}(v) \sqcup \mathop{\partial_{\mathrm{P}(v)}} \sqcup \mathrm{E}(v) 
\\
&= D_{\mathrm{ss}}(v) \sqcup \mathop{\mathrm{Sing}_c}(v)  \sqcup \partial_{\mathop{\mathrm{Per}}(v)} \sqcup  \mathop{\partial_{\mathrm{P}(v)}}
\end{split}
\] 
\end{proof}

\subsubsection{Description of components of the decompositions}

Recall that a quasi-regular flow without recurrent points ($\mathrm{i.e.}$ $\mathrm{R}(v) = \emptyset$) is of finite type if there are at most finitely many limit cycles.
In the finite type case, we obtain the following statement from Lemma~\ref{lem010}, Proposition~\ref{cor019}, Theorem~\ref{lem0c}, and Theorem~\ref{homogeneity}.

\begin{proposition}\label{lem11-01}
The following statements hold for a flow $v$ of finite type on a compact surface $S$:
\\
$(1)$
The orbit spaces $(S - \mathop{\mathrm{Bd}}(v))/v$ and $(S - D_{\mathrm{ss}}(v))/v$ are finite disjoint unions of intervals and circles.
\\
$(2)$
The orbit space $(S - \mathop{\mathrm{BD}}(v))/v$ is a finite disjoint union of open intervals and circles. 
\\
$(3)$ For any connected component $C$ of $S - \mathop{\mathrm{BD}}(v)$, the boundary $\partial C$ is a finite union of closed orbits, proper semi-multi-saddle separatrices, and separatrices on $\partial S$ between $\partial$-sources and $\partial$-sinks. 
\end{proposition}

\begin{proof}
By Proposition~\ref{cor:bd01}, the border (resp. weak border) point set $\mathop{\mathrm{BD}}(v)$ {\rm(resp.} $\mathop{\mathrm{Bd}}(v)${\rm)} is closed and is the finite union of singular points, one-sided periodic orbits {\rm(resp.} periodic orbits on $\partial S${\rm)}, limit cycles, proper semi-multi-saddle separatrices, and separatrices between a $\partial$-source and a $\partial$-sink on the boundary $\partial S$. 
Therefore, assertion (3) holds. 
By Proposition~\ref{cor019}, the union $D_{\mathrm{ss}}(v) = \mathop{\mathrm{Bd}}(v) -(\mathop{\mathrm{Sing}_c}(v) \sqcup \partial_{\mathop{\mathrm{Per}}(v)} \sqcup \mathop{\partial_{\mathrm{P}(v)}})$ is the finite union of non-central singular points, limit cycles, and proper semi-multi-saddle separatrices. 

%\begin{claim}\label{claim:050}
%Assertion {\rm(2)} holds.
%\end{claim}
%
%\begin{proof}[Proof of Claim~\ref{claim:050}]
By Theorem~\ref{homogeneity}, each connected component of $S - \mathop{\mathrm{BD}}(v)$  is either a trivial flow box in $\mathrm{P}(v)$ whose orbit space is an interval, an annulus in $\mathrm{P}(v)$ whose orbit space is a circle, a torus in $\mathop{\mathrm{Per}}(v)$ whose orbit space is a circle, or an annulus in $\mathop{\mathrm{Per}}(v)$ whose orbit space is an open interval. 
This implies assertion (2). 
%\end{proof}

%Since the set difference $\mathop{\mathrm{BD}}(v) - \mathop{\mathrm{Bd}}(v) = \mathop{\mathrm{Per}_{1}}(v)$, 
By Theorem~\ref{lem0c}, each connected component of $S - \mathop{\mathrm{Bd}}(v)$  is either a trivial flow box in $\mathrm{P}(v)$ whose orbit space is an interval, an annulus in $\mathrm{P}(v)$ whose orbit space is a circle, a torus in $\mathop{\mathrm{Per}}(v)$ whose orbit space is a circle, or an annulus/M{\"o}bius band/Klein bottle in $\mathop{\mathrm{Per}}(v)$ whose orbit space is an interval. 
The orbit space $(S - \mathop{\mathrm{Bd}}(v))/v$ is a finite disjoint union of intervals and circles.

Since the set difference $\mathop{\mathrm{Bd}}(v) - D_{\mathrm{ss}}(v) = \mathop{\mathrm{Sing}_c}(v) \sqcup \partial_{\mathop{\mathrm{Per}}(v)} \sqcup \mathop{\partial_{\mathrm{P}(v)}}$ is the union of centers, boundary components in $\mathop{\mathrm{int}}\Pv$, and  separatrices between a $\partial$-source and a $\partial$-sink on the boundary $\partial S$. 
Therefore, each connected component of $S - D_{\mathrm{ss}}(v)$ is either a trivial flow box in $\mathrm{P}(v)$ whose orbit space is an interval, an annulus in $\mathrm{P}(v)$ whose orbit space is a circle, a torus in $\mathop{\mathrm{Per}}(v)$ whose orbit space is a circle, a center disk whose orbit space is an interval, or an annulus/M{\"o}bius band/Klein bottle in $\mathop{\mathrm{Per}}(v)$ whose orbit space is an interval. 
The orbit space $S - D_{\mathrm{ss}}(v)$ is a finite disjoint union of intervals and circles.
\end{proof}

%\subsection{Proof of Theorem~\ref{main:b}}

\subsection{On the (weak) border point set}

Recall several concepts to describe the combinatorial structures of the (weak) border point sets and their complements of flows of finite type.  

\subsubsection{Graphs}
Recall an ordered triple $G := (V, E, r)$ is an abstract multi-graph if $V$ and $E$ are sets and $r : E \to \{ \{ x,y \} \mid x, y \in V \}$. 

\subsubsection{Orders}
A binary relation $\leq$ on a set $P$ is a {\bf pre-order} if it is reflexive (i.e. $a \leq a$ for any $a \in P$) and transitive (i.e. $a \leq c$ for any $a, b, c \in P$ with $a \leq b$ and $b \leq c$).
For a pre-order $\leq$, the inequality $a<b$ means both $a \leq b$ and $a \neq b$. 
A pre-order $\leq$ on $X$ is a partial order if it is antisymmetric (i.e. $a = b$ for any $a,b \in P$ with $a \leq b$ and $b \leq a$).
A {\bf poset} is a set with a partial order. 
For a point $x$ of a pre-ordered set $P$, define the {\bf class} $\hat{x} := \{ y \in P \mid x \leq y, y \leq x \}$. 
The set of classes of a pre-ordered set $P$ is a decomposition and so its quotient space is a poset with respect to the induced order, which is called the {\bf class space} of $P$ and denoted by $\hat{P}$.
A pre-order order $\leq$ is a total order (or linear order) if either $a < b$ or $b < a$ for any points $a \neq b$.

\subsubsection{Specialization pre-orders of topologies}
The {\bf specialization pre-order} $\leq_{\tau}$ on a topological space $(X, \tau)$ is defined as follows:
$ x \leq_{\tau} y $ if  $ x \in \overline{\{ y \}}$.
Then the class $\hat{x} = \{ y \in X \mid \overline{\{ x \}} = \overline{\{ y \}} \}$ of a point $x \in X$.
The class space with the quotient topology is a $T_0$ space, which is called the $T_0$-tification of $X$ and denoted by $\hat{X}$.

\subsubsection{Multi-graphs}
A {\bf graph} is a cell complex whose dimension is at most one and which is a geometric realization of an abstract multi-graph.
In other words, it can be drawn such that no edges cross each other.
Such a drawing is called a {\bf surface (directed) graph}.
Note that a finite (directed) multi-graph can be embedded in a surface.
Moreover, we call that a disjoint union of a surface directed graph and finite simple closed directed curves embedded in a surface is a {\bf generalized surface directed graph}. 
%We say that a
\begin{definition}
An ordered triple $G = (V, E, r)$ is an {\bf abstract multi-graph with $0$-hyper-edges} if $V$ and $E$ are sets and $r : E \to \{ \{ x,y \} \mid x, y \in V \} \sqcup \{ \emptyset \}$.
\end{definition}
Note that a generalized surface directed graph is a geometric realization of an abstract multi-graph with $0$-hyper-edges on a surface such that $0$-hyper-edges are realized by disjoint simple closed directed curves.

\subsubsection{Multi-graphs as posets}
A poset $P$ is said to be {\bf multi-graph-like} if the height of $P$ is at most one and  $\left| \overline{\{ x \}} \right| \leq 3$ for any element $x \in P$.
For a multi-graph-like poset $P$, each element of $P_0$ is called a {\bf vertex}, and each element of $P_1$ is called an {\bf edge}.
Then an abstract multi-graph $G$ can be considered as a multi-graph-like poset $(P, \leq_G)$ with $V = P_{0}$ and $E = P_1$ as follows: $P = V \sqcup \mathrm{E}(v)$ and $x <_G e$ if $x \in r(e)$.
Conversely, a multi-graph-like poset $P$ can be considered as an abstract multi-graph with $V = P_{0}$, $E = P_1$, and $r: P_1 \to \{ \{ x,y \} \mid x, y \in V \}$ defined by $r(e) := \partial^+ \{ e \} = \overline{\{ e \}} - \{ e \}$.

\subsubsection{Extended orbits}
Let $v$ be a flow on a compact connected surface $S$.
We define the {\bf extended orbit space} $\bm{S/v_{\mathrm{ex}}}$ as follows. 

\begin{definition}
The extended orbit space $S/v_{\mathrm{ex}}$ is the quotient space $S/\sim_{\mathrm{ex}}$ of the equivalence relation $\sim_{\mathrm{ex}}$ defined as follows: $x \sim_{\mathrm{ex}} y$ if either $O(x) = O(y)$ or both $x$ and $y$ are contained in a multi-saddle connection.
\end{definition}

%Define a relation $\sim_{\mathrm{ex}}$ on $S$ as follows: $x \sim_{\mathrm{ex}} y$ if either $O(x) = O(y)$ or both $x$ and $y$ are contained in a multi-saddle connection.
%Write the {\bf extended orbit space} $\bm{S/v_{\mathrm{ex}}} := S/\sim_{\mathrm{ex}}$.
In other words, $S/v_{\mathrm{ex}}$ can be obtained from $S/v$ by collapsing the multi-saddle connections into singletons.
Note that $(S - \mathop{\mathrm{Bd}}(v))/v = (S - \mathop{\mathrm{Bd}}(v))/v_{\mathrm{ex}}$ and that each class of $\sim_{\mathrm{ex}}$ is either an orbit or a multi-saddle connection.
We define a variation of orbit space as follows. 

\begin{definition}
The {\bf extended orbit class space} $\bm{S/\hat{v}_{\mathrm{ex}}}$ is the $T_0$-tification of $S/v_{\mathrm{ex}}$. 
\end{definition}

%Denote by $\bm{S/\hat{v}_{\mathrm{ex}}}$ the $T_0$-tification of $S/v_{\mathrm{ex}}$, called the {\bf extended orbit class space}.
In other words, define a relation $\approx_{\mathrm{ex}}$ on $S$ as follows: $x \approx_{\mathrm{ex}} y$ if either $\overline{O(x)} = \overline{O(y)}$ or both $x$ and $y$ are contained in a multi-saddle connection.
Then $S/\hat{v}_{\mathrm{ex}} := S/\approx_{\mathrm{ex}}$.
By \cite[Proposition~2.2]{yokoyama2016topological} and \cite[[Corollary 3.4]{yokoyama2019properness},
the definitions of the equivalences $\sim_{\mathrm{ex}}$ and $\approx_{\mathrm{ex}}$ imply the following statement.

\begin{lemma}\label{prop:bd03}
Let $v$ be a flow on a compact connected surface $S$.
The following conditions are equivalent for any invariant subset $T \subseteq S$:
\\
$(1)$ $T/\hat{v}_{\mathrm{ex}} = T/v_{\mathrm{ex}}$.
\\
$(2)$ $T/\hat{v} = T/v$.
\\
$(3)$ $\mathrm{R}(v) \cap T = \emptyset$ $(\mathrm{i.e.}$ each recurrent orbit in $T$ is closed$ )$.
\\
$(4)$ $T/v$ is $T_0$. 
\\
$(5)$ Each orbit in $T$ is proper. 
\end{lemma}

\subsection{Finiteness and Enumerability of flows of finite type}
Recall the following notations for a quasi-regular flow $v$: 
\\
$D(v)$ : The multi-saddle connection diagram $D(v)$ (i.e. the union of multi-saddles and multi-saddle separatrices)
\\
$\mathop{\mathrm{P}_{\mathrm{semi}}}(v) = \partial^{-} \mathrm{P}(v) \sqcup \mathop{\mathrm{P}_{\mathrm{ms}}}(v) \sqcup \mathop{\mathrm{P}_{\mathrm{ss}}}(v)$ : the union of proper semi-multi-saddle separatrices. 
We obtain the finiteness of the weak border point sets.

\begin{proposition}\label{thm:bd02}
%Let $v$ be a flow of finite type on a compact connected surface $S$.
The following statements hold for any flow $v$ of finite type on a compact connected surface $S$:
\\
$(1)$ The quotient space $\mathop{\mathrm{Bd}}(v)/\hat{v}_{\mathrm{ex}} = \mathop{\mathrm{Bd}}(v)/v_{\mathrm{ex}}$ is a finite abstract multi-graph $(V_{\mathop{\mathrm{Bd}}(v)}, E_{\mathop{\mathrm{Bd}}(v)})$. 
\\
$(2)$ The vertex set $V_{\mathop{\mathrm{Bd}}(v)}$ consists of centers, sinks, $\partial$-sinks, sources, $\partial$-sources, periodic orbits on $\partial S$, multi-saddle connections, and limit cycles. 
\\
$(3)$ The edge set $E_{\mathop{\mathrm{Bd}}(v)}$ consists of ss-separatrices and non-closed orbits on $\partial S$ connecting $\partial$-sinks and $\partial$-sources.
\end{proposition}

\begin{proof}
By Lemma~\ref{prop:bd03}, we obtain $\mathop{\mathrm{Bd}}(v)/\hat{v}_{\mathrm{ex}} = \mathop{\mathrm{Bd}}(v)/v_{\mathrm{ex}}$. 
Proposition~\ref{cor:bd01} implies that the weak border point set $\mathop{\mathrm{Bd}}(v)$ 
% = \mathop{\mathrm{BD}}(v) - \mathop{\mathrm{Per}_{1}}(v)$ 
is the finite union of singular points,  periodic orbits on the boundary $\partial S$, proper semi-multi-saddle separatrices, separatrices between a $\partial$-source and a $\partial$-sink on $\partial S$, and limit cycles. 
This implies assertion (1). 

Let $E_{\mathop{\mathrm{Bd}}(v)}$ be the union of ss-separatrices and separatrices between a $\partial$-source and a $\partial$-sink on the boundary $\partial S$. 
Therefore, assertion {\rm(3)} holds.

Put $V_{\mathop{\mathrm{Bd}}(v)} := \mathop{\mathrm{Bd}}(v) - E_{\mathop{\mathrm{Bd}}(v)}$. 

\begin{claim}\label{claim:047}
Assertion {\rm(2)} holds.
\end{claim}

\begin{proof}[Proof of Claim~\ref{claim:047}]
By Lemma~\ref{lem001}, the union $\partial^{-} \mathrm{P}(v)\sqcup \mathop{\mathrm{P}_{\mathrm{sep}}}(v) = \mathop{\mathrm{P}_{\mathrm{semi}}}(v) \sqcup \mathop{\partial_{\mathrm{P}(v)}}$ is the finite union of proper semi-multi-saddle separatrices and separatrices on $\partial S$ between $\partial$-sources and $\partial$-sinks. 
Therefore, we have the following equality: 
\[
\begin{split}
\mathop{\mathrm{Bd}}(v) &= \mathop{\mathrm{Sing}}(v) \sqcup \partial_{\mathop{\mathrm{Per}}(v)} \sqcup \partial^{-} \mathop{\mathrm{Per}}(v) \sqcup \partial^{-} \mathrm{P}(v) \sqcup \mathop{\mathrm{P}_{\mathrm{sep}}}(v)
\\
& = \mathop{\mathrm{Sing}}(v) \sqcup \partial_{\mathop{\mathrm{Per}}(v)} \sqcup \partial^{-} \mathop{\mathrm{Per}}(v) \sqcup \mathop{\mathrm{P}_{\mathrm{semi}}}(v) \sqcup \mathop{\partial_{\mathrm{P}(v)}}
\end{split}
\]
On the other hand, since the union of ss-separatrices is the set difference $\mathop{\mathrm{P}_{\mathrm{semi}}}(v) \setminus D(v)$ and the union of ss-separatrices and separatrices between a $\partial$-source is $\mathop{\partial_{\mathrm{P}(v)}}$, we have $E_{\mathop{\mathrm{Bd}}(v)} = (\mathop{\mathrm{P}_{\mathrm{semi}}}(v) \setminus D(v)) \sqcup \mathop{\partial_{\mathrm{P}(v)}}$. 
This implies the following equality: 
\[
\begin{split}
V_{\mathop{\mathrm{Bd}}(v)} = \mathop{\mathrm{Bd}}(v) - E_{\mathop{\mathrm{Bd}}(v)} &= \mathop{\mathrm{Sing}}(v) \sqcup \partial_{\mathop{\mathrm{Per}}(v)} \sqcup \partial^{-} \mathop{\mathrm{Per}}(v) \sqcup (D(v) \setminus \Sv)
\end{split}
\]
By Lemma~\ref{lem010}, this means that the invariant subset $V_{\mathop{\mathrm{Bd}}(v)}$ is the finite union of singular points, periodic orbits on the boundary $\partial S$, limit cycles, and multi-saddle connections. 
The quasi-regularity implies that each singular point is either a center, a sink, a $\partial$-sink, a source, a $\partial$-source, or a multi-saddle. 
Since any multi-saddles are contained in multi-saddle connections, assertion {\rm(2)} holds.  
\end{proof}
This completes the proof.
%By definitions of ss-component, an ss-component is either a sink, a $\partial$-sink, a source, a $\partial$-source, or a limit circuit. 
%Therefore $V_{\mathop{\mathrm{Bd}}(v)}$ is the finite union of centers, periodic orbits on the boundary $\partial S$, the multi-saddle connection diagram, and ss-components except non-periodic limit circuits. 
%
\end{proof}

Proposition~\ref{cor019} and Proposition~\ref{thm:bd02} imply the following statement.

\begin{corollary}\label{lem8-04}
The following statements hold for a flow $v$ of finite type on a compact surface $S$: 
\\
$(1)$ The quotient spaces $D_{\mathrm{ss}}(v)/{v}_{\mathrm{ex}}$,  $\mathop{\mathrm{Bd}}(v)/{v}_{\mathrm{ex}}$, and $\mathop{\mathrm{BD}}(v)/{v}_{\mathrm{ex}}$ are finite abstract multi-graphs. 
\\
$(2)$ The complement $(S - D_{\mathrm{ss}}(v))/v$ $(\mathrm{resp.}$ $(S - \mathop{\mathrm{Bd}}(v))/v$, $S - \mathop{\mathrm{BD}}(v)/v )$ is a finite disjoint union of intervals and circles such that the boundary of a connected component of $S - D_{\mathrm{ss}}(v)$ $(\mathrm{resp.}$ $S - \mathop{\mathrm{Bd}}(v)$, $S - \mathop{\mathrm{BD}}(v))$ consists of finitely many closed orbits and finitely many separatrices. 
\end{corollary}

Moreover, combining Theorem~\ref{lem0c}, Proposition~\ref{lem11-01}, and Proposition~\ref{thm:bd02}, we have the following statement. 

\begin{lemma}\label{thm:enumerate}
The set of flows of finite type on compact surfaces is enumerable by using finite labeled graphs.
\end{lemma}

\begin{proof}
Since each of $\omega$-limit set and $\alpha$-limit set of a point is a closed orbit or a limit circuit, 
if there are no limit circuits then each ss-separatrix connects to a ($\partial$-)sink or a ($\partial$-)source and so the ss-multi-saddle connection diagram is a generalized surface directed graph on a compact surface. 

\begin{claim}\label{claim:048}
%We claim that t
The set of flows of finite type without limit circuits on compact surfaces is enumerable by using finite labeled graphs.  
\end{claim}

\begin{proof}
%[Outline of the proof of Claim~\ref{claim:048}]
%Indeed, t
Let $v$ be a flow of finite type without limit circuits on a compact surface. 
%Suppose that there are no limit circuits. 
Then the weak border point set $\mathop{\mathrm{Bd}}(v)$ is a generalized surface directed graph on a compact surface. 
Since each multi-saddle connection diagram consists of finitely many orbits, we can enumerate the weak border point sets of flows of finite type without limit circuits on compact surfaces as a generalized surface directed graph on compact surfaces. 

Theorem~\ref{lem0c} implies that each connected component of the weak border point set $\mathop{\mathrm{Bd}}(v)$ is one of six kinds of domains. 
By Proposition~\ref{lem11-01}, the complement of the weak border point set consists of finitely many domains. 
If a domain is either a torus or a Klein bottle in $\Pv$ (i.e. in the case (3) or (4)) in Theorem~\ref{lem0c}, then we have all the topological information of the domain. 
For the remaining four domains (i.e. a trivial flow box in $\mathrm{P}(v)$, a periodic annulus, a transverse annulus, and M{\"o}bius band in $\Pv$), since the boundaries of such domains consist of finitely many orbits, adding these finite gluing information, we can construct complete finite invariants of flows of finite type without limit circuits on compact surfaces. 
In particular, we can enumerate such flows. 
\end{proof}

In the general case, by Proposition~\ref{thm:bd02}, the extended orbit space of the weak border point set is a finite abstract multi-graph with $0$-hyper-edges. 

\begin{claim}\label{claim:049}
%We show that a
Any flows of finite type on compact surfaces can be obtained by replacing sinks and sources into limit circuits and by pasting their boundary components if necessary. 
\end{claim}

\begin{proof}
%[Outline of the proof of Claim~\ref{claim:049}]
%Indeed, l
Let $v$ be a flow of finite type on a compact surface $S$, $\Gamma$ the union of limit circuits of $v$, $S_1$ the metric completion of $S - \Gamma$, and $\partial := S_1 - (S - \Gamma)$ the new boundary of $S_1$. 
Since $S_1 - \partial = S - \Gamma$ and the new boundary $\partial$ can be immersed into $\Gamma$, there is the natural continuous surjection $\pi : S_1 \to S$ associated to the metric completion such that a restriction $\pi\vert_{\partial}: \partial \to \Gamma$ is an immersion and a restriction $\pi\vert_{S_1 - \partial}: S_1 - \partial \to S - \Gamma$ is an identity mapping as on the left of Figure~\ref{Fig:quotients}. 
Collapsing all new boundary components of $S_1$ (i.e. connected components of $\partial$) each of which is a limit circuit into singletons, we obtain a continuous surjection $q: S_1 \to S_2$ as on the right of Figure~\ref{Fig:quotients}. 
\begin{figure}
%\[
%\begin{diagram}
%\node{S} \node{S_1} \arrow{e,t}{q} \arrow{w,t}{\pi} \node{S_2}\\
%\node{S - \Gamma} \arrow{n,t,-}{\bigcup} \node{S_1 - \partial} \arrow{n,t,-}{\bigcup} \arrow{e,b,-}{=}  \arrow{w,b,-}{=} \node{S_2 - q(\partial)} \arrow{n,t,-}{\bigcup}
%\end{diagram}
%\]
\[
\xymatrix@=18pt{
S \ar@{}[d]|{\bigcup} & & S_1\ar[ll]_{\pi} \ar[rr]^q \ar@{}[d]|{\bigcup} & &  S_2 \ar@{}[d]|{\bigcup} \\
S - \Gamma & &  S_1 - \partial \ar@{=}[ll]_{\pi\vert_{S_1 - \partial}} \ar@{=}[rr]^{q\vert_{S_1 - \partial}} & &  S_2 - q(\partial)}
\]
\caption{Canonical quotient mappings induced by the metric completion and the collapse.}
\label{Fig:quotients}
\end{figure}
Then the resulting flow $v_2$ on $S_2$ is a flow of finite type without limit circuits on a compact surface such that $v$ can be obtained from $v_2$ by replacing sinks and sources into limit circuits and by pasting boundary components if necessary. 
\end{proof}

Therefore, we can enumerate the weak border point set $\mathop{\mathrm{Bd}}(v)$ and the complement $S - \mathop{\mathrm{Bd}}(v)$ by using finite labeled graphs, and so the flow. 
\end{proof}

\section{Graphs of surface flows}\label{sec:graph}

We explicitly construct a finite recursive complete invariant of a flow of finite type as a finite labeled graph, whose existence is demonstrated by Lemma~\ref{thm:enumerate}, which 
%Let $v$ be a flow of finite type on a compact surface $S$. 
Using the invariant, we revive the ``Markus-Neumann theorem'' (i.e. affirmatively answer Problem~\ref{prob:MN}) in the next section.

\subsection{Finite complete invariants}

Denote by $\chi(S)$ the space of flows on a compact surface $S$.
Put $\chi := \bigcup \chi(T)$, where the union runs over compact surfaces.
Denote by $\bm{\chi_F} \subset \chi$ the subspace of flows of finite type (i.e. quasi-regular flows without non-closed recurrent points such that there are at most finitely many limit cycles).
Then we have the following observation. 

\begin{lemma}
The class $\chi_F$ contains flows generated by $\Sigma(S) \sqcup \mathcal{H}^r_*(S)$ for any $r \in \Z_{\geq 0 }$, where $\Sigma^r(S)$ is the set of the Morse-Smale vector fields in the sense of Labarca and Pacifico~\cite{labarca1990stability} {\rm(}see also Definition~\ref{def:MS_with} below{\rm)} and $\mathcal{H}^r_*(S)$ is the set of regular Hamiltonian $C^r$ vector fields each of whose saddle connection is self-connected.
\end{lemma}

Recall that $\mathcal{H}^r_*(S)$ is open dense in the set $\mathcal{H}^r(S)$ of Hamiltonian $C^r$ vector fields on an orientable closed connected surface $S$ for any $r \in \Z_{\geq 1}$. 
Moreover, we obtain the following observation. 

\begin{lemma}
Suppose that $S$ is closed. 
Then the class $\chi_F$ contains flows generated by $\Sigma^r(S)$ for any $r \in \Z_{\geq 0 }$, where $\Sigma^r(S)$ is the set of the Morse-Smale $C^r$ vector fields {\rm(}see Definition~\ref{def:MS_without} below{\rm)}.
\end{lemma}

\subsubsection{Reconstruction of flows of finite type}
Roughly speaking, each flow $v$ in $\chi_F$ can be reconstructed from the following finite data (see definitions of $G^{\BD}$, $G_{\BD}$, $G_{D(v)}$, $\mathcal{L}^{\mathrm{BD}(v)}$, $l_{\BD}$, $l_{D(v)}$ below in this section):  
\[
(G^{\BD}, G_{\BD}, G_{D(v)}, \mathcal{L}^{\mathrm{BD}(v)}, l_{\BD}, l_{D(v)})
\] 
To state more precisely, we define some notations as follows:
Denote by $\mathcal{G}_l$ the set of finite abstract multi-graphs with labels.
Define a mapping $p: \chi_F \to \mathcal{G}_l \times \mathcal{G}_l \times \mathcal{G}_l$ by
$$p(v) := ((G^{\BD}, \mathcal{L}^{\BD}), (G_{\BD}, l_{\BD}), (G_{D(v)}, l_{D(v)})).$$
Denote by $\sim$ the topological equivalence on $\chi_F$ (i.e. an equivalence relation defined by $v \sim w$ if there is a homeomorphism $h: S \to T$ which is orbit-preserving (i.e. the image of an orbit of $v$ is an orbit of $w$)  and preserves the orientation of the orbits).
Write $\bm{\widetilde{\chi_F}}$ by the quotient space of $\chi_F$ by the equivalence $\sim$. 
Denote by $\bm{\widetilde{\mathcal{G}_l}}$ the quotient space of $\mathcal{G}_l$ by the isomorphisms. 
Now we can state precisely that a flow $v$ in $\chi_F$ can be reconstructed from the finite data as follows.

\begin{theorem}\label{injection}
The induced map $\widetilde{p}: \widetilde{\chi_F} \to \widetilde{\mathcal{G}_l} \times \widetilde{\mathcal{G}_l} \times \widetilde{\mathcal{G}_l}$ is well-defined and injective.
\end{theorem}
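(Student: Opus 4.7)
The plan is to verify well-definedness and injectivity separately, in each case by producing an explicit topological equivalence from the combinatorial data.

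\emph{Well-definedness.} Suppose $v \sim w$ via an orientation-preserving orbit equivalence $h : S \to T$. Because $h$ sends $\mathop{\mathrm{Sing}}(v), \mathop{\mathrm{Per}}(v), \mathrm{P}, \mathrm{LD}, \mathrm{E}$ to the corresponding sets for $w$, it sends multi-saddles, centers, sinks, sources, limit cycles, and ss-components to their counterparts, and hence carries $D_+(v)$ onto $D_+(w)$ and $D_{\mathrm{ss}}(v)$ onto $D_{\mathrm{ss}}(w)$. The induced maps on quotients $D_{\mathrm{ss}}/\hat{v}_{\mathrm{ex}} \to D_{\mathrm{ss}}/\hat{w}_{\mathrm{ex}}$ and $D_+/v \to D_+/w$ are therefore isomorphisms of abstract multi-graphs, and they preserve all the labels: the vertex label $l_{V_{\mathrm{ss}}}$ is preserved because $h$ preserves orbit orientation (hence counterclockwise cyclic/total orders up to the defined equivalence); $l_{E_{\mathrm{ss}}}$ is preserved because $h$ sends $\alpha$- and $\omega$-limit sets to $\alpha$- and $\omega$-limit sets; $L_{D_+}$ is preserved because $h$ is itself an isotopy of the ambient surface carrying $D_+(v)$ to $D_+(w)$; and on the dual side, $h$ sends connected components of $S - D_{\mathrm{ss}}(v)$ to those of $T - D_{\mathrm{ss}}(w)$ and preserves the classification in Theorem \ref{lem0c}, hence also $l^{\mathrm{ss}}$ and $l^{D_{\mathrm{ss}}}$.

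\emph{Injectivity.} Suppose $p(v)$ and $p(w)$ have the same class. I will reconstruct the flow up to topological equivalence from this data in three stages. First, use the embedding label $L_{D_+}$ to realize $G_{D_+}$ as a generalized surface directed graph inside some compact surface; this recovers $D_+$ as a subset of the ambient surface (uniquely up to ambient homeomorphism), together with the directions of the limit cycles, periodic boundary components, and the multi-saddle connections. Second, attach ss-separatrices: for each vertex $x \in \mathbb{S}_s$ (respectively limit circuit $\gamma$), the label $l_{V_{\mathrm{ss}}}(x)$ (respectively $l_{D_+}(\gamma)$) specifies the cyclic order (or total order, in the boundary case) in which the separatrices from $E_{\mathrm{ss}}$ incident to that vertex depart, while $l_{E_{\mathrm{ss}}}$ determines the other endpoint inside the multi-saddle diagram. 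Since ss-separatrices connect a multi-saddle to an ss-component and the finiteness of limit cycles (Lemma \ref{lem8-07}) makes $E_{\mathrm{ss}}$ finite, this embedding is determined up to isotopy, and we recover $D_{\mathrm{ss}}$ as an embedded cell complex. Third, reconstruct $S - D_{\mathrm{ss}}$: the dual graph $G^{\mathrm{ss}}$ enumerates the connected components, the label $l^{\mathrm{ss}}$ identifies each one as one of the seven types in Theorem \ref{lem0c}, and $l^{D_{\mathrm{ss}}}$ specifies which boundary pieces of $D_{\mathrm{ss}}$ bound which component (in the annular case it even indicates the two sides). Glue each component to $D_{\mathrm{ss}}$ along the prescribed boundary arcs. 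The resulting topological space is homeomorphic to the original surface (after capping trivial boundary data) because the gluing is dictated by the labels, and by Theorem \ref{lem0c} together with $\mathrm{LD} \sqcup \mathrm{E} = \emptyset$ for finite type and the finitely-many-limit-cycle hypothesis, the flow on each component is uniquely determined up to orientation-preserving orbit equivalence (an open flow box, a non-Reeb or Reeb annulus with prescribed flow direction, a torus/Klein bottle filled by periodic orbits, an annular/M\"obius periodic region, or an essential locally dense region; the last case is excluded under finite type).

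\emph{Main obstacle.} The most delicate step is the third one: ensuring that the labeled dual graph $(G^{\mathrm{ss}}, l^{\mathrm{ss}}, l^{D_{\mathrm{ss}}})$ glues the components back to a surface with a flow that is uniquely determined up to topological equivalence. The two subtle points are (i) each annular component has two boundary circles and $l^{D_{\mathrm{ss}}}$ must record the unordered pair correctly so that no spurious automorphism swaps them in an incompatible way with the flow direction on $D_{\mathrm{ss}}$, and (ii) for components of type $\mathbb{A}^-$ (Reeb domains) one must check that the two boundary limit circuits have the opposite flow directions prescribed by $D_+$, and for $\mathbb{A}^+$ the parallel case holds; this compatibility follows from the definition of a Reeb domain together with the flow direction encoded by $L_{D_+}$ on the boundary circuits. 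Once these compatibilities are verified, each of the seven local models has essentially a unique flow, and assembling them along the labeled dual graph yields an orbit equivalence between $v$ and $w$.
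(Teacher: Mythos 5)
Your proof takes essentially the same approach as the paper: a staged reconstruction argument that first realizes $D_+$ from $(G_{D_+}, L_{D_+})$, then attaches the ss-separatrices using the labels on $G_{\mathrm{ss}}$ to recover $D_{\mathrm{ss}}$, then reconstructs and glues the complementary regions via $(G^{\mathrm{ss}}, l^{\mathrm{ss}}, l^{D_{\mathrm{ss}}})$. You go a little further than the paper in two respects, both of which are improvements rather than deviations: you make the well-definedness half explicit (the paper's proof addresses only the reconstruction/injectivity direction), and you isolate the genuinely delicate point in the third stage --- that the annular dual labels and the Reeb versus non-Reeb distinction must cohere with the flow directions already fixed on $D_+$ so that no spurious gluing automorphism survives. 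The paper glosses over this last compatibility; your remark about $\mathbb{A}^\pm$ and the direction data encoded in $L_{D_+}$ is exactly the check that justifies the paper's final sentence ``we can paste $D_{\mathrm{ss}}$ and the complement.''
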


Theorem~\ref{main:04} is folowed from Theorem~\ref{injection}. 
To show this theorem, we construct $G^{\BD}$ (in \S~\ref{sec:O}), $G_{\BD}$ (in \S~\ref{sec:G_BD}), $G_{D(v)}$ (in \S~\ref{sec:G_Dv}), $\mathcal{L}^{\mathrm{BD}(v)}$ (in \S~\ref{sec:L^BD} and \S~\ref{sec:leq_perp}), $l_{\BD}$ (in \S~\ref{sec:l_BD}), $l_{D(v)}$ (in \S~\ref{sec:G_Dv}) as follows.

%\subsection{Abstract multi-graphs of separatrices and one-sided orbits}
%
%To define invariants, we introduce some concepts. 

\subsection{Canonical domains and canonical regions}

We define canonical domains and canonical regions as follows. 

\begin{definition}
Each connected component of the complement $S - D_{\mathrm{ss}}(v)$ (resp. $S - \mathop{\mathrm{Bd}}(v)$, $S - \mathop{\mathrm{BD}}(v)$) of a flow $v$ of finite type on a compact surface $S$ is called a {\bf canonical domain} of $D_{\mathrm{ss}}(v)$ (resp. $\mathop{\mathrm{Bd}}(v)$, $\mathop{\mathrm{BD}}(v)$). 
\end{definition}

\begin{definition}
Let $U$ be an invariant trivial flow box (resp. invariant periodic annulus, invariant transverse annulus). 
We call that $U$ is a {\bf canonical region} if it is maximal with respect to the inclusion relation among invariant trivial flow boxes (resp. invariant periodic annuli, invariant transverse annuli). 
\end{definition}

Note that the definition of a canonical region of $v$ for the orbit complex \cite{markus1954global,neumann1975classification,neumann1976global} is slightly different from our definitions, and that a canonical domain which is not a periodic torus for a flow of finite type on a compact surface corresponds to a canonical region. 

\subsection{Graph structure $G_{\mathrm{BD}}$ with label $l_{\BD}$ of the border point set $\BD$}\label{sec:G_BD}
Let $v$ be a flow of finite type on a compact surface. 
From Proposition~\ref{thm:bd02}, denote by $\bm{G_{\mathrm{BD}(v)}} = (V_{\mathrm{BD}(v)}, E_{\mathrm{BD}(v)})$ the abstract multi-graph $\mathop{\mathrm{BD}}(v)/v_{\mathrm{ex}} = \mathop{\mathrm{BD}}(v)/\hat{v}_{\mathrm{ex}}$.
Note that each of $V_{\mathrm{BD}(v)}$ and $E_{\mathrm{BD}(v)}$ is a subset of $G_{\mathrm{BD}(v)} = \mathop{\mathrm{BD}}(v)/v_{\mathrm{ex}}$. 
In other words, the vertex set $V_{\mathrm{BD}(v)}$ consists of singular points that are not multi-saddles (i.e. sinks, $\partial$-sinks, sources, $\partial$-sources, and centers), multi-saddle connections, limit cycles, and non-limits one-sided cycles.
In other words, we have the following statement:
\[
\begin{split}
V_{\mathrm{BD}(v)} = (\Sv \setminus D(v))/v_{\mathrm{ex}} & \sqcup \{ \text{multi-saddle connection} \}
\\
& \sqcup \{ \text{limit cycle} \}
\\
& \sqcup \{ \text{non-limits one-sided cycle} \}
\end{split}
\]
The edge set $E_{\mathrm{BD}(v)}$ consists of orbits in $\mathop{\partial_{\mathrm{P}(v)}}$, ss-separatrices which are arranged in cyclic ways around limit circuits, sinks, and sources, and ss-separatrices which are arranged in linear ways around $\partial$-sinks and $\partial$-sources.
In other words, we have the following statement:
\[
\begin{split}
E_{\mathrm{BD}(v)} = \{ \text{orbit in } \mathop{\partial_{\mathrm{P}(v)}} \} \sqcup \{ \text{ss-separatrix} \}
\end{split}
\]

Notice that ss-separatrices are arranged in cyclic ways around limit circuits, sinks, and sources,  and that ss-separatrices are arranged in linear ways around $\partial$-sinks and $\partial$-sources.
Then the arrangements can be recovered by the dual graph $G^{\BD}$ and the labels $\partial^{\mathrm{BD}(v)}$ (see definition in \S~\ref{sec:boundary_label}) and $\leq^{\mathrm{BD}(v)}$ (see definition in \S~\ref{sec:leq_perp_trans}), which have the transverse information. 
%information to the graph $G_{\mathrm{BD}(v)}$ as follows. 

\subsubsection{Label $l_{\BD}$ for $G_{\mathrm{BD}(v)}$}\label{sec:l_BD}
Define a label $l_{\BD} : V_{\BD} \to \{ 0, 1_{\partial}, 1_{\mathrm{non}}, 1_{\mathrm{two}}, 1_{\mathrm{ms}} \} =: \Sigma_{\mathrm{Top}}^0$ as follows: 
\\
$l_{\BD}(\sigma) := 0$ if $\sigma \in \mathop{\mathrm{Sing}}(v)$ is either a sink, a $\partial$-sink, a source, a $\partial$-source, or a center, 
%not a saddle,  
\\
$l_{\BD}(\gamma) := 1_{\partial}$ if $\gamma$ is a periodic orbit in $\partial S$,   
\\
$l_{\BD}(\gamma) := 1_{\mathrm{non}}$ if $\gamma$ is a one-sided periodic orbit outside of $\partial S$, 
\\
$l_{\BD}(\gamma) := 1_{\mathrm{two}}$ if $\gamma$ is a two-sided periodic orbit.  
\\
$l_{\BD}(\gamma) := 1_{\mathrm{ms}}$ if $\gamma$ is a multi-saddle connection diagram.  
\\
%Set $\Sigma_{\mathrm{Top}}^0 := \{ 0, 1_{\partial}, 1_{\mathrm{non}}, 1_{\mathrm{two}}, 1_{\mathrm{ms}} \}$. 
%
Note that each multi-saddle circuit with an associated collar has either semi-repelling, semi-attracting, or identical one-sided holonomy, because of the finiteness of limit cycles. 
Then a one-sided circuit has one of these three kinds of holonomies and a two-sided cycle has one of six kinds of holonomies. 

\subsection{Abstract multi-graph $G_{D(v)}$ with label $l_{D}$ of the orbit space $D(v)/v$ of the multi-saddle connection diagram}\label{sec:G_Dv}

For a quasi-regular flow $v$ on a compact surface, denote by $\bm{G_{D(v)}}$ the abstract multi-graph $D(v)/v$ and a label $\bm{l_{D}} : G_{D(v)} \to V_{\BD}$ is defined as the natural projection $D(v)/v \to D(v)/v_{\mathrm{ex}}$.

\subsection{Dual graphs $G^{\mathrm{BD}}$ of the border point set $\BD$}\label{sec:O}
For a flow $v$ of finite type on a compact surface $S$, define the dual graph $\bm{G^{\mathrm{BD}(v)}} = (V^{\BD}, E^{\BD})$ of the border point set $\BD$ as follows:
The vertex set $V^{\BD}$ consists of canonical domains of $\BD$ (i.e. the connected components of $S - \BD$), each of which is either a periodic torus, an invariant trivial flow box, an invariant transverse annulus, or an invariant periodic annulus.
In other words, we have the following equality: 
\[
\begin{split}
V^{\BD} = \{ \text{periodic torus} \} &\sqcup \{ \text{maximal invariant trivial flow box in } S - \mathop{\mathrm{BD}}(v) \}
\\
&\sqcup \{ \text{maximal invariant transverse annulus in } S - \mathop{\mathrm{BD}}(v) \}
\\
&\sqcup \{ \text{maximal invariant periodic annulus in } S - \mathop{\mathrm{BD}}(v) \}
\end{split}
\]
%\subsubsection{Labels $l^{\BD}$ of $V^{\BD}$}\label{subsec:l^BD}

An edge $\{U_i, U_j\}$ in $E^{\BD}$ exists if one of the folowing statements holds: 
\\
(1) $\dim (\overline{U_i} \cap \overline{U_j}) = 1$ and at least one of $U_i$ and $U_j$ is a periodic/ransverse annulus. 
%$\{ \A, \widetilde{\A} \} \cap \{ l^{\BD}(U_i), l^{\BD}(U_j) \} \neq \emptyset$. 
\\
(2) $U_i \neq U_j$, $\dim (\partial_{\pitchfork} U_i \cap \partial_{\pitchfork} U_j) = 1$, and both $U_i$ and $U_j$ are trivial flow boxes.
%$\mathbb{D}  =  l^{\BD}(U_i) =  l^{\BD}(U_j) \}$. 
We call that such canonical domains $U_i$ and $U_j$ are adjacent. 
% and the closure $\overline{U_i} \cap \overline{U_j}$ contains ss-separatrices. 

%For a flow $v$ of finite type
%%quasi-regular flow $v$ 
%on a compact surface, d
Define a label $\bm{l^{\BD}} : V^{\BD} \to \{ \mathbb{D}, \widetilde{\mathbb{A}}, \mathbb{A},  \mathbb{T} \}$ as follows:
%the label $l^{\BD}(U)$ is defined as follows: 
\\
$l^{\BD}(U) := \mathbb{D}$ if $U$ is an invariant open trivial flow box in $\mathrm{P}(v)$, 
% with rightward flow direction, 
\\
$l^{\BD}(U) := \widetilde{\mathbb{A}}$ if $U$ is an open transverse annulus in $\mathrm{P}(v)$, 
% with outward flow direction, 
\\
$l^{\BD}(U) := \mathbb{A}$ if $U$ is an open annulus in $\mathop{\mathrm{Per}}(v)$, 
% with anti-clockwise flow direction, 
\\
$l^{\BD}(U) := \mathbb{T}$ if $U$ is a torus in $\mathop{\mathrm{Per}}(v)$, 
\\
By Theorem~\ref{homogeneity}, the label $l^{\BD}$ is well-defined. 
Set $\Sigma_{\mathrm{Top}}^1 := \{ \mathbb{D}, \widetilde{\mathbb{A}}, \mathbb{A},  \mathbb{T} \}$. 
%

%\subsection{Labels of $V^{\BD}$}
\subsection{A spiral direction $\mathcal{D}_{\partial_{\perp}}$ for a region whose $\omega$-limit {\rm(resp.} $\alpha$-limit{\rm)} set is a circuit}\label{subsec:001}

For a flow $v$ of finite type on a compact surface, for any $U \in V^{\BD}$ with some orientation, define a label $\bm{\mathcal{D}_{\partial_{\perp}}(U) = (\mathcal{D}_{\partial_{\perp}^{\alpha}}(U), \mathcal{D}_{\partial_{\perp}^{\omega}}(U))} \in \{ R, L, 0 \} \times \{ R, L, 0 \}$ as follows: 
%define a label $\bm{\mathcal{D}_{\partial_{\perp}} = (\mathcal{D}_{\partial_{\perp}^{\alpha}}, \mathcal{D}_{\partial_{\perp}^{\omega}})} : V^{\BD} \to \{ R, L, 0 \} \times \{ R, L, 0 \}$ by $$\mathcal{D}_{\partial_{\perp}} (U) := (\mathcal{D}_{\partial_{\perp}^{\alpha}} (U), \mathcal{D}_{\partial_{\perp}^{\omega}} (U))$$ 
%for any $U \in V^{\BD}$ as follows: 

\begin{figure}
\begin{center}
\includegraphics[scale=0.35]{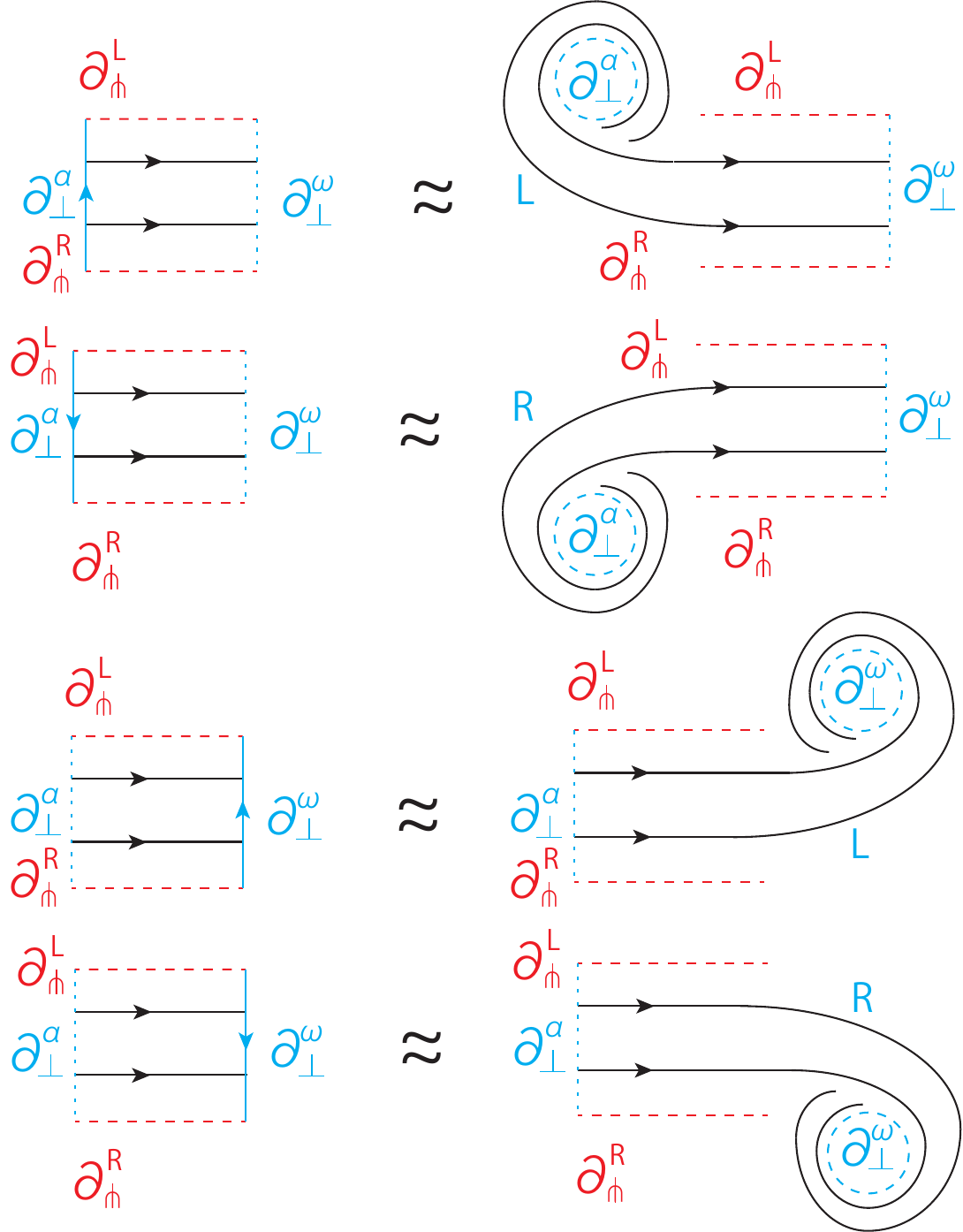}
\,\,\,
\includegraphics[scale=0.45]{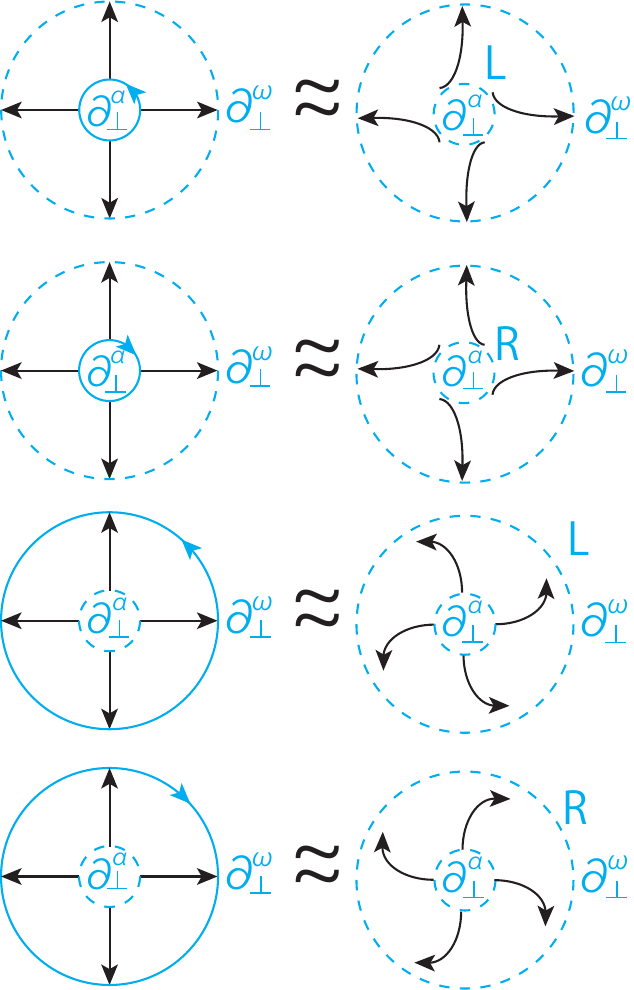}
\end{center}
\caption{Schematic pictures and their spiral structures of invariant trivial flow boxes and transverse annuli}
\label{spiral}
\end{figure}

Let $U$ be either an oriented invariant trivial flow box or an oriented invariant transverse annulus. 
% each of whose $\alpha$-limit set is $\gamma$. 

\begin{definition}
The {\bf spiral direction} $\mathcal{D}_{\partial_{\perp}^{\alpha}} (U)$ is $L$ (resp. $R$) if there is a non-trivial circuit $\gamma$ such that $\alpha(x) = \gamma$ for any $x \in U$, and the invariant subset $U$ rotates anti-clockwise (resp. clockwise) around $\gamma$ as the upper part of Figure~\ref{spiral}. 
\end{definition}

\begin{definition}
The {\bf spiral direction} $\mathcal{D}_{\partial_{\perp}^{\omega}} (U)$ is $L$ (resp. $R$) if there is a non-trivial circuit $\gamma$ such that $\omega(x) = \gamma$ for any $x \in U$, and the invariant subset $U$ rotates anti-clockwise (resp. clockwise) around $\gamma$ as the lower part of Figure~\ref{spiral}. 
\end{definition}

Otherwise the {\bf spiral direction} $\mathcal{D}_{\partial_{\perp}^{\sigma}} (U)$ for any $\sigma \in \{ \alpha, \omega \}$ is defined as $0$. 

Note that the direction with respect to the local orientation on the box or annulus is well-defined even if the limit circuit $\gamma$ is contained in a non-orientable compact surface, by using the orientation reversion as in \S~\ref{sec:lor}.

\subsection{Labels for vertical boundaries and transverse boundaries}

We define labels for vertical boundaries $\partial_{\perp}^{\bullet}$ and transverse boundaries $\partial_{\pitchfork}^{\bullet}$ as follows. 

\subsubsection{Labels for vertical boundaries}
%Recall that, for an invariant open subset $A$, define the vertical boundary $\partial_{\perp} A$ of $A$ as follows: 
%\[
%\begin{split}
%\partial_{\perp}^{\alpha} A &:= \left(\bigcup_{x \in A} \alpha(x) \right) \setminus A
%\\
%\partial_{\perp}^{\omega} A &:= \left(\bigcup_{x \in A} \omega(x) \right) \setminus A
%\\
%\partial_{\perp} A &:= \partial_{\perp}^{\alpha} A \cup \partial_{\perp}^{\omega} A
%\end{split}
%\]

Define a label $\partial_{\perp}^{\bullet} = (\partial_{\perp}^{\alpha}/\hat{v}, \partial_{\perp}^{\omega}/\hat{v}) : V^{\BD} \to 2^{\mathrm{BD}(v)/\hat{v}} \times 2^{\mathrm{BD}(v)/\hat{v}}$ by 
$$\partial_{\perp}^{\bullet}(U) = (\partial_{\perp}^{\alpha}(U)/\hat{v}, \partial_{\perp}^{\omega}(U)/\hat{v}) := (\alpha(x)/\hat{v}, \omega(x)/\hat{v})$$ 
for any $U \in V^{\BD}$ and for some $x \in U$.
By Theorem~\ref{homogeneity}, the $\omega$-limit and $\alpha$-limit sets of any point in $U$ are so of $x$ and thus the label $\partial_{\perp}^{\bullet}$ is well-defined. 
%Note that the label $\partial_{\perp}$ can be considered as the restriction of the pair $(\leq_{\hat{\alpha}}, \leq_{\hat{\omega}})$ on $S/\hat{v}$. 

\subsection{Label $\mathcal{L}^{\mathrm{BD}(v)}$ of $V^{\mathrm{BD}(v)}$}

We define labels for transverse descriptions as follows. 

\subsubsection{Labels for transverse boundaries}

Define a label $\partial_{\pitchfork}^{\bullet} := (\partial_{\pitchfork}^R/\hat{v}, \partial_{\pitchfork}^L/\hat{v}) : V^{\BD} \to 2^{\mathrm{BD}(v)/\hat{v}} \times 2^{\mathrm{BD}(v)/\hat{v}}$ by $$\partial_{\pitchfork}^{\bullet}(U) := (\partial_{\pitchfork}^R(U)/\hat{v}, \partial_{\pitchfork}^L(U)/\hat{v}))$$ for any $U \in V^{\BD}$. 

\subsubsection{Label $\partial^{\mathrm{BD}(v)}$ for boundaries}\label{sec:boundary_label}

Define a label $\partial^{\mathrm{BD}(v)}$ as follows: 
\[
\partial^{\mathrm{BD}(v)} := (\partial_{\pitchfork}^{\bullet}, \partial_{\perp}^{\bullet}) = (\partial_{\pitchfork}^R/\hat{v}, \partial_{\pitchfork}^L/\hat{v}, \partial_{\perp}^{\alpha}/\hat{v}, \partial_{\perp}^{\omega}/\hat{v}) : V^{\mathrm{BD}(v)} \to (2^{\mathrm{BD}(v)/\hat{v}})^4
\]

\subsubsection{Label $\leq^{\mathrm{BD}(v)}$ for boundaries of connected components of the complement $S - \mathrm{BD}(v)$}

Roughly speaking, the label $\leq^{\mathrm{BD}(v)}$ is information about how the orbits in $\mathrm{BD}(v)$ are arranged when one observes $\mathrm{BD}(v)$ from each connected component of the complement $S - \mathrm{BD}(v)$.
For any connected component $U$ of the complement $S - \mathrm{BD}(v)$, we can define a label $\leq^{\mathrm{BD}(v)} := (\leq_{\partial_{\pitchfork}^R}, \leq_{\partial_{\pitchfork}^L}, \leq_{\partial_{\perp}^{\alpha}}, \leq_{\partial_{\perp}^{\omega}})$ (see \S~\ref{sec:leq_perp_trans} below for the definition) such that the pair of $\leq_{\partial_{\perp}^{\alpha}}$ and $\leq_{\partial_{\perp}^{\omega}}$ (resp. $\leq_{\partial_{\pitchfork}^R}$ and $\leq_{\partial_{\pitchfork}^L}$) corresponds to a pair of ``orders'' of orbits on vertical (resp. transverse) boundaries of $U$, which are the restriction of the order structure (see the second order structure in \S~\ref{sec:ord_cpx} for details) introduced by a brief description as a term in Markus-Neumann theorem (see \cite[\S~5]{neumann1976global} for details). 
Though the detailed construction of $\leq^{\mathrm{BD}(v)}$ is straightforward but not concise, we arrange the precise construction in Definition~\ref{def:leq_bd} in \S~\ref{sec:leq_perp_trans} in Appendix.

\subsubsection{Definition of $\mathcal{L}^{\mathrm{BD}(v)}$}\label{sec:L^BD}

We define the label $\mathcal{L}^{\mathrm{BD}(v)}$ as follows: 
\[
\mathcal{L}^{\mathrm{BD}(v)} := [L^{\mathrm{BD}(v)}] = [\mathcal{O}, l^{\BD}, \partial^{\mathrm{BD}(v)}, \leq^{\mathrm{BD}(v)}, \mathcal{D}_{\partial_{\perp}}]
\]
%as follows. 
Recall that the labels $\mathcal{O}$ and $l^{\BD}$ (resp. $\mathcal{D}_{\partial_{\perp}}$, $\partial^{\mathrm{BD}(v)}$, $\leq^{\mathrm{BD}(v)}$) are defined in \S~\ref{sec:O} (resp. \S~\ref{subsec:001}, \S~\ref{sec:boundary_label}, Definition~\ref{def:leq_bd} in \S~\ref{sec:leq_perp_trans}). 
%% (resp. \S~\ref{sec:O}).
%%{subsec:l^BD}). 
%Moreover, the label $\leq^{\mathrm{BD}(v)}$ will be defined in Definition~\ref{def:leq_bd} in \S~\ref{sec:leq_perp_trans} below. 

\subsection{Definition of label $\mathcal{L}^{\mathrm{BD}(v)}$}
Let $v$ be a flow of finite type on a compact surface $S$. 

\subsubsection{Lebels $\partial^{\mathrm{BD}(v)}$ and $\leq^{\mathrm{BD}(v)}$ of $\mathrm{BD}(v)$}\label{subsec:p^BD}

Let $\mathcal{O}_{\mathrm{circuit,path}}$ be the set of finite circuit orders and path orders. 
Define a label 
$$\bm{L^{\mathrm{BD}(v)}} : V^{\mathrm{BD}(v)} \to \{ \pm \} \times \Sigma_{\mathrm{Top}}^1 \times (2^{\mathrm{BD}(v)/\hat{v}})^4 \times  \mathcal{O}_{\mathrm{circuit,path}}^4 \times \{ R, L, 0 \}^2$$ 
as follows: 
\[
L^{\mathrm{BD}(v)} := ( \mathcal{O}, l^{\BD}, \partial^{\mathrm{BD}(v)}, \leq^{\mathrm{BD}(v)}, \mathcal{D}_{\partial_{\perp}})
\] 
By definition, we have 
\[
L^{\mathrm{BD}(v)} = ( \mathcal{O}, l^{\BD}, \partial_{\pitchfork}^R/\hat{v}, \partial_{\pitchfork}^L/\hat{v}, \partial_{\perp}^{\alpha}/\hat{v}, \partial_{\perp}^{\omega}/\hat{v}, \leq_{\partial_{\pitchfork}^R}, \leq_{\partial_{\pitchfork}^L}, \leq_{\partial_{\perp}^{\alpha}}, \leq_{\partial_{\perp}^{\omega}}, \mathcal{D}_{\partial_{\perp}^{\alpha}}, \mathcal{D}_{\partial_{\perp}^{\omega}})
\]

\subsubsection{Local orientation reversion}\label{sec:lor}
A local orientation reversion $\bm{{V}^{\mathrm{Rev}}}: V_{\mathrm{BD}(v)} \to V_{\mathrm{BD}(v)}$ of canonical domains is an identity as a set with $\mathcal{O}^{{\mathrm{Rev}}} (U) = - \mathcal{O}(U)$, where $\mathcal{O}^{{\mathrm{Rev}}} := \mathcal{O}\circ V^{{\mathrm{Rev}}}$. 
Define $\mathcal{D}_{\partial_{\perp}^{\alpha}}^{\mathrm{Rev}}(U) = L$ (resp. $R$) if $\mathcal{D}_{\partial_{\perp}^{\alpha}} (U) = R$ (resp. L). 
Similarly, define $\mathcal{D}_{\partial_{\perp}^{\omega}}^{\mathrm{Rev}}(U) = L$ (resp. $R$) if $\mathcal{D}_{\partial_{\perp}^{\omega}} (U) = R$ (resp. L) and 
\[
\bm{\mathcal{D}_{\partial_{\perp}}^{\mathrm{Rev}}} := (\mathcal{D}_{\partial_{\perp}^{\alpha}}^{\mathrm{Rev}}, \mathcal{D}_{\partial_{\perp}^{\omega}}^{\mathrm{Rev}})
\]
Define $\mathcal{D}_{\partial_{\perp}^{\sigma}}^{\mathrm{Rev}}(U) = 0$ if $\mathcal{D}_{\partial_{\perp}^{\sigma}} (U) = 0$. 
Other revisions of labels are defined as follows: 
\begin{equation*}
\begin{split}
\bm{L^{\mathrm{BD}(v)\mathrm{Rev}}} & := (\mathcal{O}^{{\mathrm{Rev}}},  l^{\BD {\mathrm{Rev}}}, \partial^{\mathrm{BD}(v){\mathrm{Rev}}}, \leq^{\mathrm{BD}(v){\mathrm{Rev}}}, \mathcal{D}_{\partial_{\perp}}^{{\mathrm{Rev}}})\\
&\,\,= (- \mathcal{O}, l^{\BD}, \partial_{\pitchfork}^L/\hat{v},  \partial_{\pitchfork}^R/\hat{v}, \partial_{\perp}^{\bullet}, \leq_{\partial_{\pitchfork}^L}, \leq_{\partial_{\pitchfork}^R}, \leq_{\partial_{\perp}^{\alpha}}, \leq_{\partial_{\perp}^{\omega}}, \mathcal{D}_{\partial_{\perp}}^{{\mathrm{Rev}}})
\end{split}
\end{equation*}
\[
\bm{l_{\BD}^{\mathrm{Rev}}} := l_{\BD}
\]
%Then the label $L_{\mathrm{BD}(v)} := ( l_{\BD}, L^{\mathrm{BD}(v)})$ is the following mapping 
%$$L_{\mathrm{BD}(v)} : V_{\mathrm{BD}(v)} \times V^{\mathrm{BD}(v)}  \to \Sigma_{\mathrm{Top}}^0 \times \{ \pm \} \times \Sigma_{\mathrm{Top}}^1 \times (2^{\mathrm{BD}(v)/\hat{v}})^4 \times \mathcal{O}_{\mathrm{circuit,path}}^4 \times \{ R, L, 0 \}^2$$
%Note $L_{\mathrm{BD}(v)} = ( l_{\BD}, \mathcal{O}, l^{\BD}, \partial_{\pitchfork}^R/\hat{v}, \partial_{\pitchfork}^L/\hat{v}, \partial_{\perp}^{\alpha}/\hat{v}, \partial_{\perp}^{\omega}/\hat{v}, \leq_{\partial_{\pitchfork}^R}, \leq_{\partial_{\pitchfork}^L}, \leq_{\partial_{\perp}^{\alpha}}, \leq_{\partial_{\perp}^{\omega}}, \mathcal{D}_{\partial_{\perp}})$. 
%%
%
%
%$$L_{\mathrm{BD}(v)}^{\mathrm{Rev}} := ( l_{\BD}, L^{\mathrm{BD}(v)\mathrm{Rev}})$$
In particular, we have $(\partial_{\pitchfork}^{R}/\hat{v})^{\mathrm{Rev}} = \partial_{\pitchfork}^L/\hat{v}$, $(\partial_{\pitchfork}^{L}/\hat{v})^{\mathrm{Rev}} = \partial_{\pitchfork}^R/\hat{v}$, $(\leq_{\partial_{\pitchfork}^R})^{\mathrm{Rev}} = \leq_{\partial_{\pitchfork}^L}$, and $(\leq_{\partial_{\pitchfork}^L})^{\mathrm{Rev}} = \leq_{\partial_{\pitchfork}^R}$.

\subsubsection{Equivalence classes of canonical domains}

Define an equivalence relation of labels as follows for a quasi-regular flow $v$ on a compact surface: 
$L^{\mathrm{BD}(v)} \sim_o L'^{\mathrm{BD}(v)}$ if either $L^{\mathrm{BD}(v)}(U) = L'^{\mathrm{BD}(v)}(U)$ or $L^{\mathrm{BD}(v) \mathrm{Rev}}(U) = L'^{\mathrm{BD}(v)}(U)$ for any $U \in V^{\mathrm{BD}(v)}$. 
\begin{definition}
Define a label $\bm{\mathcal{L}^{\mathrm{BD}(v)}} : V^{\mathrm{BD}(v)} \to ((2^{\mathrm{BD}(v)/\hat{v}})^4 \times \Sigma_{\mathrm{Top}}^1 \times \mathcal{O}_{\mathrm{circuit,path}}^4 \times \{ R, L, 0 \}^2)/\sim_o$ by 
\[
\mathcal{L}^{\mathrm{BD}(v)} := [L^{\mathrm{BD}(v)}] = [\mathcal{O}, l^{\BD}, \partial^{\mathrm{BD}(v)}, \leq^{\mathrm{BD}(v)}, \mathcal{D}_{\partial_{\perp}}]
\]
where $[L^{\mathrm{BD}(v)}(U)]$ is the equivalence class of $L^{\mathrm{BD}(v)}(U)$. 
\end{definition}

%Also define a label $\mathcal{L}_{\mathrm{BD}(v)} :  V_{\mathrm{BD}(v)} \times V^{\mathrm{BD}(v)} \to (\Sigma_{\mathrm{Top}}^0 \times \{ \pm \} \times \Sigma_{\mathrm{Top}}^1 \times (2^{\mathrm{BD}(v)/\hat{v}})^4 \times   \mathcal{O}_{\mathrm{circuit,path}}^4 \times \{ R, L, 0 \}^2)/\sim_o$ by 
%\begin{equation*}
%\begin{split}
%\mathcal{L}_{\mathrm{BD}(v)} &:= (l_{\BD}, \mathcal{L}^{\mathrm{BD}(v)})
%%= (l_{\BD}, [L^{\mathrm{BD}(v)}]) 
%= (l_{\BD}, [\mathcal{O}, l^{\BD}, \partial^{\mathrm{BD}(v)}, \leq^{\mathrm{BD}(v)}, \mathcal{D}_{\partial_{\perp}}])\\
%& \hspace{2.7pt} = (l_{\BD}, [ \mathcal{O}, l^{\BD}, \partial_{\pitchfork}^R/\hat{v}, \partial_{\pitchfork}^L/\hat{v}, \partial_{\perp}^{\alpha}/\hat{v}, \partial_{\perp}^{\omega}/\hat{v}, \leq_{\partial_{\pitchfork}^R}, \leq_{\partial_{\pitchfork}^L}, \leq_{\partial_{\perp}^{\alpha}}, \leq_{\partial_{\perp}^{\omega}}, \mathcal{D}_{\partial_{\perp}}])
%\end{split}
%\end{equation*}
Since $2^{\mathrm{BD}(v)/\hat{v}}$ is the power set of an abstract multi-graph, any representative of the image by $\mathcal{L}^{\mathrm{BD}(v)}$ of any element consists of four signs, four abstract multi-graphs, two finite path orders, and two finite circuits orders.

\subsection{Reconstruction of the border point set $\BD$}

We observe the following statement. 

\begin{lemma}\label{reconstruction-order}
The border point set $\mathrm{BD}(v)$ of a flow $v$ of finite type on a compact surface can be reconstructed by abstract finite multi-graphs $G^{\BD}$, $G_{\BD}$, and $G_{D(v)}$ with the labels $\mathcal{L}^{\mathrm{BD}(v)}$, $l_{\BD}$, and $l_{D(v)}$ as an embedded subset. 
\end{lemma}

\begin{proof}
We show that the orbit structure and the holonomy of any element of $V_{\BD}$, the cyclic orders or the total orders of ss-separatrices around it, and the spiral structure of any element of $E_{\BD}$ can be reconstructed by the labels $l_{\BD}$ and $L^{\mathrm{BD}(v)}$. 
Precisely, the label $l_{\BD}$ reconstructs the topological type of vertices in $V_{\BD}$ (i.e. a singular point, a periodic orbit in $\partial S$, a one-sided periodic orbit outside of $\partial S$, a two-sided periodic orbit, a non-periodic limit circuit). 

Let $\sigma \in V_{\BD}$ be an element with $l_{\BD}(\sigma) = 0$. 
If there is no canonical domain whose vertical boundary contains $\sigma$, 
% (i.e. $\sigma$ is maximal with respect to $\leq_{\hat{\alpha}}$ and $\leq_{\hat{\omega}}$ on $G_{\BD}$), 
 then $\sigma$ is a center. 
In other words, each center can be reconstructed by labels $l_{\BD}$ and $\partial_{\perp}^{\bullet} = (\partial_{\perp}^{\alpha}/\hat{v}, \partial_{\perp}^{\omega}/\hat{v})$. 
If there is a canonical domain whose $\alpha$-vertical boundary contains $\sigma$, then $\sigma$ is a source or a $\partial$-source. 
Since the labels $\partial_{\perp}/\hat{v}, \leq_{\partial_{\perp}^{\alpha}}$, and $\leq_{\partial_{\perp}^{\omega}}$ distinguish cyclic (resp. total) orders of ss-separatrices around any singular point, each source (resp. $\partial$-source) with the cyclic (resp. total) order of ss-separatrices can be reconstructed by labels $l_{\BD}, \partial_{\pitchfork}^{\alpha}/\hat{v}$, and $\partial_{\perp}^{\alpha}/\hat{v}$. 
Similarly, each sink (resp. $\partial$-sink) with the cyclic (resp. total) order of ss-separatrices can be reconstructed by labels $l_{\BD}, \partial_{\pitchfork}^{\omega}/\hat{v}$ and $\partial_{\perp}^{\omega}/\hat{v}$. 

Let $\gamma \in V_{\BD}$ be an element with $l_{\BD}(\gamma) := 1_{\partial}$. 
Note that the local orientation $\mathcal{O}$ is necessary to determine left and right directions and that the spiral direction can be reconstructed by the labels $\partial_{\perp}^{\bullet}$ and $\mathcal{D}_{\partial_{\perp}}$. 
%In particular, if $\gamma$ is contained in the domain of $\partial_{\perp}^R$ or $\partial_{\perp}^L$ (resp. $\mathcal{D}_{\partial_{\perp}}^{\alpha}$ or $\mathcal{D}_{\partial_{\perp}}^{\omega}$), then $\gamma \subseteq \partial_{\mathop{\mathrm{Per}}(v)}$ (resp. $\gamma$ is a limit cycle).
If there is a canonical domain whose $\alpha$-vertical (resp. $\omega$-vertical) boundary contains $\gamma$, then $\gamma$ is a semi-repelling (resp. semi-attracting) limit cycle. 
If there is a canonical domain whose transverse boundary contains $\gamma$, then $\gamma \subseteq \partial_{\mathop{\mathrm{Per}}(v)}$. 
Moreover, each periodic orbit contained in $\partial S$ with the cyclic order of ss-separatrices can be reconstructed by labels $l_{\BD}, \mathcal{O}, \mathcal{D}_{\partial_{\perp}}, \partial^{\BD}$, and $\leq^{\BD}$ (i.e. all labels except $l^{\BD}$). 
Similarly, each one-sided (resp. two-sided) periodic orbit outside of $\partial S$ with one (resp. two) cyclic order of ss-separatrices can be reconstructed by all labels except $l^{\BD}$. 

Let $\gamma \in V_{\BD}$ be an element with $l_{\BD}(\gamma) := 1_{\mathrm{ms}}$. 
The abstract multi-graph structure of $\gamma$ is reconstructed by the abstract multi-graph $G_{D(v)}$ with the natural projection $l_{D(v)}$ and the orders $\leq^{\BD}$ on $\gamma$. 
Since each multi-saddle connection is a finite disjoint union of multi-saddles and multi-saddle separatrices, the multi-saddle connection $\gamma$ can be reconstructed by all labels except $l^{\BD}$ as an embedded subset. 
\end{proof}

\subsection{Proof of Theorem~\ref{injection}}

We demonstrate Theorem~\ref{injection} as follows. 

\begin{proof}[Proof of Theorem~\ref{injection}]
Lemma~\ref{reconstruction-order} implies that $\BD$ can be reconstructed. 
Since $S - \BD$ consists of finitely many invariant trivial flow boxes, periodic tori, periodic annuli, and transverse annuli, the label $l^{\BD}$ determines the type of four kinds of canonical domains. 
Also the local orientation $\mathcal{O}$ with the orbit direction $\partial^{\mathrm{BD}(v)}$ and the spiral direction $\mathcal{D}_{\partial_{\perp}}$ determines flow directions of regions. 
This means that we can reconstruct a flow uniquely. 
\end{proof}

Note that the label $\mathcal{L}^{\mathrm{BD}(v)}$ can be replaced by $L^{\BD}$ in the previous theorem if the compact surface $S$ is orientable. 
Notice that each of the conditions of ``finite type'' (i.e. quasi-regularity, finite conditions of limit cycles, non-existence of Q-sets) is necessary to construct a complete invariant using finite data.
Indeed, the finite condition of limit cycles is necessary for finite descriptions.
As mentioned, the set of topological equivalence classes of minimal flows (resp. Denjoy flows) on a torus is uncountable.
Similarly, the singular point set is uncountable up to locally topological equivalence.
In fact, singular points with infinitely parabolic sectors as shown in Figure~\ref{sectors} form an uncountable subset because we can choose the directions of infinitely many sectors as shown in Figure~\ref{sing}.

We would like to know whether the finiteness of limit cycles can be removed by using countable sequences of labels. 

\begin{figure}
\begin{center}
\includegraphics[scale=0.25]{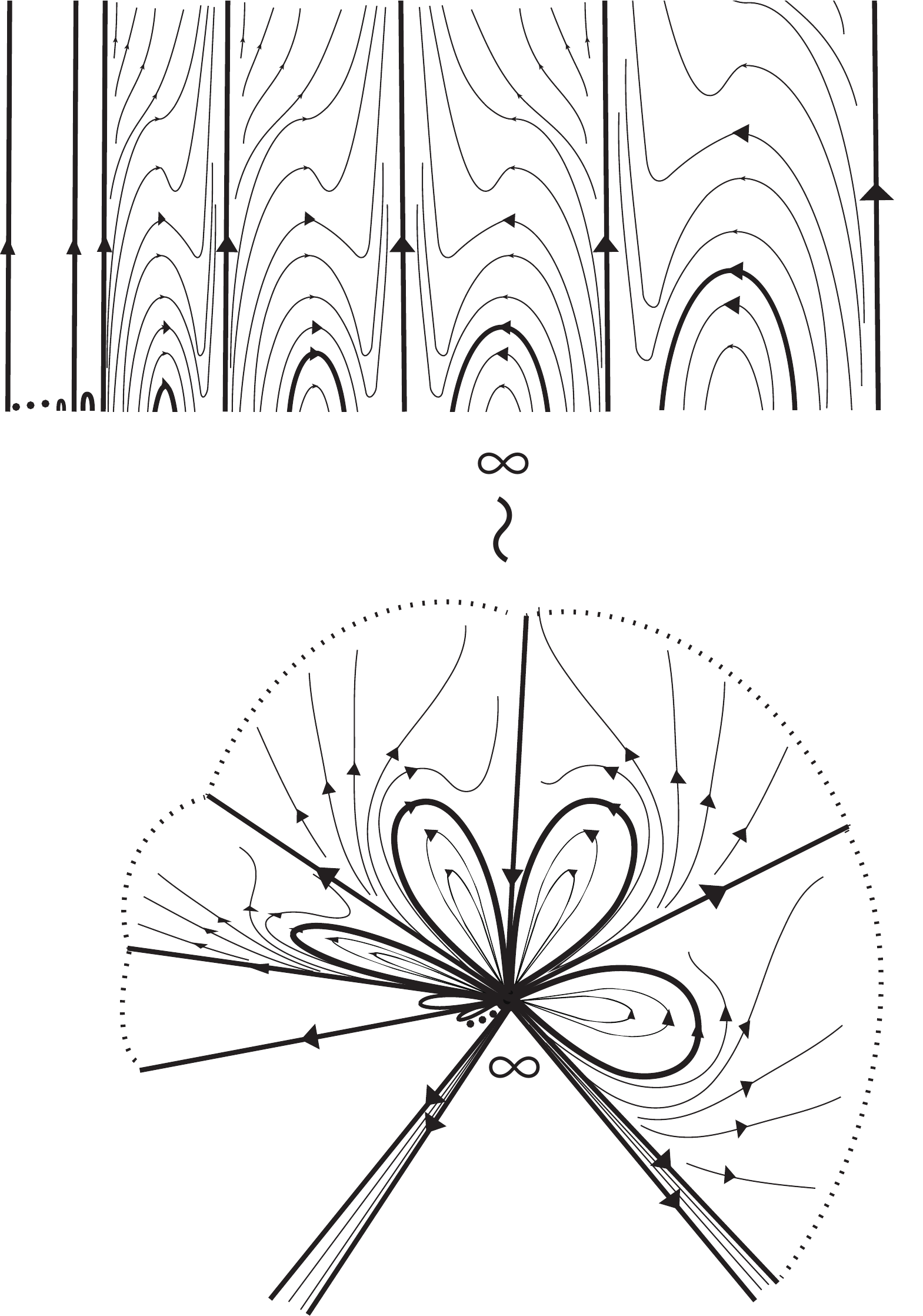}
\end{center}
\caption{A degenerate singular point with infinitely many parabolic sectors}
\label{sing}
\end{figure}

\section{On the ``Markus-Neumann theorem''}

As mentioned above, Buend{\'\i}a and L\'opez have constructed counterexamples to the ``Markus-Neumann theorem''. 
In fact, one of counterexamples (cf. the figure to the right in \cite[Figure 2]{buendia2018markus}) is a toral flow with one singular point and non-recurrent orbits such that a small neighborhood of the singular point consists of two hyperbolic sectors and two parabolic sectors (see Figure~\ref{example-sectors}), which are not quasi-regular but has only finitely sectored singular points (see Definition~\ref{def:fin_sec} for details). 
\begin{figure}
\begin{center}
\includegraphics[scale=0.2]{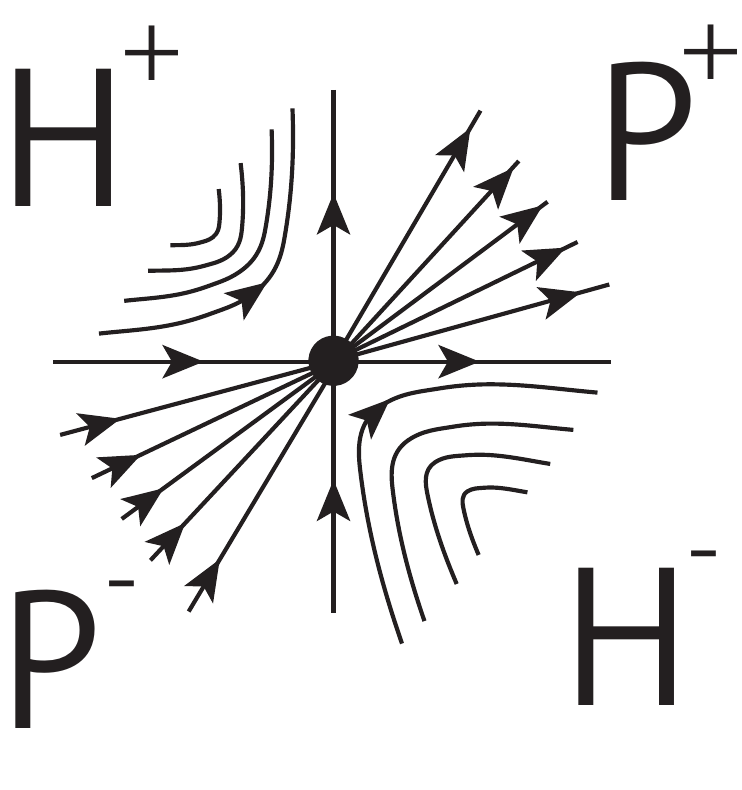}
\end{center}
\caption{A singular point whose neighborhood consists of two parabolic sectors and two hyperbolic sectors}
\label{example-sectors}
\end{figure}
%Recall that a parabolic sector is topologically equivalent to an invariant trivial flow box with the point $(\pm \infty, 0)$ and a hyperbolic (resp. elliptic) sector is topologically equivalent to a Reeb component with the point $(\infty, 0)$ (resp. $(- \infty, 0)$) as in Figure~\ref{sectors}. 
%\begin{figure}
%\begin{center}
%\includegraphics[scale=0.2]{sectors.pdf}
%\end{center}
%\caption{Two parabolic sector $P^-$ and $P^+$, a hyperbolic sector $H^-$ with clockwise orbit direction, and an elliptic sector $E^+$ with anti-clockwise orbit direction}
%\label{sectors}
%\end{figure}
Moreover, an example of a pair of flows consisting of non-recurrent orbits in a plane in \cite[Figure 3]{neumann1975classification} by Neumann is also a counterexample (see Lemma~\ref{lem:non_ex_noo_01}). 
In addition, ``Markus-Neumann theorem'' does not hold for the non-orientable original setting (see Lemma~\ref{lem:non_ex_noo_02}). 
However, we demonstrate that orbit complex is a complete invariant for flows of finite type on compact surfaces. 

\subsection{On the ``Markus-Neumann theorem'' for flows of finite type}

Let $v$ be a flow on a compact surface $S$. 
We call that an invariant subset $U \subseteq S$ is {\bf parallel} if $v\vert_U$ is topologically equivalent to either a canonical domain (i.e. an invariant trivial flow box $\mathbb{D}$, an invariant periodic annulus $\A_+$, an invariant transverse annulus $\A_{\widetilde{+}}$) as Figure~\ref{flow-boxes}, or a periodic torus $\mathbb{T}$. 
Following \cite{buendia2018markus}, we call that a point $x$ is {\bf ordinary} if it has a parallel neighborhood $U$ such that $\alpha'(x) = \alpha'(y)$ and $\omega'(x) = \omega'(y)$ for any point $y \in U$ and that there are exactly two orbits $O_- \neq O_+$ with $\alpha'(x) = \alpha'(O_-) = \alpha'(O_+)$ and $\omega'(x) = \omega'(O_-) = \omega'(O_+)$ such that $\partial U = \alpha'(x) \cup \omega'(x) \cup O_- \cup O_+$. 
A point is {\bf non-ordinary} if it is not ordinary. 
%\begin{definition}\label{def:sep_M}
An orbit is {\bf ordinary} (resp. {\bf non-ordinary}) if so are its points. 
%\end{definition}
Note that a non-ordinary orbit is called a separatrix in the sense of Markus in papers \cite{markus1954global,neumann1975classification,neumann1976global,buendia2018markus}. 
Denote by $\bm{\Sigma_{\mathrm{Sep}}}$ the set of non-ordinary points. 

\subsubsection{Coincidence between $\mathop{\mathrm{BD}}(v)$ and  $\Sigma_{\mathrm{Sep}}$ for a flow $v$ of finite type on a compact surface}

We show that the set of non-ordinary points corresponds to the border point set $\mathop{\mathrm{BD}}(v)$ for a flow of finite type on a compact surface. 

\begin{lemma} 
Let $v$ be a flow of finite type on a compact surface $S$. 
Then $\mathop{\mathrm{BD}}(v) = \Sigma_{\mathrm{Sep}}$. 
\end{lemma}

\begin{proof}
Theorem~\ref{homogeneity} implies that each orbit in $(S - \mathop{\mathrm{BD}}(v))$ has a parallel invariant neighborhood satisfying the conditions in the definition of ``ordinary'' and so $\mathop{\mathrm{BD}}(v) \supseteq \Sigma_{\mathrm{Sep}}$. 
On the other hand, we show that $\mathop{\mathrm{BD}}(v) \subseteq \Sigma_{\mathrm{Sep}}$. 
Indeed, recall that $\mathop{\mathrm{BD}}(v) = \mathop{\mathrm{Bd}}(v) \sqcup \mathop{\mathrm{Per}_{1}}(v)$.
Lemma~\ref{lem010} implies that $\mathop{\mathrm{BD}}(v) = \mathop{\mathrm{Bd}}(v) \sqcup \mathop{\mathrm{Per}_{1}}(v) = \mathop{\mathrm{Sing}}(v) \sqcup \partial^{-} \mathop{\mathrm{Per}}(v) \sqcup \partial_{\mathop{\mathrm{Per}}(v)} \sqcup \mathop{\mathrm{Per}_{1}}(v) \sqcup \partial^{-} \mathrm{P}(v) \sqcup  \mathop{\mathrm{P}_{\mathrm{sep}}}(v)$ consists of finitely many orbits. 
Since each singular point has no parallel neighborhoods, the singular point set $\mathop{\mathrm{Sing}}(v)$ consists of non-ordinary points. 
Because each orbit in $\partial^{-} \mathop{\mathrm{Per}}(v)$ is a limit cycle $\gamma$, any its neighborhood $U$ intersects an non-recurrent orbit $O$ with either $\alpha(O) = \alpha'(O) = \gamma \neq \emptyset = \alpha'(\gamma)$ or $\omega(O) = \omega'(O) = \gamma \neq \emptyset = \omega'(\gamma)$, and so the union $\partial^{-} \mathop{\mathrm{Per}}(v)$ consists of non-ordinary points.  
By Proposition~\ref{prop:cs}, the union $\partial^{-} \mathrm{P}(v)$ consists of connecting separatrices.
From Lemma~\ref{lem:semi-multi-saddles}, we have that $\mathop{\mathrm{P}_{\mathrm{sep}}}(v) = \mathop{\mathrm{P}_{\mathrm{ms}}(v)} \sqcup \mathop{\mathrm{P}_{\mathrm{ss}}(v)} \sqcup \mathop{\partial_{\mathrm{P}(v)}}$ is the union of semi-multi-saddle separatrices in the interiror $\mathop{\mathrm{int}}\mathrm{P}(v)$. 
Fix a non-recurrent orbit $O$ whose $\omega$-limit set is a multi-saddle $x$. 
Since each multi-saddle has at most finitely many separatrices, we have $\omega'(O) = \{ x \} \neq \omega'(O')$ for all but finitely many orbits $O'$ in a neighborhood of $O$. 
By symmetry, we have $\mathop{\mathrm{P}_{\mathrm{ms}}(v)} \sqcup \mathop{\mathrm{P}_{\mathrm{ss}}(v)} \sqcup \partial^{-} \mathrm{P}(v) \subset \Sigma_{\mathrm{Sep}}$. 
Since any orbit in $\mathop{\partial_{\mathrm{P}(v)}} \sqcup \partial_{\mathop{\mathrm{Per}}(v)} \sqcup \mathop{\mathrm{Per}_{1}}(v)$ has no parallel neighborhoods, we obtain $\mathop{\partial_{\mathrm{P}(v)}} \sqcup \partial_{\mathop{\mathrm{Per}}(v)} \sqcup \mathop{\mathrm{Per}_{1}}(v) \subseteq \Sigma_{\mathrm{Sep}}$ and so $\mathop{\mathrm{BD}}(v) \subseteq \Sigma_{\mathrm{Sep}}$. 
\end{proof}

\subsubsection{Orders induced by flows}

Define 
%a pre-order $\leq_{\hat{\alpha}}$ (resp. $\leq_{\hat{\omega}}$) on the orbit class space $S/\hat{v}$ as follows: 
%%$$ \hat{x} \leq_{\hat{\alpha}} \hat{y} \text{ if } \text{either } x \in \alpha(y) \text{ or } \hat{x} = \hat{y}$$
%$$ \hat{x} \leq_{\hat{\omega}} \hat{y} \text{ if } \text{either } x \in \omega(y) \text{ or } \hat{x} = \hat{y}$$
%$$ \hat{x} \leq_{\hat{\alpha}} \hat{y} \text{ if } \hat{x} \cap \bigcup_{y \in \hat{y}} \alpha(y) \neq \emptyset$$
%$$ \hat{x} \leq_{\hat{\omega}} \hat{y} \text{ if } \hat{x} \cap \bigcup_{y \in \hat{y}} \omega(y) \neq \emptyset$$
%Similarly, define 
a pre-order $\leq_{\alpha}$ (resp. $\leq_{\omega}$) on $S$ as follows: 
$$ x \leq_{\alpha} y \text{ if } \text{either } x \in \alpha(y) \text{ or } \hat{x} = \hat{y}$$
$$ x \leq_{\omega} y  \text{ if } \text{either } x \in \omega(y) \text{ or } \hat{x} = \hat{y} $$
Moreover, define a pre-order $\bm{\leq_{\alpha}}$ (resp. $\bm{\leq_{\omega}}$) on the power set $2^S$ as follows: 
$$ A \leq_{\alpha} B \text{ if } \text{either } A = B \text{ or } a \leq_{\alpha} b \text{ for any } a \in A, b \in B$$
$$ A \leq_{\omega} B  \text{ if } \text{either } A = B \text{ or } a \leq_{\omega} b \text{ for any } a \in A, b \in B$$
Define a pre-order $\bm{\leq_{\pitchfork}}$ for invariant subsets as follows: 
$$ A \leq_{\pitchfork} B \text{ if } \text{either } A \subseteq \partial_{\pitchfork} B \text{ or } A = B$$

\subsubsection{Orbit complex}\label{sec:ord_cpx}

Recall that the {\bf orbit complex} is an orbit space with cell structure, fiber type, order structure, and orientability of spirals (see \cite[\S 5]{neumann1976global} for details). 
Let $\pi_v : S \to S/v$ be the natural projection. 
A {\bf $\bm{1}$-cell} is the image $\pi_v(U)$ of a canonical domain $U$ and a {\bf $\bm{0}$-cell} is the image $\pi_v(O)$ of an orbit $O$ in $\mathrm{BD}(v)$. 
In other words, a $1$-cell is a connected component of $\pi_v(S - \mathrm{BD}(v)) = (S - \mathrm{BD}(v))/v$ and the set of $0$-cells is $\pi_v(\mathrm{BD}(v)) = \mathrm{BD}(v)/v$. 
A $1$-cell which is an open interval (resp. a circle) is called an {\bf open (resp. closed) 1-cell}. 
Notice that an open interval (resp. a circle) structure of a $1$-cell is embedded in the orbit space $S/v$. 
The {\bf fiber type} of a $i$-cell is either an open interval, a circle, or a point according to the topological type of the fiber of $\pi_v$ over a point in the orbit space. 
In particular, the fiber type of a $1$-cell is either an open interval or a circle. 
Notice that $\mathrm{BD}(v)$ is the union of the vertical boundaries and transverse boundaries of canonical domains.
Therefore, for a canonical domain $U$, the {\bf first order structure} is the union of the restrictions $\leq_{\partial_{\perp}^{\alpha}}\vert_{\partial_{\perp}^{\alpha}(U) \setminus \mathop{\mathrm{Sing}}(v)}$, $\geq_{\partial_{\perp}^{\omega}}\vert_{\partial_{\perp}^{\omega}(U) \setminus \mathop{\mathrm{Sing}}(v)}$, $\leq_{\partial_{\pitchfork}^R}\vert_{\partial_{\pitchfork}^{R} U \setminus \mathop{\mathrm{Sing}}(v)}$, and $\leq_{\partial_{\pitchfork}^L}\vert_{\partial_{\pitchfork}^{L} U \setminus \mathop{\mathrm{Sing}}(v)}$, which is either a total order or a cyclic order, where $\geq_{\partial_{\perp}^{\omega}}$ is the opposite order of $\leq_{\partial_{\perp}^{\omega}}$ and the restriction $\leq\vert_{A} := \leq\vert_{A \times A}$ of a subset $A$ on a poset $X$ is the restriction as a binary relation. 
Here the pre-orders are identified as subsets of $S \times S$ because a binary relation on a set $X$ is a subset of $X \times X$. 
The {\bf second order structure}, which is an extension of the first order structure, can be identified with the label $\leq^{\mathrm{BD}(v)}$. 
%the union of the orders $\leq_{\partial_{\perp}^{\alpha}}$, $\leq_{\partial_{\perp}^{\omega}}$, $\leq_{\partial_{\pitchfork}^R}$, and $\leq_{\partial_{\pitchfork}^L}$. 
%restrictions $\leq_{\partial_{\perp}^{\alpha}}\vert_{\partial_{\perp}^{\alpha}(U)}$, $\leq_{\partial_{\perp}^{\omega}}\vert_{\partial_{\perp}^{\omega}(U)}$, $\leq_{\partial_{\pitchfork}^R}\vert_{\partial_{\pitchfork}^{R} U}$, and $\leq_{\partial_{\pitchfork}^L}\vert_{\partial_{\pitchfork}^{L} U}$, 
%$\leq_{\alpha}\vert_{\partial U}$ (resp. $\leq_{\omega}\vert_{\partial U}$), 
For a canonical domain $U$, the {\bf third order structure}
%, called orientability of spirals in \cite{neumann1975classification},
 is the union of the restrictions $\leq_{\alpha}\vert_{(\partial_{\perp} U) \times U}$ and $\geq_{\omega}\vert_{U \times \partial_{\perp} U}$. 
The {\bf orientability of spirals} is the label $\mathcal{D}_{\partial_{\perp}}$. 

\subsubsection{Non-existence of non-ordinary orbits for some flows}

As mentioned above, ``Markus-Neumann theorem'' does not hold even for a flow whose orbits are homeomorphic to open intervals. 
In other words, \cite[Theorem~5.2]{markus1954global} and \cite[Lemma]{neumann1975classification} (refered as \cite[Lemma~1]{neumann1976global}) don't hold as follows. 

\begin{lemma}\label{lem:non_ex_noo_01}
Let $v$ {\rm(resp.} $v'${\rm)} be the flow on the plane $\R^2$ on the left {\rm(resp.} right{\rm)} in \cite[Figure 3]{neumann1975classification}. 
Then any connected component of the complement of $\Sigma_{\mathrm{Sep}}$ for $v$ {\rm(resp.} $v'${\rm)} is not parallel.  
%\\
%{\rm(2)} The orbit spaces $\R^2/v$ and $\R^2/v'$ are homeomorphic. 
\end{lemma}

\begin{proof}
For any orbit of $v$ and $v'$, the $\alpha'$-limit set and the $\omega'$-limit set are the empty set. 
On the other hand, the saturation of any small neighborhood of $b_1$ (resp. $b_2$, $c_1$, $c_2$, $d_1$, $d_2$) is topologically equivalent to an open trivial flow box and so a parallel neighborhood. 
Moreover, the boundary of such a saturation consists of two orbits, each of which is either $b_1$, $b_2$, $c_1$, $c_2$, $d_1$, $d_2$, or a horizontal line in $A \sqcup E$. 
This means that $v$ and $v'$ have no non-ordinary orbits. 
Then $\Sigma_{\mathrm{Sep}} = \emptyset$ for $v$ (resp. $v'$). 
Therefore, any connected component of the complement of $\Sigma_{\mathrm{Sep}}$ for $v$ (resp. $v'$) is $\R^2$ and so not parallel. 
\end{proof}

By the previous lemma, the cell structure, the order structure, and the orientability of spirals are not well-defined in the original way by Markus and Neumann (cf. \cite[\S~5]{neumann1975classification}). 
Though two flows $v$ and $v'$ in the previous lemma are not topologically equivalent to each other, notice that the orbit spaces of two flows in the previous lemma are homeomorphic, and so their orbit complexes can not be distinguished. 
Moreover,  ``Markus-Neumann theorem'' does not hold for the non-orientable original setting as follows. 

\begin{lemma}\label{lem:non_ex_noo_02} 
There is a flow $\widetilde{v}$ {\rm(resp.} $\widetilde{v'}${\rm)} on a M{\"o}bius band $\mathbb{M}^2$ such that any connected component of the complement of $\Sigma_{\mathrm{Sep}}$ for $\widetilde{v}$ {\rm(resp.} $\widetilde{v'}${\rm)} is not parallel.  
\end{lemma}

\begin{proof}
By twisting the planes with the flows $v$ and $v'$ in the previous lemma, and by identifying a trivial flow box on $A$ with one on $E$, we can obtain a M{\"o}bius band $\mathbb{M}^2$ with the resulting flows $\widetilde{v}$ and $\widetilde{v'}$ on $\mathbb{M}^2$. 
As the same argument in the proof of the previous lemma, the flows $\widetilde{v}$ and $\widetilde{v'}$ have no non-ordinary orbits. 
Therefore, any connected component of the complement of $\Sigma_{\mathrm{Sep}}$ is $\mathbb{M}^2$ and so not parallel. 
\end{proof}

\subsubsection{Completeness of the orbit complex for flows of finite type on orientable compact surfaces}

We demonstrate that the orbit complex is a complete invariant comparing our invariant. 
% by adding two circuit orders $\leq_{\partial_{\perp}^{\alpha}}$ and $\leq_{\partial_{\perp}^{\omega}}$. 

\begin{theorem}\label{th:NM_re}
The orbit complex for a flow of finite type on an orientable compact surface is a complete invariant. 
\end{theorem}

\begin{proof}
Fix a flow $v$ of finite type on an orientable compact surface. 
Recall that our topological invariant for $v$ consists of three abstract multi-graphs $(G^{\BD}, G_{\BD} = \mathrm{BD}(v)/v_{\mathrm{ex}}, G_{D(v)} = D(v)/v)$ with the labels $L^{\mathrm{BD}(v)}$, $l_{\BD}$,  and $l_{D(v)}$, where $L^{\mathrm{BD}(v)} = ( \mathcal{O}, l^{\BD}, \partial^{\mathrm{BD}(v)}, \leq^{\mathrm{BD}(v)},  \mathcal{D}_{\partial_{\perp}})$. 
The local orientation $\mathcal{O}$ can be induced by the orientation of the surface.  
The orbit space implies the abstract multi-saddle connection $D(v)/v = G_{D(v)}$ with the labels $l_{D(v)}$. 
The $0$-cell structure with fiber type and the third order structure imply the abstract border point set $\mathrm{BD}(v)/v$ and so $G_{\mathrm{BD}(v)} = \mathrm{BD}(v)/v_{\mathrm{ex}}$.  
The $1$-cell structure with fiber type implies the label $l^{\BD}$. 
Since the vertex set $V^{\BD}$ of $G^{\BD}$ consists of the canonical domains of $\BD$ (i.e. the connected components of $S - \BD$) and the edges of $G^{\BD}$ exist if canonical domains are adjacent in the sense of \S~\ref{sec:O}, the orbit space with the cell structure implies the dual graph $G^{\BD} = (V^{\BD}, E^{\BD})$.  
The fiber structure implies the label $l_{\BD}$. 
The second order structure can be identified with the order $\leq^{\mathrm{BD}(v)}$. 
Combining the specialization pre-order for the quotient topology on the orbit space, the orbit structure implies labels $\partial^{\mathrm{BD}(v)}$. 
The orientability of spirals corresponds to the label $\mathcal{D}_{\partial_{\perp}}$. 
This means that the orbit complex implies our complete invariant $(G^{\BD}, G_{\BD}, G_{D(v)}, L^{\mathrm{BD}(v)}, l_{\BD}, l_{D(v)})$. 
\end{proof}

The previous result revives the ``Markus-Neumann theorem'' (i.e. affirmatively answer Problem~\ref{prob:MN}) as follows. 

\subsection{Proof of Theorem~\ref{main:03}}
Theorem~\ref{main:03} is followed from Theorem~\ref{th:NM_re}. 
% and Corollary~\ref{cor:NM_re}. 

\section{Applications to Hamiltonian flows and Morse-Smale flows}

We apply our decompositions to Hamiltonian flows and Morse-Smale flows. 

\subsection{Applications to Hamiltonian flows}
%\subsubsection{Applications to Hamiltonian flows and Morse-Smale flows}

Recall that denote by $\bm{\mathop{\mathrm{Sing}_c}(v)}$ the set of centers.  
We have the following properties. 

\begin{corollary}\label{cor:Ham}
The following statements hold for any Hamiltonian flow $v$ with finitely many singular points on a compact surface $S$: 
\\
{\rm(1)} The flow $v$ is of finite type. 
\\
{\rm(2)} The weak border point set $\mathop{\mathrm{Bd}}(v) = D(v) \sqcup \mathop{\mathrm{Sing}_c}(v) \sqcup \partial_{\mathop{\mathrm{Per}}(v)}$ is the finite union of centers, multi-saddles, multi-saddles separatrices, and periodic orbits on the boundary $\partial S$.  
\\
{\rm(3)} The complement $S - \mathop{\mathrm{Bd}}(v)$ is the finite union of invariant open periodic annuli.  
\end{corollary}

\begin{proof}
From the existence of Hamiltonian, the surface $S$ is orientable and contains neither periodic tori,  periodic M{\"o}bius bands nor periodic Klein bottles. 
Moreover, Lemma~\ref{lem002} implies that the open invariant union $\mathrm{LD}(v)$ of locally dense orbits is empty, because of the existence of the Hamiltonian. 
Since any Hamiltonian flow on a surface is volume-preserving and so has no wandering domains, the flow $v$ has no wandering domain and so is non-wandering. 
\cite[Theorem~3]{cobo2010flows} implies that $v$ is quasi-regular. 
%By Lemma~\ref{lem:bd_cl}, we obtain $\mathop{\mathrm{Bd}}(v) - \mathop{\mathrm{Pd}_0}(v) = \partial_{\mathop{\mathrm{Per}}(v)}$. 
The non-existence of wandering domains implies that there are no limit circuits, the interior of the union $\mathrm{P}(v)$ of non-recurrent orbits is empty.  
Lemma~\ref{lem3-06} implies that $\mathrm{E}(v) = \emptyset$. 
This means that $v$ is of finite type and $S = \mathop{\mathrm{Cl}}(v) \sqcup \mathrm{P}(v) = \overline{\mathop{\mathrm{Per}}(v)}$. 

Theorem~\ref{lem0c} implies that each connected component of $S - \mathop{\mathrm{Bd}}(v)$ is an invariant open periodic annulus and that $\mathop{\mathrm{Bd}}(v)$ is the finite union of centers, multi-saddles, multi-saddles separatrices, and periodic orbits on the boundary $\partial S$. 
This means that assertions {\rm(2)}--{\rm(3)} hold. 
\end{proof}

The previous corollary means that any Hamiltonian flow with finitely many singular points on a compact surface can be obtained by gluing the boundaries of finitely many invariant open periodic annuli.  

\subsection{Applications to Morse-Smale flows}

For a closed invariant set $\gamma$,  define the stable manifold $W^s(\gamma) := \{ y \in X \mid \omega(y) \subseteq  \gamma \}$ and the unstable manifold $W^u(\gamma) := \{ y \in X \mid \alpha(y) \subseteq  \gamma \}$.
Recall a $C^r$ Morse-Smale vector field on a closed manifold for any $r \in \Z_{\geq 0}$ as follows. 

\begin{definition}\label{def:MS_without}
a $C^r$ vector field $X$ on a closed manifold is Morse-Smale if the non-wandering set consists of finitely many hyperbolic closed orbits and any point in the union $\bigcup_{O,O': \text{closed orbits}} W^s(O) \cap W^u(O')$ of intersections of the stable and unstable manifolds of closed orbits is transversal. 
\end{definition}

Moreover, under three generic conditions for differentials to guarantee structural stability, Labarca and Pacifico defined a Morse-Smale vector field on a compact manifold as follows \cite{labarca1990stability}.

\begin{definition}\label{def:MS_with}
A $C^\infty$ vector field $X$ on a compact manifold is {\bf Morse-Smale} if $(1)$ the non-wandering set $\Omega(X)$ consists of finitely many hyperbolic closed orbits; $(2)$ the restriction $X\vert_{\partial M}$ is Morse-Smale; $(3)$ for any orbits $O, O' \subset \Omega(X)$ and for any non-transversal point $x \in W^s(O) \cap W^u(O')$, we have $x \in \partial M$ and either $O$ or $O'$ is singular with respect to $X$ (i.e. $O \subseteq \mathop{\mathrm{Sing}}(X)$ or $O' \subseteq \mathop{\mathrm{Sing}}(X)$).
\end{definition}

Therefore, we say that a flow on a compact manifold is {\bf Morse-Smale} if it is topologically equivalent to a flow generated by a Morse-Smale vector field. 

Note that the two generic conditions for differentials form an open dense subset of the set of $C^\infty$ vector fields and that they are stated as follows: any closed orbit is $C^2$ linearizable, and the weakest contraction (resp. expansion) at any closed orbit is defined.
Here the weakest contraction at a singular (resp. periodic) point $p$ is defined if the contractive eigenvalue with the biggest real part among the contractive eigenvalues of $DX(p)$ (resp. $DX_f(p)$, where $X_f$ is the Poincar\'e map) is simple. 
Dually, we can define the weakest expansion at $p$. 

Under the $C^2$ linearizable condition and the eigenvalue conditions, a Morse-Smale $C^\infty$ vector field on a compact manifold is structurally stable with respect to the set of $C^\infty$ vector fields~\cite{labarca1990stability}.
Any Morse-Smale flow on a compact surface can be obtained by gluing the boundaries of finitely many transverse annuli and invariant trivial flow boxes as follows.

\begin{corollary}\label{cor:MS}
The following statements hold for any Morse-Smale flow $v$ on a compact surface $S$: 
\\
{\rm(1)} The flow $v$ is of finite type. 
\\
{\rm(2)} The invariant subset $\mathop{\mathrm{Bd}}(v) = D_{\mathrm{ss}}(v) \sqcup \mathop{\partial_{\mathrm{P}(v)}}$ is the finite union of sources, $\partial$-sources, sinks, $\partial$-sinks, saddles, $\partial$-saddles, limit cycles, proper semi-multi-saddle separatrices, and sepatarices from $\partial$-souces to $\partial$-sinks.  
\\
{\rm(3)} The complement $S - \mathop{\mathrm{Bd}}(v)$ is the finite union of invariant transverse annuli and invariant trivial flow boxes.  
\end{corollary}

\begin{proof}
By definition, the closed point set $\mathop{\mathrm{Cl}}(v)$ consists of finitely many sources, $\partial$-sources, sinks, $\partial$-sinks, saddles, $\partial$-saddles, and limit cycles and we have $S = \mathop{\mathrm{Cl}}(v) \sqcup \mathrm{P}(v) = \overline{\mathrm{P}(v)}$. 
Therefore, the flow $v$ is of finite type. 
Theorem~\ref{lem0c} implies that each connected component of $S - \mathop{\mathrm{Bd}}(v)$ is either an invariant open transverse annulus or an invariant open trivial flow box. 
From Proposition~\ref{lem11-01}, the finiteness of $\mathop{\mathrm{Bd}}(v)$ and its complement holds. 
The non-degeneracy of singular points implies that the union $\mathop{\mathrm{P}_{\mathrm{semi}}}(v)$ consists of proper semi-multi-saddle separatrices. 
Therefore $\mathop{\mathrm{Bd}}(v) = \mathop{\mathrm{Sing}}(v) \sqcup \partial^{-} \mathop{\mathrm{Per}}(v) \sqcup  \mathop{\mathrm{P}_{\mathrm{semi}}}(v) \sqcup \mathop{\partial_{\mathrm{P}(v)}} = D_{\mathrm{ss}}(v) \sqcup \mathop{\partial_{\mathrm{P}(v)}}$ is desired. 
\end{proof}

\section{Decompositions of flows with no-slip boundary condition}

From physical and differential equation points of view, the no-slip boundary condition (as in Figure~\ref{blowup}) and non-compactness appearing from properties of fluid phenomena on punctured surfaces are fundamental. 
To analyze such fluid phenomena, it is necessary to allow the degeneracy of singularities and non-compactness of surfaces. 
\begin{figure}
\begin{center}
\includegraphics[scale=0.1]{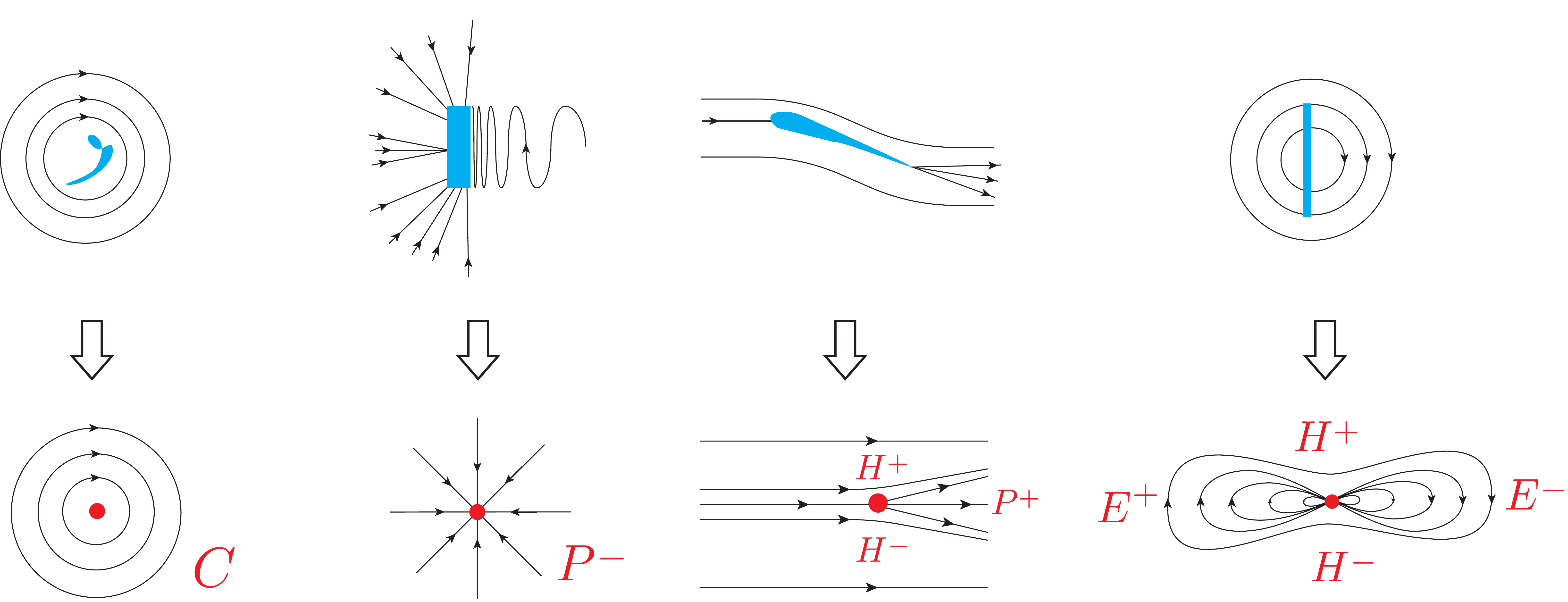}
\end{center}
\caption{Blow-downs of finitely many connected components of the singular point set. Here the symbol $C$ represents a center and the symbols $P^{\pm}$ (resp. $H^{\pm}$, $E^{\pm}$) represent parabolic (resp. hyperbolic, elliptic) sectors (see \S~\ref{sec:sector}).}
\label{blowup}
\end{figure}
To extend our descriptions of surfaces, we define blow-downs of a surface with respect to singular points and their flows as follows (see Figure~\ref{Fig:blowdown}):

\begin{figure}
\[
\xymatrix@=18pt{
S \ar@{}[d]|{\bigcup} & & S_{\mathrm{mc}}\ar[ll]_{\pi_{\mathrm{mc}}} \ar[rr]^q \ar@{}[d]|{\bigcup} & &  S_{\mathrm{col}} \ar@{}[d]|{\bigcup} \\
S - \mathop{\mathrm{Sing}}(v) & &  S_{\mathrm{mc}} - \mathop{\mathrm{Sing}}(v_{\mathrm{mc}}) \ar@{=}[ll]_{\pi_{\mathrm{mc}}|}
 \ar@{=}[rr]^{q|} & &  S_{\mathrm{col}} - \mathop{\mathrm{Sing}}(v_{\mathrm{col}})}
\]
\caption{Canonical quotient mappings induced by the metric completion and the collapse.}
\label{Fig:blowdown}
\end{figure}

\subsection{Blow-down surfaces and blow-down flows}

Let $v$ be a flow on a surface $S$ of finite genus with finitely many boundary components (and with possibly infinitely many ends (see definition of end in \S~\ref{sec:end_completion})).
Fix a Riemannian metric on $S$.
Since the singular point set $\mathop{\mathrm{Sing}}(v)$ is closed, the complement $S - \mathop{\mathrm{Sing}}(v)$ is open and so a surface. 
In particular, the set difference $S - (\mathop{\mathrm{Sing}}(v) \cup \partial S)$ is an open surface without boundary. 

Denote by $\bm{S_{\mathrm{mc}}}$ the {\bf metric completion} of the complement $S - \mathop{\mathrm{Sing}}(v)$. 
Identifying the union $\partial$ of new boundary components with the new singular points, define a flow $\bm{v_{\mathrm{mc}}}$ on $S_{\mathrm{mc}}$ such that $O_v(x) = O_{v_{\mathrm{mc}}}(x)$ for any point $x \in S - \mathop{\mathrm{Sing}}(v) = S_{\mathrm{mc}} - \mathop{\mathrm{Sing}}(v_{\mathrm{mc}})$ up to topological equivalence. 
Then $\partial = \mathop{\mathrm{Sing}}(v_{\mathrm{mc}})$ and so $S - \mathop{\mathrm{Sing}}(v) = S_{\mathrm{mc}} - \mathop{\mathrm{Sing}}(v_{\mathrm{mc}})$. 
By \cite[Theorem~3]{reeb1952certaines}, each connected component of the open surface $S - (\mathop{\mathrm{Sing}}(v) \cup \partial S)$ without boundary is homeomorphic to the resulting surface from a closed surface by removing a closed totally disconnected subset. 

Therefore, collapsing each connected component of $\mathop{\mathrm{Sing}}(v_{\mathrm{mc}})$ into a singular point, we obtain the resulting flow $\bm{v_{\mathrm{col}}}$ with totally disconnected singular points, called the {\bf blow-down flow} of $v$ with respect to singular points, on the resulting surface $S_{\mathrm{col}}$, called the {\bf blow-down surface} with respect to singular points, up to topological equivalence. 
%Denote by $\partial_{\mathrm{col}}$ the union of singular points of $v_{\mathrm{col}}$. 

By construction, we have the following observation. 

\begin{lemma}
Let $v$ be a flow on a surface $S$ whose singular point set has at most finitely many connected components, $v_{\mathrm{mc}}$ the resulting flow on the metric completion $S_{\mathrm{mc}}$ of $S - \mathop{\mathrm{Sing}}(v)$, and $v_{\mathrm{col}}$ the blow-down flow on the resulting surface $S_{\mathrm{col}}$. 
The following statements hold: 
\\
{\rm(1)} $S - \mathop{\mathrm{Sing}}(v) = S_{\mathrm{mc}} - \mathop{\mathrm{Sing}}(v_{\mathrm{mc}}) = S_{\mathrm{col}} -  \mathop{\mathrm{Sing}}(v_{\mathrm{col}})$. 
\\
{\rm(2)} For any point $x \in S - \mathop{\mathrm{Sing}}(v)$, we have $O_v(x) = O_{v_{\mathrm{mc}}}(x) = O_{v_{\mathrm{col}}}(x)$. 
\\
{\rm(3)} The singular point set $\mathop{\mathrm{Sing}}(v_{\mathrm{col}})$ of $v_{\mathrm{col}}$ is totally disconnected. 
\\
{\rm(4)} Each connected component of $S_{\mathrm{col}}$ is a compact surface.
\end{lemma}

%Then $O_v(x) = O_{v_{\mathrm{mc}}}(x) = O_{v_{\mathrm{col}}}(x)$ for any point $x \in S - \mathop{\mathrm{Sing}}(v) = S_{\mathrm{mc}} - \mathop{\mathrm{Sing}}(v_{\mathrm{mc}}) = S_{\mathrm{col}} -  \mathop{\mathrm{Sing}}(v_{\mathrm{col}})$. 
Notice that $S_{\mathrm{mc}}$ and $S_{\mathrm{col}}$ may have infinitely many connected components. 
% but that each connected component of $S_{\mathrm{col}}$ is a compact surface. 

\subsubsection{Generalized concepts for constructing decompositions}

%Recall that a flow on a surface is {\bf quasi-regular} if any singular points are either multi-saddles, sinks, $\partial$-sinks, sources, $\partial$-sources, or centers (see Figure~\ref{Fig:quasiregular}).
%\begin{figure}
%\begin{center}
%\includegraphics[scale=0.325]{quasi_reg_sing01.pdf}
%\end{center}
%\caption{Multi-saddles, a sink,  a $\partial$-sink, a source, a $\partial$-source, and a center.}
%\label{Fig:quasiregular}
%\end{figure} 

Let $\pi_{\mathrm{mc}} \colon S_{\mathrm{mc}} \to S$ be the canonical quotient map and $q \colon S_{\mathrm{mc}} \to S_{\mathrm{col}}$ the canonical quotient map. 

\begin{definition}
A flow on a surface with finitely many connected components of the singular point set is {\bf quasi-regular-like} if the blow-down flow is quasi-regular. 
\end{definition}

\begin{definition}
A boundary component $C$ of the singular point set is a {\bf weak saddle} if it the image $q(\pi_{\mathrm{mc}}^{-1}(C))$ is a multi-saddle. 
\end{definition}

An orbit is an orbit from or to a weak saddle if it is from or to a multi-saddle with respect to the blow-down flow $v_{\mathrm{col}}$. 
%Notice that $q(\pi_{\mathrm{mc}}^{-1}(\mathop{\mathrm{int}}\Sv)) = \emptyset$. 
%
We define $\mathop{\mathrm{P}_{\mathrm{semi}}}(v)$ for a quasi-regular-like flow on surface as follows. 

\begin{definition}\label{semi_P}
For a quasi-regular-like flow $v$ on a surface, the subset $\bm{\mathop{\mathrm{P}_{\mathrm{semi}}}(v)}$ is defined as the union of non-recurrent orbits from or to weak saddles. 
\end{definition}

Notice that the union $\mathop{\mathrm{P}_{\mathrm{semi}}}(v)$ is the union of proper semi-multi-saddle separatrices if $v$ is quasi-regular, because any weak saddles for a quasi-regular flow are multi-saddles. 
Therefore, definitions of $\mathop{\mathrm{P}_{\mathrm{semi}}}(v)$ in Definition~\ref{semi_P_qr} and Definition~\ref{semi_P} coincide. 

\subsection{Border point sets}

For a quasi-regular-like flow $v$ on a compact surface, put 
\[
\begin{split}
\bm{\mathop{\mathrm{Bd}_0}(v)} &:= \mathop{\mathrm{Sing}}(v) \sqcup \partial^{-} \mathop{\mathrm{Per}}(v) \sqcup \partial_{\mathop{\mathrm{Per}}(v)} \sqcup  \mathop{\partial_{\mathrm{P}(v)}} \sqcup \mathop{\mathrm{P}_{\mathrm{semi}}}(v) \sqcup \mathrm{E}(v)
\\
\bm{\mathop{\mathrm{BD}_0}(v)} &:= \mathop{\mathrm{Bd}_0}(v)  \sqcup \mathop{\mathrm{Per}_{1}}(v)
 \end{split}
\] 

We also define the pre-border set $\bm{\mathop{\mathrm{Pd}_0}(v)}$ as follows: 
\[
\mathop{\mathrm{Pd}_0}(v) := \mathop{\mathrm{Sing}}(v) \sqcup \partial^{-} \mathop{\mathrm{Per}}(v) \sqcup \mathop{\mathrm{P}_{\mathrm{semi}}}(v) \sqcup \mathrm{E}(v)
\]

Then $\mathop{\mathrm{Bd}_0}(v) = \mathop{\mathrm{Pd}_0}(v) \sqcup \partial_{\mathop{\mathrm{Per}}(v)} \sqcup  \mathop{\partial_{\mathrm{P}(v)}}$. 
%
%where $\mathop{\mathrm{Per}}(v)$ (resp. $\mathrm{E}(v)$) is the union of periodic (resp. nowhere dense recurrent) orbits. 
%
By definitions of $\mathop{\mathrm{Pd}_0}(v)$ and $\mathop{\mathrm{BD}}_0(v)$, from Proposition~\ref{cor:bd01}, we have the following observation.

\begin{lemma}\label{lem:bd_cl}
The following stratements hold for any quasi-regular-like flow $v$ on a compact surface $S$: 
\\
{\rm(1)} The restriction $v \vert_{S - \mathop{\mathrm{Pd}_0}(v)}$ of the flow $v$ is topologically equivalent to the restriction $v_{\mathrm{col}} \vert_{S_{\mathrm{col}} - \mathop{\mathrm{bd}}_0(v_{\mathrm{col}})}$ to the blow-down flow $v_{\mathrm{col}}$ on the blow-down surface $S_{\mathrm{col}}$ in the canonical way. 
\\
{\rm(2)} If $v$ is quasi-regular, then $\mathop{\mathrm{Bd}}_0(v) = \mathop{\mathrm{Bd}}(v)$ and $\mathop{\mathrm{BD}}_0(v) = \mathop{\mathrm{BD}}(v)$. 

\end{lemma}

%By definition, any quasi-regular-like flow $v$ on a compact surface $S$ and the blow-down flow $v_{\mathrm{col}}$ on the blow-down surface $S_{\mathrm{col}}$ correspond to each other on the complement $ = S_{\mathrm{col}} - \mathop{\mathrm{Bd}}_0(v_{\mathrm{col}})$ up to topological equivalence. 
%
% of a relation between $\mathop{\mathrm{BD}}(v)$ and $\mathop{\mathrm{BD}}_0(v)$ for a quasi-regular flow. 
%
%\begin{corollary}\label{lem:bd_cl}
%Let $v$ be a quasi-regular flow on a compact surface $S$. 
%Then $\mathop{\mathrm{Bd}}(v) - \mathop{\mathrm{Pd}_0}(v) = \mathop{\partial_{\mathrm{P}(v)}} \sqcup \partial_{\mathop{\mathrm{Per}}(v)}$ and $\mathop{\mathrm{BD}}(v) - \mathop{\mathrm{BD}}_0(v) = \mathop{\partial_{\mathrm{P}(v)}}$. 
%\end{corollary}

For a point $z \in S_{\mathrm{col}}$, denote by $\alpha_{v_{\mathrm{col}}} (z)$ the $\alpha$-limit  set of $z$ in $ S_{\mathrm{col}}$ with respect to $v_{\mathrm{col}}$ and by $\omega_{v_{\mathrm{col}}} (z)$ the $\omega$-limit set of $z$ in $ S_{\mathrm{col}}$ with respect to $v_{\mathrm{col}}$. 
We describe the decomposition of a quasi-regular-like flow on a compact surface. 

\begin{theorem}\label{main:a}
Each connected component of $S - \mathop{\mathrm{Pd}_0}(v)$ for a quasi-regular-like flow $v$ on a compact surface $S$ is one of the following seven invariant open subsets exclusively:
\\
$(1)$
A trivial flow box in $\mathrm{P}(v)$, 
%whose orbit space is an interval,
\\
$(2)$
An annulus in $\mathrm{P}(v)$, 
%whose orbit space is a circle,
\\
$(3)$ 
An annulus in $\mathop{\mathrm{Per}}(v)$, 
%whose orbit space is a circle,
\\
$(4)$ 
A torus in $\mathop{\mathrm{Per}}(v)$, 
%whose orbit space is an interval,
\\
$(5)$ 
A Klein bottle in $\mathop{\mathrm{Per}}(v)$, 
%whose orbit space is an interval,
\\
$(6)$ 
A M{\"o}bius band in $\mathop{\mathrm{Per}}(v)$, 
%whose orbit space is an interval,
or
\\
$(7)$ 
An essential subset in $\mathrm{LD}(v)$. 
%, whose orbit class space is a singleton.

Moreover, the following statements hold: 
\\
$(a)$ For any connected component $U$ in $\mathrm{P}(v) \setminus \mathop{\mathrm{Pd}_0}(v)$ and for any points $x,y \in U$, we have $\alpha_{v_{\mathrm{col}}}(x) = \alpha_{v_{\mathrm{col}}}(y)$ and $\omega_{v_{\mathrm{col}}}(x) = \omega_{v_{\mathrm{col}}}(y)$. 
%, the 
%The $\omega$-limit $(\mathrm{resp.}$ $\alpha$-limit$)$ set of a point in a  is the $\omega$-limit $(\mathrm{resp.}$ $\alpha$-limit$)$ set of any point in the connected component. 
\\
$(b)$ For any connected component $U$ in $\mathrm{LD}(v)$ and for any points $x,y \in U$, we have $\overline{O(x)} = \overline{O(y)} = \overline{U} \subseteq S$. 
%The closure of a point in a connected component in $\mathrm{LD}(v)$ is the closure of any point in the connected component. 
\\
{\rm(c)} For any connected component $U$ in $\mathrm{LD}(v)$, the boundary $\partial U$  consists of singular points and finitely many orbits from or to subsets of singular points. 
\\
$(d)$ For any connected component $U$ in $S - \mathop{\mathrm{Pd}_0}(v) = S_{\mathrm{col}} - \mathop{\mathrm{Bd}}_0(v_{\mathrm{col}}) \subseteq S_{\mathrm{col}}$, 
% as a subset of $S_{\mathrm{col}}$, 
any connected component of $\partial_{\mathrm{col}} U$ is a finite union of closed orbits, semi-multi-saddle separatrices, and separatrices on $\partial S$ between $\partial$-sources and  $\partial$-sinks unless the boundary component $\partial_{\mathrm{col}} U$ intersects $\mathrm{E}(v_{\mathrm{col}}) = \mathrm{E}(v)$.
% is a finite union of closed orbits, orbits from or to weak saddles, and separatrices on $\partial S$ unless the boundary component $\partial_{\mathrm{col}} U$ intersects $\mathrm{E}(v_{\mathrm{col}}) = \mathrm{E}(v)$. 
\end{theorem}

\begin{proof}
%[Proof of Theorem~\ref{main:a}]
Let $v$ be a quasi-regular-like flow on a compact surface $S$. 
By Lemma~\ref{lem:bd_cl}, we have $\mathop{\mathrm{Bd}}(v_{\mathrm{col}}) - \mathop{\mathrm{bd}}_0(v_{\mathrm{col}}) = \partial_{\mathop{\mathrm{Per}}(v_{\mathrm{col}})} \sqcup \mathop{\partial_{\mathrm{P}(v_{\mathrm{col}})}}$. 
Therefore Theorem~\ref{main:a} is followed from Theorem~\ref{lem0c}. 
\end{proof}

\subsubsection{Decompositions of flow of finite-like type}

We call that a quasi-regular-like flow is of {\bf finite-like type} if there are at most finitely many limit cycles and each recurrent orbit is closed. 
%Denote by $\mathop{\mathrm{BD}}_0(v)$ the union of $\mathop{\mathrm{Pd}_0}(v)$ and periodic orbits with small non-orientable neighborhoods. 
%In other words, the subset $\mathop{\mathrm{BD}}_0(v)$ is the union of $\mathop{\mathrm{Pd}_0}(v)$ and one-sided periodic orbits. 
Then, we have the following statement.

\begin{theorem}\label{main:b}
Each connected component of $S - \mathop{\mathrm{BD}}_0(v)$ for a quasi-regular-like flow $v$ on a compact surface $S$ is one of the following five invariant open subsets exclusively:
\\
$(1)$
A trivial flow box in $\mathrm{P}(v)$, 
%whose orbit space is an interval,
\\
$(2)$
An annulus in $\mathrm{P}(v)$, 
%whose orbit space is a circle,
\\
$(3)$ 
An annulus in $\mathop{\mathrm{Per}}(v)$, 
%whose orbit space is a circle,
\\
$(4)$ 
A torus in $\mathop{\mathrm{Per}}(v)$, 
\\
$(5)$ 
An essential subset in $\mathrm{LD}(v)$. 
%whose orbit space is an interval,

Moreover, if $v$ is of finite-like type, then the union $\mathop{\mathrm{BD}}_0(v_{\mathrm{col}})$ is a finite union of singular points, limit cycles, one-sided periodic orbits, and orbits from or to singular points. 
\end{theorem}

\begin{proof}
Let $v$ be a quasi-regular-like flow on a compact surface $S$. 
By Lemma~\ref{lem:bd_cl}(1) and Proposition~\ref{cor019}, the complement $S - \mathop{\mathrm{BD}_0}(v)$ can be identified with the complement $S - \mathop{\mathrm{BD}}_0(v_{\mathrm{col}})$. 
Therefore, the assertion is followed from Theorem~\ref{homogeneity}. 
\end{proof}

\subsubsection{}

We describe the relations among the border point set and relative concepts as follows. 

\begin{proposition}\label{cor019+}
The following statements hold for a quasi-regular flow $v$ with at most finitely many limit cycles on a compact surface $S$:
\\
%$(1)$
%$D_{\mathrm{ss}}(v) = (\mathop{\mathrm{Sing}}(v) - \mathop{\mathrm{Sing}_c}(v)) \sqcup \partial^{-} \mathop{\mathrm{Per}}(v) \sqcup \mathop{\mathrm{P}_{\mathrm{semi}}}(v) \sqcup \mathrm{E}(v)$ is closed.
%\\
$(1)$ $\mathop{\mathrm{Pd}_0}(v) = D_{\mathrm{ss}}(v) \sqcup \mathop{\mathrm{Sing}_c}(v)$ is closed. 
\\
$(2)$ 
$\mathop{\mathrm{Bd}}(v) = \mathop{\mathrm{Pd}_0}(v) \sqcup \partial_{\mathop{\mathrm{Per}}(v)} \sqcup \mathop{\partial_{\mathrm{P}(v)}}$ is closed.
%\\
%$(3)$ 
%$\mathop{\mathrm{BD}}(v) = \mathop{\mathrm{Bd}}(v) \sqcup \mathop{\mathrm{Per}_{1}}(v)$ is closed. 
\end{proposition}

\begin{proof}
By definition and Proposition~\ref{cor019}, assertion (1) holds. 
Lemma~\ref{lem:bd_cl} implies that assertion {\rm(2)} holds. 
\end{proof}

\section{Decompositions of flows of weakly finite type}

We weaken the regularity(-like) condition for singular points into a finitely sectored condition. 

\subsection{Flows of weakly finite type}\label{sec:sector}

Let $v$ be a flow on a surface $S$. 

\subsubsection{Sectors for flows on surfaces}

Sectors are defined as follows. 

\begin{definition}\label{def:sector}
The restriction $v\vert_A$ for a subset $A \subset S$ is a {\bf sector} for a singular point $x$ if there are a non-degenerate interval $I \subseteq [0, 2 \pi )$ and a homeomorphism $h \colon \{ x\} \sqcup A \to \{ 0 \} \sqcup \{ ( r \cos \theta, r \sin \theta ) \in \R^2 \mid r \in (0,1), \theta \in I \}$ such that $h^{-1}(0) = x$.
\end{definition}

\begin{definition}\label{def:para_sector}
A sector on an open subset $A$ for a singular point $x \in S$ is a {\bf parabolic} if the restriction of $v$ to $\{ x\} \sqcup A$ is locally topologically equivalent to the restriction on $\{ 0 \} \sqcup \{ ( r \cos \theta, r \sin \theta ) \in \R^2 \mid r \in (0,1), \theta \in [0,\pi/2] \}$ of the flow generated by $X(x,y) := (x,y)$ as on the left of Figure~\ref{sectors_all_01}. 
%A {\bf parabolic sector} is topologically equivalent to a trivial flow box with the point $(\pm \infty, 0)$ as on the left of Figure~\ref{sectors_all_01}. 
\end{definition}

\begin{definition}\label{def:hyp_sector}
A sector on an open subset $A$ for a singular point $x$ is a {\bf hyperbolic} if the restriction of $v$ to $\{ x\} \sqcup A$ is locally topologically equivalent to the restriction on $\{ 0 \} \sqcup \{ ( r \cos \theta, r \sin \theta ) \in \R^2 \mid r \in (0,1), \theta \in [0,\pi/2] \}$ of the flow generated by  $X(x,y) := (-x, y)$ as on the middle of Figure~\ref{sectors_all_01}. 
\end{definition}

\begin{definition}\label{def:elli_sector}
A sector on an open subset $A$ for a singular point $x$ is {\bf elliptic} if the restriction of $v$ to $\{ x\} \sqcup A$ is locally topologically equivalent to the restriction on $\{ (x,y) \in \R^2 \mid x \geq 0, x^2 + y^2 > 1 \} \sqcup \{ \infty \} \subset \R^2 \sqcup \{ \infty \}$ of the flow generated by a continuous vector field $X$ defined by $X = (0,1)$ on $\R^2$ and $X = 0$ at the point $\infty$ at infinity as on the left of Figure~\ref{sectors_all_01}. 
\end{definition}

Here the union $\R^2 \sqcup \{ \infty \}$ is the one-point compactification of $\R^2$, which is a sphere. 

\begin{figure}
\begin{center}
\includegraphics[scale=0.2]{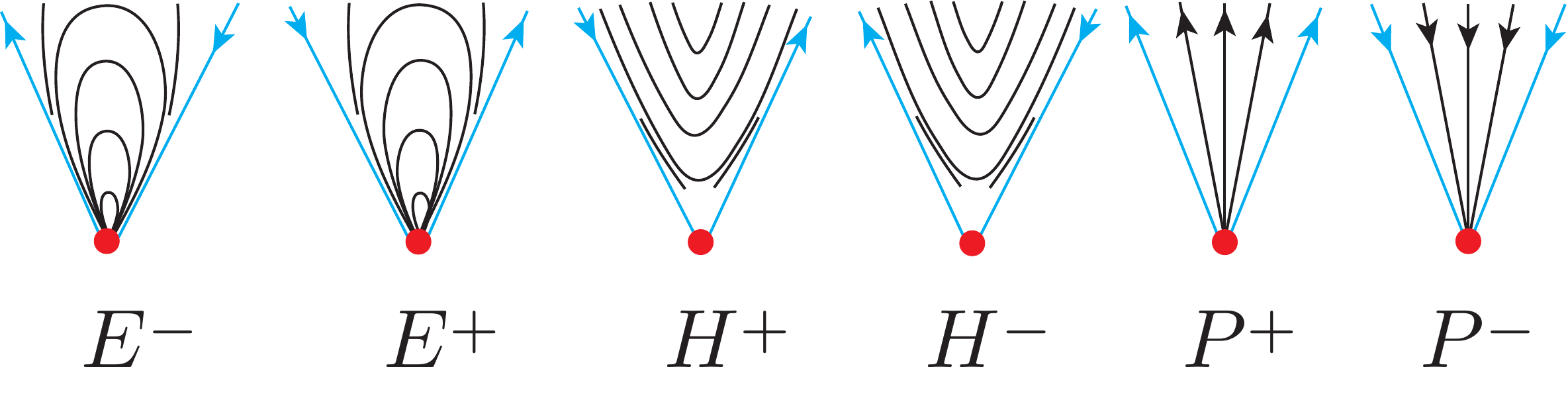}
\end{center}
\caption{Two parabolic sectors $P^-$ and $P^+$, two hyperbolic sectors $H^-$ and $H^+$ with clockwise and anti-clockwise orbit directions, and two elliptic sectors $E^+$ and $E^-$ with clockwise and anti-clockwise orbit directions, respectively.}
\label{sectors_all_01}
\end{figure}

\begin{definition}\label{def:fin_sec}
A singular point in the interior of a surface is {\bf finitely sectored} if either it is a center or there is its open neighborhood which is an open disk and is a finite union of the point, parabolic sectors, hyperbolic sectors, and elliptic sectors such that a pair of distinct sectors intersects at most two orbit arcs. 
\end{definition}

\begin{definition}
A singular point on the boundary of a surface is {\bf finitely sectored} if it is finitely sectored for the resulting flow on the double of the compact surface. 
\end{definition}

%\begin{definition}\label{def:fin_sec_flow}
%A flow on a surface is {\bf finitely sectored} if any singular point of it is finitely sectored. 
%\end{definition}

Note that a maximal open elliptic sector $U$ is invariant and has the $\omega$-limit set $\{ x \}$ and the $\alpha$-limit set $\{ x \}$ of any point $x$ in $U$ such that the orbit closure $\overline{O(y)} = O(y) \sqcup \{ x \}$ for any point $y \in U$ bounds an invariant trivial flow box in $U$. 

\begin{definition}
For a finitely sectored singular point $x$, a nonempty open parabolic sector is a {\bf maximal open parabolic sector} if it is an open parabolic sector contained in the complement of the union of maximal elliptic open sectors of $x$ and hyperbolic sectors of $x$ which is maximal with respect to the inclusion order. 
\end{definition}

%A separatrix is a {\bf hyperbolic} (resp. {\bf elliptic}, {\bf parabolic}) {\bf border separatrix} if it is contained in the boundary of a hyperbolic sector (resp. a maximal open elliptic sector, a maximal open parabolic sector). 

Moreover, the quasi-regularity implies that each singular point is a finitely sectored singular point without elliptic sectors. 
A singular point is a multi-saddle if and only if it is a finitely sectored singular point whose sectors are hyperbolic.
Similarly, a singular point is a sink, a $\partial$-sink, a source, or a $\partial$-source if and only if it is a finitely sectored singular point whose sectors consist of exactly one parabolic sector as in Figure~\ref{Fig:quasiregular}.
In \cite[Theorem~A]{kibkalo2022topological}, any isolated singular points of a gradient vector field on a surface are characterized as non-trivial finitely sectored singular points without elliptic sectors.

\subsubsection{Flows of weakly finite type and border separatrices}

We generalize flows of finite type as follows. 

\begin{definition}
A flow is {\bf of weakly finite type} if any singular point of it is finitely sectored, there are at most finitely many limit cycles, and $\mathrm{R}(v) = \emptyset$.
\end{definition}

We generalize semi-multi-saddle separatrix and separatrices in $\mathop{\partial_{\mathrm{P}(v)}}$ as follows. 

\begin{definition}
A separatrix is a {\bf hyperbolic} {\rm(resp.} {\bf elliptic}, {\bf parabolic}{\rm)} border separatrix if it is contained in the boundary of a hyperbolic sector (resp. a maximal open elliptic sector, a maximal open parabolic sector). 
\end{definition}

Here, each of the boundaries of a hyperbolic sector, a maximal open elliptic sector, and a maximal open parabolic sector is either the topological boundary of the sector as subsets of $S$ or the intersection of the sector and the boundary $\partial S$ of the surface $S$. 

\begin{definition}
A separatrix is a {\bf border separatrix} if it is either hyperbolic, elliptic, or parabolic border separatrix. 
\end{definition}

Denote by $\bm{\mathop{\partial_{\mathrm{h}(v)}}}$ the union of hyperbolic border separatrices, and by $\bm{\mathop{\partial_{\mathrm{e/p}(v)}}}$ the union of elliptic/parabolic border separatrices which are not hyperbolic. 
By definition, note that $\mathop{\partial_{\mathrm{P}(v)}} \subseteq \mathop{\partial_{\mathrm{e/p}(v)}}$.
The union  $\bm{\mathop{\partial_{\mathrm{bs}(v)}}} := \mathop{\partial_{\mathrm{h}(v)}} \sqcup \mathop{\partial_{\mathrm{e/p}(v)}}$ is the union of border separatrices. 
We have the following statement. 

\begin{lemma}\label{lem:border_set_diff}
For any flow $v$ with finitely sectored singular points on a compact surface, we have $\partial^{-} \mathrm{P}(v) \sqcup \mathop{\mathrm{P}_{\mathrm{sep}}}(v) \subseteq \mathop{\partial_{\mathrm{bs}(v)}}$. 
\end{lemma}

\begin{proof}
Bu definitions, we obtain $\mathop{\mathrm{P}_{\mathrm{ms}}}(v) \sqcup \mathop{\mathrm{P}_{\mathrm{ss}}}(v) \subseteq \mathop{\partial_{\mathrm{h}(v)}}$ and $\mathop{\partial_{\mathrm{P}(v)}} \subseteq \mathop{\partial_{\mathrm{e/p}(v)}}$. 
Therefore, we have $\mathop{\mathrm{P}_{\mathrm{sep}}}(v) = \mathop{\mathrm{P}_{\mathrm{ms}}}(v) \sqcup \mathop{\mathrm{P}_{\mathrm{ss}}}(v) \sqcup \mathop{\partial_{\mathrm{P}(v)}} \subseteq \mathop{\partial_{\mathrm{h}(v)}} \sqcup \mathop{\partial_{\mathrm{e/p}(v)}} = \mathop{\partial_{\mathrm{bs}(v)}}$. 
By Proposition~\ref{prop:cs}, the finitely sectored property of singular points implies $\partial^{-} \mathrm{P}(v) \subseteq \mathop{\partial_{\mathrm{bs}(v)}}$. 
\end{proof}

\subsubsection{A generalization of {\rm(}weak{\rm)} border point sets for flows of weakly finite type}

Define $\bm{\mathop{\mathrm{Bd}_+}(v)}$ and $\bm{\mathop{\mathrm{BD}_+}(v)}$ for a flow $v$ with finitely sectored singular points on a compact surface $S$ as follows: 
\[
\begin{split}
\mathop{\mathrm{Bd}_+}(v) &:= \mathop{\mathrm{Sing}}(v) \sqcup \partial^{-} \mathop{\mathrm{Per}}(v) \sqcup \partial_{\mathop{\mathrm{Per}}(v)} \sqcup \mathop{\partial_{\mathrm{bs}(v)}} \sqcup \mathrm{E}(v)
\\
 \mathop{\mathrm{BD}_+}(v) &:= \mathop{\mathrm{Bd}_+}(v)  \sqcup \mathop{\mathrm{Per}_{1}}(v)
 \end{split}
 \]

By Lemma~\ref{lem010} and Lemma~\ref{lem:border_set_diff}, we have $\mathop{\mathrm{Bd}}(v) \subseteq \mathop{\mathrm{Bd}_+}(v)$ and $\mathop{\mathrm{BD}}(v) \subseteq \mathop{\mathrm{BD}_+}(v)$. 
Moreover, we have the following relations. 

\begin{lemma}
The following statements hold for any quasi-regular flow $v$ on a compact surface $S$: 
\\
{\rm(1)} The invariant subset $\mathop{\partial_{\mathrm{bs}(v)}} = \partial^{-} \mathrm{P}(v) \sqcup \mathop{\mathrm{P}_{\mathrm{sep}}}(v)$ is the finite union of proper semi-multi-saddle separatrices and separatrices on $\partial S$ between $\partial$-sources and $\partial$-sinks. 
\\
{\rm(2)} $\mathop{\mathrm{Bd}_+}(v) = \mathop{\mathrm{Bd}}(v)$. 
\\
{\rm(3)} $\mathop{\mathrm{BD}_+}(v) = \mathop{\mathrm{BD}}(v)$. 
\end{lemma}

\begin{proof}
Lemma~\ref{lem:border_set_diff} implies that $\partial^{-} \mathrm{P}(v) \sqcup \mathop{\mathrm{P}_{\mathrm{sep}}}(v) \subseteq \mathop{\partial_{\mathrm{bs}(v)}}$. 

\begin{claim}\label{claim:046}
$\mathop{\partial_{\mathrm{bs}(v)}}  \subseteq \partial^{-} \mathrm{P}(v) \sqcup \mathop{\mathrm{P}_{\mathrm{sep}}}(v)$. 
\end{claim}

\begin{proof}[Proof of Claim~\ref{claim:046}]
The quasi-regularity implies that any singular points are either multi-saddles, sinks, $\partial$-sinks, sources, $\partial$-sources, or centers. 
Therefore, any border separatrices connect among multi-saddles, $\partial$-sinks, and $\partial$-sources. 
This implies that each border separatrix is either a semi-multi-saddle separatrix or a separatrix on $\partial S$ between a$\partial$-source and a $\partial$-sink. 
Lemma~\ref{lem010}(6) completes the proof of the claim.
\end{proof}

%The previous claim and Lemma~\ref{lem:border_set_diff}
Therefore, assertion (1) holds. 
By definitions of $\mathop{\mathrm{Bd}_+}(v)$, $\mathop{\mathrm{Bd}}(v)$, $\mathop{\mathrm{BD}_+}(v)$ and $\mathop{\mathrm{BD}}(v)$, assertions (2) and (3) hold. 
\end{proof}

%Note that the union of the boundaries of hyperbolic sectors contains the ss-separatrices and multi-saddle separatrices. 

\subsection{Deformations of quasi-regular flows into flows with finitely sectored singular points}

We can deform any flow with sectored singular points on a compact surface into a quasi-regular flow by finitely many operations which increase at most finitely many non-recurrent orbits but preserves both the complements of $\mathop{\mathrm{Bd}_{+}}$ and $\mathop{\mathrm{BD}}_{+}$. 
\begin{figure}[t]
\begin{center}
\includegraphics[scale=0.2]{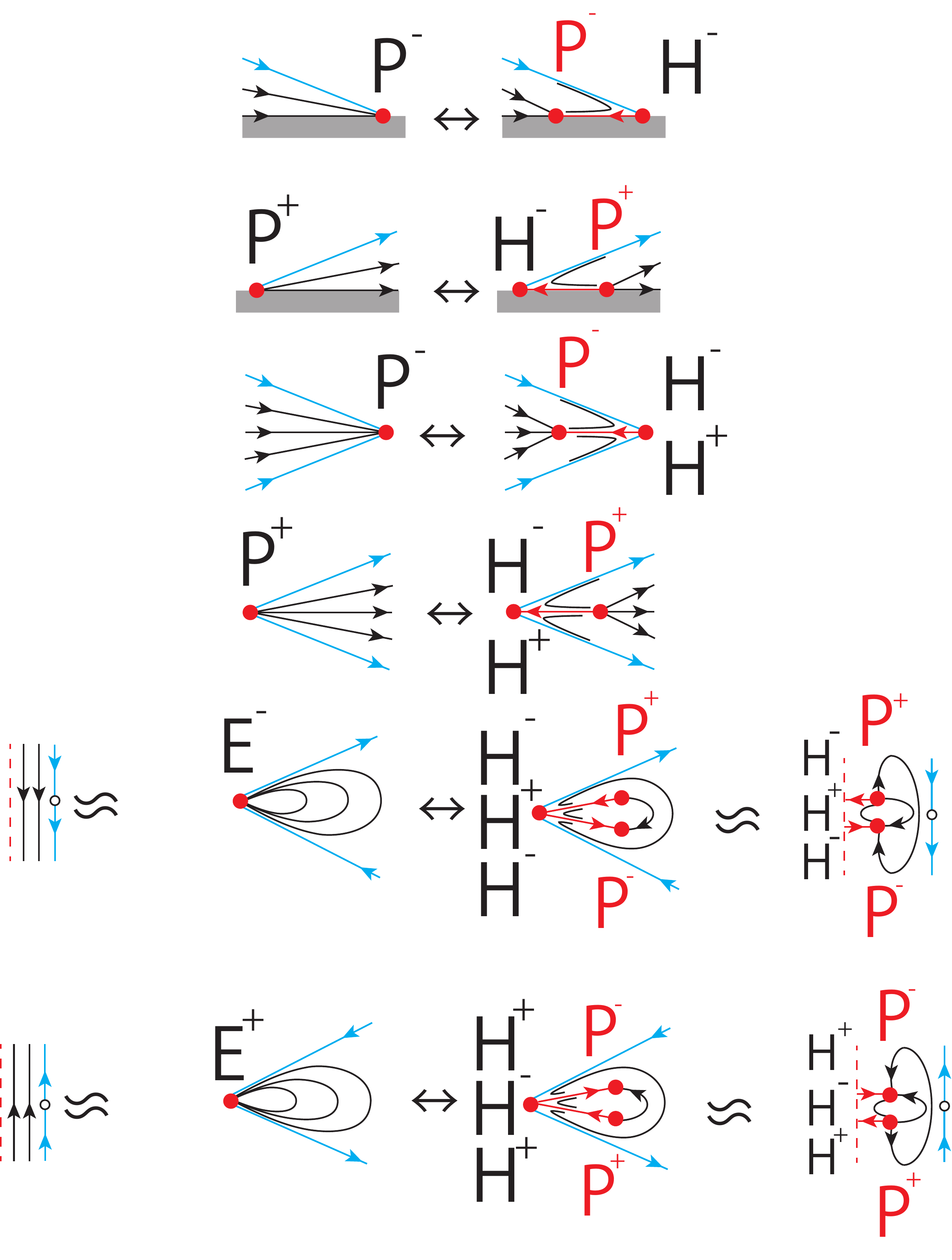}
\end{center}
\caption{Upper two columns: Two deformations of parabolic sectors on the boundary into a hyperbolic sector and a $\partial$-sink/$\partial$-source; Middle two columns: Two deformations of parabolic sectors outside of the boundary into two hyperbolic sectors, a sink/source, and a separatrix connecting singular points; Bottom two columns: Two deformations of maximal elliptic sectors into a hyperbolic sector, a source, a parabolic sector, and a separatrix connecting singular points}
\label{sectors}
\end{figure}

\subsubsection{Blow-up operations of sectors and their blow-down mapping with respect sectors}

We have the following observation about parabolic sectors. 

\begin{lemma}
Let $v_\gamma$ be a flow on a surface $S$ with a hyperbolic border separatrix $\gamma$ as in four on the top of the right in Figure~\ref{sectors}. 
Collapsing the closure $\overline{\gamma}$ into a singleton $x_\gamma$ implies a mapping $p_\gamma \colon S \to S$ which satisfies the following properties: 
\\
{\rm(1)} The singular point $x_\gamma$ has a parabolic sector. 
\\
{\rm(2)} The restriction $p_\gamma\vert_{S - \overline{\gamma}} \colon S - \overline{\gamma} \to S - \{ x_\gamma \}$ can be identified with the identical map. 
\\
{\rm(3)} The restriction $v_\gamma \vert_{S - \overline{\gamma}}$ is topologically equivalent to the restriction $v\vert_{S - \{ x_\gamma \}}$, where $v$ is the resulting flow by the collapsing operation. 
\\
{\rm(4)} $\mathop{\mathrm{BD}_+}(v) - \{ x_\gamma \} = \mathop{\mathrm{BD}_+}(v_\gamma) - \overline{\gamma}$. 
\\
{\rm(5)} The resulting flow has one parabolic or elliptic sector less than $v$, except for the number of sinks and sources.
\end{lemma}

Then the surjection $p_\gamma$ is called a {\bf blow-down mapping} to $\gamma$.  
The replacement of the singular point $x_\gamma$ into the interval $\overline{\gamma}$ by $p_\gamma^{-1}$ is called {\bf blow-up operation of a parabolic sector}. 
Moreover, the flow $v_\gamma$ is called the {\bf blow-up flow} of $v$ with respect to a parabolic sector, the flow $v$ is called the {\bf blow-down flow} of $v_\gamma$ with respect to a parabolic sector. 
In addition, the interval $\overline{\gamma}$ is called the {\bf blow-up} of the singular point $x_\gamma$ with a parabolic sector.
Notice that the closure $\overline{\gamma}$ consists of a singular point with hyperbolic sectors, a hyperbolic border separatrix, and a $\partial$-sink (resp. $\partial$-source, sink, source) as in first (resp. second, third, fourth) from the top in the right. 

Similarly, we have the following observation about elliptic sectors. 

\begin{lemma}
Let $v_{\gamma_\pm}$ be a flow on a surface $S$ with two hyperbolic border separatrices $\gamma_-$ and $\gamma_+$ as in two on the bottom of the right in Figure~\ref{sectors}. 
Collapsing the closure $\overline{\gamma_- \sqcup \gamma_+}$ into a singleton $x_{\gamma_\pm}$ implies a mapping $p_{\gamma_\pm} \colon S \to S$ which satisfies the following properties: 
\\
{\rm(1)} The singular point $x_{\gamma_\pm}$ has an elliptic sector. 
\\
{\rm(2)} The restriction $p_{\gamma_\pm}\vert_{S - \overline{\gamma_- \sqcup \gamma_+}} \colon S - \overline{\gamma_- \sqcup \gamma_+} \to S - \{ x_{\gamma_\pm} \}$ can be identified with the identical map. 
\\
{\rm(3)} The restriction $v_{\gamma_\pm} \vert_{S - \overline{\gamma_- \sqcup \gamma_+}}$ of the flow $v_{\gamma_\pm}$ is topologically equivalent to the restriction $v\vert_{S - \{ x_{\gamma_\pm} \}}$ of the resulting flow $v$ by the collapsing operation. 
\\
{\rm(4)} $\mathop{\mathrm{BD}_+}(v) - \{ x_{\gamma_\pm} \} = \mathop{\mathrm{BD}_+}(v_{\gamma_\pm}) - \overline{\gamma_- \sqcup \gamma_+}$. 
\\
{\rm(5)} The resulting flow has one parabolic or elliptic sector less than $v$, except for the number of sinks and sources.
\end{lemma}

Then the surjection $p_{\gamma_\pm}$ is called a {\bf blow-down mapping} to $\gamma_- \sqcup \gamma_+$.  
The replacement of the singular point $x_\gamma$ into the interval $\overline{\gamma}$ by $p_{\gamma_\pm}^{-1}$ is called {\bf blow-up operation of an elliptic sector}. 
Moreover, the flow $v_{\gamma_\pm}$ is called the {\bf blow-up flow} of $v$ with respect to an elliptic sector, the flow $v$ is called the {\bf blow-down flow} of $v_{\gamma_\pm}$ with respect to an elliptic sector. 
In addition, the interval $\overline{\gamma_\pm}$ is called the {\bf blow-up} of the singular point $x_{\gamma_\pm}$ with an elliptic sector.
Notice that the closure $\overline{\gamma_- \sqcup \gamma_+}$ consists of a singular point with at least three hyperbolic sectors, two hyperbolic border separatrix, a sink, and a source as in bottom two on the right. 

Taking finitely many lifts of finitely sectored singular points, we have the following blow-up operation. 

\begin{lemma}\label{lem:blow_up}
For any flow $v$ with finitely sectored singular points on a compact surface $S$, 
applying finitely many blow-up operations of maximal elliptic sectors as possible, and applying finitely many blow-up operations of parabolic sectors as possible, the following statements hold for the resulting flow $w$: 
\\
{\rm(1)} The resulting flow $w$ is a quasi-regular flow on $S$. 
\\
{\rm(2)} $\Gamma \subseteq \mathop{\mathrm{BD}_+}(w)$, where $\Gamma$ is the union of blow-ups of finitely sectored singular points. 
\\
{\rm(3)} The restriction $w\vert_{S - \Gamma}$ is topologically equivalent to the restriction $v\vert_{S - \Sv}$ in the canonical way. 
\end{lemma}

The resulting quasi-regular flow $w$ is called the {\bf blow-up flow} of $v$ with respect to sectors. 
Replacing $\mathop{\mathrm{BD}}(v)$ with $\mathop{\mathrm{BD}_+}(v)$, note that we can obtain a similar injection as in Theorem~\ref{injection} for flows of weakly finite type. 

\section{Decompositions of flows of weakly finite-like type}

We weaken the regularity-like condition and weak finite type condition for singular points into finitely sectored-like condition. 

\subsection{Flows of weakly finite-like type}
%\label{sec:sector-like}

%\subsubsection{Flows of weakly finite-like type}

\begin{definition}
A flow $v$ is {\bf of weakly finite-like type} if the blow-down flow $v_{\mathrm{col}}$ with respect to singular points is of weakly finite type. 
\end{definition}

\begin{definition}
A flow $v$ has {\bf sectored-like singular points} if each singular point of the blow-down flow $v_{\mathrm{col}}$ with respect to singular points is finitely sectored. 
\end{definition}

Then a flow is of weakly finite-like type if and only if it has sectored-like singular points, there are at most finitely many limit cycles, and any recurrent orbits are closed.

\subsubsection{Border-like separatrices}

The border separatrices are generalized as follows. 

\begin{definition}
A separatrix is a {\bf hyperbolic} (resp. {\bf elliptic}, {\bf parabolic}) {\bf border-like separatrix} of a flow $v$ on a surface $S$ if it is contained in the boundary of a hyperbolic sector (resp. a maximal open elliptic sector, a maximal open parabolic sector) as a subset of $S_{\mathrm{col}}$. 
\end{definition}

\begin{definition}
A separatrix is a {\bf border-like separatrix} of a flow $v$ on a surface $S$ if it is either a hyperbolic, elliptic, or parabolic border-like separatrix. 
\end{definition}

Denote by $\bm{\mathop{\partial_{\mathrm{bl}(v)}}}$ the union of border-like separatrices. 
By definition, we obtain that $(\mathop{\partial_{\mathrm{P}(v)}} \sqcup \mathop{\mathrm{P}_{\mathrm{semi}}}(v)) \cup \mathop{\partial_{\mathrm{bs}(v)}} \subseteq \mathop{\partial_{\mathrm{bl}(v)}}$.

\subsubsection{A generalization of {\rm(}weak{\rm)} border point sets for flows of  sectored-like singular points}

Define $\bm{\mathop{\mathrm{Bd}_{0+}}(v)}$ and $\bm{\mathop{\mathrm{BD}_{0+}}(v)}$ for a flow $v$ with sectored-like singular points on a compact surface $S$ as follows: 
\[
\begin{split}
\bm{\mathop{\mathrm{Bd}_{0+}}(v)} &:= \mathop{\mathrm{Sing}}(v) \sqcup \partial^{-} \mathop{\mathrm{Per}}(v) \sqcup \partial_{\mathop{\mathrm{Per}}(v)} \sqcup \mathop{\partial_{\mathrm{bl}(v)}} \sqcup \mathrm{E}(v)
\\
\bm{\mathop{\mathrm{BD}_{0+}}(v)} &:= \mathop{\mathrm{Bd}_{0+}}(v)  \sqcup \mathop{\mathrm{Per}_{1}}(v)
 \end{split}
 \]

By definition, from $(\mathop{\partial_{\mathrm{P}(v)}} \sqcup \mathop{\mathrm{P}_{\mathrm{semi}}}(v)) \cup \mathop{\partial_{\mathrm{bs}(v)}} \subseteq \mathop{\partial_{\mathrm{bl}(v)}}$, we have the following observation. 

\begin{lemma}\label{lem:correspond_0}
The following statements hold for a flow $v$ with sectored-like singular points on a compact surface $S$: 
\\
{\rm(1)} If $v$ has sectored-like singular points, then $\mathop{\mathrm{Bd}_{0+}}(v) = \mathop{\mathrm{Bd}}_+(v)$. 
\\
{\rm(2)} If $v$ is quasi-regular-like, then $\mathop{\mathrm{Bd}_{0+}}(v) = \mathop{\mathrm{Bd}}_0(v)$. 
\\
{\rm(3)} If $v$ is quasi-regular, then $\mathop{\mathrm{Bd}_{0+}}(v) = \mathop{\mathrm{Bd}}_0(v)  = \mathop{\mathrm{Bd}}_+(v) = \mathop{\mathrm{Bd}}(v)$. 
\end{lemma}

\subsection{Decompositions of flows with sectored-like singular points}

For a point $z \in S_{\mathrm{col}}$, denote by $\alpha_{v_{\mathrm{col}}} (z) \subseteq S_{\mathrm{col}}$ {\rm(resp.} $\omega_{v_{\mathrm{col}}} (z) \subseteq S_{\mathrm{col}}${\rm)} the $\alpha$-limit {\rm(resp.} $\omega$-limit{\rm)} set of a point $z$ with respect to $v_{\mathrm{col}}$. 
By Theorem~\ref{lem0c}, Theorem~\ref{homogeneity}, and Lemma~\ref{lem:correspond_0}, the blow-down flows of quasi-regular flows imply the following main results. 

\begin{theorem}\label{th:bd1_1}
Each connected component of $S - \mathop{\mathrm{Bd}_{0+}}(v)$ for a flow $v$ with sectored-like singular points on a compact surface $S$ is one of the following  invariant open subsets exclusively:
\\
$(1)$
A trivial flow box in $\mathrm{P}(v)$, whose orbit space is an interval,
\\
$(2)$
An annulus in $\mathrm{P}(v)$, whose orbit space is a circle,
\\
$(3)$ 
A torus in $\mathop{\mathrm{Per}}(v)$, whose orbit space is a circle,
\\
$(4)$ 
A Klein bottle in $\mathop{\mathrm{Per}}(v)$, whose orbit space is an interval,
\\
$(5)$ 
An annulus in $\mathop{\mathrm{Per}}(v)$, whose orbit space is an interval,
\\
$(6)$ 
A M{\"o}bius band in $\mathop{\mathrm{Per}}(v)$, whose orbit space is an interval,
or
\\
$(7)$ 
An essential subset in $\mathrm{LD}(v)$, whose orbit class space is a singleton.

Moreover, the following statements hold: 
\\
$(a)$ For any connected component $U$ in $\mathrm{P}(v_{\mathrm{col}}) \setminus \mathop{\mathrm{Bd}}_{+}(v_{\mathrm{col}}) = \mathrm{P}(v) \setminus \mathop{\mathrm{Bd}_{0+}}(v)$ and for any points $x,y \in U$, we have $\alpha_{v_{\mathrm{col}}}(x) = \alpha_{v_{\mathrm{col}}}(y) $ and $\omega_{v_{\mathrm{col}}}(x) = \omega_{v_{\mathrm{col}}}(y)$. 
\\
$(b)$ For any connected component $U$ in $\mathrm{LD}(v)$ and for any points $x,y \in U$, we have $\overline{O(x)} = \overline{O(y)} = \overline{U} \subseteq S$. 
\\
$(c)$ Each of the vertical $\alpha$- and $\omega$-vertical boundaries of any connected components of $S - \mathop{\mathrm{Bd}_{0+}}(v_{\mathrm{col}})$ in the cases $(1)$ and $(2)$ is a finite union of closed orbits, proper semi-multi-saddle separatrices, and separatrices on $\partial S$ between $\partial$-sources and $\partial$-sinks as a subset of $S_{\mathrm{col}}$ unless the vertical boundary intersects $\mathrm{E}(v_{\mathrm{col}}) = \mathrm{E}(v)$. 
\\
$(d)$ Each of the left and right transverse boundaries of any connected components of $S - \mathop{\mathrm{Bd}_{0+}}(v_{\mathrm{col}})$ in the cases $(1)$ and $(4)$ is an immersed line which is a finite union of closed orbits, proper semi-multi-saddle separatrices, and separatrices on $\partial S$ between $\partial$-sources and $\partial$-sinks as a subset of $S_{\mathrm{col}}$. 
% unless the left/right transverse boundary intersects $\mathrm{E}(v_{\mathrm{col}}) = \mathrm{E}(v)$. 
\end{theorem}

\begin{theorem}\label{cor:bd1_1}
Each connected component of $S - \mathop{\mathrm{BD}}_{+}(v)$ for a flow $v$ with sectored-like singular points on a compact surface $S$ is one of the following invariant open subsets exclusively:
\\
$(1)$
A trivial flow box in $\mathrm{P}(v)$, whose orbit space is an interval,
\\
$(2)$
An annulus in $\mathrm{P}(v)$, whose orbit space is a circle,
\\
$(3)$ 
A torus in $\mathop{\mathrm{Per}}(v)$, whose orbit space is a circle,
\\
$(4)$ 
An annulus in $\mathop{\mathrm{Per}}(v)$, whose orbit space is an interval, or
\\
$(5)$ 
An essential subset in $\mathrm{LD}(v)$, whose orbit class space is a singleton.
%
%Moreover, the following statements hold: 
%\\
%$(a)$ For any connected component $U$ in $\mathrm{P}(v_{\mathrm{col}}) \setminus \mathop{\mathrm{BD}}_{+}(v_{\mathrm{col}}) = \mathrm{P}(v) \setminus \mathop{\mathrm{BD}}_{+}(v)$ and for any points $x,y \in U$, we have $\alpha_{v_{\mathrm{col}}}(x) = \alpha_{v_{\mathrm{col}}}(y)$ and $\omega_{v_{\mathrm{col}}}(x) = \omega_{v_{\mathrm{col}}}(y)$.
%\\
%$(b)$ For any connected component $U$ in $\mathrm{LD}(v)$ and for any points $x,y \in U$, we have $\overline{O(x)} = \overline{O(y)} = \overline{U} \subseteq S$. 
%\\
%$(c)$ Each of the vertical $\alpha$- and $\omega$-vertical boundaries of any connected components of $S_{\mathrm{col}} - \mathop{\mathrm{BD}_+}(v_{\mathrm{col}})$ in the cases $(1)$ and $(2)$ is a finite union of closed orbits, proper semi-multi-saddle separatrices, and separatrices on $\partial S$ between $\partial$-sources and $\partial$-sinks as a subset of $S_{\mathrm{col}}$ unless the vertical boundary intersects $\mathrm{E}(v_{\mathrm{col}}) = \mathrm{E}(v)$. 
%\\
%$(d)$ Each of the left and right transverse boundaries of any connected components of $S_{\mathrm{col}} - \mathop{\mathrm{BD}_+}(v_{\mathrm{col}})$ in the cases $(1)$ and $(4)$ is an immersed line which is a finite union of closed orbits, proper semi-multi-saddle separatrices, and separatrices on $\partial S$ between $\partial$-sources and $\partial$-sinks as a subset of $S_{\mathrm{col}}$. 
\end{theorem}

Notice the similar properties in Theorem~\ref{th:bd1_1}(a)--(d) hold in the previous theorem. 
%Lemma~\ref{lem:border_set} and t
The previous theorem implies the following description of flows of weakly finite-like type. 

\begin{corollary}\label{cor:bd2}
Each connected component of $S - \mathop{\mathrm{BD}}_{+}(v)$ for a flow $v$ of weakly finite-like type on a compact surface $S$ is one of the following invariant open subsets exclusively:
\\
$(1)$
A trivial flow box in $\mathrm{P}(v)$, whose orbit space is an interval,
\\
$(2)$
An annulus in $\mathrm{P}(v)$, whose orbit space is a circle,
\\
$(3)$ 
A torus in $\mathop{\mathrm{Per}}(v)$, whose orbit space is a circle, or
\\
$(4)$ 
An annulus in $\mathop{\mathrm{Per}}(v)$, whose orbit space is an interval. 

Moreover, the union $\mathop{\mathrm{BD}_+}(v_{\mathrm{col}})$ is the finite union of singular points, limit cycles, one-sided periodic orbits, and border separatrices of $v_{\mathrm{col}}$. 
\end{corollary}

\section{Decompositions of flows of virtual sectored(-like) type on non-compact surfaces}\label{sec:end_completion}

Since some kinds of fluid phenomena are described as flows on non-compact domains, to describe such phenomena, we generalize our results into the case for non-compact surfaces. 
Therefore, we introduce end completions of surfaces with flows. 

\subsection{Virtually border point sets}

Recall the end completion as follows, which is introduced by Freundenthal \cite{Freudenthal1931end}. 

\subsubsection{End completions of surfaces with finite genus} 
Consider the direct system $\{K_\lambda\}$ of compact subsets of a topological space $X$ and inclusion maps such that the interiors of $K_\lambda$ cover $X$.  
There is a corresponding inverse system $\{ \pi_0( X - K_\lambda ) \}$, where $\pi_0(Y)$ denotes the set of connected components of a space $Y$. 
Then the set of ends of $X$ is defined to be the inverse limit of this inverse system. 
Notice that $X$ has one end $x_{\mathcal{U}}$ for each sequence $\mathcal{U} := (U_i)_{i \in \mathbb{Z}_{>0}}$ with $U_i \supseteq U_{i+1}$ such that $U_i$ is a connected component of $X - K_{\lambda_i}$ for some $\lambda_i$. 
Considering the disjoint union $X_{\mathrm{end}}$ of $X$ and  $\{ \pi_0( X - K_\lambda ) \}$ as a set, a subset $V$ of the union $X_{\mathrm{end}}$ is an open \nbd of an end $x_{\mathcal{U}}$ if there is some $i \in \mathbb{Z}_{>0}$ such that $U_i \subseteq V$. 
Then the resulting topological space $\bm{X_{\mathrm{end}}}$ is called {\bf the end completion} (or end compactification) of $X$. 
Note that the end completion is not compact in general. 
From \cite[Theorem~3]{richards1963classification}, any connected surfaces with finite genus are  homeomorphic to the resulting surfaces from closed surfaces by removing closed totally disconnected subsets. 
Therefore, the end compactification $S_{\mathrm{end}}$ of a connected surface $S$ of finite genus is a closed surface. 

For a flow $v$ on a surface $S$ of finite genus, considering ends to be singular points, we obtain the resulting flow $\bm{v_{\mathrm{end}}}$ on the surface $S_{\mathrm{end}}$ which is a union of closed surfaces. 

\subsubsection{Virtually sectored-like singular points and virtual border point sets}
We define some concepts to state decompositions on non-compact surfaces.
 
\begin{definition}
A flow $v$ is a flow with {\bf virtual sectored-like} singular points if $\bm{v_{\mathrm{end}}}$ is a flow with sectored-like singular points. 
\end{definition}

We define $\bm{\mathop{\mathrm{Bd}}_{\widetilde{0+}}(v)}$, $\bm{\mathop{\mathrm{BD}}_{\widetilde{0+}}(v)}$ for a flow $v$ with virtual sectored-like singular points on a surface of finite genus and finite ends as follows: 
\[
\begin{split}
\mathop{\mathrm{Bd}_{\widetilde{0+}}}(v) &:= \mathop{\mathrm{Bd}_{0+}}(v_{\mathrm{end}}) \cap S
\\
\mathop{\mathrm{BD}_{\widetilde{0+}}}(v) &:= \mathop{\mathrm{BD}_{0+}}(v_{\mathrm{end}}) \cap S
\end{split}
\]
%\[
%\mathop{\mathrm{BD}_{\widetilde{+}}}(v) := \mathop{\mathrm{BD}_{+}}(v_{\mathrm{end}}) \cap S
%\]
We have the following observation.

\begin{lemma}\label{lem:end}
The following statements hold for a flow $v$ with sectored-like singular points on a surface $S$ with finite genus and finite ends:
\\
{\rm(1)} The flow $v$ is a flow with virtual sectored-like singular points if and only if every end becomes a finitely sectored singular point of $v_{\mathrm{end}}$. 
\\
{\rm(2)} If $v$ is a flow with virtual sectored-like singular points, then $v_{\mathrm{end}}$ is  a flow $v$ with sectored-like singular points on the compact surface $S_{\mathrm{end}}$ such that $S - \mathop{\mathrm{Bd}}_{\widetilde{0+}}(v) = S_{\mathrm{end}} - \mathop{\mathrm{Bd}_{0+}}(v_{\mathrm{end}})$ and $S - \mathop{\mathrm{BD}}_{\widetilde{0+}}(v) = S_{\mathrm{end}} - \mathop{\mathrm{BD}_{0+}}(v_{\mathrm{end}})$. 
\end{lemma}

From Theorem~\ref{th:bd1_1}, Theorem~\ref{cor:bd1_1}, and Lemma~\ref{lem:end}, we have the following statements. 

\begin{theorem}\label{th:bd1}
Each connected component of $S - \mathop{\mathrm{Bd}}_{\widetilde{0+}}(v)$ for  a flow $v$ with virtual sectored-like singular points on a surface $S$ with finite genus and finite ends is one of the following  invariant open subsets exclusively:
\\
$(1)$
A trivial flow box in $\mathrm{P}(v)$, whose orbit space is an interval,
\\
$(2)$
An annulus in $\mathrm{P}(v)$, whose orbit space is a circle,
\\
$(3)$ 
A torus in $\mathop{\mathrm{Per}}(v)$, whose orbit space is a circle,
\\
$(4)$ 
A Klein bottle in $\mathop{\mathrm{Per}}(v)$, whose orbit space is an interval,
\\
$(5)$ 
An annulus in $\mathop{\mathrm{Per}}(v)$, whose orbit space is an interval,
\\
$(6)$ 
A M{\"o}bius band in $\mathop{\mathrm{Per}}(v)$, whose orbit space is an interval,
or
\\
$(7)$ 
An essential subset in $\mathrm{LD}(v)$, whose orbit class space is a singleton.

Moreover, the following statements hold: 
\\
$(a)$ For any connected component $U$ in $\mathrm{P}(v_{\mathrm{col}}) \setminus \mathop{\mathrm{Bd}}_{\widetilde{0+}}(v_{\mathrm{col}}) = \mathrm{P}(v) \setminus \mathop{\mathrm{Bd}}_{\widetilde{0+}}(v)$ and for any points $x,y \in U$, we have $\alpha_{v_{\mathrm{col}}}(x) = \alpha_{v_{\mathrm{col}}}(y) $ and $\omega_{v_{\mathrm{col}}}(x) = \omega_{v_{\mathrm{col}}}(y)$. 
\\
$(b)$ For any connected component $U$ in $\mathrm{LD}(v)$ and for any points $x,y \in U$, we have $\overline{O(x)} = \overline{O(y)} = \overline{U} \subseteq S$. 
\\
$(c)$ Each of the vertical $\alpha$- and $\omega$-vertical boundaries of any connected components of $S - \mathop{\mathrm{Bd}}_{\widetilde{0+}}(v_{\mathrm{col}})$ in the cases $(1)$ and $(2)$ is a finite union of closed orbits, proper semi-multi-saddle separatrices, and separatrices on $\partial S$ between $\partial$-sources and $\partial$-sinks as a subset of $S_{\mathrm{col}}$ unless the vertical boundary intersects $\mathrm{E}(v_{\mathrm{col}}) = \mathrm{E}(v)$. 
\\
$(d)$ Each of the left and right transverse boundaries of any connected components of $S - \mathop{\mathrm{Bd}}_{\widetilde{0+}}(v_{\mathrm{col}})$ in the cases $(1)$ and $(4)$ is an immersed line which is a finite union of closed orbits, proper semi-multi-saddle separatrices, and separatrices on $\partial S$ between $\partial$-sources and $\partial$-sinks as a subset of $S_{\mathrm{col}}$. 
% unless the left/right transverse boundary intersects $\mathrm{E}(v_{\mathrm{col}}) = \mathrm{E}(v)$. 
\end{theorem}

\begin{theorem}\label{cor:bd12}
Each connected component of $S - \mathop{\mathrm{BD}}_{\widetilde{0+}}(v)$ for a flow $v$ with virtual sectored-like singular points on a surface $S$ with finite genus and finite ends is one of the following invariant open subsets exclusively:
\\
$(1)$
A trivial flow box in $\mathrm{P}(v)$, whose orbit space is an interval,
\\
$(2)$
An annulus in $\mathrm{P}(v)$, whose orbit space is a circle,
\\
$(3)$ 
A torus in $\mathop{\mathrm{Per}}(v)$, whose orbit space is a circle,
\\
$(4)$ 
An annulus in $\mathop{\mathrm{Per}}(v)$, whose orbit space is an interval, or
\\
$(5)$ 
An essential subset in $\mathrm{LD}(v)$, whose orbit class space is a singleton.
%
%Moreover, the following statements hold: 
%\\
%$(a)$ For any connected component $U$ in $\mathrm{P}(v_{\mathrm{col}}) \setminus \mathop{\mathrm{Bd}}_{\widetilde{+}}(v_{\mathrm{col}}) = \mathrm{P}(v) \setminus \mathop{\mathrm{Bd}}_{\widetilde{0+}}(v)$ and for any points $x,y \in U$, we have $\alpha_{v_{\mathrm{col}}}(x) = \alpha_{v_{\mathrm{col}}}(y)$ and $\omega_{v_{\mathrm{col}}}(x) = \omega_{v_{\mathrm{col}}}(y)$.
%\\
%$(b)$ For any connected component $U$ in $\mathrm{LD}(v)$ and for any points $x,y \in U$, we have $\overline{O(x)} = \overline{O(y)} = \overline{U} \subseteq S$. 
%\\
%$(c)$ Each of the vertical $\alpha$- and $\omega$-vertical boundaries of any connected components of $S_{\mathrm{col}} - \mathop{\mathrm{Bd}}_{\widetilde{0+}}(v_{\mathrm{col}})$ in the cases $(1)$ and $(2)$ is a finite union of closed orbits, proper semi-multi-saddle separatrices, and  separatrices on $\partial S$ between $\partial$-sources and $\partial$-sinks as a subset of $S_{\mathrm{col}}$ unless the vertical boundary intersects $\mathrm{E}(v_{\mathrm{col}}) = \mathrm{E}(v)$. 
%\\
%$(d)$ Each of the left and right transverse boundaries of any connected components of $S_{\mathrm{col}} - \mathop{\mathrm{Bd}}_{\widetilde{0+}}(v_{\mathrm{col}})$ in the cases $(1)$ and $(4)$ is an immersed line which is a finite union of closed orbits, proper semi-multi-saddle separatrices, and separatrices on $\partial S$ between $\partial$-sources and $\partial$-sinks as a subset of $S_{\mathrm{col}}$. 
\end{theorem}

Notice the similar properties in Theorem~\ref{th:bd1}(a)--(d) hold in the previous theorem. 
The previous theorem implies the following description of flows of ``virtual weakly finite-like type''. 

\begin{corollary}\label{cor:vir_bd2}
Each connected component of $S - \mathop{\mathrm{BD}}_{\widetilde{0+}}(v)$ for a flow $v$ with virtual sectored-like singular points without non-closed recurrent points on a compact surface $S$ is one of the following invariant open subsets exclusively:
\\
$(1)$
A trivial flow box in $\mathrm{P}(v)$, whose orbit space is an interval,
\\
$(2)$
An annulus in $\mathrm{P}(v)$, whose orbit space is a circle,
\\
$(3)$ 
A torus in $\mathop{\mathrm{Per}}(v)$, whose orbit space is a circle, or
\\
$(4)$ 
An annulus in $\mathop{\mathrm{Per}}(v)$, whose orbit space is an interval. 

Moreover, if the flow $v$ has at most finitely many limit cycles, then the union $\mathop{\mathrm{BD}}_{\widetilde{0+}}(v_{\mathrm{col}})$ is the finite union of singular points, limit cycles, one-sided periodic orbits, and border separatrices of $v_{\mathrm{col}}$. 
\end{corollary}

\subsubsection{Flows of virtual weakly finite type}

A flow $v$ on a surface of finite genus and finite ends is of {\bf virtual weakly finite type} if $v_{\mathrm{end}}$ is of weakly finite type. 
Lemma~\ref{lem:blow_up}, Lemma~\ref{thm:enumerate}, Lemma~\ref{lem:end}, and Corollary~\ref{cor:bd2} imply the following statement. 

\begin{theorem}\label{thm:ext_enumerable}
The set of flows of virtual weakly finite type on surfaces with finite genus and finite ends is enumerable.
\end{theorem}

\appendix

\section{Characterization of centers}\label{appendix}

In this appendix, we will demonstrate Proposition~\ref{prop:char_center}, which follows from a technical lemma below. 
To state the technical lemma, we recall the concept of ``accumulated by periodic orbits'' as follows.
A singular point $x \in S$ is {\bf accumulated by periodic orbits} \cite{roussarie2021top} if for any \nbd $U$ of $x$ there is a periodic orbit in $U$. 
Notice that Roussarie \cite{roussarie2021top} introduced the concept to extend the study of germs of vector fields on the plane with an isolated singular point at the origin not accumulated by periodic orbits are studied by Bendixson \cite{bendixson1901courbes} and Dumortier \cite{dumortier1977singularities}. 
Then we state the technical lemma, which characterizes an isolated singular point accumulated by periodic orbits, which complements the studies of germs by Bendixson, Dumortier, and Roussarie. 

\begin{lemma}\label{lem:comp_center}
The following statements are equivalent for any isolated singular point $x$ of a flow $v$ on a surface $S$: 
\\
{\rm(1)} The point $x$ is accumulated by periodic orbits. 
\\
{\rm(2)} There is a sequence $(O_n)_{n \geq 0}$ of periodic orbits which bound invariant closed disks $B_n$ with $B_{n+1} \subset \mathop{\mathrm{int}} B_{n}$ such that $\{ x \} = \bigcap_{n \geq 0} B_n$, the intersection $B_n \cap \mathrm{P}(v)$ for any $n \geq 0$ is open, and the $\omega$-limit sets and $\alpha$-limit sets of any non-closed points in $B_0$ are limit cycles. 
%, where $\mathrm{P}(v) := \{ x \in S \mid x \notin \omega(x) \cup \alpha(x) \}$. 
\end{lemma}

\begin{proof}
By definition of ``accumulated by periodic orbits'', assertion (2) implies assertion (1). 
Therefore, it suffices to show that assertion (1) implies assertion (2). 

Let $v$ be a flow on a surface $S$. 
Suppose that an isolated singular point $x$ is accumulated by periodic orbits. 
Fix a small open ball $U$ of $x$ with $\Sv \cap U = \{ x \}$. 

\begin{claim}\label{claim:a02}
There is a sequence $(B_n)_{n \geq 0}$ of invariant closed disks in $U$ whose boundaries are perodic orbits with $\{ x \} = \bigcap_{n \geq 0} B_n$ such that $B_{n+1} \subset \mathop{\mathrm{int}} B_{n} \subset U$ for any $n \geq 0$. 
\end{claim}
\begin{proof}[Proof of Claim~\ref{claim:a02}]
Denote by $(O_n)_{n \geq 0}$ a sequence of periodic orbits and a sequence $(x_n)$ with $x_n \in O_n$ converging to $x$. 
Taking a subsequence of $(O_n)_{n \geq 0}$, we may assume that the sequence $(O_n)_{n \geq 0}$ consists of pairwise disjoint periodic orbits. 
Collapsing the outside of $U$ into a singular point if necessary, \cite[Gutierrez's smoothing theorem]{gutierrez1986smoothing} implies that the restriction $v\vert_U$ is topologically equivalent to a smooth flow. 
Taking a subsequence of $(O_n)_{n \geq 0}$, we may assume that $O_n \subset U$ for any $n \geq 0$. 
Since $U$ is homeomorphic to the plane, by Jordan-Schoenflies theorem, any periodic orbits $O_n \subset U$ bound closed disks in $U$, denoted by $B_n$. 
If there is an invariant closed disk $B_k$ which does not contain $x$, then the Poincar\'e-Hopf theorem implies there are singular points on $B_n \subset U$, which contradicts the isolatedness of the singular point $x$. 
Thus, any invariant closed disk $B_n$ contains $x$. 
For any $n \neq m \in \Z_{\geq 0}$, by $\partial B_n = O_n \neq O_m = \partial B_m$, since the sequence $(x_n)$ with $x_n \in O_n$ converges to $x$, either $B_n \subset \mathop{\mathrm{int}} B_m$ or $B_m \subset \mathop{\mathrm{int}} B_n$. 
Taking a subsequence of $(O_n)_{n \geq 0}$, we may assume that $B_{n+1} \subset \mathop{\mathrm{int}} B_{n} \subset U$ for any $n \geq 0$. 
%Fix a Riemannian metric on $S$. 
%Replacing $U$ by arbitrarily small invariant open disk in some $B_n$ and t
Taking a subsequence of $(O_n)_{n \geq 0}$, we may assume that $\{ x \} = \bigcap_{n \geq 0} B_n$. 
\end{proof}

Replacing $U$ by an arbitrarily small invariant open disk in some $B_n$, we may assume that $U$ is an invariant open disk. 
Then the $\alpha$-limit or the $\omega$-limit sets of any points in $U - \{ x \}$ do not contain $x$. 
Since $B_n$ is an invariant closed disk, by a generalization of the Poincar\'e--Bendixson theorem and its dual statement, the non-existence of singular points in the invariant subset $U - \{ x \}$ implies that the $\omega$-limit sets and $\alpha$-limit sets of any points in $U - \{ x \}$ are periodic orbits. 
Therefore, the $\omega$-limit sets and $\alpha$-limit sets of any non-closed points in $B_n$ are limit cycles. 
%Corollary~\ref{claim:a01} 
Proposition~\ref{prop:cs}(3) implies that $\mathrm{P}(v) \cap B_n = B_n \setminus \Cv$ is open. 
\end{proof}

To prove Proposotion~\ref{prop:char_center}, we recall some concepts. 
A compact invariant subset $\mathcal{M}$ of a flow on a topological space $X$ is \emph{positively asymptotically stable} if, for any neighborhood $U$ of $\mathcal{M}$, there is a neighborhood $V$ of $\mathcal{M}$ with $\bigcup_{x \in V} O^+(x) \subseteq U$ such that $\{ y \in X \mid \omega(y) \subseteq \mathcal{M} \}$ is a \nbd of $\mathcal{M}$. 
Similarly, a compact invariant subset $\mathcal{M}$ of a flow on a topological space $X$ is \emph{negatively asymptotically stable} if, for any neighborhood $U$ of $\mathcal{M}$, there is a neighborhood $V$ of $\mathcal{M}$ with $\bigcup_{x \in V} O^-(x) \subseteq U$ such that $\{ y \in X \mid \alpha(y) \subseteq \mathcal{M} \}$ is a \nbd of $\mathcal{M}$, where $O^-(x) := \{ v_t(x) \mid t \leq 0 \}$.

\begin{proof}[Proof of Proposition~\ref{prop:char_center}]
If $x$ is a center, then there is an open disk $V$ with $V - \{ x \} \subseteq \Pv$. 
%invariant closed disk $U$ with $U - \{ x \} \subseteq \Pv$. 
This means that assertion {\rm(1)} implies assertion {\rm(2)}. 
 
Conversely, suppose that there is an open disk $V$ with $V - \{ x \} \subseteq \Pv$. 
\begin{claim}\label{claim:a03}
%We claim that t
There is no \nbd $U \subset V$ of $x$ which does not contain any closed invariant subsets except the singleton $\{ x \}$. 
\end{claim}
\begin{proof}[Proof of Claim~\ref{claim:a03}]
Assume that there is such a neighborhood. 
By results in \cite{taro1962errata,ura1964flow} (cf. \cite[Theorem 1.6]{bhatia1970attraction} or \cite[Theorem]{egawa1973remark}), since $\{ x \}$ is neither positively asymptotically stable nor negative asymptotically stable, the singular point $x$ is an $\omega$-limit or $\alpha$-limit set of a non-singular point. 
This implies that $V$ contains such non-closed points, which contradict $V \subseteq \Cv$. 
\end{proof}

Since $V - \{ x \} \subseteq \Pv$, for any \nbd $U$ of $x$, Claim~\ref{claim:a03} implies that there is a periodic orbit contained in $U$.  
This means that $x$ is accumulated by periodic orbits. 
Lemma~\ref{lem:comp_center} implies that 
%$x$ is a compressed center. 
%Then 
there is an invariant closed disk $U \subseteq V \subseteq \{ x \} \sqcup \Pv$. 
By the Poincar\'e-Hopf theorem, any periodic orbit in $U$ bounds an invariant closed disk in $U$ which contains singular points and so the unique singular point $x$ in $U$. 
Therefore $x \in \mathop{\mathrm{int}} U$. 
Replacing $U$ by any disk bounded by a periodic orbit in $U$, we may assume that $\partial U$ is a periodic orbit. 
Since any periodic orbit bounds an invariant closed disk in $U$ containing $x$, such invariant closed disks in $U$ whose boundaries are periodic orbits are totally ordered with respect to the inclusion order. 
This means that the orbit space $U/v$ is a closed interval and so $x$ is a center. 
\end{proof}

Notice that the conditions in Lemma~\ref{lem:comp_center} are necessary. 
In fact, the isolated properties in Lemma~\ref{lem:comp_center} are necessary. 
Indeed, there is a non-isolated singular point that is not a center and has an invariant \nbd consisting of closed orbits. 
Consider the flow $v$ on a torus $\R^2/\Z^2$ generated by a vector field $(\sin (2\pi x), 0)$. 
Then $\Sv = \{ [0], [1/2] \} \times \R/\Z$ and $\R^2/\Z^2 = \Sv \sqcup \Pv$. 
Therefore, the point $[(0,0)]$ is a non-isolated singular point that is not a center and has an invariant \nbd consisting of closed orbits. 

Moreover, the condition for limit cycles in Lemma~\ref{lem:comp_center} is necessary. 
Indeed, for any flow on a sphere with exactly two singular points, which are one sink and one source, and at least infinitely many periodic orbits, there is a sequence in Lemma~\ref{lem:comp_center} except for the condition for limit cycles. 

\section{Proof of Ma{\v \i}er's result}\label{M_result}

We state a proof Ma{\v \i}er's result as follows, which is based on the sketch of a proof of  \cite[Theorem 4.2]{aranson1996maier}. 

\begin{proof}[Proof of Lemma~\ref{pos_rec}{\rm(Ma{\v \i}er)}]

Since $\omega(x) \setminus  \mathop{\mathrm{Cl}}(v) \neq \emptyset$, the point $x$ is not closed. 
If $O(x) = O(z)$, then $x \in \omega(z) = \omega(x)$ is non-closed positively recurrent. 

\begin{claim}\label{claim:01}
If $O(x) = O(z)$, then $x$ is non-closed positively recurrent. 
\end{claim}

\begin{proof}[Proof of Claim~\ref{claim:01}]
Suppose that $O(x) = O(z)$. 
By $x \in \omega(z) = \omega(x)$, the point is positively recurrent. 
Since $x$ is not closed, the assertion holds. 
\end{proof}

Thus, we may assume that $O(x) \neq O(z)$. 
Fix a non-closed point $y \in \alpha(x) \setminus \mathop{\mathrm{Cl}}(v)$. 
Then there is a transverse closed arc $I_{[-1,1]}: [-1,1] \to S$ with $y = I_{[-1,1]}(0)$ such that the negative orbit $O^+(x)$ intersects $I_{[-1,1]}([-1,0])$ infinitely many times. 
Denote by $I := I_{[-1,1]}([-1,0])$ a directed closed interval. 
Therefore there is a sequence $(x_i)_{i \in \Z_{\geq 0}}$ of points in $O^+(x) \cap I$ with $x_{i+1} \in O^+(x_i)$ which converges to $y$ monotonically from one side. 
Denote by $I_{a,b}$ the closed sub-arc in $I$ whose boundary consists of $a$ and $b$ for any points $a, b \in I$ and by $C_{a,b}$ the closed orbit arc in an orbit $O$ from $a$ to $b$ for any points $a, b \in O \cap I$. 

Assume that $x$ is not positively recurrent (i.e. $x \notin \omega(x)$). 
Then there is an open sub-arc $J$ in $I$ with $\{ x_2 \} = J \cap O^+(x)$. 
By $x_2 \in \omega(z)$, the first return map $f_{v,J}$ on $J$ induced by $v$ is well-defined and injective. 
From the finiteness of genus of $S$, by replacing $x$ with a point of $O^+(x)$, 
we may assume that the restriction of the first return map $f_{v,I}$ on the transverse closed arc $I$ induced by $v$, to a \nbd of $f_{v,I}^{-1}(O^+(x) \cap I)$ in $I$ is orientation-preserving. 
Therefore, $I$ and $O^+(x)$ intersect in a same orientation infinitely many times. 
\begin{claim}\label{claim:02}
We can define a strictly increasing subsequence $(n_i)_{i \in \Z_{\geq 0}}$ of $\Z_{\geq 0}$ with $n_i +3 \leq n_{i+1}$ and a sequence $(z_i)_{i \in \Z_{\geq 0}}$ of $J \cap O^+(z)$ with $z_{i+1} \in O^+(z_i)$ converging to $x_2$ monotonically from one side in $J$ such that $C_{z_{i-1},z_{i}} \cap \mathop{\mathrm{int}} I_{z_{i-1}, z_{i}} \neq \emptyset$, $C_{z'_{i}, z_{i}} \cap \mathop{\mathrm{int}} I_{x_{n_{i}},x_{n_{i}+1}} \neq \emptyset$, and $C_{z_0, z'_{i}} \cap I_{x_{n_{i}}, y} = \emptyset$ for any $i \in \Z_{>0}$, where $z'_{i} \in \mathop{\mathrm{int}} C_{z_{i-1},z_{i}} \cap I_{z_{i-1},z_{i}}$ is the first return image of $z_{i}$ into $I_{z_{i-1},z_i}$ induced by the time reversed flow of $v$. 
\end{claim}

\begin{proof}[Proof of Claim~\ref{claim:02}]
By induction, fix a point $z_0 \in J \cap O^+(z)$ and $n_0 = 0$ such that $O^+(z)$ intersects $I_{z_0, x_2}$ infinitely many times. 
Since the sequence $(x_k)_{k \in \Z_{\geq 0}}$ converges to $y$ monotonically from one side, by $O(x) \neq O(z)$, for any $i \in \Z_{\geq 0}$, there are an integer $k_i \geq 3$ and a point $z_{i+1} \in I_{z_i, x_2} \cap O^+(z_i)$ with $C_{z_{i},z_{i+1}} \cap \mathop{\mathrm{int}} I_{z_{i}, z_{i+1}} \neq \emptyset$, $C_{z_0,z'_{i+1}} \cap \mathop{\mathrm{int}} I_{x_{n_{i}+k_i}, y} = \emptyset$, and $C_{z'_{i+1}, z_{i+1}} \cap I_{x_{n_{i}+k_i}, y} \neq \emptyset$. 
Fix an integer $n_{i+1} \geq n_i + k_i \geq n_i +3$ such that $C_{z'_{i+1}, z_{i+1}} \cap \mathop{\mathrm{int}} I_{x_{n_{i+1}}, x_{n_{i+1}+1}} \neq \emptyset$ and $C_{z_0, z'_{i+1}} \cap I_{x_{n_{i+1}}, y} = \emptyset$. 
Then $C_{z_{i-1},z_{i}} \cap \mathop{\mathrm{int}} I_{z_{i-1}, z_{i}} \neq \emptyset$, $C_{z'_{i}, z_{i}} \cap \mathop{\mathrm{int}} I_{x_{n_{i}}, x_{n_{i}+1}} \neq \emptyset$, and $C_{z_0, z'_{i}} \cap I_{x_{n_{i}}, y} = \emptyset$ for any $i \in \Z_{>0}$. 
\end{proof}

Fix a Riemannian metric $g$ on $S$ which induces the Riemannian distance $d_g$. 
Since the sequence $(z_i)_{i \in \Z_{\geq 0}}$ of $J \cap O^+(z)$ converging to $x_2$ monotonically from one side, the sequence of the lengths of $I_{z_{i+1},z_i}$ converges to zero. 
For any $i \in \Z_{\geq 0}$, let $f_{v,I_{z_i, x_2}}$ be the first return map from $I_{z_i, x_2}$ to $I_{z_{i+1}, x_2} \subset I_{z_i, x_2}$ induced by $v$. 
Then $C_{z'_{i+1}, z_{i+1}} \cap I_{z_{i}, x_2} = \{ z'_{i+1}, z_{i+1} \}$ and $C_{z'_{i+1}, z_{i+1}} \cap \mathop{\mathrm{int}} I_{x_{n_{i+1}},x_{n_{i+1}+1}} \neq \emptyset$. 
Since $\mathop{\mathrm{int}} C_{z_i,z_{i+1}} \cap I_{z_i, z_{i+1}} \neq \emptyset$, we have that $z_i \neq z'_{i+1}$ and so that the closed intervals $I_{z'_{i+1}, z_{i+1}} \subset J$ are pairwise disjoint. 
Therefore the unions $\gamma_i := C_{z'_{i+1}, z_{i+1}} \cup I_{z'_{i+1}, z_{i+1}} \subset O^+(z) \cap J$ are pairwise disjoint loops intersecting $\mathop{\mathrm{int}} I_{x_{n_{i+1}},x_{n_{i+1}+1}}$. 
Let $\A_i$ be the connected component of $S - \bigcup_{k \in \Z_{\geq 0}} \gamma_k$ intersecting $I_{z'_{i+1},z_i}$. 

\begin{claim}\label{claim:03}
We may assume that $\A_i$ is a closed annulus whose boundary is a disjoint union $\gamma_i \sqcup \gamma_{i+1} \subset O^+(z_0) \cup J$ such that the pairwise disjoint loops $\gamma_i$ are homotopic to each other.
\end{claim}
\begin{proof}[Proof of Claim~\ref{claim:03}]
Then the boundary of any domain $\A_i$ is contained in $(O^+(z_0) \cup J) \sqcup \partial S$. 
Since there are at most finite genus and finitely many boundary components, by renumbering, we may assume that each domain $\A_i$ is annular and that the restriction of $f_{v,I_{z_i, x_2}}$ whose domain is a small neighborhood of $z'_{i+1} \in I_{z_i, z_{i+1}}$ and codomain is a small neighborhood of $z_{i+1}$ is orientation-preserving. 
Then $\A_i$ is a closed annulus whose boundary is a disjoint union $\gamma_i \sqcup \gamma_{i+1} \subset O^+(z_0) \cup J$. 
Since $S$ is compact, by renumbering, we may assume that the pairwise disjoint loops $\gamma_i$ are homotopic to each other.
\end{proof}

Then the union $\A_{i-1} \cup \A_{i}$ is also a closed annulus with $\A_{i-1} \cap \A_{i} = \gamma_i$. 
Denote by $d_0 > 0$ the distance between $\gamma_0$ and $\gamma_1$ in $\A_0$ (i.e. $d_0 := d_g(\gamma_0,\gamma_1)$, where $d_g(A,B) := \min_{a \in A, b \in B}d_g(q_0,q_1)$).

Fix a large integer $N \in \Z_{>2}$ such that the length of $I_{x_{n_{i-1}},x_{n_{i+1}}}$ is less than $d_0/2$ for any $i \geq N$. 
Then $x_2 \notin O^+(x_{n_N +1})$. 
Since $\gamma_i \cap O^+(x_2) \subset (O^+(z) \cup J) \cap O^+(x_2) = \emptyset$ for any $i \geq N$, put $D := \min \{ d_g(x_{n_N +1}, \partial \A_N), d_g(x_{n_{N+1} +1}, \partial \A_{N+1}) \} = \min \{ d_g(\{ x_{n_N +1}\} , \gamma_N \sqcup \gamma_{N+1}), d_g(\{ x_{n_{N+1} +1}\} , \gamma_{N+1} \sqcup \gamma_{N+2}) \} >0$. 

For any $i \in \Z_{\geq 0}$, applying the waterfall construction to the loop $\gamma_i$, there is a closed transversal $T_i$ isotopic to $\gamma_i$ with $x_{n_{i}},x_{n_{i}+1} \notin T_i$ such that $T_i$ intersects $\mathop{\mathrm{int}} I_{x_{n_{i}},x_{n_{i}+1}}$ transversely and $d_{H}(T_i, \gamma_i) < \min \{D, d_0, d_g(\gamma_{i-1}, \gamma_{i}), d_g(\gamma_{i}, \gamma_{i+1}) \}/4$, where $d_H$ is the Hausdorff distance. 
For any $i \in \Z_{\geq 0}$, denote by $\A'_i$ the closed annulus whose boundary is $T_i \sqcup T_{i+1}$ and which is near $\A_i$. 
Then the union $\A'_{i} \cup \A'_{i+1}$ is also a closed annulus with $\A'_{i} \cap \A'_{i+1} = T_{i+1}$ and $\partial (\A'_{i} \cup \A'_{i+1}) = T_{i} \sqcup T_{i+2}$. 

\begin{claim}\label{claim:ineq_001}
$d_g(x_{n_N +1}, \partial \A'_N) \geq 3D/4$. 
\end{claim}
\begin{proof}[Proof of Claim~\ref{claim:ineq_001}]
We have the following inequality: 
%We have that 
%$d_g(x_{n_N +1}, \partial \A'_N) =  d_g(x_{n_N +1}, T_N \sqcup T_{N+1}) = \min \{ d_g(x_{n_N +1}, T_N), d_g(x_{n_N +1}, T_{N+1})\} \geq \min \{ d_g(x_{n_N +1}, \gamma_N) - d_H(T_N,\gamma_{N}), d_g(x_{n_N +1}, \gamma_{N+2}) - d_H(T_{N+2},\gamma_{N+2})\} \geq D - D/4 = 3D/4$. 
\[
\begin{split}
& \,\, d_g(x_{n_N +1}, \partial \A'_N) =  d_g(x_{n_N +1}, T_N \sqcup T_{N+1}) 
\\
= & \, \min \{ d_g(x_{n_N +1}, T_N), d_g(x_{n_N +1}, T_{N+1})\} \\
\geq & \,  \min \{ d_g(x_{n_N +1}, \gamma_N) - d_H(T_N,\gamma_{N}), d_g(x_{n_N +1}, \gamma_{N+2}) - d_H(T_{N+2},\gamma_{N+2})\} 
\\
\geq & \,  D - D/4 = 3D/4
\end{split}
\]
\end{proof}

\begin{claim}\label{claim:04}
The closed transversal $T_i$ intersects exactly once $I_{x_{n_{i}},x_{n_{i}+1}}$ at a point in $I_{x_{n_{i}},x_{n_{i}+1}}$ for any $i \in \Z_{\geq N}$. 
\end{claim}
\begin{proof}[Proof of Claim~\ref{claim:04}]
Assume that $T_i$ intersects $I_{x_{n_{i}},x_{n_{i}+1}}$ at least twice. 
Since $\A'_i$ is a closed annulus with $\partial \A'_i = T_i \sqcup T_{i+1}$ such that $T_i$ and $T_{i+1}$ are closed transversals, the transverse closed arc $I_{x_{n_{i}},x_{n_{i}+1}}$ goes outside of $\A'_i$ and goes into $\A'_i$ from $\gamma_i$ with respect to the positive or negative direction. 
The fact that the union $\bigcup_{k=0}^i \A'_k$ is a closed annulus whose boundary components are closed transversals implies that  $I_{x_{n_{i}},x_{n_{i}+1}} \cap T_k \neq \emptyset$ for any $k =0, 1, \ldots , i$. 
Since the transverse closed arc $I_{x_{n_{i}},x_{n_{i}+1}}$ goes through $\A'_0$, it contains a sub-arc in $\A'_0$ whose boundary component consists of a point in $T_0$ and a point in $T_1$. 
Then the length of $I_{x_{n_{i}},x_{n_{i}+1}}$ is more than $d_0/2$, which contradicts that the length is less than $d_0/2$. 
\end{proof}

By the previous claim, we have that $x_{n_{N}}< T_N \cap I_{x_{n_{N}},x_{n_{N}+1}} < x_{n_{N}+1} < x_{n_{N+1}} < T_{N+1} \cap I_{x_{n_{N+1}},x_{n_{N+1}+1}} < x_{n_{N+1}+1} < x_{n_{N+2}}$ in the closed interval $I$. 
% < x_{n_{N+1}+1} < x_{n_{N+2}}< T_{N+2} \cap I_{x_{n_{N+2}},x_{n_{N+3}+1}}$. 

\begin{claim}\label{claim:05}
%There is a large integer $i \in \Z_{>0}$ such that 
$x_{n_N +1} \in \A_N$. 
% \cup \A_{N+1}$. 
\end{claim}
\begin{proof}[Proof of Claim~\ref{claim:05}]
Since $\A'_{N}$ is a closed annulus with $\partial \A'_{N} = T_{N} \sqcup T_{N+1}$, by $T_N \cap I_{x_{n_{N}},x_{n_{N}+1}} < x_{n_{N+1}} < T_{N+1} \cap I_{x_{n_{N+1}},x_{n_{N+1}+1}}$, we obtain that $x_{n_N +1} \in \A'_N$.
By Claim~\ref{claim:ineq_001}, we have the following inequalities: 
\[
\begin{split}
d_g(x_{n_N +1}, \partial \A'_N) \geq 3D/4 > D/4 &> \max \{ d_{H}(T_N, \gamma_N), d_{H}(T_{N+1}, \gamma_{N+1})\} 
\\
&\geq d_H(\partial \A'_N, \partial \A_N) \geq d_H(\A'_N, \A_N)
\end{split}
\]
%that $d_g(x_{n_N +1}, \partial \A'_N) \geq 3D/4 > D/4 > \max \{ d_{H}(T_N, \gamma_N), d_{H}(T_{N+1}, \gamma_{N+1})\} \geq d_H(\partial \A'_N, \partial \A_N) \geq d_H(\A'_N, \A_N)$. 
Since the boundary $\partial \A'_N = T_N \sqcup T_{N+1}$ is isotopic to $\partial \A_N = \gamma_N \sqcup \gamma_{N+1}$,  the annulus $\A'_N$ is isotopic to the annulus $\A_N$ with $d_H(\partial \A'_N, \partial \A_N)< D/4 < d_g(x_{n_N +1}, \partial \A'_N)$ and so $x_{n_N +1} \in \A_N$, because the annulus $\A'_N$ contain $x_{n_N +1}$.
% and that the transverse closed arc $I_{x_{n_{N}},x_{n_{N+1}+1}}$ contains a sub-arc connecting with $\gamma_N$ and $\gamma_{N+1}$ imply $I_{x_{n_{N}+1},x_{n_{N+1}}} \subset \A_N$ and so $x_{n_N +2} \in \A_N$. 
\end{proof}
By the same argument of the proof of the previous claim, we have $x_{n_{N+1} +1} \in \A_{N+1}$. 
%Fix $i$ as in the previous claim.
From $O^+(x_{n_N +1}) \subseteq O^+(x)$ and $x_{n_{N+1} +1} \in O^+(x_{n_N +1}) \setminus \A_N$, the positive orbit $O^+(x_{n_N +1})$ intersects $\A_N$ but is not contained in $\A_N$. 
By $O^+(x_{n_N +1}) \cap (\bigcup_{k} C_{z'_{k+1}, z_{k+1}}) \subseteq O(x) \cap O(z) = \emptyset$ and $\partial \A_N = \gamma_N \sqcup \gamma_{N+1} \subset O^+(z) \cup (I_{z'_{N}, z_{N}} \sqcup I_{z'_{N+1}, z_{N+1}})$, we have $\emptyset \neq O^+(x_{n_N +1}) \cap \partial \A_N = O^+(x_{n_N +1}) \cap (\gamma_N \sqcup \gamma_{N+1})  = O^+(x_{n_N +1}) \cap (I_{z'_{N}, z_{N}} \sqcup I_{z'_{N+1}, z_{N+1}}) \subset O^+(x) \cap J = \{ x_2 \}$, which contradicts $x_2 \notin O^+(x_{n_N +1})$. 
Thus, the point $x$ is positively recurrent. 
\end{proof}

\section{Definition of $\leq^{\mathrm{BD}(v)}$}

We define the path orders and the circuit orders for any connected component of the complement $S - \mathrm{BD}(v)$. 

\subsection{Path orders $\leq_{\partial_{\pitchfork}^R}$ and $\leq_{\partial_{\pitchfork}^L}$ on a path on the transverse boundary of an invariant trivial flow box}

Let $\mu_R := (0,1) \times \{ 0 \}$ be an open interval, $\mathbb{D} := (0,1) \times (0,1)$ a canonical invariant trivial flow box whose orbits are of form $(0,1) \times \{ t \}$, $U$ an oriented open trivial flow box, and $h_R: (0,1) \times [0,1) = \mathbb{D} \sqcup \mu_R \to U \sqcup \partial_{\pitchfork}^R U$ a mapping such that the restriction $h_R\vert_{\mathbb{D}} : \mathbb{D} \to U$ is a topologically equivalent homeomorphism and the restriction $h_R\vert_{\mu_R} : \mu_R \to \partial_{\pitchfork}^R U$ is an injection except singular points.  
Since an injection except singular points lifts orbits into the domain, the interval $\mu_R$ can be considered as a union of orbits. 
Note that the restriction $h_R\vert_{\mu_R}$ need not be injective. 

Suppose that the interval $\mu_R$ consists of finitely many orbits. 
\begin{definition}
Define an equivalence relation $\sim_{\partial_{\pitchfork}^R U}$ on $\mu_R$ as follows: $x \sim y$ if there is a connected component $J$ of $h_R^{-1}(O)$ for some orbit $O$ in $\partial_{\pitchfork}^R U$ such that $J$ contains $x$ and $y$. 
\end{definition}

Then $\mu_R/\sim_{\partial_{\pitchfork}^R U}$ is a finite set. 

\begin{definition}
The total order $(J_1, J_2, \ldots , J_n)$ (i.e. $J_1 < J_2 < \cdots < J_n$) on the finite set $\mu_R/\sim_{\partial_{\pitchfork}^R U}$ is defined by the transitive closure of the following relation $J' \prec J''$ if either $\omega(J') = J''$ or $J' = \alpha(J'')$.
% with the total order $(J_1, J_2, \ldots , J_n)$ (i.e. $J_1 < J_2 < \cdots < J_n$) induced by the interval $\mu_R$ such that either $\omega(J_i) = J_{i+1}$ or $J_i = \alpha(J_{i+1})$ for any $i \in \{1, \ldots , n-1 \}$. 

We call a sequence $(h_R(J_1), h_R(J_2), \ldots , h_R(J_n))$ of orbits in $\partial_{\pitchfork}^R U$ a {\bf path order} of $\partial_{\pitchfork}^R U$ and denoted by $\bm{\leq_{\partial_{\pitchfork}^R}(U)}$ (or $\bm{\leq_{\partial_{\pitchfork}^R}}$ with respect to $U$) (i.e. $h_R(J_1) \leq_{\partial_{\pitchfork}^R}  h_R(J_2) \leq_{\partial_{\pitchfork}^R}  \cdots \leq_{\partial_{\pitchfork}^R} h_R(J_n)$ with respect to $U$).
\end{definition}

Similarly, we define a path order $\leq_{\partial_{\pitchfork}^L}(U)$ of the left transverse boundary $\partial_{\pitchfork}^L U$.
Note that path orders need not be total orders because $h_R\vert_{\mu_R}$ is not injective in general, and that we can similarly define a path order unless the assumption that $\mu_R$ consists of finitely many orbits. 

\subsection{Cyclic orders $\leq_{\partial_{\perp}^{\alpha}}$ and $\leq_{\partial_{\perp}^{\omega}}$ for the vertical boundaries of canonical regions}\label{sec:leq_perp}

We define a cyclic order as follows. 

\subsubsection{Cyclic orders}
We define a cyclic relation as follows. 

\begin{definition}
Define a {\bf cyclic relation} $\sim_c$ on a finite set $F$ with $n$ points as follows:
$(O_1, O_2, \ldots , O_n) \sim_c (O_{i_1}, O_{i_2}, \ldots , O_{i_n})$ if $F = \{ O_1, O_2, \ldots , O_n\}$ and  there is an integer $k = 0, 1, \ldots , n-1$ such that $j - i_j \equiv k \mod n$ for any $j = 1, \ldots , n$.
Then an equivalence class is called a {\bf cyclic class} of $F$ and denoted by $\bm{[O_1, O_2, \ldots , O_n]}$.
\end{definition}

If a cycle class consists of one element, then it is called a trivial cycle class. 
Notice that the empty set is also a cycle class by definition. 
Note that a cyclic class of a $n$-point set corresponds to a $n$-point subset of an oriented circle.

\subsubsection{Trivial circuit orders}

We define trivial circuit orders as follows. 

\begin{definition}
For any transverse annulus $U$, define $\bm{\leq_{\partial_{\pitchfork}^{\alpha}}(U)}$ (resp. $\bm{\leq_{\partial_{\pitchfork}^{\omega}}(U)}$) as the empty set. 
\end{definition}

\begin{definition}
For any periodic annulus $U$, define $\bm{\leq_{\partial_{\perp}^{\alpha}}(U)}$ (resp. $\bm{\leq_{\partial_{\perp}^{\omega}}(U)}$) as the empty set. 
\end{definition}

\begin{definition}
For any ($\partial$-)sink (resp. ($\partial$-)source) $x$ with a basin $\A$, the trivial cycle class of the singular point is called a {\bf trivial circuit order} and denoted by $\leq_{\A}(x)$. 
\end{definition}

\subsubsection{A circuit order on semi-attracting or semi-repelling limit sets}

Let $\gamma$ be a multi-saddle circuit with a collar $\A = \mathbb{S}^1 \times (0,1)$ and an associated immersion $h\vert_{\mathbb{S}^1 \times \{ 0 \}} : \mathbb{S}^1 \times \{ 0 \} \to \gamma$ except singular points.  
Then the composition $p_{\partial} := h \circ p^{-1} : \mathbb{S}^1 \to \gamma$ is an injection except singular points, called a {\bf multi-saddle circuit} with the spiral direction for $\A$, where $p: \mathbb{S}^1 \times \{ 0 \} \to \mathbb{S}^1$ is the canonical projection. 
Since an injection except singular points lifts orbits into the domain, the circle $\mathbb{S}^1 \times \{ 0 \}$ can be considered as a union of orbits. 

Suppose that the circuit $\gamma$ consists of finitely many orbits. 

\begin{definition}
Define an equivalence relation $\sim_{\A}$ on $\mathbb{S}^1$ as follows: $x \sim_{\A} y$ if there is a connected component $I$ of $p_{\partial}^{-1}(O)$ for some orbit $O$ in $\gamma$ such that $I$ contains $x$ and $y$. 
\end{definition}

Then $\mathbb{S}^1/\sim_{\A}$ is a finite set

\begin{definition}
The induced cyclic order $[J_1, J_2, \ldots , J_n]$  (i.e. $J_1 < J_2 < \cdots < J_n < J_0$) on the finite set $\mathbb{S}^1/\sim_{\A}$ is defined by either $\omega(J_i) = J_{i+1}$ or $J_i = \alpha(J_{i+1})$ for any $i \in \{0, \ldots , n-1 \}$, where $J_0 := J_n$. 

Consider a sequence $(p_{\partial}(J_1), p_{\partial}(J_2), \ldots , p_{\partial}(J_n))$ of orbits in $\gamma$ and define an equivalence relation $\approx_{\A}$ as follows: 
\[
(p_{\partial}(J_1), p_{\partial}(J_2), \ldots , p_{\partial}(J_n)) \approx_{\A} (p_{\partial}(J_{i_1}), p_{\partial}(J_{i_2}), \ldots , p_{\partial}(J_{i_n}))
\]
 if there is an integer $k = 0, 1, \ldots , n-1$ such that $j+k = i_j$ for any $j = 1, \ldots , n$, where $J_{n + l} := J_{l}$ for $l = 0, \ldots , n-1$.
We call an equivalence class a {\bf circuit order} of $\gamma$ and denoted by $\bm{\leq_{\A}(\gamma)}$. 
 \end{definition}

%\subsubsection{Definitions of cyclic orders $\leq_{\partial_{\perp}^{\alpha}}$ and $\leq_{\partial_{\perp}^{\omega}}$ for canonical regions}

Note that a circuit class is a quotient space of a cyclic order. 
We define cyclic orders for the vertical boundaries of canonical regions.

\begin{definition}\label{def:leq_perp}
For any canonical region $U$ which intersects the semi-repelling (reps. semi-attracting) basin $\A$ of a limit circuit $\gamma$, define $\bm{\leq_{\partial_{\perp}^{\alpha}}(U)}$ (resp. $\bm{\leq_{\partial_{\perp}^{\omega}}(U)}$) as $\leq_{\A}(\gamma)$. 
\end{definition}

\subsubsection{Cyclic orders for the transverse boundaries of periodic annuli}\label{sec:leq_perp_trans}

We define cyclic orders for the transverse boundaries of periodic annuli. 

\begin{definition}\label{def:leq_trans}
For any periodic annulus $U$ whose right (resp. left) transverse boundary is a circuit $\gamma$, 
%If the circuit $\gamma$ is the right (resp. left) transverse boundary of a periodic annulus $U$, then 
define $\bm{\leq_{\partial_{\pitchfork}^{R}}(U)}$ (resp. $\bm{\leq_{\partial_{\pitchfork}^{L}}(U)}$) as the circuit order $\leq_{U}(\gamma)$. 
\end{definition}

Notice that we can define a circuit order unless the assumption that $\gamma$ consists of finitely many orbits in a similar way. 

\begin{definition}\label{def:leq_bd}
We define a lebel $\bm{\leq^{\mathrm{BD}(v)}}$ by 
\[
\leq^{\mathrm{BD}(v)} := (\leq_{\partial_{\pitchfork}^R}, \leq_{\partial_{\pitchfork}^L}, \leq_{\partial_{\perp}^{\alpha}}, \leq_{\partial_{\perp}^{\omega}}) : V^{\mathrm{BD}(v)} \to   \mathcal{O}_{\mathrm{circuit,path}}^4
\]
where $\mathcal{O}_{\mathrm{circuit,path}}$ is the set of finite circuit orders and path orders. 
\end{definition}

\bibliographystyle{abbrv}
\bibliography{../y202408}

\begin{thebibliography}{10}

\bibitem{andronov1937coarse}
A.~Andronov and L.~Pontryagin.
\newblock Coarse systems.
\newblock In {\em Dokl. Akad. Nauk SSSR}, volume~14, pages 247--250, 1937.

\bibitem{AKKMOP2009}
Z.~Arai, W.~Kalies, H.~Kokubu, K.~Mischaikow, H.~Oka, and P.~Pilarczyk.
\newblock A database schema for the analysis of global dynamics of
  multiparameter systems.
\newblock {\em SIAM J. Appl. Dyn. Syst.}, 8(3):757--789, 2009.

\bibitem{aranson1996maier}
S.~Aranson and E.~Zhuzhoma.
\newblock Maier's theorems and geodesic laminations of surface flows.
\newblock {\em Journal of Dynamical and Control Systems}, 2(4):557--582, 1996.

\bibitem{aranson1996introduction}
S.~K. Aranson, G.~R. Beliski{\u\i}, and E.~Zhuzhoma.
\newblock {\em Introduction to the qualitative theory of dynamical systems on
  surfaces}.
\newblock American Mathematical Society, 1996.

\bibitem{aref1998stagnation}
H.~Aref and M.~Br{\o}ns.
\newblock On stagnation points and streamline topology in vortex flows.
\newblock {\em Journal of Fluid Mechanics}, 370:1--27, 1998.

\bibitem{arnol1991topological}
V.~I. Arnol'd.
\newblock Topological and ergodic properties of closed 1-forms with
  incommensurable periods.
\newblock {\em Functional Analysis and Its Applications}, 25(2):81--90, 1991.

\bibitem{bakker1991bifurcations}
P.~G. Bakker.
\newblock {\em Bifurcations in Flow Patterns: Some applications of the
  qualitative theory of differential equations in fluid dynamics}, volume~2.
\newblock Springer Science \& Business Media, 1991.

\bibitem{bendixson1901courbes}
I.~Bendixson.
\newblock Sur les courbes d\'{e}finies par des \'{e}quations
  diff\'{e}rentielles.
\newblock {\em Acta Math.}, 24(1):1--88, 1901.

\bibitem{bhatia1970attraction}
N.~P. Bhatia.
\newblock Attraction and nonsaddle sets in dynamical systems.
\newblock {\em Journal of Differential Equations}, 8(2):229--249, 1970.

\bibitem{bolsinov2004integrable}
A.~V. Bolsinov and A.~T. Fomenko.
\newblock {\em Integrable Hamiltonian systems: geometry, topology,
  classification}.
\newblock CRC press, 2004.

\bibitem{BHS2011}
C.~Bonatti, H.~Hattab, and E.~Salhi.
\newblock Quasi-orbit spaces associated to {$T_0$}-spaces.
\newblock {\em Fund. Math.}, 211(3):267--291, 2011.

\bibitem{BHSV2011}
C.~Bonatti, H.~Hattab, E.~Salhi, and G.~Vago.
\newblock Hasse diagrams and orbit class spaces.
\newblock {\em Topology Appl.}, 158(6):729--740, 2011.

\bibitem{bro1999streamline}
M.~Bro/ns and J.~N. Hartnack.
\newblock Streamline topologies near simple degenerate critical points in
  two-dimensional flow away from boundaries.
\newblock {\em Physics of fluids}, 11(2):314--324, 1999.

\bibitem{buendia2018markus}
J.~G.~E. Buend{\'\i}a and V.~J. L{\'o}pez.
\newblock On the markus--neumann theorem.
\newblock {\em Journal of Differential Equations}, 265(11):6036--6047, 2018.

\bibitem{Candel2000foliation}
A.~Candel and L.~Conlon.
\newblock {\em Foliations I}, volume~23.
\newblock American Mathematical Soc., 2000.

\bibitem{chaika2021singularity}
J.~Chaika, K.~Fr{\k{a}}czek, A.~Kanigowski, and C.~Ulcigrai.
\newblock Singularity of the spectrum for smooth area-preserving flows in genus
  two and translation surfaces well approximated by cylinders.
\newblock {\em Communications in Mathematical Physics}, 381(3):1369--1407,
  2021.

\bibitem{cherry1937topological}
T.~Cherry.
\newblock Topological properties of the solutions of ordinary differential
  equations.
\newblock {\em American Journal of Mathematics}, 59(4):957--982, 1937.

\bibitem{cobo2010flows}
M.~Cobo, C.~Gutierrez, and J.~Llibre.
\newblock Flows without wandering points on compact connected surfaces.
\newblock {\em Transactions of the American Mathematical Society},
  362(9):4569--4580, 2010.

\bibitem{conze2011cocycles}
J.-P. Conze and K.~Fr{\k{a}}czek.
\newblock Cocycles over interval exchange transformations and multivalued
  hamiltonian flows.
\newblock {\em Advances in Mathematics}, 226(5):4373--4428, 2011.

\bibitem{dumortier1977singularities}
F.~Dumortier.
\newblock Singularities of vector fields on the plane.
\newblock {\em J. Differential Equations}, 23(1):53--106, 1977.

\bibitem{egawa1973remark}
J.~Egawa.
\newblock A remark on the flow near a compact invariant set.
\newblock {\em Proceedings of the Japan Academy}, 49(4):247--251, 1973.

\bibitem{forni1997solutions}
G.~Forni.
\newblock Solutions of the cohomological equation for area-preserving flows on
  compact surfaces of higher genus.
\newblock {\em Annals of Mathematics}, 146(2):295--344, 1997.

\bibitem{forni2002deviation}
G.~Forni.
\newblock Deviation of ergodic averages for area-preserving flows on surfaces
  of higher genus.
\newblock {\em Annals of Mathematics}, 155(1):1--103, 2002.

\bibitem{frkaczek2012ergodic}
K.~Fr{\k{a}}czek and C.~Ulcigrai.
\newblock Ergodic properties of infinite extensions of area-preserving flows.
\newblock {\em Mathematische Annalen}, 354(4):1289--1367, 2012.

\bibitem{Freudenthal1931end}
H.~{Freudenthal}.
\newblock {\"Uber die Enden topologischer R\"aume und Gruppen}.
\newblock {\em {Math. Z.}}, 33:692--713, 1931.

\bibitem{gardiner1985structure}
C.~Gardiner.
\newblock The structure of flows exhibiting nontrivial recurrence on
  two-dimensional manifolds.
\newblock {\em Journal of differential equations}, 57(1):138--158, 1985.

\bibitem{gutierrez1986smoothing}
C.~Guti{\'e}rrez.
\newblock Smoothing continuous flows on two-manifolds and recurrences.
\newblock {\em Ergodic Theory and dynamical systems}, 6(1):17--44, 1986.

\bibitem{HMMN2014}
S.~Harker, K.~Mischaikow, M.~Mrozek, and V.~Nanda.
\newblock Discrete {M}orse theoretic algorithms for computing homology of
  complexes and maps.
\newblock {\em Found. Comput. Math.}, 14(1):151--184, 2014.

\bibitem{hartnack1999streamline}
J.~N. Hartnack.
\newblock Streamline topologies near a fixed wall using normal forms.
\newblock {\em Acta Mechanica}, 136(1):55--75, 1999.

\bibitem{hurewicz1928dimension}
W.~Hurewicz and K.~Menger.
\newblock Dimension und zusammenhangsstufe.
\newblock {\em Mathematische Annalen}, 100(1):618--627, 1928.

\bibitem{hurewicz2015dimension}
W.~Hurewicz and H.~Wallman.
\newblock {\em Dimension Theory (PMS-4), Volume 4}.
\newblock Princeton university press, 2015.

\bibitem{kanigowski2016ratner}
A.~Kanigowski and J.~Ku{\l}aga-Przymus.
\newblock Ratner’s property and mild mixing for smooth flows on surfaces.
\newblock {\em Ergodic Theory and Dynamical Systems}, 36(8):2512--2537, 2016.

\bibitem{kibkalo2021topological}
V.~Kibkalo and T.~Yokoyama.
\newblock Topological characterizations of morse-smale flows on surfaces and
  generic non-morse-smale flows.
\newblock {\em arXiv preprint arXiv:2109.14662}, 2021.

\bibitem{kibkalo2022topological}
V.~Kibkalo and T.~Yokoyama.
\newblock Topological characterizations of morse-smale flows on surfaces and
  generic non-morse-smale flows.
\newblock {\em Discrete and Continuous Dynamical Systems}, 42(10):4787--4822,
  2022.

\bibitem{kidambi2000streamline}
R.~Kidambi and P.~K. Newton.
\newblock Streamline topologies for integrable vortex motion on a sphere.
\newblock {\em Physica D: Nonlinear Phenomena}, 140(1-2):95--125, 2000.

\bibitem{kruglov2018topological}
V.~Kruglov, D.~Malyshev, and O.~Pochinka.
\newblock Topological classification of $\omega$-stable flows on surfaces by
  means of effectively distinguishable multigraphs.
\newblock {\em Discrete \& Continuous Dynamical Systems-A}, 38(9):4305, 2018.

\bibitem{krupski1991recent}
P.~Krupski.
\newblock Recent results on homogeneous curves and anr’s.
\newblock In {\em Topology Proc}, volume~16, pages 109--118, 1991.

\bibitem{kulaga2012self}
J.~Ku{\l}aga.
\newblock On the self-similarity problem for smooth flows on orientable
  surfaces.
\newblock {\em Ergodic Theory and Dynamical Systems}, 32(5):1615--1660, 2012.

\bibitem{labarca1990stability}
R.~Labarca and M.~Pacifico.
\newblock Stability of morse-smale vector fields on manifolds with boundary.
\newblock {\em Topology}, 29(1):57--81, 1990.

\bibitem{ma2005geometric}
T.~Ma and S.~Wang.
\newblock {\em Geometric theory of incompressible flows with applications to
  fluid dynamics}.
\newblock Number 119. American Mathematical Soc., 2005.

\bibitem{markley1969poincare}
N.~G. Markley.
\newblock {The Poincar{\'e}-Bendixson theorem for the Klein bottle}.
\newblock {\em Transactions of the American Mathematical Society}, pages
  159--165, 1969.

\bibitem{markley1970number}
N.~G. Markley.
\newblock On the number of recurrent orbit closures.
\newblock {\em Proceedings of the American Mathematical Society}, pages
  413--416, 1970.

\bibitem{markley2023flows}
N.~G. Markley and M.~Vanderschoot.
\newblock {\em Flows on Compact Surfaces: The Weil--Hedlund--Anosov Program}.
\newblock Springer Nature, 2023.

\bibitem{markus1954global}
L.~Markus.
\newblock Global structure of ordinary differential equations in the plane.
\newblock {\em Transactions of the American Mathematical Society},
  76(1):127--148, 1954.

\bibitem{marzougui2002area}
H.~Marzougui.
\newblock Area preserving flows with a dense orbit.
\newblock {\em Nonlinearity}, 15(5):1379, 2002.

\bibitem{mayer1943trajectories}
A.~Mayer.
\newblock Trajectories on the closed orientable surfaces.
\newblock {\em Rec. Math.[Mat. Sbornik] NS}, 12(54):71--84, 1943.

\bibitem{MMW2016}
K.~Mischaikow, M.~Mrozek, and F.~Weilandt.
\newblock Discretization strategies for computing {C}onley indices and {M}orse
  decompositions of flows.
\newblock {\em J. Comput. Dyn.}, 3(1):1--16, 2016.

\bibitem{moffatt2001topology}
H.~Moffatt.
\newblock The topology of scalar fields in 2d and 3d turbulence.
\newblock In {\em IUTAM symposium on geometry and statistics of turbulence},
  pages 13--22. Springer, 2001.

\bibitem{MSW2015}
M.~Mrozek, R.~Srzednicki, and F.~Weilandt.
\newblock A topological approach to the algorithmic computation of the {C}onley
  index for {P}oincar\'e maps.
\newblock {\em SIAM J. Appl. Dyn. Syst.}, 14(3):1348--1386, 2015.

\bibitem{neumann1976global}
D.~Neumann and T.~O'Brien.
\newblock Global structure of continuous flows on 2-manifolds.
\newblock {\em Journal of differential equations}, 22(1):89--110, 1976.

\bibitem{neumann1975classification}
D.~A. Neumann.
\newblock Classification of continuous flows on 2-manifolds.
\newblock {\em Proceedings of the American Mathematical Society}, pages 73--81,
  1975.

\bibitem{nikolaev2001non}
I.~Nikolaev.
\newblock Non-wandering flows on the 2-manifolds.
\newblock {\em Journal of Differential Equations}, 173(1):1--16, 2001.

\bibitem{nikolaev2013foliations}
I.~Nikolaev.
\newblock {\em Foliations on surfaces}, volume~41.
\newblock Springer Science \& Business Media, 2013.

\bibitem{nikolaev1999flows}
I.~Nikolaev and E.~Zhuzhoma.
\newblock {\em Flows on 2-dimensional manifolds: an overview}.
\newblock Number 1705. Springer Science \& Business Media, 1999.

\bibitem{novikov1982hamiltonian}
S.~P. Novikov.
\newblock {The Hamiltonian formalism and a many-valued analogue of Morse
  theory}.
\newblock {\em Uspekhi Matematicheskikh Nauk}, 37(5):3--49, 1982.

\bibitem{oshemkov1998classification}
A.~A. Oshemkov and V.~V. Sharko.
\newblock Classification of morse-smale flows on two-dimensional manifolds.
\newblock {\em Sbornik: Mathematics}, 189(8):1205, 1998.

\bibitem{peixoto1962structural}
M.~M. Peixoto.
\newblock Structural stability on two-dimensional manifolds.
\newblock {\em Topology}, 1(2):101--120, 1962.

\bibitem{ravotti2017quantitative}
D.~Ravotti.
\newblock Quantitative mixing for locally hamiltonian flows with saddle loops
  on compact surfaces.
\newblock In {\em {Annales Henri Poincar{\'e}}}, volume~18, pages 3815--3861.
  Springer, 2017.

\bibitem{reeb1952certaines}
G.~Reeb.
\newblock Sur certaines propri{\'e}t{\'e}s topologiques des vari{\'e}t{\'e}s
  feuillet{\'e}es.
\newblock {\em Act. Sc. et Ind.}, 1952.

\bibitem{richards1963classification}
I.~Richards.
\newblock On the classification of noncompact surfaces.
\newblock {\em Transactions of the American Mathematical Society},
  106(2):259--269, 1963.

\bibitem{roussarie2021top}
R.~Roussarie.
\newblock A topological study of planar vector field singularities.
\newblock {\em Discrete Contin. Dyn. Syst.}, 40(9):5217--5245, 2020.

\bibitem{sakajo2023topological}
T.~Sakajo and K.~Itatani.
\newblock Topological identification of vortical flow structures in the left
  ventricle of the heart.
\newblock {\em SIAM Journal on Imaging Sciences}, 16(3):1491--1519, 2023.

\bibitem{sakajo2022identification}
T.~Sakajo, S.~Ohishi, and T.~Uda.
\newblock Identification of kuroshio meanderings south of japan via a
  topological data analysis for sea surface height.
\newblock {\em Journal of Oceanography}, 78(6):495--513, 2022.

\bibitem{sakajo2014unique}
T.~Sakajo, Y.~Sawamura, and T.~Yokoyama.
\newblock Unique encoding for streamline topologies of incompressible and
  inviscid flows in multiply connected domains.
\newblock {\em Fluid Dynamics Research}, 46(3):031411, 2014.

\bibitem{sakajo2015transitions}
T.~Sakajo and T.~Yokoyama.
\newblock Transitions between streamline topologies of structurally stable
  hamiltonian flows in multiply connected domains.
\newblock {\em Physica D: Nonlinear Phenomena}, 307:22--41, 2015.

\bibitem{sakajo2018tree}
T.~Sakajo and T.~Yokoyama.
\newblock Tree representation of topological streamline patterns of
  structurally stable 2d hamiltonian vector fields in multiply conected
  domains.
\newblock {\em The IMA Journal of Applied Mathematics}, 83:380--411, 2018.

\bibitem{sakajo2020discrete}
T.~Sakajo and T.~Yokoyama.
\newblock Discrete representations of orbit structures of flows for topological
  data analysis.
\newblock {\em arXiv preprint arXiv:2010.13434}, 2020.

\bibitem{taro1962errata}
U.~Taro and K.~Ikuo.
\newblock {Errata. Sur le courant exterieur a une region invariante; Theoreme
  de Bendixson}.
\newblock {\em Commentarii mathematici Universitatis Sancti Pauli= Rikkyo
  Daigaku sugaku zasshi}, 10(2):110, 1962.

\bibitem{tumarkin1928structure}
L.~Tumarkin.
\newblock Sur la structure dimensionnelle des ensembles ferm{\'e}s, cr acad.
\newblock {\em Paris}, 186:420--422, 1928.

\bibitem{uda2021identification}
T.~Uda, T.~Sakajo, M.~Inatsu, and K.~Koga.
\newblock Identification of atmospheric blocking with morphological type by
  topological flow data analysis.
\newblock {\em Journal of the Meteorological Society of Japan. Ser. II},
  99(5):1169--1183, 2021.

\bibitem{ulcigrai2011absence}
C.~Ulcigrai.
\newblock Absence of mixing in area-preserving flows on surfaces.
\newblock {\em Annals of mathematics}, pages 1743--1778, 2011.

\bibitem{ura1964flow}
T.~Ura.
\newblock On the flow outside a closed invariant set, stability, relative
  stability and saddle sets.
\newblock {\em Contributions to Differential Equations}, 3:249--294, 1964.

\bibitem{urysohn1925memoire}
P.~Urysohn.
\newblock M{\'e}moire sur les multiplicit{\'e}s cantoriennes.
\newblock {\em Fundamenta Mathematicae}, 1(7):30--137, 1925.

\bibitem{yokoyama2016topological}
T.~Yokoyama.
\newblock A topological characterization for non-wandering surface flows.
\newblock {\em Proceedings of the American Mathematical Society},
  144(1):315--323, 2016.

\bibitem{yokoyama2019properness}
T.~Yokoyama.
\newblock Properness of foliations.
\newblock {\em Topology and its Applications}, 254:171--175, 2019.

\bibitem{yokoyama2021combinatorial}
T.~Yokoyama.
\newblock Combinatorial structures of the space of hamiltonian vector fields on
  compact surfaces.
\newblock {\em arXiv preprint arXiv:2112.03475}, 2021.

\bibitem{yokoyama2021topological}
T.~Yokoyama.
\newblock {Topological characterizations of Hamiltonian flows with finitely
  many singular points on unbounded surfaces}.
\newblock {\em arXiv preprint arXiv:2111.01420}, 2021.

\bibitem{yokoyama2022combinatorial}
T.~Yokoyama.
\newblock Combinatorial structures of the space of gradient vector fields on
  compact surfaces.
\newblock {\em arXiv preprint arXiv:2210.08756}, 2022.

\bibitem{yokoyama2021poincare}
T.~Yokoyama.
\newblock {A Poincar{\'e}-Bendixson theorem for flows with arbitrarily many
  singular points}.
\newblock {\em Studia Mathematica}, 278:99--172, 2024.

\bibitem{yokoyama2023topological}
T.~Yokoyama.
\newblock Topological characterizations of recurrence, poisson stability, and
  isometric property of flows on surfaces.
\newblock {\em Israel Journal of Mathematics}, 261:951--975, 2024.

\bibitem{yokoyama2013word}
T.~Yokoyama and T.~Sakajo.
\newblock Word representation of streamline topologies for structurally stable
  vortex flows in multiply connected domains.
\newblock {\em Proceedings of the Royal Society A: Mathematical, Physical and
  Engineering Sciences}, 469(2150):20120558, 2013.

\bibitem{yokoyama2021complete}
T.~Yokoyama and T.~Yokoyama.
\newblock Complete transition diagrams of generic hamiltonian flows with a few
  heteroclinic orbits.
\newblock {\em Discrete Mathematics, Algorithms and Applications},
  13(02):2150023, 2021.

\bibitem{zorich1999leaves}
A.~Zorich.
\newblock How do the leaves of a closed {$1$}-form wind around a surface?
\newblock In {\em Pseudoperiodic topology}, volume 197 of {\em Amer. Math. Soc.
  Transl. Ser. 2}, pages 135--178. Amer. Math. Soc., Providence, RI, 1999.

\end{thebibliography}

\end{document}